\documentclass[11pt]{amsart}
\usepackage{amsmath,amssymb,mathtools}
\usepackage{stmaryrd} 
\usepackage[T1]{fontenc}
\usepackage{lmodern,textcomp}
\usepackage{color}
\usepackage{graphicx}
\usepackage{amsthm}
\usepackage[all]{xy}
\usepackage{caption}
\usepackage{enumitem}
\usepackage{hyperref}

\usepackage{soul}

\usepackage{tikz}
\usepackage{tikz-cd}

\usepackage{fullpage}
 \usepackage{setspace}
\usepackage{cleveref}

\def\cH{{\mathcal{H}}}
\def\cM{{\mathcal{M}}}

\def\cL{{\mathcal{L}}}

\def\cO{\mathcal{O}}

\def\oC{\overline{\mathcal{C}}}
\def\oM{\overline{\mathcal{M}}}

\def\oH{\overline{\mathcal{H}}}
\def\oSQ{\overline{\mathcal{SQ}}}
\def\SQ{{\mathcal{SQ}}}
\def\oGamma{{\overline{\Gamma}}}

\def\CC{\mathbb{C}}
\def\ZZ{\mathbb{Z}}
\def\NN{\mathbb{N}}
\def\QQ{\mathbb{Q}}
\def\PP{\mathbb{P}}

\def\res{{\rm res}}
\def\Aut{{\rm Aut}}

\theoremstyle{plain}
\newtheorem{theorem}{Theorem}[section]
\newtheorem{lemma}[theorem]{Lemma}
\newtheorem{proposition}[theorem]{Proposition}
\newtheorem{corollary}[theorem]{Corollary}
\newtheorem{conjecture}[theorem]{Conjecture}

\theoremstyle{definition}
\newtheorem{definition}[theorem]{Definition}

\newtheorem{example}[theorem]{Example}
\newtheorem{remark}[theorem]{Remark}

\begin{document}
\title{Spin volumes of minimal strata \\ and Chiodo integrals}

\author{Andrei Bud}
\address{Institut de Mathématiques de Toulouse, Université de Toulouse,
118 Route de Narbonne, France}
\email{andreibud95@protonmail.com}

\author{Georgios Politopoulos}
\address{HUN-REN Alfréd Rényi Institute of Mathematics, 1053 Budapest, Reáltanoda u. 13-15}
\email{geopolito@renyi.hu}

\author{Stijn Velstra}

\begin{abstract}
We derive a closed formula for the Masur-Veech volumes of spin-parity components of the stratum of abelian differentials with a single zero of maximal order. Our approach is based on the intersection theory of the virtual subcone of spin-parity squares inside the Hodge bundle, recently developed in \cite{HolPolSau}. This is a spin refinement of the result of \cite{Sau} and agrees with the lattice-point counting technique of \cite{CheMolSauZag}. 
We further apply this theory to compute spin counterparts of virtual volumes considered in \cite{Sauvaget-virtual}. These virtual volumes are related to area Siegel-Veech constants, cf. \cite{vanSau}, and we are able to compute these invariants component-wise.   
In addition, by comparing our method for non-parity squares with the classical result of Sauvaget, we compute specific Chiodo integrals.
\end{abstract}
\maketitle

\setcounter{tocdepth}{1}
\tableofcontents

\section*{Introduction}

\subsection*{Setting and motivation} 

Starting with the influential paper \cite{Kon-Zor} of Kontsevich and Zorich, strata of differentials have been a central object of study in Algebraic Geometry and Teichm\"uller Dynamics. In recent years, substantial progress have been made in connecting these two viewpoints: 

\begin{itemize}
    \item intersection-theoretic methods have been used to compute dynamical invariants of strata (such as  Masur-Veech volumes, area Siegel-Veech constants, and Lyapunov exponents, cf. \cite{Sau}, \cite{CheMolSauZag}, \cite{vanSau} and \cite{ChenMol}, among many others)
    \item the flat-surface interpretation of differentials is used to determine the number of connected components, and to construct compactifications with mild singularities, cf. \cite{Kon-Zor} and \cite{BaiCheGenGruMol}. 
\end{itemize} 

In fact many of these invariants can be defined component-wise, while the invariant of the entire stratum is additive. One key observation is that, if we weight them by the spin parity of the component, this new difference of invariants has a meaningful numeric and intersection-theoretic interpretation. This perspective already appears in the literature in several recent papers, studying spin DR-cycles \cite{HolPolSau}, spin orbifold Euler characteristics of strata \cite{CosSauSch}, the spin Masur-Veech volume \cite{CheMolSauZag}, cylinder counts \cite{vanSau} and the BKP hierarchy \cite{KloVel}. \\

\noindent The goal of this paper is to compute the Masur-Veech volume of the minimal stratum, with its components weighted by the spin parity. We will use intersection-theoretical methods to find recurrence formulas that let us compute volumes via coefficient extraction, see Theorem \ref{thm:maintheorem}. We will apply the same method in order to compute the area Siegel-Veech constant for all its components.

Let $g$ and $n$ be two non-negative integers satisfying $2g-2 + n > 0$ and denote by $\mathbb{P}\cH_{g,n}$ the projectivized Hodge bundle over the moduli space $\cM_{g,n}$ of smooth genus $g$ curves with $n$ marked points. This is the projective bundle whose fiber over a point $[C,x_1,\ldots, x_n]$ is $\mathbb{P}H^0(C,\omega_C)$, i.e. $\mathbb{P}\cH_{g,n}$ parametrizes abelian differentials up to scaling. Given a nonnegative partition $\mu = (\mu_1, \ldots, \mu_n)$ of $2g-2 +n$, one defines the stratum  $\mathbb{P}\cH_{g,n}(\mu) \subseteq \mathbb{P}\cH_{g,n}$ parametrizing (classes of) Abelian differentials whose zero divisor is supported at the marked points $x_1, \ldots, x_n$ with vanishing orders $\mu_1 -1, \ldots, \mu_n-1$, respectively; that is, a point $[C,x_1,\ldots, x_n, \varphi] \in \mathbb{P}\cH_{g,n}(\mu)$ satisfies 
\[ \textrm{div}(\varphi) = \sum_{i=1}^n(\mu_i-1)x_i.\]

 From a geometric perspective, a key feature is that, for any partition $\mu$, the stratum $\mathbb{P}\cH_{g,n}(\mu)$ is a smooth orbifold of dimension $2g-2 + n$, see \cite{Polis}, and has at most three connected components, distinguished by hyperellipticity and spin parity, see \cite{Kon-Zor}. For many geometric and enumerative questions, however, it is essential to work with a suitable compactification. In our case, there exists a natural extension $\mathbb{P}\oH_{g,n}$ of the Hodge bundle over $\oM_{g,n}$ and we want to understand the closure of $\mathbb{P}\cH_{g,n}(\mu)$ inside this space. This question was fully answered in \cite{BaiCheGenGruMol-IVC}, where the Incidence Variety Compactification $\mathbb{P}\oH_{g,n}(\mu)$ is explicitly described. This was further improved in \cite{BaiCheGenGruMol}, where the authors constructed a smooth orbifold compactification, whose boundary is a normal crossing divisor. These compactifications provide a framework to obtain recursive formulas for intersection-theoretic computations on strata of differentials. 

\subsection*{Spin parity} Let $[C,x_1,\ldots, x_n, \varphi]$ be an element of a stratum $\mathbb{P}\cH_{g,n}(\mu)$ and assume all entries of $\mu$ are odd. Then, the line bundle
\[
\theta \coloneqq 
\cO_C\!\left(\sum_{i=1}^n \frac{\mu_i-1}{2}x_i\right)
\]
is a theta characteristic on $C$ (i.e. satisfies $\theta^{\otimes 2} \cong \omega_C)$. It is a classical result of \cite{Mum1, Ati}, that the parity $h^0(C,\theta)\pmod 2$ is deformation invariant in families of smooth curves. Consequently, the stratum $\mathbb{P}\cH_{g,n}(\mu)$ has at least two connected components, distinguished by the parity of the theta characteristic. We denote by $\mathbb{P}\cH_{g,n}(\mu)^+$ and $\mathbb{P}\cH_{g,n}(\mu)^-$ the components of $\mathbb{P}\cH_{g,n}(\mu)$ of even and respectively odd spin parity.

While spin parity provides a clear geometric distinction between the two components, separating them from an intersection-theoretic perspective proved to be more subtle. However, the problem of computing the class of the individual components was recently solved in \cite{HolPolSau}, where the authors extended the method of \cite{Sau2} to the cone of spin sections. Their work provides the framework for spin-parity intersection theory, and plays a central role in the present paper. 

Moreover, when the partition $\mu$ is either $(2g-1)$ or $(g,g)$, the corresponding stratum of Abelian differentials admits a component parametrizing only hyperelliptic curves. As before, we can associate to an element $[C,x_1,\varphi]\in \mathbb{P}\cH_{g,1}(2g-1)^{\rm hyp} $ a theta characteristic. Consequently, the hyperelliptic component contributes to the difference $[\mathbb{P}\cH_{g,n}(\mu)^{+}]-[\mathbb{P}\cH_{g,n}(\mu)^{-}]$, and therefore will yield a non-trivial contribution to the corresponding Masur-Veech volume. On the other hand, the geometry of the hyperelliptic component is significantly simpler, and its Masur-Veech volume has already been computed in  \cite[Section 6]{CheMolSauZag} and indirectly in \cite{AthEskZor}.  

\subsection*{Volumes}
Let $g,n$ satisfy $2g-2+n >0$, and let $\mu$ be a length $n$ partition of $2g-2 + n$. We consider the subcone $\cH_{g,n}(\mu)$ of $\cH_{g,n}$, which parametrizes Abelian differentials with order profile $\mu-1$ at the $n$ marked points, or equivalently, flat surfaces with $n$ marked conic singularities of angles $2\pi \mu$, see \cite{Chen} and \cite{Zor}.

Let $[C,x_1,\ldots, x_n,\varphi]$ be an element of $\cH_{g,n}(\mu)$ and consider the relative cohomology group 
\[ H \coloneqq H^1(C, \{x_1,\ldots, x_n\}; \mathbb{C}).  \]

By choosing a basis of the relative homology group, and then integrating the differential $\varphi$ along these $2g+n-1$ paths, we realize $H$ as a system of local coordinates of $\cH_{g,n}(\mu)$, called the period coordinates. The upshot is that the relative cohomology group $H$ contains a natural integer lattice $H^1(C, \{x_1,\ldots, x_n\}; \mathbb{Z}\oplus i\mathbb{Z})$, allowing us to consider a linear volume form $d\nu_{MV}$, normalized in such a way that the fundamental (hypercubic) domain has volume equal to $1$. 

We consider two subloci $\cH^{\leq 1}_{g,n}(\mu)$ and  $\cH^{1}_{g,n}(\mu)$ parametrizing flat surfaces satisfying the area condition 
\[ {\rm Area}(C,\varphi) \coloneqq \frac{i}{2}\int_C\varphi\wedge\overline{\varphi} \leq 1, \ \textrm{and} \ {\rm Area}(C,\varphi) = 1, \ \textrm{respectively.} \]
We can obtain a volume form on $\cH^{1}_{g,n}(\mu)$ via desingularisation, see \cite{EskKonZor2014}. The volume with respect to this form, called the Masur-Veech volume of $\cH_{g,n}^{1}(\mu)$, is finite, see \cite{Mas1982} and \cite{Vee1982}. It was first computed in \cite{EskOko} as limits of Hurwitz-type numbers by relating the lattice points of the strata to ramified torus covers $C \ni x \mapsto \int_{x_1}^x \omega \in \frac{\CC}{\ZZ\oplus i\ZZ}$. For the relation of these Masur-Veech volumes to the statistics of saddle connections, see for example \cite{EskMasZor}. 

Finally, a volume form over $\mathbb{P}\cH_{g,n}(\mu)$ can be obtained via disintegration of the form $d\nu_{MV}$, (see \cite[Section 2.2]{Sau}). The total volume of $ \mathbb{P}\cH_{g,n}(\mu)$ is thus obtained using the projection map $p\colon \cH_{g,n}(\mu) \rightarrow \mathbb{P}\cH_{g,n}(\mu)$. We consider the volume form of $\mathbb{P}\cH_{g,n}(\mu)$ respecting the equality
\[ \textrm{Vol}(U) = \textrm{Vol}(p^{-1}U\cap \cH^{\leq 1}_{g,n}(\mu))\]
for every open domain $U$ of $\mathbb{P}\cH_{g,n}(\mu)$. It is a result of \cite{CheMolSauZag} that the volume obtained in this way can be computed via intersection theory on (a compactification of) the stratum $\mathbb{P}\cH_{g,n}(\mu)$. 

\subsection*{Intersection Theory}
For $\mu = (2g-1)$ this Masur-Veech volume was compared to an integral of tautological bundles on the incidence variety compactification $\PP\oH_{g,1}(2g-1)$ of the projectivized stratum; see \cite{Sau}. This was based on the existence of a desingularisation with a good (in the sense of \cite{Mum2}) metric extending the metric on the tautological bundle over $\PP\cH_{g,n}(\mu)$ induced by the area. In \cite{CosMolZac} a good enough canonical metric extension was obtained using the moduli spaces of multi-scale differentials of \cite{BaiCheGenGruMol}. This leads in \cite{Sau} to the volume formula for the minimal partition $\mu = (2g-1)$:  

\[ \textrm{Vol}(\cH^{1}_{g,1}(2g-1)) = \frac{2(2i\pi)^{2g}}{(2g-2)!}\int_{\PP\oH_{g,1}(2g-1)}\xi^{2g-2} \cdot \psi_1 = \frac{2(2i\pi)^{2g}}{(2g-2)!}\int_{\PP\oH_{g,1}(2g-1)}\xi^{2g-1},\]
where $\xi$ denotes the first Chern class of the universal line bundle $\cO(1)$ on $\PP\oH_{g,n}$ and the $\psi$-class is the pullback of the one on $\overline{\cM}_{g,1}$. This intersection-theoretic formula was computed recursively in \cite{Sau} using an explicit formula for the class of $\PP\oH_{g,n}(\mu)$ in $\PP\oH_{g,n}$. This produces the relation~\ref{eq:ag-formula} below, which was analysed in \textit{op.cit.} to prove conjectures of \cite{EskZor} on volume asymptotics in case $\mu = (2g-1)$. 

 We denote $a_g \coloneqq \int_{\PP\oH_{g,n}(2g-1)}\xi^{2g-1}$ and consider the formal series $\mathcal{S}(t) = \frac{t/2}{\sinh{t/2}}$ and
\[ \mathcal{F}(t) = 1 + \sum_{g>0}(2g-1)a_gt^{2g}.\]
The main result of \cite{Sau} is the coefficient extraction formula
\begin{align*} [t^{2g}]\ \mathcal{S}(t) = \frac{1}{(2g)!}[t^{2g}]\left(\mathcal{F}(t)^{2g}\right),
\end{align*}
where the square brackets indicate the coefficient of $t^{2g}$. We can invert this result and write it in terms of Bernoulli numbers; we define 
$$\varpi(x) \coloneqq \sum_{n\geq2} \frac{B_n}{n} x^n$$ 
and we have
\begin{align*}
[t^{2g}]\ \mathcal{F}(t) 
&= \frac{1}{1-2g}[t^{2g}]\bigg(e^{ -\varpi(t) + 2\ \varpi(t/2)}\bigg)^{(1-2g)}. \\
\end{align*}

\noindent This follows from a recursion that is solved in terms of linear Hodge integrals $b_g \coloneqq \int_{\oM_{g,1}} \psi_1^{2g-2}\lambda_g$, namely
\begin{equation}\label{eq:ag-formula}(2g-1)a_g = \sum_{\substack{n\geq1,\, g_1,\dots,g_n\geq 1\\ g_1+\dots+ g_n = g}}\frac{(1-2g)^{n-1}}{n!}\prod_{i=1}^n(2g_i-1)!(-1)^{g_i}b_{g_i}.\end{equation} 
The power series result of \cite{Sau} then uses that the numbers $(-1)^gb_g$ are the non-trivial coefficients of $\frac{t/2}{\sinh{t/2}}$, see \cite{FabPan}.\\

\noindent Based on this initial case, the volumes for general length $n$ partition are expressed as integrals in \cite{CheMolSauZag}, i.e. 
\[ \textrm{Vol}(\cH^{1}_{g,n}(\mu)) = \frac{2(2i\pi)^{2g}}{(2g-3+n)!}\int_{\PP\oH_{g,n}(\mu)}\xi^{2g-2} \cdot \prod_{i=1}^n \psi_i = \frac{2(2i\pi)^{2g}}{(2g-3+n)!}\int_{\PP\oH_{g,n}(\mu)}\xi^{2g-1} \cdot \prod_{i\neq j} \psi_i.\]

\noindent This is done by showing that the volumes and the integrals satisfy the same recursion on $n$, and, as a consequence, they obtain large genus asymptotics for Masur-Veech volumes, proving conjectures of Eskin-Zorich.\\

\noindent For other occurrences of volumes of moduli spaces expressed via enumerative geometry, see e.g. \cite{Mir}, \cite{Wol} for moduli of bordered hyperbolic surfaces \cite{CheMolSau}, \cite{Gou16}, \cite{DurGouYak25} for quadratic differentials, \cite{Nor}, \cite{StaWit} for bordered supercurves, or \cite{AnaNor},\cite{Sau3} for cone angles.

\subsection*{Statement of the main result.}

Our goal is to compute via intersection-theory, the volume difference of the components ${\rm Vol}^\pm(2g-1):= {\rm Vol}\left(\cH_{g,1}^1(2g-1)^+\right) - {\rm Vol}\left(\cH_{g,1}^1(2g-1)^-\right)$. The intersection-theoretic formula of \cite{Sau} holds componentwise and hence we have 
\[ \textrm{Vol}^{\pm}(2g-1) = \frac{2(2i\pi)^{2g}}{(2g-2)!}\Bigl(\int_{\PP\oH_{g,1}(2g-1)^+}\xi^{2g-1} -\int_{\PP\oH_{g,1}(2g-1)^-}\xi^{2g-1} \Bigr).\]
We define a scaling of the volume
\[ a_g^{\pm} \coloneqq \int_{\PP\cH_{g,1}(2g-1)^+}\xi^{2g-1} - \int_{\PP\cH_{g,1}(2g-1)^-}\xi^{2g-1}.\]
 Similarly to \cite{Sau}, these spin volumes can then be computed via coefficient extraction. We define the formal series
\begin{align*}
    \mathcal{S}^{\pm}(t) \coloneqq \frac{t/2 \cdot \cosh{t/2}}{\sinh{t/2}} \ \ \textrm{and} \ \
    \mathcal{F}^{\pm}(t)\coloneqq 1 + \sum_{g > 0}(2g-1)a^{\pm}_gt^{2g}.
\end{align*}

 Our main result, the proof of which is concluded in Section \ref{sec: solve recursion}, states the following.
\begin{theorem}[Main Theorem]\label{thm:maintheorem}
For $g\geq 1$, 
\[(2g-1)a_g^\pm = \sum_{n=1}^g\sum_{\substack{\, g_1,\dots,g_n\geq 1\\ g_1+\dots+ g_n = g}}\frac{(1-2g)^{n-1}}{n!}\prod_{i=1}^n(2g_i-1)!(-1)^{g_i}b^\pm_{g_i},\]
where $b_g^\pm \coloneqq-\frac{2^{g-1}}{2^{2g-1}-1} b_g.$ Equivalently, 
\begin{align*}
[t^{2g}]\ \mathcal{F}^\pm(t) 
&= \frac{1}{1-2g}[t^{2g}]\bigg(e^{(1-2g)\ \varpi(t/\sqrt2)}\bigg),\\
 2^{-g} [t^{2g}]\ \mathcal{S}^\pm(t) &= \frac{1}{(2g)!}[t^{2g}]\Bigl( \mathcal{F}^{\pm}(t)\Bigr)^{2g},
\end{align*}
where the latter two expressions are inverse to each other. 
\end{theorem}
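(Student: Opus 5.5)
The plan follows Sauvaget's argument for the whole stratum, replacing every ingredient by its spin-signed analogue from \cite{HolPolSau}.

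\textbf{Step 1: a spin recursion for the boundary class.} We work inside the virtual subcone of spin-parity squares in the Hodge bundle. By the spin refinement of Sauvaget's induction formula, the signed class $[\PP\oH_{g,n}(\mu)^+]-[\PP\oH_{g,n}(\mu)^-]$ equals $\xi+\psi_i$ times the signed class for a smaller stratum, minus boundary contributions. These boundary terms come from level graphs. The lower level is a single vertex carrying the minimal-type stratum. The upper level consists of vertices of genera $g_1,\dots,g_n$, each carrying a twisted differential with one pole.

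Iterating this from $\mu=(1,\dots,1)$ down to $(2g-1)$ gives $\xi^{2g-1}$ on the minimal stratum. The result is a sum over ordered splittings $g=g_1+\dots+g_n$. The factor $(1-2g)^{n-1}/n!$ comes from the combinatorics of merging zeros, exactly as in \eqref{eq:ag-formula}.

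The essential point is that spin parity is multiplicative over the components of such boundary configurations. Hence the signed class factors as a product of signed classes of the pieces. This is what lets the recursion keep the same shape, with only the vertex contributions replaced.

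\textbf{Step 2: identify the vertex contributions as $b^\pm_{g_i}$.} In the non-spin case each vertex contributes $(2g_i-1)!(-1)^{g_i}b_{g_i}$, through $\int_{\oM_{g,1}}\psi_1^{2g-2}\lambda_g$. In the spin case the corresponding contribution is an integral against the spin $\lambda_g$-type class, namely the top Chern class of the spin Hodge bundle weighted by parity. We compute this through the Chiodo class for $r=2$, $k=1$, using the formula of \cite{HolPolSau} that expresses the signed spin class as $2^{g-1}$ times a Chiodo-type class.

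The target is the identity
\[
b_g^\pm=-\frac{2^{g-1}}{2^{2g-1}-1}\,b_g .
\]
One route is to evaluate the spin linear Hodge integral directly with Chiodo's GRR formula, using that $\psi_1^{2g-2}\lambda_g$-type integrals localize to rational tails. The alternative route, which I would try first, is to check the case $g=1$ (and $g=2$) by hand and then use that both sides satisfy the same generating-function identity. The coefficients $\frac{(2^{1-2g}-1)B_{2g}}{2g(2g)!}$ of $\frac{t/2\cosh(t/2)}{\sinh(t/2)}$ differ from those of $\frac{t/2}{\sinh(t/2)}$ by exactly the stated ratio, up to the $2^{g}$ rescaling. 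I expect this identification, with the correct powers of $2$ and the correct sign, to be the main obstacle.

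\textbf{Step 3: generating functions.} Given the recursion, the power-series statements follow as in \cite{Sau}, by Lagrange inversion. Put $B(t)=\sum_g(2g-1)!(-1)^g b^\pm_g t^{2g}$. The recursion says $(2g-1)a_g^\pm=\frac{1}{1-2g}[t^{2g}]\exp\bigl((1-2g)B(t)\bigr)$.

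Next, $\log$ of the appropriate series gives the coefficients $\frac{B_{2g}}{2g}$. Substituting $b^\pm_g$ turns $\varpi$-type sums into
\[
\sum_g\frac{B_{2g}}{2g}\,2^{-g}t^{2g}=\varpi(t/\sqrt2).
\]
The non-spin combination $-\varpi(t)+2\varpi(t/2)$ is replaced here because $2^{1-2g}-1$ factors against $b_g$.

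Finally, Lagrange inversion shows that $\mathcal F(t)=\frac{1}{1-2g}[t^{2g}]e^{(1-2g)\Phi}$ for all $g$ is equivalent to $[t^{2g}]e^{-\Phi}$-type data equalling $\frac{1}{(2g)!}[t^{2g}]\mathcal F^{2g}$. Matching $e^{-\varpi(t/\sqrt2)}$ with $\mathcal S^\pm(t/\sqrt2)$ rescaled by $2^{-g}$ gives the second displayed equation.
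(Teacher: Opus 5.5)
There is a genuine gap in Steps~1--2. You assume the spin recursion has exactly the shape of \eqref{eq:ag-formula} with $b_{g_i}$ replaced by $b^\pm_{g_i}$. That is essentially the statement to be proved, and it is not what the geometry gives. Spin parity only exists when all orders are odd, so you cannot climb from order $0$ to order $2g-2$ one step at a time inside the Hodge bundle as Sauvaget does. You have to use the squares recursion of Proposition~\ref{prop:strata-recursion}. It starts from the codimension-$(g-1)$ cone $\PP\oSQ_g(1)$ and raises the order by $2$ each time, with multiplier $\xi+a_1\psi_1$ and a factor $2$. Two things then change.

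First, the principal term becomes $\frac{(2g-3)!!}{2^{g-1}}\int_{\oM_{g,1}}\psi_1^{g-1}s^\pm_{g,2g-1}$. Evaluating it needs three inputs:
\begin{enumerate}
\item the Segre--Chiodo tree formula of \cite{HolPolSau};
\item the identity $\Omega^{\pm,1^n}_{g,n}(2t)=2^{g-1}\Lambda_g(-t)\Lambda_g(2t)$;
\item Faber's \emph{quadratic} Hodge integrals $\int\psi_1^{g-1}\lambda_g\lambda_{g-1}$, not a linear integral against a ``spin $\lambda_g$''.
\end{enumerate}
The tree sum also leaves elliptic-tail corrections, so you get $\beta_g^\pm=\sum_m\frac{(b_1^\pm)^m}{m!}D(g-m)b^\pm_{g-m}$ rather than a single vertex term. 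This is where $b_g^\pm$ genuinely comes from. Your fallback of checking $g=1,2$ and then matching generating functions is circular: you have no independent identity for the geometric quantities to match against.

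Second, each intermediate square carries $N_{k,g}$ auxiliary double zeros that are forgotten. So the bottom level of a contributing back-bone graph is a \emph{positive-dimensional} genus-$0$ residueless stratum of squares, integrated against $\psi_1^{N_{k,g}-1}$. You also have the levels reversed: the minimal strata $\PP\oSQ_{g_i}(2g_i-1)$ sit on top, and the genus-$0$ differential with poles of order $2g_i$ sits on the bottom. These Hurwitz integrals are weighted by $\frac{(2g-3)!!}{k!!\,2^{N_{k,g}}}$ and summed into $Hur(\underline g)$. They are not the zero-dimensional counts of the non-spin case. The paper computes them in closed form (Theorem~\ref{thm:Hurformula}) by a genus-$0$ boundary recursion, using the emptiness criterion of \cite{GenTah} and the counts of \cite{CheMolSauZag}.

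What the geometry actually yields is Lemma~\ref{lem:agpm-recursion}:
\[
a_g^\pm=\frac{(2g-3)!!}{2^{g-1}}\beta_g^\pm-\sum_{m\ge2}\frac{1}{m!}\prod_i(2g_i-1)a^\pm_{g_i}\,Hur(\underline g).
\]
Its shape differs substantially from \eqref{eq:ag-formula}. Only the combinatorial identity of Proposition~\ref{prop:rec=main}, which uses the explicit $Hur$ formula and a Bell-polynomial inversion, shows it is equivalent to the claimed formula. None of this appears in your proposal, and it is the bulk of the proof.

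Your Step~3 is essentially right: Lagrange inversion, together with $\sum_h(2h-1)!(-1)^hb_h^\pm t^{2h}=\varpi(t/\sqrt2)$. One correction: the second displayed identity comes from matching $\frac{1}{(2g)!}[t^{2g}]\mathcal F^\pm(t)^{2g}$ with $\frac{1}{(2g-1)!}[t^{2g}]\varpi(t/\sqrt2)=2^{-g}B_{2g}/(2g)!$. It does not come from any exponential of $\varpi$.
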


 This formula appeared in the shape presented here in \cite[Section 6.3]{CosSauSch} as an ingredient to compute the Euler characteristic of the spin-parity components of the minimal strata. Another consequence of Theorem \ref{thm:maintheorem} and \cite{Sau} is that we can compute the Masur-Veech volume of each of the three components of $\mathbb{P}\cH_{g,1}(2g-1)$. Indeed, the contribution coming from the hyperelliptic component is explicitly computed in \cite[Section 6]{CheMolSauZag} as 
\[ \textrm{Vol}(\cH^1_{g,1}(2g-1)^{\textrm{hyp}}) = \frac{2}{(2g+1)!}\frac{(2g-3)!!}{(2g-2)!!}\pi^{2g}. \]

\subsection*{Outline of the proof.}
Our proof adapts the methods of \cite{Sau} and \cite{Sauvaget-virtual} to the spin-parity case. In order to do this we use the intersection theory of spin-parity squares (instead of the Hodge bundle) developed in \cite{HolPolSau}, to compute $a_g^{\pm}$. We view the locus $\PP\cH_{g,1}(2g-1)^\pm$ as a sublocus of the cone of squares, and we compute its class via the inductive procedure of \textit{op.cit.} which decreases the order of the zero by $2$ at each step. This process will give us the spin Masur-Veech volume in terms of integrals against weighted Segre classes, with some extra boundary contribution that we need to account for. While in \cite{Sau} the boundary contribution is immediately computed, the case of spin-parity squares proves to be more cumbersome. This contribution is a sum of Hurwitz integrals on residueless strata in genus $0$, and we deduce and solve recurrence formulas to account for these terms, see Theorem \ref{thm:Hurformula}. The Segre classes appearing in \textit{loc.cit.} are expressed in terms of linear Hodge integrals via Mumford's formula. For the cone of squares, the weighted Segre classes are related to spin-Chiodo classes as in \cite{HolPolSau}, and thus will be expressed in terms of double Hodge integrals. The proof concludes with adding together the principal and boundary contribution to $a_g^\pm$ and analysing the combinatorics of the formula.

\subsection*{Cylinder counts and area Siegel-Veech constants} 
Let $[C,x_1,\ldots, x_n, \varphi]$ be an element of $\mathcal{H}_g(\mu)$, viewed as a flat surface. Then the union of (parallel) closed geodesic of a given homotopy type form a cylinder $Z$, whose width $w(Z)$ is the length of any of these geodesics. One defines the area weighted counting function as
\[
N_0(C,\varphi,L)
\coloneqq
\sum_{w(Z)<L}
\frac{\mathrm{area}(Z)}{\mathrm{area}(C)}.
\]
For any connected component $X \subseteq \mathcal{H}_g(\mu)$, one has the quadratic asymptotic growth
\[
N_0(C,\varphi,L) \sim c_{\textrm{area}}(X)\,\pi L^2,
\]
for almost all abelian differentials $[C,x_1,\ldots, x_n, \varphi]  \in X$, where $c_{\textrm{area}}(X)$ is a constant depending on $X$, called the area Siegel--Veech constant, cf. \cite{Vee98,EskMasZor}. By \cite[Theorem 1.4]{CheMolSauZag}, these invariants can be computed via intersection theory as
\[ c_{\textrm{area}}(X) = -\frac{1}{4\pi^2}\frac{\int_X \delta_0\cdot \xi^{2g-2}\prod_{i=2}^n\psi_i}{\int_X \xi^{2g-1}\prod_{i=2}^n\psi_i},\]
where $\delta_0 $ is the divisor class of the locus of curves with a non-separable node.

 In the case $n=1$, these constants $c_{\textrm{area}}(\mathcal{H}_g(2g-1)) = -\frac{1}{4\pi^2}\frac{{\int_{\mathbb{P}\overline{\mathcal{H}}_g(2g-1)} \delta_0\cdot \xi^{2g-2}}}{a_g}$ for the whole stratum can be computed via \cite[Theorem 1.6]{Sau}. The constants for the hyperelliptic components are implicitly computed in \cite[Lemma 1.1 and Theorem 3]{EskKonZor2014}, giving: 
 \[
c_{\mathrm{area}}\big(\mathcal H_g^{\mathrm{hyp}}(2g-1)\big)
=
\frac{(2g+1)g}{(2g-1)\pi^2} \ \ \textrm{and} \ \ c_{\mathrm{area}}\big(\mathcal H_g^{\mathrm{hyp}}(g,g)\big)
=
\frac{(2g+1)(g+1)}{2g\,\pi^2}.
\]
 When all entries of $\mu$ are odd the area Siegel-Veech constants admit parity refinements, as studied in \cite{vanSau}.
In order to compute these refined invariants, we use the same strategy as for our volume computation and obtain a recursion for 
\[ d_g^\pm \coloneqq 
\int_{\mathbb{P}\overline{\mathcal{H}}_g(2g-1)^+} \delta_0\cdot \xi^{2g-2} - \int_{\mathbb{P}\overline{\mathcal{H}}_g(2g-1)^-} \delta_0\cdot \xi^{2g-2}.\] 

We mention in passing that by flat geometric results of \cite[Section 5.3]{Boi}, these invariants are integrals on genuine (spin-parity induced) components of minimal strata with a pair of simple poles.
The recursion formula is made explicit in Lemma \ref{lem:dgpm-recursion}, which is the spin analogue of \cite[Theorem 3.15]{Sau}.

 As in the case of volumes, the case of the minimal stratum is an initial step in a recursion on genera and partitions of 
\[ d_g^\pm(\mu) \coloneqq 
\int_{\mathbb{P}\overline{\mathcal{H}}_g(\mu)^+} \delta_0\cdot \xi^{2g-2}\prod_{i=2}^n\psi_i - \int_{\mathbb{P}\overline{\mathcal{H}}_g(\mu)^-} \delta_0\cdot \xi^{2g-2}\prod_{i=2}^n\psi_i.\]
As such, this recursion allows for the computations of area Siegel-Veech constants for every component of Abelian strata. This was used in \cite{CheMolSauZag} to prove the large genus limit conjectures of Eskin and Zorich in \cite{EskZor}.

\subsection*{Generalization to virtual volumes}

Because of the intersection theoretical computation of Masur-Veech volumes, their definition can be extended even for strata where they do not make geometric sense. When we work with meromorphic strata, the same method of defining volumes will fail since we are working with translation surfaces of infinite area. However, in this case we can just associate a purely algebraic definition of volume, based on intersection theory. We consider a meromorphic stratum 
\[ \mathbb{P}\mathcal{H}^R_{g,1+m}(2g-1+m, \underbrace{0, \ldots, 0}_{m \ \textrm{times}}) \]
with a unique zero, $m$ simple poles and residue conditions given by a subspace $R$ of 
$$\{(r_1,\ldots, r_m) \ \mid \ r_1+\cdots+r_m=0\}.$$ 

The virtual volume of this space is defined to be the integral of the top power of $\xi$ we can consider, scaled by a constant, i.e. if $N \coloneqq \dim \mathbb{P}\mathcal{H}^R_{g,1+m}(2g-1+m, \underbrace{0, \ldots, 0}_{m \ \textrm{times}})$ then 
\[ \textrm{Vol}(2g-1+m, R) \coloneqq \frac{2(2\pi i)^{2g}}{(2g-1+m)!}\int_{\mathbb{P}\overline{\mathcal{H}}^R_{g,1+m}(2g-1+m, \underbrace{0, \ldots, 0}_{m \ \textrm{times}})} \xi^N. \]
This intersection theoretical definition of volumes appears in \cite{Sauvaget-virtual}, where many such virtual volumes are computed. The same methods that compute the spin version of the Masur-Veech volume, and of the area Siegel-Veech constant, can be employed to study (spin) virtual volumes of strata of meromorphic differentials with only simple poles. \\

\noindent We will restrict ourself to the case $m=2p$ and space of residue conditions $$R = \{ r_1 + r_2 = 0, r_3 + r_4 = 0, \ldots, r_{2p-1}+r_{2p} = 0\}.$$ 
In this setting, we have a gluing map
 $$\zeta\colon \mathbb{P}\overline{\mathcal{H}}_{g, 1+ 2p}^R(2g-1+2p, \underbrace{0, \dots, 0}_{\text{2p times}}) \rightarrow \mathbb{P}\overline{\mathcal{H}}_{g+p, 1}(2g-1 + 2p)$$
 identifying the markings in the same pair. This allows us to view the virtual volume as an intersection on a minimal stratum, generalizing the Masur-Veech volume and area Siegel-Veech constant. 

 We define 
 \[ [D_p] \coloneqq \zeta_* [\mathbb{P}\overline{\mathcal{H}}_{g, 1+ 2p}^R(2g-1+2p, \underbrace{0, \dots, 0}_{\text{2p times}})].  \]
 Up to scaling, the virtual volumes can be computed as integrals of the form
\[ d_{g,p} \coloneqq \int_{\mathbb{P}\overline{\mathcal{H}}_g(2g-1)} \xi^{2g-1-p}[D_p], \]
and we also have a spin version 
\[ d_{g,p}^\pm \coloneqq \int_{\mathbb{P}\overline{\mathcal{H}}_g(2g-1)^\pm} \xi^{2g-1-p}[D_p], \]
whose computation will be the focus of Section \ref{sec: areaSV}. We note that this definition recovers $d_g^\pm$ up to a factor of $2$; more precisely, $d_{g,1}^\pm=2d_g^\pm$. 

\noindent In \cite[Theorem 1.5]{Sauvaget-virtual}, Sauvaget considered (amongst other directions) the deformation of the volumes by pairs of simple poles with zero residue sum \[\mathcal{F}_\epsilon(t):= 1 + \sum_{g>0}(2g-1)\ \left(\sum_p d_{g,p}\ \frac{\epsilon^p}{p!}\right)\ t^{2g},\] and proved the formula \[[t^{2g}]\exp(t^2\epsilon)\ \mathcal{S}(t) = \frac{1}{(2g)!}[t^{2g}]\left(\mathcal{F}_\epsilon(t)\right)^{2g} \in\mathbb{Q}[\epsilon].\]

Projecting this formula to $\mathbb{Q}[\epsilon]/(\epsilon^2)$ one re-obtains the inverse of equation \ref{eq:ag-formula} and formula (4) of \cite[Theorem 1.6]{Sau} used to compute the area Siegel-Veech constants for the sum of the components of minimal strata.  \\

\noindent In the spin-parity case, we obtain a recursion formula for the spin virtual volumes $d_{g,p}^\pm$. As in the classical case, our formula resembles the Leibniz rule for derivation. Seeing Lemma \ref{lem:bouquetrecursion} as a $p$-iterated derivation of Lemma \ref{lem:agpm-recursion}, we obtain the following seemingly simpler corolla corollary to Theorem \ref{thm:maintheorem}, proved in Subsection \ref{sec:derivation}. 
\begin{theorem}[Spin flowerseries formula] \label{thm:bouquetformula} 
Consider (for $a_0^\pm = -1$) the power series $$\mathcal{F}_p^\pm(t) \coloneqq \sum_{g\geq0} (2g-1)d_{g,p}^\pm t^{2g} \ \textrm{and}$$  
$$\mathcal{F}_\epsilon^{\pm} \coloneqq \sum_{p\geq 0} \mathcal{F}_p^\pm \frac{\epsilon^p}{p!} \in \mathbb{Q}\llbracket t\rrbracket[\epsilon],$$ then for positive genera
\begin{align*} 
 [t^{2g}]\big( 2^{-g}\mathcal{S}^\pm - \epsilon t^2\big) &= \frac{1}{(2g)!}[t^{2g}]\Bigl( \mathcal{F}_\epsilon^{\pm}(t)\Bigr)^{2g},\\
[t^{2g}]\ \mathcal{F}_p^\pm(t) 
&= -(2g-1)^{p-1}\ [t^{2g-2p}]\bigg(e^{(1-2g)\  \varpi({t/\sqrt2})}\bigg).
\end{align*}
\end{theorem}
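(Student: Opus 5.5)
We prove Theorem \ref{thm:bouquetformula} by deducing it formally from Theorem \ref{thm:maintheorem} and the recursion for the spin virtual volumes in Lemma \ref{lem:bouquetrecursion}. The guiding principle is the one announced before the statement: the recursion for $d^\pm_{g,p}$ is the $p$-fold derivation, with respect to a formal parameter $\epsilon$, of the recursion for $a^\pm_g$ in Lemma \ref{lem:agpm-recursion}. We therefore work in $\mathbb{Q}\llbracket t\rrbracket[\epsilon]$. Note that $d^\pm_{g,0}=a^\pm_g$ and $a_0^\pm=-1$, so $\mathcal{F}^\pm_0=\mathcal{F}^\pm$. We use the normalisation $d^\pm_{0,0}=-1$ and set $d^\pm_{g,p}=0$ for $p>g$, which makes the degrees of $\xi$ consistent.

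\textbf{First identity.} Since $\mathcal{F}^\pm_\epsilon|_{\epsilon=0}=\mathcal{F}^\pm$, the case $\epsilon=0$ is exactly the second equality of Theorem \ref{thm:maintheorem}. It remains to control the $\epsilon$-dependence. Write $\mathcal{F}^\pm_\epsilon=\mathcal{F}^\pm+\epsilon t^2\mathcal{G}$ and expand $[t^{2g}]\frac{1}{(2g)!}(\mathcal{F}^\pm_\epsilon)^{2g}$ in powers of $\epsilon$.
\begin{enumerate}
\item Comparing the coefficient of $\epsilon^p/p!$ on both sides, the identity becomes a statement about $\sum_{p_1+\dots+p_{2g}=p}\binom{p}{p_1,\dots,p_{2g}}\prod_i [\,\cdot\,]\mathcal{F}^\pm_{p_i}$.
\item By Lemma \ref{lem:bouquetrecursion}, this sum vanishes for $p\ge 2$. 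For $p=1$ it gives the coefficient $-1$ of $\epsilon t^2$ at $g=1$ and zero otherwise.
\end{enumerate}
Concretely, I expect Lemma \ref{lem:bouquetrecursion} to say that $d^\pm_{g,p}$ satisfies the same Leibniz-type recursion as $a_g^\pm$, with each factor replaced by an appropriate derivative. The only inhomogeneity is the term coming from one handle pinched into a pair of simple poles, whose contribution is $\epsilon t^2$.

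\textbf{Second identity.} I will solve for $\mathcal{F}^\pm_p$ by Lagrange inversion, following the proof that the two formulations in Theorem \ref{thm:maintheorem} are inverse to each other. Put $\Phi_\epsilon(t)=\sum_g \frac{[t^{2g}](2^{-g}\mathcal{S}^\pm-\epsilon t^2)}{\cdot}$, or, more simply, use the substitution structure. The relation $[t^{2g}]H=\frac{1}{(2g)!}[t^{2g}]F^{2g}$ is of Lagrange type: it says $F$ is determined by $H$ via $u=t\,F(u)$-type inversion. The inverse relation then reads $[t^{2g}]F=\frac{1}{1-2g}[t^{2g}]E_\epsilon(t)^{1-2g}$, where $E_\epsilon=e^{\varpi(t/\sqrt2)}\cdot e^{-\epsilon t^2/(1-2g)\cdot(\ldots)}$.

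More precisely, the shift $H\mapsto H-\epsilon t^2$ multiplies the exponential generating function by $e^{-\epsilon t^2}$. After inversion this yields $E_\epsilon^{1-2g}=e^{(1-2g)\varpi(t/\sqrt2)}\,e^{-(1-2g)\epsilon t^2}$. Extracting $[\epsilon^p/p!]$ gives
\[
\frac{1}{1-2g}\,(2g-1)^p\,[t^{2g-2p}]\,e^{(1-2g)\varpi(t/\sqrt2)}=-(2g-1)^{p-1}[t^{2g-2p}]\,e^{(1-2g)\varpi(t/\sqrt2)},
\]
which is the claimed formula.

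\textbf{Main obstacle.} The delicate step is verifying that the Lagrange-inversion identity passes correctly through the $\epsilon$-deformation. Here the exponent $1-2g$ depends on the extracted index $g$, so one cannot naively differentiate in $\epsilon$. I would treat this by viewing the relation as a family of identities indexed by $g$ with exponent $2g$ fixed before extraction. Alternatively, one can check directly that the proposed $\mathcal{F}^\pm_p$ satisfies the recursion of Lemma \ref{lem:bouquetrecursion}: expand $e^{(1-2g)\varpi}$ as the same sum over compositions $g_1+\dots+g_n$ that appears in Theorem \ref{thm:maintheorem}, and observe that removing $p$ factors of $t^2$ accounts for the factor $(2g-1)^{p}$. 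Since both sides are determined by the recursion together with the initial values $d^\pm_{g,0}=a^\pm_g$, uniqueness then concludes the proof.
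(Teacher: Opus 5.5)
\textbf{Verdict.} There is a genuine gap, and it sits in your first identity, which carries all the content of the theorem. Your deduction of the second identity from the first is fine. The passage between $[t^{2g}]H=\frac{1}{(2g)!}[t^{2g}]F^{2g}$ and $[t^{2g}]F=\frac{1}{1-2g}[t^{2g}]\exp\bigl((1-2g)\sum_h(2h-1)!\,([t^{2h}]H)\,t^{2h}\bigr)$ is a triangular formal inversion, valid over $\mathbb{Q}[\epsilon]$. Replacing $H$ by $H-\epsilon t^2$ shifts the exponent by $-\epsilon t^2$. The $g$-dependent exponent is no obstacle, and the placeholder formulas for $\Phi_\epsilon$ and $E_\epsilon$ should simply be dropped.

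\textbf{Where the gap is.} Step (2), ``by Lemma \ref{lem:bouquetrecursion} this sum vanishes for $p\ge2$ and equals $-\delta_{g,1}$ for $p=1$'', is asserted rather than proved; you even write that you only ``expect'' the lemma to have the right shape. Lemma \ref{lem:bouquetrecursion} is a numerical recursion in $Hur(\underline g)$, $\beta^\pm_{g-p}$ and $b_1$. Nothing in it speaks about $\frac{1}{(2g)!}[t^{2g}](\mathcal{F}^\pm_\epsilon)^{2g}$. Already for $p=0$, the link between the recursion and coefficient extraction is the \emph{formal} coordinate change of Proposition \ref{prop:rec=main}. Moreover, ``derivation with respect to $\epsilon$'' has no meaning for the recursion, since $\epsilon$ does not occur in it.

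\textbf{The missing idea.} You need to identify the right derivation and lift the recursion to polynomial identities.
\begin{enumerate}
\item Let $\partial$ be the derivation of $\mathbb{Q}[x_1,x_2,\dots]$ with $\partial x_1=1$ and $\partial x_h=0$ for $h\ge 2$ (morally $\partial/\partial b_1^\pm$). Transport it to the $y$-variables via $y_g=M_g(x)$.
\item Apply $\partial^p$ to the polynomial identity $R_g(y)=S_g(D(1)x_1,\dots,D(g)x_g)$. On the left, the Leibniz rule produces exactly the multinomial sum $\sum p!\prod_i(2g_i-1)\,\partial^{p_i}y_{g_i}/p_i!$ weighted by $Hur(\underline g)/m!$.
\item On the right, using $D(1)=-1$, one gets $\partial^p S_g(Dx)=S_{g-p}(Dx)-\frac{p}{(g-p)!}x_1^{g-p}$. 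At $x=b^\pm$ (with $b_1^\pm=-b_1$) this is precisely the inhomogeneous term $\beta^\pm_{g-p}+(-1)^{g-p-1}\frac{p}{(g-p)!}b_1^{g-p}$ of Lemma \ref{lem:bouquetrecursion}.
\item Induction on $g$ then gives $d^\pm_{g,p}=(\partial^p y_g)|_{y=a^\pm}$, which is Lemma \ref{lem:virtualpm-derivation}.
\end{enumerate}
Only now does the theorem follow. The operator $e^{\epsilon\partial}$ is the substitution $x_1\mapsto x_1+\epsilon$. Applied to the formal identity $(-1)^g x_g=\frac{1}{(2g)!}[t^{2g}]F_0(t,y)^{2g}$, it yields the first formula, because the $t^2$-coefficient $-x_1$ becomes $-x_1-\epsilon$. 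Applied to $\sum_h(2h-1)!(-1)^h x_h t^{2h}$, it subtracts $\epsilon t^2$, which yields the second formula directly.

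\textbf{Your alternative route.} It has the same hole. Checking that the closed formula satisfies Lemma \ref{lem:bouquetrecursion} means verifying the $Hur$-weighted recursion including the $\beta^\pm_{g-p}$ term. That is the $p$-th derivative of Proposition \ref{prop:rec=main}, not just tracking the factor $(2g-1)^p$ coming from $t^{2p}$.
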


\noindent In particular,  $d^\pm_{g,g} = -(2g-1)^{g-1}$ and
opposed to the classical case, extracting $O(\epsilon^2)$ gives formulas among the $d_{g,p}^\pm$ without reference to Hodge integrals a priori.

\subsection*{Consequence for Chiodo integrals}
The intersection theory of spin-parity squares of \cite{HolPolSau} holds for the cone of squares without parity weighting, and we still obtain a correspondence between Segre and Chiodo classes, see Theorem \ref{thm:Chiodo=Segre}. This correspondence, together with the vanishing related to powers of $\xi$, gives the following vanishing result for Chiodo classes in genus $g>0$ (see Corollary \ref{cor:Chiodo-vanishing}): \[\Omega_{g,1}^{1^n}(t) \text{ is  polynomial of degree } 2g-1.\]

\noindent Furthermore, analogous to our proof of the Main Theorem \ref{thm:maintheorem}, the volumes $a_g$ can then be expressed in terms of integrals against Chiodo classes. While in the proof of the Main Theorem the spin-Chiodo classes are known, in the non-parity case the classical volumes are known. We leverage this to deduce the ratio of the occurring spin-Chiodo integrals to Chiodo integrals to be the ratio of $b_g^\pm$ to $b_g$, cf. Theorem \ref{thm:chiodo-bg}. Phrased compactly, if the genus $0$ integral on the left-hand sides are set to $1$, then the following are equivalent:

\begin{align*}\sum_{g\geq 0}\left(\int_{\oM_{g,1}}\frac{\psi_1^{g-1}}{(g-1)!}\ \Omega_{g,1}^{1}(t)\right)
 =& \frac{\mathcal{S}(t)}{2t},
 \\\sum_{g\geq 0}\left(\int_{\oM_{g,1}}\frac{\psi_1^{g-1}}{(g-1)!}\ \Omega_{g,1}^{\pm,1}(t)\right) =& \frac{2^{-g}\mathcal{S}^\pm(t)}{2t}, \end{align*} the latter of which is readily deduced from \cite[Remark 9.11]{GiaKraLew} in Equation \ref{eq:chiodopm-bgpm}. \\

\begin{conjecture} 
From this numerical evidence we conjecture \[\frac{[t^{2g-1}]\Omega_{g,0}^{\pm}(t)}{[t^{2g-1}]\Omega_{g,0}^{}(t)} = \frac{b_g^\pm}{b_g},\] or equivalently $[t^{2g-1}]\Omega_{g,1}^{1}(t) =(-1)^g\frac{2^{2g-1}-1}{2^g}\lambda_g\lambda_{g-1}$. 
\end{conjecture}
\noindent For $g=3$, by the established Gorenstein property in low genus \cite[Corollary 1.8]{CanLar}, this is equivalent to the minimality of the Chiodo class.
By the CohFT axioms this is equivalent to $[t^{5}]\Omega^{(0,0)}_{2,2}(t)=0$, which can be computed via \textit{admcycles}~\cite{admcycles}.

\subsection*{Outline of the paper} \begin{itemize}
    \item In Section \ref{sec:squares}, we discuss the intersection theory of moduli spaces of squares. This section contains the main theoretical results of \cite{HolPolSau} necessary for our computations: \begin{enumerate}
        \item an analogue of \cite[Proposition 3.11]{Sau}, which allows for the recursive computation of the class $[\mathbb{P}\oH_{g,n}(2g-1)^{+}]-[\mathbb{P}\oH_{g,n}(2g-1)^{-}]$ inside the moduli of squares, cf. Proposition \ref{prop:strata-recursion}; 
      \item a way to relate spin-Segre classes of the cone of squares $\oSQ_{g,n}$ over $\oM_{g,n}$ to spin-Chiodo classes, followed by a formula for Chiodo classes in terms of  $\lambda$-classes, cf. Lemma \ref{eq:spinchiodo=2Lambda} and Theorem \ref{thm:Chiodo=Segre}. 
    \end{enumerate}
   \item In Section \ref{sec: volume}, we compute the class $[\mathbb{P}\oH_{g,n}(2g-1)^{+}]-[\mathbb{P}\oH_{g,n}(2g-1)^{-}]$ and we multiply it with $\xi^{2g-1}$ in order to obtain the spin Masur-Veech volume (up to scaling with a known constant). We will use the formula $\xi^{2g} = 0$ in $\mathbb{P}\cH_{g,n}$ in order to simplify many of the terms and write volumes in terms of Hurwitz integrals and descendants of Segre classes, cf. Lemma \ref{lem:agpm-recursion}.   
   \item In Section \ref{sec: hurwitz}, we look at the Hurwitz integrals, defined as sums of integrals of residueless strata in genus $0$. Seeing these strata either as determinantal loci of strata of differentials, or as Hurwitz spaces, we obtain recurrence formulas for these integrals. We use these formulas to obtain a combinatorial description of Hurwitz integrals, cf. Theorem \ref{thm:Hurformula}.  
   \item In Section \ref{sec: solve recursion}, we bridge together the spin-volume formula of Section \ref{sec: volume}, the computation of the Segre class descendant, and the computation of the Hurwitz integrals. This will conclude the proof of our main result, Theorem \ref{thm:maintheorem}.
   \item In Section \ref{sec: areaSV}, we adapt our methods to compute $d_{g,p}^\pm$. Many of the steps follow similarly, with the remark that along the vanishing $\xi^{2g} = 0$ we will also use $\xi^{2g-1}\delta_0 = 0$ and its analogues for $p>1$. This argument will lead to a proof of Theorem \ref{thm:bouquetformula} and the computability of area Siegel-Veech constants.
   \item Lastly in Section \ref{sec: non-weighted}, we discuss the non-weighted volume studied in \cite{Sau} through the perspective of the intersection theory of the cone of squares. As a consequence, we compute the descendant of a Chiodo class in $\oM_{g,1}$. 
\end{itemize}
\subsubsection*{Acknowledgments.}
We would like to thank Adrien Sauvaget for introducing us to this problem and for his guidance. We furthermore thank Danilo Lewański for useful discussions about Chiodo classes on early stages of this project, as well as Martin M\"oller and Miguel Prado for discussions about volumes and intersection theory of strata. \textbf{A. B.} and \textbf{S.V.} 
were supported by the ERC Starting Grant SpiCE, no.101164820. \textbf{G. P.} was supported by a 
postdoctoral fellowship at the Erd\H{o}s Center of 
the HUN-REN Alfréd Rényi Institute of Mathematics, in 
connection with the Simons Semester in Algebraic
Geometry. Finally, the second named author would also 
like to thank Leiden University for its hospitality 
during part of the preparation of this paper.

\section{Squares} \label{sec:squares}

\subsection{Moduli spaces of differentials}
We denote by $\pi:\oC_{g,n}\to \oM_{g,n}$ 
the universal curve, and by $\oH_{g,n}\to\oM_{g,n}$
the Hodge bundle.
Recall the standard notation for the following tautological classes.
\begin{itemize}
\item The $\psi$-classes $\psi_i := c_1(\sigma_i^*\omega_{\oC_{g,n}/\oM_{g,n}}) \in A^{1}(\oM_{g,n})$,
\item The $\lambda$-classes $\lambda_j := c_j(\oH_{g,n})\in A^{j}(\oM_{g,n})$, and the total Chern polynomial
$$ \Lambda_{g}(t):= \sum_i \lambda_it^i\in A^{*}(\oM_{g,n})[t].$$
\end{itemize}
We denote by $p\colon\PP\oH_{g,n}\to \oM_{g,n}$
the projectivised Hodge bundle and by $\xi:=c_{1}(\cO(1))$
the tautological line bundle class. We also recall
the Leray-Hirsch type identification of the Chow ring of
$\PP\oH_{g,n}$
\begin{equation}\label{eq:Chow-Hodge-bundle}
    A^*(\PP\oH_{g,n})\cong A^*(\oM_{g,n})[\xi]/(\sum_{i=0}^g \xi^{g-i}\lambda_i).
\end{equation} 

\begin{definition}\label{def:mero-hodge-bundle}Let $P=(p_{1},\dots,p_{n})$ be a vector of non-negative integers. We denote by $\oH_{g}[P]$ the vector bundle
with fiber over a point $(C,x_{1},\dots,x_{n})$ in $\oM_{g,n}$ given by 
\begin{equation*}
    H^0\left(C,\omega_{C}\left(\sum_{i=1}^{n}p_{i}x_{i}\right)\right).
\end{equation*}
By Riemann-Roch this vector bundle has rank $\sum_{i=1}^{n}p_{i}+g-1$. 
    
\end{definition}
Let $\ell\in\ZZ_{>0}$, and let $\textbf{g}=(g^{1},\dots,g^{\ell})$ and $\textbf{n}=(n^{1},\dots,n^{\ell})$
be vectors of non-negative integers satisfying 
\begin{equation}\label{eq:cond-on-g-n}
    2g^{i}-2+n^{i}>0, \ \ i=1,\dots,\ell.
\end{equation}
We will always assume this condition whenever we choose a tuple $(\textbf{g},\textbf{n})$. Moreover, 
let $\textbf{P}=(P^{1},\dots,P^{\ell})$ be a vector of 
vectors of non-negative integers $P^{i}=(p^{i}_{1},\dots,p^{i}_{n_{i}})$. Using these data we define
\begin{equation*}
    \oM_{\textbf{g},\textbf{n}}:=\prod_{i=1}^{\ell}\oM_{g^{i},n^{i}},\ \text{and}\ \ \oH_{\textbf{g}}[\textbf{P}]:=\prod_{i=1}^{\ell}\oH_{g^{i}}[P^{i}].
\end{equation*}
Furthermore, we consider vectors of integers 
$a^{i}:=(a_{1}^{i},\dots,a_{n^{i}}^{i})\in\ZZ^{n^{i}}$  
for all $i = 1,\dots,\ell$
such that $-p_{j}^{i}\leq a^{i}_{j}-1$ for all
pairs $(i,j)$.  We denote by  
$\textbf{a}=(a^{i})_{i=1}^{\ell}$
the vector with entries the vectors of integers $a^{i}$.
Then we define the sub-cone  
\begin{equation*}
    \cH_{\textbf{g}}(\textbf{a})\subseteq\oH_{\textbf{g}}[\textbf{P}]    
\end{equation*}
parametrizing differentials of
smooth curves whose order of vanishing at $x^{i}_{j}$ is
given by $a^{i}_{j}-1$, and we define the \emph{incidence variety} $\oH_{\textbf{g}}(\textbf{a})$ to be the Zariski closure of $\cH_{\textbf{g}}(\textbf{a})$ in $\oH_{\textbf{g}}[\textbf{P}]$.

\subsubsection{Residue conditions} Let $N$ denote
the set of pairs $(i,j)$ such that $a^{i}_{j}\leq0$, 
where $1\leq i \leq \ell$ and $1\leq j \leq n_{i}$. We
denote by $\mathcal{R}(\textbf{a})$, or simply by $\mathcal{R}$ when $\textbf{a}$ is clear from the context,
the subspace
of $\CC^{|N|}$ consisting  of vectors $(r^{i}_{j})_{(i,j)\in N}$ satisfying: 
for all $i\in \{1,\dots ,\ell\}$
\begin{equation*}
    \sum_{\substack{j\in \{1,\dots,n^{i}\} \\ (i,j)\in N}}r_{j}^{i} = 0.
\end{equation*}
A \emph{space of residue conditions} is a subspace 
$R\subseteq\mathcal{R}(\textbf{a})$ defined by linear conditions of the form:
\begin{equation*}
    \sum_{ (i,j)\in E}r_{j}^{i} = 0,
\end{equation*}
where $E\subseteq N$. We denote by 
$\oH_{\textbf{g}}(\textbf{a},R)\subseteq\oH_{\textbf{g}}(\textbf{a})$ the sub-cone of differentials whose residues
are prescribed by $R$.

\begin{definition}Let $g,n\in \mathbb{Z}_{\geq0}$ such that $2g-2+n>0$. A \emph{stable graph of type} $(g,n)$ is the data of
\begin{equation*}
    \Gamma=\left(V,H,L, g\colon V\to \mathbb{Z}_{\geq0}, i\colon H\to H, \phi\colon H\to V, \ell\colon L\to \{1,\dots,n\}\right)
\end{equation*}
where $V$, $H$, and $L$ are finite sets and:

\begin{enumerate}
    \item [(i)] The function $i$ is an involution of $H$.
    \item [(ii)] Elements of $H$ are called \emph{half-edges} and the 
    cycles of length 2 for $i$ are called \emph{edges}. We denote the set 
    of cycles of length $2$ for $i$ by $E$. 
    \item [(iii)] 
    The involution $i$ has $n$ fixed points, and $L$ is the subset of
    such fixed points; elements of $L$ are called \emph{legs}. 
    The function $\ell$ identifies $L$ with the set 
    $\{1,\dots,n\}$ of integers from $1$ to $n$.
    \item [(iv)] An element $v\in V$ is called a \emph{vertex}, and
    the value $g(v)$ is called the \emph{genus}
    of the vertex $v$. A half-edge $h$ is \emph{incident} to $v$ if
    $\phi(h) = v$.
    We denote by $H(v)$ the set of half-edges
    incident to $v$, and by $n(v)$ the valency of the vertex $v$, i.e.,
    the cardinality of $H(v)$. 
    \item [(v)] The genus of the graph, defined as
    \begin{equation*}
        g(\Gamma) := h^1(\Gamma) + \sum_{v\in V} g(v), \ \text{where}\ 
        h^1(\Gamma) = |E|-|V|+1,
    \end{equation*}
    is equal to $g$.
    \item [(vi)] The graph is connected, and for all vertices $v$ we have $2g(v) - 2 + n(v) > 0$.
\end{enumerate}
For the sets $H,E$, and $V$, we will 
also use the notation $H(\Gamma)$, $E(\Gamma)$, 
and $V(\Gamma)$. Given a stable graph of type $(g,n)$ we 
use the standard notation $$\zeta_{\Gamma}\colon\prod_{v\in V(\Gamma)}\oM_{g(v),n(v)}\to \oM_{g,n}$$
for the familiar gluing maps.
\end{definition}

\begin{definition} \label{def:twist} A \emph{twist} on a stable graph $\Gamma$
is a function $I\colon H(\Gamma)\to \ZZ$ such that:
\begin{enumerate}
    \item [(a)] if $e=(h,h')$ is an edge, then 
    $$I(h)+I(h')=0.$$
    \item [(b)] For all $v,v'\in V(\Gamma)$, 
    if two edges $(h_{1},h_{1}'), (h_{2},h_{2}')$
    connect the vertices $v,v'$, then
    \begin{equation*}
        I(h_{1})\geq0\Leftrightarrow I(h_{2})\geq0
    \end{equation*}
    in which case we write $v\geq v'$.
    \item [(c)] The relation on the set of vertices
    described in the previous point
    is transitive.
\end{enumerate}
A pair $(\Gamma,I)$ is called a \emph{twisted graph}.
\end{definition}

\subsubsection{Boundary components}
We fix $(\textbf{g},\textbf{n})$ to be vectors of vectors
of non-negative integers satisfying the inequalities \ref{eq:cond-on-g-n}. 
Then a stable graph $\Gamma$ of type 
$(\textbf{g},\textbf{n})$ is the data of stable graphs 
$\Gamma^{i}$ of type $(g^{i},n^{i})$ for $i=1,\dots,\ell$. 
The markings on $\Gamma^{i}$ are labelled by $(i,j)$ for 
$j=1,\dots,n^{i}$. Finally, given a stable graph
$\Gamma$ of type $(\textbf{g},\textbf{n})$ we have the 
familiar gluing morphisms
\begin{equation*}
    \zeta_{\Gamma}\colon \prod_{v\in V(\Gamma)}\oM_{g(v),n(v)}\to \oM_{\textbf{g},\textbf{n}}
\end{equation*}
Moreover, a \emph{twist} on a stable graph of type $(\textbf{g},\textbf{n})$ is a twist on every component of the graph.

\begin{definition} A \emph{bi-colored} graph
is a twisted graph 
$(\Gamma,I)$ together with a non-trivial partition 
$V(\Gamma)=V_{0}\sqcup V_{-1}$ such that:
\begin{enumerate}
    \item [(i)] For each vertex $v\in V(\Gamma)$ we have 
    \begin{equation*}
        \sum_{h\to v}I(h)\leq 2g(v)-2+n(v).
    \end{equation*}
    \item [(ii)] If $e=(h, h')$ is an edge, then $I(h)\neq 0$. If we assume that $I(h) > 0$, then $h$ is incident to a 
    vertex in $V_{0}$ and $h'$ to a vertex in $V_{-1}$. 
\end{enumerate}
Moreover, 
\begin{enumerate}
    \item [(a)] for $v\in V_{i}$, we say that $v$ is \emph{in level} $i$, for $i=0,-1$.
    \item [(b)] If the values of $I$ are odd on every half-edge, we say that the bi-colored graph $\oGamma$ is \emph{odd}.
    \item [(c)] The \emph{multiplicity} of a bi-colored graph $\overline{\Gamma}=(\Gamma,I,V_{0})$ is 
given by $m(\overline{\Gamma})=\prod_{e=(h,h')}\sqrt{-I(h)I(h')}$.
\end{enumerate}
Furthermore, we denote by ${\rm Bic}_{\textbf{g},\textbf{n}}$
the set of odd bi-colored graphs of type $(\textbf{g},\textbf{n})$, and by ${\rm Bic}_{\textbf{g},\textbf{n}}(i,j)\subset{\rm Bic}_{\textbf{g},\textbf{n}}$ the subset
of bi-colored graphs $\oGamma$ where the leg corresponding
to $(i,j)$ lies on a vertex in level $-1$. Finally, given a
vector of vectors of integers $\textbf{a}$, we say that 
$\oGamma$ is compatible with $\textbf{a}$ if 
the values of the twist $I$ at legs corresponding are given by the vector $\textbf{a}$. We denote the set of such bi-colored graphs as ${\rm Bic}_{\textbf{g},\textbf{n}}(\textbf{a})$. 
\end{definition}

Given a bi-colored graph $\overline{\Gamma}=(\Gamma,I,V_{0})$, 
it determines the vectors of genera and twist values at each level: 
that is, 
\begin{align*}
    \textbf{g}[i]&:=(g(v))_{v\in V_{i}} \\
    \textbf{a}[i]&:= (I(h))_{h\in H(\Gamma),\ \phi(h)\in V_i}
\end{align*}
for $i=0,-1$. Moreover, given a space of residue conditions $R$, we denote by $R[i]$ the subspace
of $\mathcal{R}(\textbf{a}[i])$ defined by the linear conditions of $R$ restricted to level $i$, together with those forced by the \emph{global residue condition} (GRC)
(see~\cite[Definition 1.2 (4)]{BaiCheGenGruMol}).
\begin{remark} Throughout the paper we will not deal
with residue conditions on the legs since we will mostly treat the holomorphic case. That is, $R$ will not force any non-trivial residue conditions on the legs, but $R[i]$ may imply non-trivial residue conditions on the half-edges crossing levels (see for example Lemma \ref{lem:BB-graph-residue}).
\end{remark}

\begin{definition} A bi-colored graph $\Gamma$ is called 
\emph{back-bone} if there is a unique vertex $v\in V_{-1}$
such that for all vertices $u\in V_{0}$ there is a unique 
edge connecting $u$ to $v$. Equivalently, $\oGamma$ is
back-bone if it is connected, of compact type, 
and there is a unique vertex $v\in V_{-1}$. 
\end{definition}

Given a bi-colored graph $\oGamma$, 
we denote by $\oH(\oGamma,R)$ the space
\begin{equation*}
    \oH_{\textbf{g}[0]}(\textbf{a}[0],R[0])\times p_{-1}\left(\PP\oH_{\textbf{g}[-1]}(\textbf{a}[-1],R[-1])\right)\subseteq \oH_{\textbf{g}[0]}(\textbf{a}[0])
    \times \oM_{\textbf{g}[-1],\textbf{n}[-1]},
\end{equation*}
where $p_{-1}\colon \PP\oH_{\textbf{g}[-1]}(\textbf{a}[-1])\to  \oM_{\textbf{g}[-1],\textbf{n}[-1]}$ is the 
morphism forgetting the differential. Furthermore, we denote by   
\begin{equation*}
    \zeta_{\oGamma}\colon \oH(\oGamma,R)\to \oH_{\textbf{g}}(\textbf{a},R)
\end{equation*}
the gluing map assigning to each point a differential
vanishing on vertices in level $-1$. The authors of ~\cite{BaiCheGenGruMol} showed that, given a geometric point
in the incidence variety compactification of a stratum $\PP\cH_{g}(a)$, i.e. the closure of $\PP\cH_{g}(a)$ inside the projectivized (twisted) Hodge bundle,
there is an associated level graph $\oGamma$ satisfying certain conditions. In particular, the GRC implies the following lemma when the associated level graph $\oGamma$  is back-bone.
We refer the reader
to~\cite{Sau,BaiCheGenGruMol} for details on the definition
of $\zeta_{\oGamma}$. 

\begin{lemma}\label{lem:BB-graph-residue} Let $(C,x_{1},\dots,x_{n},\omega)$ be an element in the incidence variety compactification of a stratum $\PP\cH_{g}(a)$ whose associated level graph $\oGamma$ is a connected back-bone graph. Then the residues of the restriction of $\omega$ at the branches of nodes vanish.
\end{lemma}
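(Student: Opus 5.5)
The plan is to use the global residue condition (GRC) of \cite{BaiCheGenGruMol}, applied to the special shape of a back-bone graph.

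\emph{Setup.} Let $\oGamma$ be the associated level graph. It has a unique vertex $v$ in level $-1$, and every level-$0$ vertex $u$ is joined to $v$ by exactly one edge $e_u=(h_u,h_u')$. The half-edge $h_u$ sits at $u$ and $h_u'$ at $v$, with $I(h_u)>0$. Since $\oGamma$ is of compact type, each $e_u$ is separating. Removing $e_u$ isolates the level-$0$ component consisting of $u$ alone: any other level-$0$ vertex connects only to $v$, so no two level-$0$ vertices are adjacent through $v$ once $e_u$ is cut.

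\emph{Vanishing at the level $-1$ branches.} Recall the GRC. For every connected component $Y$ of the subgraph of $\oGamma$ lying strictly above level $-1$ that contains no marked pole (the stratum is holomorphic, so there are no marked poles at all), the sum of the residues of $\omega_{-1}$ at the half-edges $h'$ joining $Y$ to level $-1$ must vanish. The components above level $-1$ are exactly the single vertices $u\in V_0$. Each such component meets level $-1$ along the single half-edge $h_u'$. The GRC therefore reads $\res_{h_u'}\omega_{-1}=0$ for every $u$, so the residues at all lower branches of nodes vanish.

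\emph{Vanishing at the level $0$ branches.} On the upper branches, $\omega_0|_u$ is holomorphic at $h_u$, because $I(h_u)>0$ means it has a zero of order $I(h_u)-1\ge 0$ there. Hence $\res_{h_u}\omega_0|_u=0$ automatically. Alternatively, one can use the residue theorem on $u$: the only possible pole of $\omega_0|_u$ is at $h_u$, so its residue there is zero.

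\emph{Main obstacle.} The argument itself is short; the care needed is in quoting the GRC precisely in the form of \cite[Definition 1.2 (4)]{BaiCheGenGruMol}. Two points must be checked. First, the components to use are the connected components of the subgraph above level $-1$. Second, the statement should be applied to the incidence-variety limit $\omega$: the level $-1$ differential is the one recorded by the twisted differential. Both are fine in the two-level situation here.
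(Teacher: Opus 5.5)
Your proposal is correct and follows essentially the same route as the paper: apply the global residue condition to each level-$0$ vertex $u$. Such a vertex forms its own connected component above level $-1$ and is joined to the bottom vertex by a single edge, so the condition forces the residue at the lower branch of that edge to vanish. The only difference is on the upper branches: the paper argues that residues at the two branches of a node are opposite, whereas you observe directly that the level-$0$ differential is holomorphic there (order $I(h_u)-1\geq 0$), which is the more precise justification for vertical nodes.
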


\begin{proof}  First, we note that since $\oGamma$ is
back-bone graph, we have $V_{0}=V\setminus V_{-1}$. Furthermore, for all $u\in V_{0}$, we denote by $h_{u}$ the unique non-leg half-edge incident to $u$. 
Let $(C_{v},(x_{h})_{h\to v},\omega_{v})$ denote
the irreducible component of $C$ corresponding 
to $v\in V_{-1}$. 
Then the global residue condition implies that 
\begin{equation*}
    {\rm res}_{x_{h_{u}}}\omega=0,
\end{equation*}
where $x_{h_{u}}$ is the branch of the node corresponding
to the unique edge joining $u$ to $v$. Finally, since the residues on branches of nodes are opposite we obtain the desired result.
\end{proof}

\subsection{Differentials with spin signs} In this subsection we introduce the \emph{cone of squares}, which was firstly introduced and studied in~\cite{HolPolSau}. Below we recall the basic facts related to our work.

\begin{definition}\label{def:squares}

We denote by $\SQ_{g,n}\subset\cH_{g,n}$ the sub-cone  containing all tuples  $(C,x_1,\dots,x_n,\omega)$ where the order of vanishing of $\omega$ is even
at all points of $C$. Throughout, we will call $\SQ_{g,n}$ and its closure $\oSQ_{g,n}$ in $\oH_{g,n}$ the \emph{sub-cone of squares}
\end{definition}

\noindent Every geometric point $(C,x_{1},\dots x_{n},\omega)\in\SQ_{g,n}$ naturally carries
a spin structure\footnote{As the name suggests, squares are related to tensor squares of sections of the spin bundle. In, particular they are \textit{not} squares of differentials.}. Indeed, the order of vanishing of $\omega$ at all points is even, so the divisor $D\coloneqq \textrm{div}(\omega)/2$ is well defined. In particular, the line bundle $\cO_{C}(D)$ squares to $\omega_{C}$, that is, $\cO_{C}(D)$ is a spin structure on $(C,x_{1},\dots,x_{n})$. Therefore, since the parity
of a spin structure is deformation invariant~\cite{Mum1,Ati}, we obtain the decomposition 
\begin{equation*}
    \SQ_{g,n}=\SQ_{g,n}^{+}\sqcup\SQ_{g,n}^{-}
\end{equation*}
in terms of the parity of the spin structure associated at each point. Now, the space $\PP\oSQ_{g,n}$ can be equivalently interpreted as the image of the stratum $\PP\oH_{g,n+g-1}(3,\dots,3)\subset \PP\oH_{g,n+g-1}$ via the forgetful map
\begin{equation*}
    \pi_{g-1}\colon\PP\oH_{g,n+g-1}\to \PP\oH_{g,n}.
\end{equation*} 
Indeed, this is true because generically a point in 
$\SQ_{g,n}$ has
$g-1$ zeros of order $2$, that is, $\pi_{g-1}$ restricts to 
a finite map
$$\PP\cH_{g,n+g-1}(3,\dots,3)\to \PP\SQ_{g,n}$$
of degree $(g-1)!$ over the open and dense subset of $\SQ_{g,n}$ of differentials with $g-1$ zeros of order $2$. 
In particular, we have $$\dim_{\mathbb{C}}\PP\oSQ_{g,n}=3g-3+n=\dim\oM_{g,n}$$ as $\PP\oH_{g,n+g-1}(3,\dots,3)$ is of codimension $2g-2$ in $\PP\oH_{g,n+g-1}$, which in turn is of dimension $5g-5+n$.
In the following definition we generalize the reasoning above to the case of prescribed orders of vanishing.

\begin{definition} Let $a\in \ZZ^{n}_{>0}$ be a vector of odd integers such that $|a|\leq 2g-2+n$. The sub-cone 
\begin{equation*}
    \SQ_{g}(a):=\Big\{(C,x_1,\dots,x_n,\omega)\in\SQ_{g,n}\ |\ {\rm ord}_{x_{i}}\omega=a_{i}-1\Big\}
\end{equation*}
is called \emph{the sub-cone of squares with zeros prescribed by $a$}.
\end{definition}
Similar arguments as above show that the space 
$\PP\oSQ_{g}(a)$ can be identified as the image of  
the forgetful morphism $\pi_N:\PP\oH_{g,n+N}\to \PP\oH_{g,n}$
restricted to $\PP\oH_{g}(\widetilde{a})$,
where
\begin{equation*}
    N=g-1-\frac{|a|-n}{2},
\end{equation*}
and $\widetilde{a}$ is the vector obtained from 
$a$ by appending the number $3$ exactly $N$ times, 
that is, 
\begin{equation*}
    \widetilde{a}=(a_{1},\dots,a_{n},\underbrace{3,\dots,3}_{\times N}).
\end{equation*}
Essentially, $N$ is the number of zeros of order $2$ required to specify all the orders that the differentials attain. Note that if $a=(1^{n})$, 
we have that $\oSQ_{g}(a)= \oSQ_{g,n}$, in which case we have $N=g-1$. On the other hand, if $N=0$, we have
$\cH_{g}({a})=\SQ_{g}({a})$.
Moreover, a straightforward dimension count gives that
\begin{equation*}
    {\rm codim}\PP\oSQ_{g}(a)=g-1+\frac{|a|-n}{2}
\end{equation*}
in $\PP\oH_{g,n}$, which recovers the dimension count we mention above for $\PP\oSQ_{g,n}$. By the same arguments as above, $\SQ_{g}(a)$ splits according to the parity of the spin structure associated to each point.
\begin{definition}\label{def:pm-squares-classes} The \emph{fundamental class of $\PP\oSQ_{g}(a)$ (resp. weighted by the spin sign)} is given by 
\begin{equation*}
    \left.\begin{matrix}
 [\PP\oSQ_{g}(a)]\hspace{-4mm}&:=[\overline{\PP\SQ_{g}(a)^{+}}]+[\overline{\PP\SQ_{g}(a)^{-}}]\ \ \  \\
[\PP\oSQ_{g}(a)]^{\pm}\hspace{-3mm} &  :=[\overline{\PP\SQ_{g}(a)^{+}}]-[\overline{\PP\SQ_{g}(a)^{-}}]\ \ \ \\ \end{matrix}\right\}\in A^{\bullet}(\PP\oH_{g,n}).
\end{equation*}
\end{definition}

\begin{remark}\label{rem:N!-factorial} Since $\pi_N$ has degree $N!$,  
\begin{equation*}
    (\pi_N)_{*}[\PP\oH_{g}(\widetilde{a})]=N![\PP\oSQ_{g}(a)],
\end{equation*}
and similarly for the spin-parity components. Moreover, the following relation holds for the
anti-tautological line bundle under forgetting markings: $\pi_{N}^{*}\cO(1)\cong \cO(1)$. 
\end{remark}

\subsubsection{Boundary components} 
We can now generalize the definitions above to the (possibly disconnected) case of negative entries, as we did for the Hodge bundle. 
We fix a tuple $(\textbf{g},\textbf{n})$. 
Furthermore, choosing suitable  $\textbf{P}$ and space of residue conditions $R$, we fix a vector of vectors of odd integers $\textbf{a}$ as in the start of the section and we consider the space $\SQ_{\textbf{g}}(\textbf{a},R)\subset\cH_{\textbf{g}}(\textbf{a},R)$ 
of differentials with only even zeros and poles. The analysis and properties carried out in the previous subsection for $\SQ_{g,n}$ can be adapted directly in the case of $\SQ_{\textbf{g}}(\textbf{a},R)$. 
In particular, for these cones we also a have a parity decomposition, as well as a similar dimension count, and a pullback invariance of the line bundle $\cO(1)$ of their projectivizations.  
Given an odd bi-colored graph, we define the following classes:
\begin{align*}
    \oSQ(\oGamma,R)&:=\oSQ_{\textbf{g}[0]}(\textbf{a}[0],R[0])\times p_{-1}\left(\PP\oSQ_{\textbf{g}[-1]}(\textbf{a}[-1],R[-1])\right)\subset\oH(\oGamma,R), \ \text{and} \\
    [\PP\oSQ(\oGamma,R)]^{\pm}&:=[\PP\oSQ_{\textbf{g}[0]}(\textbf{a}[0],R[0])]^{\pm}\otimes p_{-1*}[\PP\oSQ_{\textbf{g}[-1]}(\textbf{a}[-1],R[-1])]^{\pm}, \\
    [\PP\oSQ(\oGamma,R)]&:=[\PP\oSQ_{\textbf{g}[0]}(\textbf{a}[0],R[0])]\otimes p_{-1*}[\PP\oSQ_{\textbf{g}[-1]}(\textbf{a}[-1],R[-1])]
\end{align*} 
in the Chow ring $A^\bullet\left(\PP\oH(\oGamma,R)\right)$,
where the classes $[\PP\oSQ_{\textbf{g}}(\textbf{a},R)]^{(\pm)}$
are defined analogously to Definition \ref{def:pm-squares-classes}. As in~\cite[Section 5]{HolPolSau} the image of 
$\PP\oSQ(\oGamma,R)$ for odd bicolored graphs $\oGamma$ under the gluing map $\zeta_{\oGamma}$ are boundary components of $\PP\oSQ_{\textbf{g}}(\textbf{a},R)$.
For ease of notation we denote the pullback of $\psi_i$ to the projectivised twisted Hodge bundle again by $\psi_i$. We will require the following intersection theoretic result, which is \cite[Proposition 5.4]{HolPolSau}: 
\begin{proposition} \label{prop:strata-recursion} 
Let $a\in\ZZ^{n}$ be a vector of odd integers and let $i\in \{1,\ldots,n\}$. If we assume that $a_i\geq -1,$ and $a_j>0$ for all $j\neq i,$ then
    \begin{equation}\label{for:induction3}
(\xi + a_{i}  \psi_{i})  [\PP\overline{\mathcal{SQ}}_g(a)]^\pm = 2\,[\PP\overline{\mathcal{SQ}}_g(a')]^\pm + \frac{1}{N!}\sum_{\oGamma \in {\rm Bic}_{g,n+N}^*(\widetilde{a};\ i)} \frac{m(\oGamma)}{|\Aut(\oGamma)|} \pi_{N*} \zeta_{\oGamma\, *} [\PP\overline{\mathcal{SQ}}(\oGamma)]^\pm.
\end{equation}
where $a'=(a_1,\ldots,a_i+2,\ldots)$ if $a_i>0$ and $(a_1,\ldots,3,\ldots)$ otherwise, and $N = g - 1 - \frac{|a| - n}{2}$. Moreover, in this formula ${\rm Bic}_{g,n+N}^*(\widetilde{a};\ i)\subset {\rm Bic}_{g,n+N}(\widetilde{a};\ i)$ stands for the set of bi-colored graphs (with leg $i$ on level $-1$ and twists compatible with $\widetilde{a}$) for which the stabilization of the graph obtained by removing the last $N$ legs is not trivial. After removing the $\pm$-symbol, the statement holds true as well. 
\end{proposition}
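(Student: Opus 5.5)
The plan is to lift the computation to the space with $N$ extra markings, where $\PP\oSQ_g(a)$ becomes (up to the factor $N!$ of Remark \ref{rem:N!-factorial}) the pushforward of an honest incidence variety $\PP\oH_g(\widetilde a)$. There we apply the Sauvaget-type induction formula \cite[Proposition 3.11]{Sau} for $(\xi+a_i\psi_i)$, and then push forward along $\pi_N$.

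First, by Remark \ref{rem:N!-factorial} and the projection formula, $(\xi+a_i\psi_i)[\PP\oSQ_g(a)]^\pm=\frac1{N!}\pi_{N*}\bigl((\pi_N^*\xi+a_i\pi_N^*\psi_i)[\PP\oH_g(\widetilde a)]^\pm\bigr)$. Here $\pi_N^*\xi=\xi$, while $\pi_N^*\psi_i=\psi_i-D_i$, where $D_i$ is the sum of boundary divisors on which leg $i$ bubbles off with a subset of the forgotten legs on a genus $0$ component.

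Second, on $\PP\oH_{g,n+N}$ I use the induction formula of \cite{Sau} (in its spin-refined form for components of odd strata). It says that $(\xi+a_i\psi_i)[\PP\oH_g(\widetilde a)]$ equals $[\PP\oH_g(\widetilde a')]$ plus the sum over $\mathrm{Bic}_{g,n+N}(\widetilde a;i)$ of $\frac{m(\oGamma)}{|\Aut\oGamma|}\zeta_{\oGamma*}[\PP\oH(\oGamma)]$, where $\widetilde a'$ raises $a_i$ by $2$. Since all entries of $\widetilde a$ are odd and the residue conditions are compatible, the spin sign propagates: both level strata carry spin structures whose parities multiply to the global one. This lets us restrict to odd bicolored graphs and replace classes by their $\pm$-versions, giving $[\PP\oSQ(\oGamma)]^\pm$.

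Third, push forward by $\pi_N$ and sort the terms.
\begin{itemize}
\item The principal term gives $N![\PP\oSQ_g(a')]$, but with $N$ replaced by $N'=N-1$. Its pushforward therefore has degree $N'!$ times a multiplicity accounting for which extra marking gets absorbed. I expect this to combine with the $a_iD_i$ correction to produce the factor $2$.
\item The graphs whose stabilization after forgetting the last $N$ legs is trivial are exactly those with a genus $0$ level $-1$ vertex carrying leg $i$ and some of the extra legs. These must be matched against $-a_i D_i[\PP\oH_g(\widetilde a)]$.
\item The remaining graphs form $\mathrm{Bic}^*$ and give the stated sum.
\end{itemize}

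The main obstacle is this cancellation. Concretely, one has to show that the total contribution of trivially-stabilizing bicolored graphs, together with $a_i\psi$-corrections, equals $[\PP\oSQ_g(a')]$ counted once more (hence the coefficient $2$). For this I would compute the genus $0$ residueless level $-1$ contribution, where leg $i$ of order $a_i$ merges with a single extra leg of twist $3$ into a node of twist $a_i+2$. The multiplicity $m(\oGamma)=a_i+2$ and the $\psi$-correction $a_i$ should combine, after dividing by the degree of the genus $0$ vertex configuration, to give one extra copy. Any configuration involving more extra legs should vanish for dimension reasons.

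The case $a_i=-1$ needs separate treatment: the marked point is then a pole-free regular point, and $a'$ has entry $3$. This case follows from the same argument with $N'=N-1$ as well.

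Dropping the $\pm$ throughout gives the unweighted statement.
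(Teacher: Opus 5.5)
The paper does not prove this statement; it only quotes \cite[Proposition 5.4]{HolPolSau}, so your argument has to stand on its own. Your framework can be made to work: lift to $\PP\oH_{g,n+N}$, use $\pi_N^*\psi_i=\psi_i-D_i$, apply Sauvaget's induction formula, then push forward. However, the step that produces the coefficient $2$ is set up incorrectly.

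The lifted locus $\PP\oH_g(\widetilde a)$ is a \emph{full} holomorphic stratum, since $|\widetilde a|=2g-2+n+N$. Sauvaget's formula on it therefore has no principal term: raising the $i$-th entry (by $1$ in \cite{Sau}, or by $2$ as you write) while keeping all $N$ extra $3$'s exceeds the canonical degree, so that stratum is empty. There is no ``principal term $N![\PP\oSQ_g(a')]$ with $N$ replaced by $N-1$''.

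The entire term $2[\PP\oSQ_g(a')]$ comes from the graphs $\oGamma_{ij}$ combined with the correction $-a_iD_i$. Here $\oGamma_{ij}$ is a rational tail at level $-1$ carrying $x_i$ and one extra point $x_j$, with edge twist $a_i+2$. The count is as follows.
\begin{itemize}
\item There are $N$ choices of $j$, and each satisfies $\pi_{N*}\zeta_{\oGamma_{ij}*}[\PP\oH(\oGamma_{ij})]=(N-1)![\PP\oSQ_g(a')]$.
\item Sauvaget's formula gives coefficient $m(\oGamma_{ij})=a_i+2$.
\item $D_{\{i,j\}}$ meets the stratum transversally along that same locus (single edge, $\ell_\Gamma=\kappa_e$).
\item Tails with two or more extra points, and disconnected bottom levels, die for dimension reasons.
\end{itemize}
The total is $\frac{N\,(N-1)!}{N!}\big((a_i+2)-a_i\big)=2$. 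So the cancellation alone gives both copies. Your split ``principal copy plus one extra copy'' is inconsistent with the multiplicities $a_i+2$ and $a_i$ that you yourself identify.

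You can also see the $2$ directly on $\PP\oSQ_g(a)$. The section $\omega\mapsto[z^{a_i-1}]\omega$ of the line bundle with Chern class $\xi+a_i\psi_i$ vanishes to order $2$ along $\PP\oSQ_g(a')$, because locally $\omega=z^{a_i-1}(z-\epsilon)^2dz$.

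Your treatment of $a_i=-1$ is also wrong. There $x_i$ is a double pole (order $a_i-1=-2$), not a regular point. Moreover $a'_i=3$ forces $N'=N-2$, not $N-1$.
\begin{itemize}
\item A tail carrying $x_i$ and a single extra point is empty: the global residue condition forces zero residue at $x_i$, but $c(z-1)^2z^{-2}dz$ has residue $-2c$.
\item The contributing tails carry $x_i$ and two extra double zeros. There are $\binom N2$ of them, with $m=3$, and $\PP\mathcal{H}^{\rm res}_0(-1,3,3,-3)$ is a single point.
\item Together with the correction $-a_iD_i=+D_i$ this gives $\tfrac12\big(3-(-1)\big)=2$.
\end{itemize}

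Finally, your reason for restricting to odd bicolored graphs does not hold. Oddness of $\widetilde a$ forces odd twists only on separating edges. Two edges joining the same pair of vertices can both carry even twists; for example, for $g=3$, $a=(5)$, take a genus $2$ top vertex joined to the bottom vertex by two edges of twist $2$. Such graphs drop out of the $\pm$-class only through a cancellation between the even and odd components, via the prong-matchings at the even edges. That cancellation is exactly the spin-specific input of \cite{HolPolSau}, and you would have to prove it. It is also unavailable once you drop the $\pm$, so your last sentence does not follow from your argument.
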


\begin{remark} The aforementioned proposition is
proven in~\cite{HolPolSau} for the fundamental classes weighted by the spin sign. However, their arguments hold verbatim also when 
we replace the weighted classes with the unweighted classes.
\end{remark}

\subsection{Segre\texorpdfstring{$^\pm$}\ \ and Chiodo\texorpdfstring{$^\pm$}\ \ classes} 

Define the $j$-th spin-parity Segre class of
squares $s_{g,j}^\pm \in A^c(\oM_{g,n})[t]$ 
as the $t^j$-coefficient of 
\begin{align*}
    s^{\pm}_{g}(t) &:=  \sum_{j\geq0}p_{*}(\xi^{j}[\PP\oSQ_{g}(1^n)]^{\pm})t^{j} \in A^\bullet(\oM_{g,n})[t].
\end{align*}
Note that, since $\oSQ_{g,n}$ is of rank $1$ over $\oM_{g,n}$, the class $s_{g}^{\pm}(t)$ is the difference of the Segre polynomials of the spin-parity components of the cone $\oSQ_{g,n}$ over $\oM_{g,n}$. 
Moreover, write $s_{g}^{\pm}=s_{g}^{\pm}(1)$. Note
that we did not include the number of markings in our notation as these classes are invariant under pulling back via the morphism that forgets markings.\\

\noindent Their computation in terms of $\lambda$-classes was performed in ~\cite{HolPolSau} using the following.

\begin{definition} Let 
$a = (1-2a_1\ , \dots , 1-2a_n)\in\ZZ^{n}$ 
be a vector of odd integers, 
$\cL\to \oC_{g,n}^{1/2}\to \oM_{g,n}^{1/2}$ 
the universal spin bundle and 
$\epsilon\colon \oM_{g,n}^{1/2}\to \oM_{g,n}$ 
the forgetful morphism. Then $\cL( \text{\tiny$\sum_i$} a_i x_i)$ 
is the universal square root of 
$\omega_{\rm log}\left(-a\cdot (x_1,\dots,x_n)\right)$.
We denote by 
\[\Omega_{g,n}^{\pm,a}(t):=\epsilon_{*}\bigg(c_t\big(-R^{\bullet}\pi_{*}\cL( \text{\tiny$\sum_i$} a_i x_i)\big)\ [\pm]\bigg)\]
the \emph{spin Chiodo class}, 
where $[\pm]$ is the class of the even minus the class of the odd parity component. 
\end{definition}

\begin{theorem} \label{thm:Segre-Chiodo}~\cite[Theorem 1.10]{HolPolSau} For all $(g,n)$ in the stable range the following identities hold 
    \begin{align}
     \Omega_{g,n}^{\pm,1^n}(2t)-2^{2g-1}&=\sum_{\Gamma\in{{\rm Tree}_{g,n}}}\frac{t^{E(\Gamma)}}{|\Aut(\Gamma)|}\zeta_{\Gamma*}\left(\bigotimes_{v\in V(\Gamma)}s^{\pm}_{g(v)}(t)\right), \ \ {\rm and} \\
     s^{\pm}_{g}(t)&=\sum_{\Gamma\in{{\rm Tree}_{g,n}}}\frac{(-t)^{E(\Gamma)}}{|\Aut(\Gamma)|}\zeta_{\Gamma*}\left(\bigotimes_{v\in V(\Gamma)}\Omega_{g(v),n(v)}^{\pm,1^{n(v)}}(2t)-2^{2g(v)-1}\right)\label{eq:weighted-segre}
\end{align} 
in $A^\bullet(\oM_{g,n})[t]$, where ${\rm Tree}_{g,n}$ denotes the set of compact type stable graphs of type $(g,n)$.
\end{theorem}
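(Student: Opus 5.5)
The plan is to prove the first identity directly and then obtain \eqref{eq:weighted-segre} by formal inversion. Both sides of the first identity are sums over ${\rm Tree}_{g,n}$ of glued vertex contributions, each edge weighted by $t$. The composition of such tree sums is governed by the standard operation of contracting subsets of edges. Summing over subsets with the weight $t^{|E|}$ on one side and $(-t)^{|E|}$ on the other gives the inclusion–exclusion identity $\sum_{S\subseteq E}(-1)^{|S|}=0$ for $E\neq\emptyset$. This inversion is routine bookkeeping of automorphism factors, which I would do last.

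For the first identity I would work on $\oM^{1/2}_{g,n}$ with the universal spin bundle $\cL$, and consider the abelian cone $\mathcal{C}=\pi_*\cL$ of spin sections. It is the kernel cone of a two-term complex $E^0\to E^1$ representing $R^\bullet\pi_*\cL$. Fulton's formula for the Segre class of such a kernel cone, with its natural virtual structure, gives
\[
\sum_j q_*\bigl(\xi^j\cap[\PP\mathcal{C}]^{\rm vir}\bigr)=c(E^1)/c(E^0)=c\bigl(-R^\bullet\pi_*\cL\bigr),
\]
up to the constant term coming from the rank. The squaring map $s\mapsto s^{2}$ sends $\mathcal{C}$ to the cone of squares, and it is generically of degree $2$ on fibres since $\pm s$ have the same square. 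Under it $\cO(1)$ on $\PP\oSQ$ pulls back to $\cO(2)$. Pushing forward along $\epsilon$ with the sign $[\pm]$ therefore identifies $\Omega^{\pm,1^n}_{g,n}(2t)$ with $s^\pm_g(t)$ on the locus of smooth curves, and this is where the substitution $t\mapsto 2t$ comes from. The constant $2^{2g-1}$ is the degree-zero term: it is the signed count of spin structures $2^{2g-1}$ (even minus odd), coming from $[\pm]$ with no $\xi$ insertion, which does not appear in the Segre series.

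The main obstacle is the boundary, where the virtual cone $\PP\mathcal{C}$ and the closure $\PP\oSQ$ no longer agree. Over a separating node the limiting spin sections may vanish identically on some components. This produces excess components of the kernel cone, supported on images of $\zeta_\Gamma$ for compact-type graphs, on which the section is a square on each vertex separately. I would first show that non-separating nodes create no excess: there the spin bundle is either locally free, or the node is handled by the Neveu–Schwarz/Ramond analysis, and the odd twists ensure that the closure is already captured. Next I would show that each separating edge contributes a normal-direction factor exactly $t$ after the rescaling. The residual intersection along a tree $\Gamma$ should factor as $\bigotimes_v s^\pm_{g(v)}(t)$, with one excess factor per edge that equals $t$ because the relevant normal bundle is $\cO(1)$-twisted. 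The parity weight should multiply across vertices, since the parity of a spin structure on a compact-type curve is the product of the parities of its components.

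To organize this rigorously I would first try a degeneration / residual-intersection argument in the style of Fulton's excess intersection formula, stratified by compact-type graphs. If this proves too delicate, the fallback is to verify the formula by checking compatibility with pullback to boundary strata together with the projection formula for $\xi$, and to reduce to smooth curves by induction on $g$.
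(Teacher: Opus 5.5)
Your M\"obius-inversion step is fine, but the core identification has a genuine gap. The cone $\pi_*\cL$ of untwisted spin sections has virtual rank $\chi(\cL)=0$, so it is not of expected dimension, even over $\cM_{g,n}$. Over the odd component every fibre is at least a line, so there is excess everywhere. Over the even component there is a rank-two component lying over the theta-null divisor. Fulton's formula therefore only gives a \emph{virtual} Segre class $c(E^1)/c(E^0)$. But $s^\pm_g(t)$ is defined through the actual classes $[\PP\oSQ_g(1^n)]^\pm$, and the difference is an excess contribution you never compute.

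Concretely, where an odd $L$ has $h^0(L)=1$, the kernel $K=\pi_*\cL$ is a line bundle with cokernel $R^1\pi_*\cL\cong K^\vee$. The virtual Segre class there is $c_t(K^\vee)/c_t(K)$, which after $t\mapsto 2t$ equals $-1+2/(1+2c_1(K)t)$. The actual Segre series of $\PP\oSQ^-$ (on which $\cO(-1)=K^{\otimes 2}$) is $1/(1+2c_1(K)t)$. On the theta-null component the excess can only be evaluated using Mumford's local model for theta characteristics \cite{Mum1}.

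This is also why your account of the constant is wrong. The signed count is $\Omega^{\pm,1^n}_{g,n}(0)=2^{g-1}$ by Lemma \ref{eq:spinchiodo=2Lambda}. The Segre series does have a constant term, namely $-2^{g-1}(2^g-1)$. The constant $2^{2g-1}=2^{g-1}+2^{g-1}(2^g-1)$ is the \emph{unsigned} count, with the automorphism factor $\tfrac12$. It comes from the degree-zero part of the virtual class: the zero section over even spin structures together with the $-1$ above over odd ones. That is exactly what your argument skips.

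At the boundary, your claims that non-separating nodes create no excess and that each separating edge contributes exactly a factor $t$ are the real content of the theorem, and you only assert them. The fallback cannot work either. A class on $\oM_{g,n}$ is not determined by its restriction to $\cM_{g,n}$ together with its pullbacks to boundary strata; the point class on $\oM_{0,5}$ already shows this.

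The idea that makes the paper's proof work (that of \cite{HolPolSau}, re-illustrated for the non-parity case in the proof of Theorem \ref{thm:Chiodo=Segre}) is to twist by the first marking. The cone $\cH^0_{\cL(x_1)}$ of sections of $\cL(x_1)$ has virtual rank one and needs no excess correction. Its Segre class is literally $c(-R^\bullet\pi_*\cL(x_1))$, which by the insertion-shift property equals $\Omega^{\pm,1^n}_{g,n}(2t)/(1-t\psi_1)$ after rescaling (Lemma \ref{lem:segre=Chiodo} and its spin analogue).

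Its components are described explicitly. Sections vanishing at $x_1$ square into $\oSQ_g(1^n)$, the others into $\oSQ_g(-1,1^{n-1})$, and boundary components are products of such cones glued along compact-type trees. This gives $\frac12(\tilde s^0_g+\tilde s^1_g)$ as in \eqref{eq:segre=tilde-segre}.

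The auxiliary series $\tilde s^1_g$ is then eliminated by the relation $(1-t\psi_1)\tilde s^1_g=(1+t\psi_1)\tilde s^0_g+c$ (Lemma \ref{cor:Segre-tilde-relation} and its spin analogue). This relation comes from the induction formula of Proposition \ref{prop:strata-recursion} together with a separate degree-zero count (the spin analogue of Lemma \ref{lem:s0-s1-deg-0}). In the spin case $c=2^{g-1}(2^g+1)+2^{g-1}(2^g-1)=2^{2g}$, and this is where $2^{2g-1}$ comes from.

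Without this twist-and-eliminate mechanism, or a complete excess-intersection analysis of the rank-zero cone both on $\cM_{g,n}$ and at the boundary, your proposal does not yet prove the identity.
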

\noindent In Section \ref{sec:non-parity} we discuss and prove its modification for non-parity Segre and Chiodo classes.
In the spin-parity case, the following explicit formula was then used in \textit{op.cit.} to express the spin Segre classes in terms of $\lambda$-classes.

\begin{lemma}~\cite[Remark 9.11]{GiaKraLew}\label{eq:spinchiodo=2Lambda}
The spin Chiodo classes with all insertions set to $1$ can be computed as
    $$\Omega_{g,n}^{\pm,1^{n}}(2t)=2^{g-1}\Lambda_{g}(-t)\Lambda_{g}(2t).$$
\end{lemma}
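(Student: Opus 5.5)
The plan is to prove the identity by a Grothendieck--Riemann--Roch comparison in the style of Mumford, with Chiodo's formula for $r=2$ taking the place of Mumford's computation of $\mathrm{ch}(\oH_{g,n})$. For $a=1^n$ all $a_i=0$, so $\cL$ is a universal square root of $\omega_{\oC/\oM}$, i.e.\ a theta characteristic.

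\textbf{Step 1 (logarithms).} First I would reduce the statement to an identity of Chern characters. Since $\oH_{g,n}\cong\oH_{g,n}^\vee$ up to the Mumford relation $\Lambda_g(t)\Lambda_g(-t)=1$, only odd Chern characters of $\oH_{g,n}$ are nonzero. Hence
\[
\log \Lambda_g(-t)\Lambda_g(2t)=\sum_{m \text{ odd}}(m-1)!\,(2^m-1)\,\mathrm{ch}_m(\oH_{g,n})\,t^m .
\]
On the other side,
\[
\log c_{2t}\big(-R^\bullet\pi_*\cL\big)=\sum_{m\ge1}(-1)^{m}(m-1)!\,(2t)^m\,\mathrm{ch}_m(R^\bullet\pi_*\cL).
\]
So it suffices to show that, after pairing with $[\pm]$ and pushing forward along $\epsilon$, the class $\mathrm{ch}_m(R^\bullet\pi_*\cL)$ behaves like $(2^{-m}-1)\,\epsilon^*\mathrm{ch}_m(\oH_{g,n})$. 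Comparing coefficients of $t^m$ above, this is exactly the required identity. It also forces the even-degree terms to vanish, which is consistent with Serre duality for a theta characteristic. The constant term is $\epsilon_*[\pm]=2^{g-1}$, the difference between the numbers of even and odd theta characteristics.

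\textbf{Step 2 (interior terms).} Next I would write Chiodo's formula for $r=2$ and $s=1$. Its $\kappa_m$- and $\psi$-terms carry the coefficient $B_{m+1}(1/2)/(m+1)!$, and the classical identity $B_{m+1}(1/2)=(2^{-m}-1)B_{m+1}$ matches them with Mumford's $\kappa$ and $\psi$ terms with exactly the factor $(2^{-m}-1)$. These terms are pulled back from $\oM_{g,n}$. By the projection formula, pushing them forward against $[\pm]$ simply multiplies by $2^{g-1}$. Here I would need multiplicativity: exponentiating in $A^\bullet(\oM^{1/2}_{g,n})$ first and pushing forward afterwards is compatible only if the boundary products factor correctly, which leads to the main difficulty.

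\textbf{Step 3 (boundary terms; main obstacle).} The boundary terms in Chiodo's formula are indexed by nodes together with the twist of the square root at the node. For a separating node the twist is forced, and the parity is the product of the parities of the two sides. I would show that the weighted pushforward factorizes as $2^{g_1-1}\cdot 2^{g_2-1}\cdot 2$, which reproduces the $\tfrac12$ in Mumford's boundary term after normalisation.

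For a non-separating node there are two cases: Neveu--Schwarz (twist $1/2$, Bernoulli value $B_{m+1}(1/2)$) and Ramond (twist $0$, Bernoulli value $B_{m+1}(0)$). I first plan to compute the $[\pm]$-weighted degrees of the two components of the boundary of $\oM^{1/2}$ over $\delta_{\rm irr}$. On the Ramond component the even and odd spin structures on the normalisation pair up with opposite parities, so the weighted count vanishes. The NS component then contributes $2^{g-1}$ times the Bernoulli factor $B_{m+1}(1/2)=(2^{-m}-1)B_{m+1}$, which matches Mumford's $\delta_{\rm irr}$ term.

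Iterated self-intersections of boundary strata in Chiodo's formula must be handled in the same way. This can be done by induction on the number of edges, using the same vanishing of Ramond contributions. I expect this parity bookkeeping over strata with several nodes, in particular checking that the signs of the Ramond cancellations are consistent, to be the hardest part. Once it is done, exponentiating the identity of Chern characters from Step 1 gives $\Omega_{g,n}^{\pm,1^n}(2t)=2^{g-1}\Lambda_g(-t)\Lambda_g(2t)$.
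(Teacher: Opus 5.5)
The paper does not prove this statement; it imports it from \cite[Remark 9.11]{GiaKraLew}. Your route is a self-contained derivation and it works: Chiodo's GRR at $r=2$, the weighting $[\pm]$ killing every Ramond twist, the surviving Neveu--Schwarz twists carrying $B_{m+1}(1/2)=(2^{-m}-1)B_{m+1}$ in every slot, and then comparison with Mumford's formula. Beyond the identity itself, it explains why the parity-weighted class is so much simpler than the unweighted one. In the unweighted case the Ramond twists survive, which is why Section~\ref{sec: non-weighted} has to extract the unweighted Chiodo integrals indirectly from Sauvaget's volumes.

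Two points in your sketch need fixing. First, the Ramond cancellation is not a pairing of even and odd spin structures on the normalisation. At a Ramond node, $M=\nu^*\cL$ is a square root of $\omega_{\tilde C}(p+q)$ and has no parity of its own. What pairs up are the two gluings $\pm\phi\colon M_q\to M_p$. They have opposite parity for the following reason. The image $W$ of $H^0(M)\to M_p\oplus M_q$ has dimension $h^0(M)-h^0(M(-p-q))=\chi(M)=1$, using $h^1(M)=h^0(M(-p-q))$ by Serre duality. By the residue theorem $W$ is isotropic for $\res_p+\res_q$. So $W$ is exactly one of the two isotropic lines, namely the graphs of $\phi$ and $-\phi$, and $h^0$ differs by one between the two gluings. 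A minor point: $2^{g-1}$ is half the difference $2^g$ between the numbers of even and odd theta characteristics, the half coming from the automorphism $-1$.

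Second, the multi-node bookkeeping you flag as the hardest part is direct, and no sign-consistency issue arises.
\begin{itemize}
\item Since $[\pm]=[\oM^{1/2,+}_{g,n}]-[\oM^{1/2,-}_{g,n}]$ is a difference of open and closed substacks, GRR, exponentiation and the boundary self-intersections can be run on each piece. This is the usual Janda--Pandharipande--Pixton--Zvonkine derivation of the graph formula for Chiodo classes. The only parity-sensitive input is then the weighted degree of each twisted stratum $\oM^{1/2}_{\Gamma,w}\to\oM_\Gamma$.
\item If $w$ has a Ramond edge, flipping the gluing at that single edge is a parity-reversing involution over $\oM_\Gamma$, by the same computation on the partial normalisation. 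So this degree vanishes.
\item If $w$ is all Neveu--Schwarz, the spin structure is an independent theta characteristic on each $C_v$, with additive $h^0$. The weighted degree is therefore $\prod_v 2^{g_v}$ times the same stacky factor $2^{|E|-|V|}$ that appears in the unweighted count $2^{2g-1-h^1(\Gamma)}$. That is $2^{\sum_v g_v+h^1(\Gamma)-1}=2^{g-1}$ for \emph{every} $\Gamma$.
\end{itemize}
This uniformity is the key point. It makes $\Omega^{\pm,1^n}_{g,n}(t)$ equal to $2^{g-1}$ times the graph formula for $\Lambda_g(-t)=c_t(-R^\bullet\pi_*\omega)$, with each $B_{m+1}$ replaced by $(2^{-m}-1)B_{m+1}$. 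That graph expansion of an exponentiated Mumford-type expression is formal in the coefficients. Hence the result equals $2^{g-1}\exp\big(\sum_m(-1)^m(m-1)!\,(2^{-m}-1)\,\mathrm{ch}_m(\oH_{g,n})\,t^m\big)$, and your Step~1 finishes the proof.
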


\section{Volume\texorpdfstring{$^\pm$}\ \  recursion}  \label{sec: volume}

As explained in the introduction, in~\cite[Theorem 1.1]{CheMolSauZag} the authors
prove that the difference ${\rm Vol}^{\pm}(2g-1)$ of the volumes of the even and odd
components of the minimal stratum can be computed via the following proposition.
\begin{proposition}[\cite{CheMolSauZag}]\label{prop:segreMV} The difference of volumes ${\rm Vol}^\pm(2g-1)$ can be expressed as
\begin{equation*}
    {\rm Vol}^\pm(2g-1)=\frac{2(2i\pi)^{2g}}{(2g-1)!}a_{g}^{\pm},
\end{equation*}
where 
\begin{equation*}
    a_{g}^{\pm}:=\int_{[\PP\oSQ_g(2g-1)]^\pm} \xi^{2g-1}.
\end{equation*}

\end{proposition}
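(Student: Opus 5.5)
The plan is to deduce the statement from the componentwise form of the volume formula of \cite[Theorem 1.1]{CheMolSauZag} (equivalently \cite{Sau} for $\mu=(2g-1)$), and then to identify the resulting intersection numbers with integrals against the spin-weighted class of squares. The first step is to check that the formula of \emph{op.cit.} holds for each connected component $X$ of $\PP\cH_{g,1}(2g-1)$ separately. Its proof has three inputs: disintegrating the Masur--Veech form $d\nu_{MV}$ along $p\colon \cH_{g,1}(2g-1)\to\PP\cH_{g,1}(2g-1)$; identifying the resulting volume form with the top power of the curvature form of the area metric on $\cO(-1)$; and the fact that this metric is good in the sense of \cite{Mum2} on the multi-scale compactification \cite{BaiCheGenGruMol,CosMolZac}. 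All three are local on the stratum, and the compactification of a connected component is a union of connected components of the multi-scale space. So for each $X\in\{\mathrm{hyp},+,-\}$ (with the hyperelliptic component counted in the parity it carries) one gets
\[
{\rm Vol}(X^1)=\frac{2(2i\pi)^{2g}}{(2g-1)!}\int_{\overline{X}}\xi^{2g-1},
\]
with the same normalising constant as for the full stratum, since the constant comes only from the disintegration and the Chern--Weil computation. I would state this explicitly as the componentwise version of the cited theorem. If the cited formula is phrased with $\xi^{2g-2}\psi_1$, I would pass to $\xi^{2g-1}$ exactly as in the introduction, again componentwise.

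The second step is to subtract the odd components from the even ones (the hyperelliptic component being grouped with its parity, as explained in the introduction). This gives ${\rm Vol}^\pm(2g-1)$ as the stated constant times $\int_{\overline{X^+}}\xi^{2g-1}-\int_{\overline{X^-}}\xi^{2g-1}$.

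The third step identifies this difference with $a_g^\pm$ as defined via squares. For $a=(2g-1)$ we have $N=g-1-\frac{(2g-1)-1}{2}=0$, so, as observed after the definition of $\SQ_g(a)$, $\SQ_g(2g-1)=\cH_{g,1}(2g-1)$ and no markings are forgotten. Moreover, the theta characteristic $\cO_C((g-1)x_1)$ attached to a point of the stratum is precisely the spin structure $\cO_C(\mathrm{div}(\omega)/2)$ used to split $\SQ_{g,1}$. Hence $\PP\SQ_g(2g-1)^{\pm}$ is the union of the components of the corresponding parity. By Definition~\ref{def:pm-squares-classes}, $[\PP\oSQ_g(2g-1)]^\pm$ is therefore the difference of the fundamental classes of the closures of those components in $\PP\oH_{g,1}$. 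Integrating $\xi^{2g-1}$ against a fundamental class equals integrating it over the (incidence or multi-scale) compactification, because $\xi$ is pulled back from $\PP\oH_{g,1}$ and the relevant maps are birational. This yields the claimed equality.

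I expect the main obstacle to be the first step. One must verify that nothing in the proof of \cite{CheMolSauZag} (in particular the boundary analysis showing that the metric is good, and the treatment of the hyperelliptic locus) uses connectedness of the stratum, and that the normalising constant matches the one stated here. For the constant, I would first check it against the hyperelliptic volume of \cite[Section 6]{CheMolSauZag} in genus $2$.
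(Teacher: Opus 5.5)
Your proposal is correct and takes the same route as the paper, which gives no independent argument: it quotes \cite[Theorem 1.1]{CheMolSauZag} in componentwise form (that paper already treats the spin and hyperelliptic components, so your locality discussion in the first step only justifies what is being cited). It then uses $N=0$, i.e.\ $\SQ_g(2g-1)=\cH_{g,1}(2g-1)$, to identify the difference of the closures of the even and odd components with $[\PP\oSQ_g(2g-1)]^\pm$, exactly as in your third step.

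One caution on your fallback. Passing from the $\xi^{2g-2}\psi_1$ form to $\xi^{2g-1}$ requires $\xi^{2g-1}\cdot[\overline{X}]=-(2g-1)\,\xi^{2g-2}\psi_1\cdot[\overline{X}]$ with $\xi=c_1(\cO(1))$. This holds because $(\xi+(2g-1)\psi_1)[\overline{X}]$ is boundary-supported and those boundary terms are killed by $\xi^{2g-2}$. Do not use the bare equality displayed in the introduction: this factor is what produces the $(2g-1)!$ in the statement, as the check $a_2=\tfrac{1}{640}$, $\tfrac{2(2i\pi)^4}{3!}a_2=\tfrac{\pi^4}{120}={\rm Vol}(\cH^1_{2,1}(3))$ confirms.
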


\noindent In order to state the recursion for the latter integrals, we introduce some further notation. Firstly, we will write  \[\beta^{\pm}_{g}:= \int_{\oM_{g,1}}\psi^{g-1}_{1}s_{g,2g-1}^{\pm}.\] Writing moduli spaces with residue condition $R = \{0\}$ with the superscript $\res$, let furthermore be
\begin{align*}
    h(k-1,-2\underline{g})&:=\int_{\oM_{0,m+1}}\psi_{1}^{N_{k,g}-1}p_{*}[\PP\oSQ^{\rm res}_{0}(k,\ (-2g_i+1)_{i = 1}^m)]\ ,\\
    Hur(\underline{g}) &:= \sum_{k=1,\ {\rm odd}}^{2g-3} \frac{(2g-3)!!}{k!!\,\,2^{N_{k,g}}}\,\,h(k-1,-2\underline{g}).
\end{align*} 
Here we take $N_{k,g}:=g-1 - \frac{k-1}{2}$. Note that, even though our computations involve classes weighted by the spin sign, we omit the $\pm$-symbol in the definition of $h(k-1,2\underline{g})$ as the parity is always even in genus $0$. Moreover, since residueless differentials in genus $0$ are exact, the above integral could be interpreted as integrals over certain Hurwitz spaces, motivating our notation. Furthermore, we define $$D(g) \coloneqq (-1)^{g}\ 2^{2g-2}\ (g-1)!.$$

 The goal of this section is to prove the following 
recursion property for the integrals $a_{g}^{\pm}$.
\begin{lemma}[Volume$^\pm$ recursion]\label{lem:agpm-recursion}
Let $g\in \ZZ_{\geq2}$ and let ${\rm Part}(g)_{m}$ be the set of length $m$ unordered partitions $\underline{g}=(g_1,\dots,g_m)$ of $g$, with entries $g_i \geq 1$. Then the following holds:
\begin{equation*}
    a_{g}^{\pm}=\frac{(2g-3)!!}{2^{g-1}}\,\beta^{\pm}_{g}-\sum_{\substack{2\leq m\leq g,\\\underline{g}\in{\rm Part}(g)_{m}}}\frac{1}{m!} \left(\prod_{i=1}^{m}(2g_{i}-1)\,a_{g_{i}}^{\pm}\right)\,Hur(\underline{g}).
\end{equation*}

Moreover, \[\beta_g^\pm =\sum_{m=0}^{g-1}\frac{ (b^\pm_1)^m}{m!}\ {D}(g-m)\ b^\pm_{g-m}. \]
\end{lemma}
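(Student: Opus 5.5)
Starting from $[\PP\oSQ_g(1)]^\pm$ on $\PP\oH_{g,1}$, I will apply Proposition~\ref{prop:strata-recursion} at the single marked point $g-1$ times, raising the odd order $1\to 3\to\cdots\to 2g-1$; at the step with current order $k$ the factor is $(\xi+k\psi_1)$ and the principal term carries a factor $2$. Inverting, $[\PP\oSQ_g(2g-1)]^\pm = 2^{-(g-1)}\prod_{k\ \mathrm{odd}<2g-1}(\xi+k\psi_1)[\PP\oSQ_g(1)]^\pm$ minus boundary terms, each weighted by the remaining $\prod(\xi+k'\psi_1)$ factors, the factor $\frac{1}{N!}$, and powers of $1/2$. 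I then multiply by $\xi^{2g-1}$ and integrate. For the principal term, every monomial $\xi^{2g-1+j}\psi_1^{g-1-j}$ is pushed forward with $p_*$; the dimension of $\oM_{g,1}$ forces $\xi$-degree $2g-1+j$ paired with $\psi_1^{g-1-j}$. Using $\xi^{2g}=0$ on the open part, together with the fact that only the top Segre piece $s^\pm_{g,2g-1}$ survives against $\psi_1^{g-1}$, the surviving term is $j=0$ with coefficient $\prod_{k}k=(2g-3)!!$. This gives $\frac{(2g-3)!!}{2^{g-1}}\beta_g^\pm$.

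For the boundary terms, I will use Lemma~\ref{lem:BB-graph-residue} and a dimension count to argue that only graphs whose top level consists of $m\geq 2$ vertices of genera $g_i$, each carrying a minimal stratum $\oSQ_{g_i}(2g_i-1)$, contribute nontrivially against $\xi^{2g-1}$. Here $\xi$ restricts to level $0$, and the top-level classes must absorb the full power $\xi^{2g_i-1}$, which produces $\prod a^\pm_{g_i}$. The bottom level is a genus $0$ vertex with the leg of order $k$ and poles of orders $-2g_i+1$ with zero residues, i.e.\ $\oSQ_0^{\res}(k,(-2g_i+1))$. The multiplicity $m(\oGamma)=\prod(2g_i-1)$, the $1/m!$ coming from automorphisms and unordered partitions, and the leftover $\psi_1$-powers $\psi_1^{N_{k,g}-1}$ on level $-1$, together with the product $(2g-3)!!/k!!$ from the remaining factors $(\xi+k'\psi_1)$ and the powers $2^{-N_{k,g}}$, should assemble into $Hur(\underline g)$. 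The main obstacle is exactly this bookkeeping: excluding graphs with positive genus at level $-1$ or with several bottom vertices (via $\xi^{2g_i}=0$ and vanishing of $\psi$-degrees), and matching the summation over $k$ to the step at which the graph appears. I would first handle each step of the induction separately and check that it produces precisely the $k$-th summand of $Hur$.

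For $\beta_g^\pm$, I will use \eqref{eq:weighted-segre} at $n=1$ to write $s^\pm_g(t)$ as a sum over trees of spin Chiodo classes, and substitute Lemma~\ref{eq:spinchiodo=2Lambda}: $\Omega^{\pm}(2t)=2^{g-1}\Lambda_g(-t)\Lambda_g(2t)$. Extracting the $t^{2g-1}$ part and pairing it with $\psi_1^{g-1}$, Mumford's relation $\Lambda(t)\Lambda(-t)=1$ together with $\lambda_g^2=0$ will reduce vertex contributions to linear Hodge integrals $\psi^{\cdot}\lambda_{g_v}$. I expect only the trees consisting of one central vertex of genus $g-m$ carrying leg $1$ with $m$ genus-$1$ leaves to survive, each leaf giving $b_1^\pm$ and the $1/m!$ coming from automorphisms. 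The central vertex then contributes $D(g-m)b^\pm_{g-m}$, after identifying $\int\psi^{g-1}\lambda_{g-1}\lambda_g$-type terms with $b_g$ through $\lambda_g\lambda_{g-1}$ evaluations and the normalisation $b_g^\pm$. This last identification of the constants $D$ with the $2^{\cdot}$ factors is a routine but delicate check.
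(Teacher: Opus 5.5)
Your derivation of the recursion follows the paper's route. You iterate Proposition~\ref{prop:strata-recursion} into Proposition~\ref{prop:2g-1-recursion} and strip the extra powers of $\xi$. You then show that only back-bone graphs with a rational bottom vertex and minimal strata on top survive, as in Lemmas~\ref{lem:non-trivial-terms}, \ref{lem:xi-int-gluing-vanishing} and Proposition~\ref{prop:boundary-contribution}, and these assemble into $Hur(\underline g)$.

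One correction is needed there. The vanishing you need is $\xi^{2g}=0$ in $A^\bullet(\PP\oH_{g,1})$ of the \emph{compactified} Hodge bundle (Lemma~\ref{lem:xi-vanishing}, from Mumford's relation and the Leray--Hirsch presentation), not on the open part. This matters most for the boundary term. Since $\Delta_k$ is supported on the boundary, only this global vanishing lets you replace $\xi^{2g-1}\prod_{\ell>k}\frac{\xi+\ell\psi_1}{2}$ by $\xi^{2g-1}\bigl(\prod_{\ell>k}\frac{\ell}{2}\bigr)\psi_1^{N_{k,g}-1}$.

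The genuine gap is in the formula for $\beta_g^\pm$. The top-degree part of $2^{g-1}\Lambda_g(-t)\Lambda_g(2t)$ is $(-1)^{g-1}2^{2g-2}\lambda_g\lambda_{g-1}$. Mumford's relation cannot reduce this to a linear Hodge class: the degree $2g-1$ part of $\Lambda_g(t)\Lambda_g(-t)=1$ is the vacuous identity $(-1)^{g}\lambda_g\lambda_{g-1}+(-1)^{g-1}\lambda_{g-1}\lambda_g=0$, and $\lambda_g^2=0$ plays no role.

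So after your (correct) reduction to elliptic-tail back-bone graphs and the projection formula from $\oM_{h,m+1}$ to $\oM_{h,1}$, with $h=g-m$, the central vertex gives the quadratic integral $\int_{\oM_{h,1}}\psi_1^{h-1}\lambda_h\lambda_{h-1}$. Turning this into $D(h)\,b_h^\pm$ is not a check of powers of $2$; it needs external input. You need Faber's evaluation $\int_{\oM_{h,1}}\psi_1^{h-1}\lambda_h\lambda_{h-1}=\frac{|B_{2h}|}{2^{2h-1}(2h-1)!!\,2h}$ and the Faber--Pandharipande value $b_h=\frac{2^{2h-1}-1}{2^{2h-1}}\frac{|B_{2h}|}{(2h)!}$. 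Comparing them gives $\int_{\oM_{h,1}}\psi_1^{h-1}\lambda_h\lambda_{h-1}=\frac{2^{h-1}(h-1)!}{2^{2h-1}-1}\,b_h$.

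This ratio is exactly what the definition $b_h^\pm=-\frac{2^{h-1}}{2^{2h-1}-1}b_h$ encodes, and it is the step the paper performs in \eqref{eq:chiodopm-bgpm}. With it, each leaf contributes $-\int_{\oM_{1,1}}\lambda_1=b_1^\pm$ and the central vertex contributes $D(h)b_h^\pm$, as you predicted.
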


Our tools for proving such a recursion property are
the recursive formula in Proposition ~\ref{prop:strata-recursion} and
the geometry of the spaces $\PP\oSQ_{g}(a)$ introduced in the previous section. In particular, the formula
of Proposition ~\ref{prop:strata-recursion} applied $g-1$ times to the case $n=1$
and $a=(2g-1)$ takes the following form, where the number $N_{k,g}$ is the appended number of zeros of order $2$ that we need to impose in each step $k$.
\begin{proposition}\label{prop:2g-1-recursion}
Let $g\in\ZZ_{\geq1}$. Then the following holds in $A^\bullet(\PP\oH_{g,1})$.
\begin{equation*}
    [\PP\oSQ_{g}(2g-1)]^{\pm}=\frac{1}{2^{g-1}}[\PP\oSQ_{g}(1)]^{\pm}\prod_{\substack{k=1, \\ k\ {\rm odd}}}^{2g-3}(\xi+k\psi_{1}) - \sum_{\substack{k=1, \\ k\ {\rm odd}}}^{2g-3}\frac{1}{N_{k,g}!}\frac{\Delta_{k}}{2}\prod_{\substack{\ell>k, \\ \ell\,{\rm odd}}}^{2g-3}\left(\frac{\xi+\ell\psi_{1}}{2}\right),
\end{equation*} 
where $N_{k,g}=g-1-\frac{k-1}{2}$ and
\begin{equation}\label{def:Delta-k}
    \Delta_{k}:=\sum_{\oGamma \in {\rm Bic}_{g,1+N_{k,g}}^*(\widetilde{2g-1-2k};\  1)} \frac{m(\oGamma)}{|\Aut(\oGamma)|} \pi_{N_{k,g}*} \zeta_{\oGamma\, *} [\PP\overline{\mathcal{SQ}}(\oGamma)]^\pm. 
\end{equation}
Finally, the same equation holds when we omit the $\pm$-symbol. 
\end{proposition}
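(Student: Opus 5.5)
We will prove Proposition \ref{prop:2g-1-recursion} by iterating Proposition \ref{prop:strata-recursion} with $n=1$, $i=1$, along the chain $a=(1),(3),(5),\dots,(2g-1)$, and then running the resulting telescoping identity downward. For odd $k$ with $1\leq k\leq 2g-3$, Proposition \ref{prop:strata-recursion} applied to $a=(k)$ (so $a_1=k>0$, $a'=(k+2)$, $N=N_{k,g}=g-1-\frac{k-1}{2}$) gives
\begin{equation*}
(\xi+k\psi_1)[\PP\oSQ_g(k)]^\pm = 2[\PP\oSQ_g(k+2)]^\pm + \frac{1}{N_{k,g}!}\Delta_k ,
\end{equation*}
where $\Delta_k$ is the boundary sum defined in \eqref{def:Delta-k}. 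Here the graphs are indexed by $\widetilde{k}$, the vector $(k,3,\dots,3)$ with $N_{k,g}$ entries equal to $3$. This is how I read the notation $\widetilde{2g-1-2k}$ in the statement, and I would check the indexing against the convention in the definition of $\Delta_k$. Solving for the next class, each step reads
\begin{equation*}
[\PP\oSQ_g(k+2)]^\pm = \frac{\xi+k\psi_1}{2}[\PP\oSQ_g(k)]^\pm - \frac{1}{N_{k,g}!}\frac{\Delta_k}{2}.
\end{equation*}

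Next we induct on $k$, proving that
\begin{equation*}
[\PP\oSQ_g(k+2)]^\pm=\frac{1}{2^{(k+1)/2}}[\PP\oSQ_g(1)]^\pm\prod_{\ell\leq k,\ \ell\text{ odd}}(\xi+\ell\psi_1) - \sum_{j\leq k,\ j\text{ odd}}\frac{1}{N_{j,g}!}\frac{\Delta_j}{2}\prod_{j<\ell\leq k,\ \ell\text{ odd}}\frac{\xi+\ell\psi_1}{2}.
\end{equation*}
The inductive step substitutes the previous expression into the solved step above and distributes the factor $\frac{\xi+k\psi_1}{2}$. This multiplies the main term by one more factor of $\frac12(\xi+k\psi_1)$. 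It also appends $\frac{\xi+k\psi_1}{2}$ to each earlier boundary term and adds the new term $-\frac{1}{N_{k,g}!}\frac{\Delta_k}{2}$. Taking $k=2g-3$ gives exactly the claimed formula, with $2^{g-1}$ coming from the $g-1$ steps. The base case $g=1$ is trivial, since $(2g-1)=(1)$ and both the product and the sum are empty.

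The one point needing care is that all products are taken in $A^\bullet(\PP\oH_{g,1})$. Multiplication by $\xi$ and $\psi_1$ is pulled back from the ambient space, so it commutes with the pushforwards $\pi_{N*}\zeta_{\oGamma*}$ only through the projection formula. I would therefore state explicitly that $\Delta_k\cdot(\xi+\ell\psi_1)$ denotes the product in the ambient Chow ring, with no need to move the classes inside the pushforward.

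The second obstacle is checking that the hypotheses of Proposition \ref{prop:strata-recursion} hold at each step: $n=1$, $a_1=k$ odd and positive, and $|a|=k\leq 2g-3<2g-1$, so $N_{k,g}\geq 1$. Finally, the unweighted version follows by the same computation, using the remark after Proposition \ref{prop:strata-recursion} that the recursion also holds without the $\pm$-symbol.
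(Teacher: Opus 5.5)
Your proposal is correct and follows the paper's argument: the paper obtains this proposition by applying Proposition \ref{prop:strata-recursion} with $n=1$, $i=1$ successively to $a=(1),(3),\dots,(2g-3)$, that is $g-1$ times with $N=N_{k,g}$ at step $k$, and unwinding the resulting recursion, which your induction on $k$ makes explicit. Your reading of the index set as ${\rm Bic}^*_{g,1+N_{k,g}}(\widetilde{k};1)$ with $\widetilde{k}=(k,3,\dots,3)$ is the right one, since it is the only reading consistent with $N_{k,g}$ and with the level $-1$ strata $\PP\oSQ_0(k,\dots)$ that appear in Proposition \ref{prop:boundary-contribution}.
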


Multiplying by $\xi^{2g-1}$ and pushing forward via $p\colon\PP\oH_{g,1}\to \oM_{g,1}$, the LHS of the 
equation in the aforementioned proposition 
is $a_{g}^{\pm}$. On the other hand, the RHS consists 
of multiple terms which we treat individually. 
The forthcoming subsections will thus be devoted to computing the \emph{principal term} (in terms of $\beta_g^\pm$, which are computed in Lemma \ref{lem:btilde-computation})
\begin{equation*}
    \int_{\oM_{g,1}} p_{*}\left(\xi^{2g-1}\frac{1}{2^{g-1}}[\PP\oSQ_{g}(1)]^{\pm}\prod_{\substack{k=1, \\ k\ {\rm odd}}}^{2g-3}(\xi+k\psi_{1})\right),
\end{equation*}
and the \emph{boundary term} (see Corollary \ref{cor:boundaryterm}). For the latter,
it will turn out that we want to compute (in terms of $Hur$ and $a_{g}^\pm$ of smaller $g$, see Proposition \ref{prop:boundary-contribution}) for each odd bi-colored graph $\oGamma$
appearing in $\Delta_{k}$ the contribution
\begin{equation*}
   \int_{\oM_{g,1}} p_{*}\left(\psi^{N_{k,g}-1}_{1}\xi^{2g-1}\pi_{N_{k,g}*}\zeta_{\oGamma*}[\PP\oSQ(\oGamma)]^{\pm}\right),
\end{equation*}
as well, and show that only the bi-colored graphs $\oGamma$
that are back-bone with rational central vertex contribute 
non-trivially. Finally, by assembling these terms and the computation of the principal part we will deduce the formula
in Lemma \ref{lem:agpm-recursion}.\\

We remark that the computations in the following two subsections hold verbatim if we replace the classes weighted by the spin sign with the classical ones, up to the evaluation of Lemma \ref{eq:spinchiodo=2Lambda}. 
\subsection{Principal term} 
The following vanishing result, which follows directly from Mumford's formula, will determine the non-trivial contributions to the principal part.
\begin{lemma}~\cite{Sauvaget-virtual}\label{lem:xi-vanishing} For all 
$k>0$ we have 
\begin{equation*} 
    \xi^{2g-1+k}=0 \in A^\bullet(\PP\oH_{g,n}).
\end{equation*}
\end{lemma}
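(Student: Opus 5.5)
The plan is to combine the Leray–Hirsch presentation \eqref{eq:Chow-Hodge-bundle} of $A^\bullet(\PP\oH_{g,n})$ with Mumford's relation $c(\oH_{g,n})\,c(\oH_{g,n}^\vee)=1$, i.e.\ $\Lambda_g(t)\Lambda_g(-t)=1$.

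\emph{Step 1: a criterion for vanishing.} By \eqref{eq:Chow-Hodge-bundle}, every class $x\in A^\bullet(\PP\oH_{g,n})$ can be written uniquely as $x=\sum_{i=0}^{g-1}c_i\,\xi^i$ with $c_i\in A^\bullet(\oM_{g,n})$ (pulled back along $p$). Projection formula then gives, for $0\le j\le g-1$,
\[
p_*(x\,\xi^j)=\sum_{i=0}^{g-1}c_i\,p_*(\xi^{i+j}).
\]
Recall $p_*(\xi^{a})=s_{a-g+1}$, where $s_\bullet$ is the Segre class of the Hodge bundle for the sign convention matching the relation $\sum_i\xi^{g-i}\lambda_i=0$. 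In particular $p_*(\xi^a)=0$ for $a<g-1$ and $p_*(\xi^{g-1})=1$. Hence the matrix $M_{ij}=p_*(\xi^{i+j})$, with $0\le i,j\le g-1$, is anti-triangular with $1$'s on the antidiagonal, so it is invertible over $A^\bullet(\oM_{g,n})$. Consequently $x=0$ as soon as $p_*(x\,\xi^j)=0$ for all $0\le j\le g-1$.

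\emph{Step 2: Mumford's formula.} Apply this criterion to $x=\xi^{2g-1+k}$ with $k>0$. We have
\[
p_*(\xi^{2g-1+k+j})=s_{g+k+j}(\oH_{g,n}),
\]
and $g+k+j>g$. The total Segre class is $c(\oH_{g,n})^{-1}$ or $c(\oH_{g,n}^\vee)^{-1}$, depending on the convention. By Mumford's relation $c(\oH)c(\oH^\vee)=1$, it equals $c(\oH_{g,n}^\vee)$ or $c(\oH_{g,n})$ respectively. In either case it is $\Lambda_g(\pm1)$, which has no components of degree larger than $g$, since $\oH_{g,n}$ has rank $g$. Hence all these pushforwards vanish, and $\xi^{2g-1+k}=0$.

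The only real point of care is the sign and dual convention linking $p_*\xi^a$ to Segre classes, given the relation in \eqref{eq:Chow-Hodge-bundle}. Mumford's identity makes the argument insensitive to this choice, since both $c(\oH)$ and $c(\oH^\vee)$ vanish in degrees above $g$. The remaining steps are routine.
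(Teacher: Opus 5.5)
Your proof is correct. The matrix $p_*(\xi^{i+j})$, $0\le i,j\le g-1$, becomes unitriangular after reversing the columns, so the pushforward pairing is perfect. Mumford's relation gives $p_*\xi^{g-1+i}=(-1)^i\lambda_i$, which vanishes for $i>g$.

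The paper gives no argument of its own: it cites \cite{Sauvaget-virtual} and says the lemma follows directly from Mumford's formula. That direct route is shorter than yours. Multiply the defining relation $P(\xi)=\sum_{i=0}^{g}\lambda_i\xi^{g-i}$ of \eqref{eq:Chow-Hodge-bundle} by $Q(\xi)=\sum_{j=0}^{g}(-1)^j\lambda_j\xi^{g-j}$. The coefficient of $\xi^{2g-m}$ in $P(\xi)Q(\xi)$ is the $t^m$-coefficient of $\Lambda_g(t)\Lambda_g(-t)=1$, so $\xi^{2g}=P(\xi)Q(\xi)=0$ in $A^\bullet(\PP\oH_{g,n})$.

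This one-line identity never passes through Segre classes. So the sign and dualization conventions you flagged as the delicate point do not arise, and no non-degeneracy argument is needed.

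Your detour is more conceptual. It separates the statement into two parts. The first is the vanishing $p_*\xi^{a}=0$ for $a\ge 2g$, which is just the top-degree bound on $c(\oH_{g,n}^\vee)$; on its own this is strictly weaker than the lemma. The second is a general principle that upgrades it: a class on $\PP\oH_{g,n}$ is determined by its pushforwards against $1,\xi,\dots,\xi^{g-1}$. You correctly included that second step rather than stopping at the pushforward vanishing.
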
 

After multiplying the RHS in Proposition~\ref{prop:2g-1-recursion} with $\xi^{2g-1}$, by Lemma \ref{lem:xi-vanishing}, only the constant term in $\xi$ of the polynomial $\prod_{\substack{k=1, \\ k\ {\rm odd}}}^{2g-3}(\xi+k\psi_{1})$  will contribute non-trivially. 
In particular, we have 
\begin{equation*}
    \xi^{2g-1}\frac{1}{2^{g-1}}[\PP\oSQ_{g}(1)]^{\pm}\prod_{\substack{k=1, \\ k\ {\rm odd}}}^{2g-3}(\xi+k\psi_{1})=\frac{(2g-3)!!}{2^{g-1}}\xi^{2g-1}[\PP\oSQ_{g}(1)]^{\pm}\psi^{g-1}_{1},
\end{equation*}
and so, using the projection formula, we only have to compute the integral
\begin{equation*}
    \int_{\oM_{g,1}} p_{*}\left(\xi^{2g-1}[\PP\oSQ_{g}(1)]^{\pm}\psi^{g-1}_{1}\right)=\int_{\oM_{g,1}}\psi_{1}^{g-1}s_{g,2g-1}^{\pm} = \beta_g^{\pm}.
\end{equation*}

\begin{lemma}[Segre$^\pm$ descendant computation]\label{lem:btilde-computation}\label{lem:bg-pm-computation} For all $g\geq1$ we have
\begin{equation*}
    \beta^\pm_g = (-1)^{g-1}\sum_{m=0}^{g-1}\frac{2^{2(g-m-1)}}{m!}
    \left(b_1\right)^{m} b_{g-m}^{\rm quad},
\end{equation*}
where $b_{g}^{\rm quad} \coloneqq \int_{\oM_{g,1}}\psi_1^{g-1}
\lambda_{g}\lambda_{g-1}$ are quadratic Hodge integrals, which in turn is equivalent to 
\begin{equation*}
    \beta^\pm_g = (-1)^{g}\sum_{m=0}^{g-1}\frac{(g-m-1)!2^{2(g-m-1)}}{m!}
    \left(b_1\right)^{m} b_{g-m}^{\pm}.
\end{equation*}
\end{lemma}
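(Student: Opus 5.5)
We compute $\beta^\pm_g=\int_{\oM_{g,1}}\psi_1^{g-1}s^\pm_{g,2g-1}$ by inverting Theorem~\ref{thm:Segre-Chiodo} with the tree formula \eqref{eq:weighted-segre}, and then evaluating the spin Chiodo classes with Lemma~\ref{eq:spinchiodo=2Lambda}.

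\textbf{Step 1: expand the Segre class over trees.} Setting $n=1$, \eqref{eq:weighted-segre} writes $s^\pm_g(t)$ as a sum over compact type trees $\Gamma\in{\rm Tree}_{g,1}$. Each vertex is decorated by
\[
\Omega^{\pm,1^{n(v)}}_{g(v),n(v)}(2t)-2^{2g(v)-1}=2^{g(v)-1}\Lambda_{g(v)}(-t)\Lambda_{g(v)}(2t)-2^{2g(v)-1},
\]
and each edge carries a weight $-t$. We extract the $t^{2g-1}$ coefficient. A vertex class has degree at most $2g(v)-1$ in $t$, since the top term $\lambda_g^2t^{2g}$ vanishes by Mumford's relation $\lambda_g^2=0$. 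Cutting an edge splits the genus. So the cohomological degree available is $\sum_v(2g(v)-1)+|E|=2g-1$, and the $t^{2g-1}$ part forces every vertex to contribute its top-degree piece $\lambda_{g(v)}\lambda_{g(v)-1}$ times a constant. We compute this constant from $\Lambda(-t)\Lambda(2t)$ using $\lambda_g^2=0$: the $t^{2g-1}$ coefficient is $2^{g-1}\bigl((-1)^g2^{g-1}+(-1)^{g-1}2^g\bigr)\lambda_g\lambda_{g-1}=(-1)^{g-1}2^{2g-2}\lambda_g\lambda_{g-1}$. The constant $-2^{2g(v)-1}$ does not appear here because we are in positive degree.

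\textbf{Step 2: only stars contribute.} We pair the resulting class with $\psi_1^{g-1}$. On a compact type tree, $\lambda_{g(v)}$ restricts to zero on any vertex class containing a non-separating node, so only trees of genus-positive vertices survive. Genus-$0$ vertices carry only the constant term, so they cannot appear in the $t^{2g-1}$ coefficient. We then count dimensions. A vertex $(g_v,n_v)$ gets $\lambda_{g_v}\lambda_{g_v-1}$, of degree $2g_v-1$, and $\dim\oM_{g_v,n_v}=3g_v-3+n_v$. The remaining $\psi$-power can only live on the vertex carrying leg $1$, because $\psi_1$ pulls back from that vertex. Every other vertex $v$ needs $3g_v-3+n_v=2g_v-1$, that is $g_v+n_v=2$, which forces $g_v=1$ and $n_v=1$: these are leaves of genus $1$. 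Hence the trees are stars: a central vertex of genus $g-m$ carrying leg $1$ and $m$ half-edges, attached to $m$ genus-one leaves. The automorphism factor is $1/m!$ and the edge factor is $(-1)^m$.

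\textbf{Step 3: evaluate the factors.} Each genus-one leaf contributes $(-1)^{0}2^{0}\int_{\oM_{1,1}}\lambda_1=b_1$. At the centre, $\psi_1^{g-1}$ is already of the correct degree $(g-m-1)+m$ after accounting for the $m$ extra markings. We integrate $\psi_1^{g-1}\lambda_{g'}\lambda_{g'-1}$ on $\oM_{g',1+m}$ with $g'=g-m$, and remove the $m$ extra markings using the string equation for $\lambda$-insertions. Pushing forward $\psi_1^{g'-1+m}$ along the forgetful maps gives $\psi_1^{g'-1}$, with multiplicity $1$ each time. This yields $b^{\rm quad}_{g-m}$. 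Collecting signs, $(-1)^m(-1)^{g-m-1}=(-1)^{g-1}$ together with $2^{2(g-m-1)}$ gives the first formula.

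\textbf{Step 4: the equivalent form.} We need the relation $b^{\rm quad}_g=-(g-1)!\,b^\pm_g$. Using the Faber--Pandharipande evaluations, this says $\int\psi^{g-1}\lambda_g\lambda_{g-1}=(g-1)!\,\frac{2^{g-1}}{2^{2g-1}-1}b_g$. This follows from the known formulas $\int_{\oM_{g,1}}\psi_1^{g-1}\lambda_g\lambda_{g-1}=\frac{1}{2^{2g-1}(2g-1)!!}\frac{|B_{2g}|}{2g}$ and $b_g=\frac{2^{2g-1}-1}{2^{2g-1}}\frac{|B_{2g}|}{(2g)!}$, after a factorial identity.

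The main obstacle is the careful bookkeeping in Steps 1--2: justifying that the top $t$-degree isolates $\lambda_g\lambda_{g-1}$ at every vertex, and that no other tree shapes survive the dimension count.
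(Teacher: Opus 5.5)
Your proposal is correct and follows essentially the same route as the paper: expand $s^\pm_{g,2g-1}$ over trees via \eqref{eq:weighted-segre}, use the degree count to force the top piece $(-1)^{g(v)-1}2^{2g(v)-2}\lambda_{g(v)}\lambda_{g(v)-1}$ at every vertex, reduce to stars with genus-one leaves, remove the extra markings by the string equation, and convert between the two forms with the Faber and Faber--Pandharipande evaluations. One point to tighten: genus-$0$ vertices drop out because their decoration $\Omega^{\pm}_{0,n}(2t)-2^{-1}$ vanishes identically, not merely because it is constant (a nonzero constant there would break your degree count $\sum_v(2g(v)-1)+|E|=2g-1$); the remark about non-separating nodes is beside the point, since every graph in ${\rm Tree}_{g,1}$ is of compact type.
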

\begin{proof} 
We use equality \ref{eq:weighted-segre} to identify the $2g-1$-th degree 
of $s_{g}^{\pm}$ with
\begin{align}
    s_{g,2g-1}^{\pm}&=\sum_{\Gamma\in{\rm Tree}_{g,1}}\frac{(-1)^{|E(\Gamma)|}}{|\Aut(\Gamma)|}\zeta_{\Gamma*}\left(\bigotimes_{v\in V(\Gamma)}       [\Omega_{g(v),n(v)}^{\pm,1^{n(v)}}(2t)]_{2g(v)-1}\right).\label{eq:top-weighted-Segre}
\end{align}
Indeed, for each graph appearing in said formula, $h^{1}(\Gamma)=0$, so 
the RHS after pushing forward via $\zeta_{\Gamma}$
lies in degree
\begin{equation*}
    E(\Gamma)+ \sum_{v\in V(\Gamma)}(2g(v)-1)=E(\Gamma)+2g-V(\Gamma)=2g-1.
\end{equation*}
No other terms contribute, since $2g(v)-1$ is the top degree of these spin Chiodo classes by Lemma \ref{eq:spinchiodo=2Lambda}.
Then, multiplying both sides of 
equation \ref{eq:top-weighted-Segre} with $\psi_{1}^{g-1}$ and integrating, we obtain after a straightforward computation
\begin{align}\label{eq:segre-psi}
\int_{\oM_{g,1}}\psi^{g-1}_{1}s^{\pm}_{g,2g-1}=\sum_{\Gamma\in{\rm Tree}_{g,1}}\frac{(-1)^{|E(\Gamma)|}}{|\Aut(\Gamma)|}
    \int_{\oM_{g(v_{1}),n(v_{1})}}&\psi_{1}^{g-1}[\Omega_{g(v_{1}),n(v_{1})}^{\pm,1^{n(v_{1})}}(2t)]_{2g(v_{1})-1} \times \\
    &\prod_{v\neq v_{1}}\int_{\oM_{g(v),n(v)}}
          [\Omega_{g(v),n(v)}^{\pm,1^{n(v)}}(2t)]_{2g(v)-1}, \nonumber
\end{align}
where $v_{1}\in V(\Gamma)$ 
is the vertex carrying the unique marking. 
Thus, on the vertices $v\in V(\Gamma)\setminus \{v_{1}\}$, the 
graph $\Gamma$ is decorated with a class 
in codimension $2g(v)-1$.
This class is \emph{not} in top degree unless $g(v)=1$, $n(v)=1$.
Therefore, the only graphs that may contribute non-trivially
are back-bone graphs with central vertex $v_{1}$ of genus
$g-m$ carrying the unique leg and $0\leq m \leq g-1$  outlying vertices of 
genus $1$ with $1$ marking, that is $\Gamma$ is a back-bone graph with elliptic tails decorated with $\lambda$-classes as depicted in the figure below\footnote{The particular $\lambda$-classes inserted at each vertex follows by the equation above and Lemma~\ref{eq:spinchiodo=2Lambda}, as it is also explained below the figure.}. 

\tikzset{every picture/.style={line width=0.75pt}}
\[\begin{tikzpicture}[x=0.75pt,y=0.75pt,yscale=-1,xscale=1]

\draw   (248,75.42) .. controls (248,68.47) and (253.63,62.83) .. (260.58,62.83) .. controls (267.53,62.83) and (273.17,68.47) .. (273.17,75.42) .. controls (273.17,82.37) and (267.53,88) .. (260.58,88) .. controls (253.63,88) and (248,82.37) .. (248,75.42) -- cycle ;
\draw    (319.43,160.55) -- (320,174.12) ;
\draw    (268,86) -- (307.5,131.55) ;
\draw    (310,89) -- (319.43,127.23) ;
\draw    (403,87) -- (333.5,134.55) ;
\draw   (297.42,76.42) .. controls (297.42,69.47) and (303.05,63.83) .. (310,63.83) .. controls (316.95,63.83) and (322.58,69.47) .. (322.58,76.42) .. controls (322.58,83.37) and (316.95,89) .. (310,89) .. controls (303.05,89) and (297.42,83.37) .. (297.42,76.42) -- cycle ;
\draw   (302.78,143.89) .. controls (302.78,134.69) and (310.23,127.23) .. (319.43,127.23) .. controls (328.63,127.23) and (336.09,134.69) .. (336.09,143.89) .. controls (336.09,153.09) and (328.63,160.55) .. (319.43,160.55) .. controls (310.23,160.55) and (302.78,153.09) .. (302.78,143.89) -- cycle ;
\draw   (400,77.42) .. controls (400,70.47) and (405.63,64.83) .. (412.58,64.83) .. controls (419.53,64.83) and (425.17,70.47) .. (425.17,77.42) .. controls (425.17,84.37) and (419.53,90) .. (412.58,90) .. controls (405.63,90) and (400,84.37) .. (400,77.42) -- cycle ;

\draw (354,75) node [anchor=north west][inner sep=0.75pt]   [align=left] {$\displaystyle \dotsc $};
\draw (256,70) node [anchor=north west][inner sep=0.75pt]   [align=left] {$\displaystyle 1$};
\draw (322,168.97) node [anchor=north west][inner sep=0.75pt]   [align=left] {$\displaystyle \psi _{1}^{g-1}$};
\draw (439,67) node [anchor=north west][inner sep=0.75pt]   [align=left] {$\displaystyle \lambda _{1}$};
\draw (332,67) node [anchor=north west][inner sep=0.75pt]   [align=left] {$\displaystyle \lambda _{1}$};
\draw (218,67) node [anchor=north west][inner sep=0.75pt]   [align=left] {$\displaystyle \lambda _{1}$};
\draw (72,139) node [anchor=north west][inner sep=0.75pt]   [align=left] {$\displaystyle \ \ \ (-1)^{g-m-1}2^{2( g-m-1)} \lambda _{g-m}$$\displaystyle \lambda _{g-m-1}$};
\draw (305,70) node [anchor=north west][inner sep=0.75pt]   [align=left] {$\displaystyle 1$};
\draw (408,71) node [anchor=north west][inner sep=0.75pt]   [align=left] {$\displaystyle 1$};
\draw (303.57,139.57) node [anchor=north west][inner sep=0.75pt]  [font=\tiny] [align=left] {$\displaystyle {\textstyle g-m}$};

\end{tikzpicture}
\]
\noindent For such graphs $\Gamma$ we have $E(\Gamma)=m$,
$V(\Gamma)=m+1$, and $|\Aut(\Gamma)|=m!$. In particular, 
$|V(\Gamma)\setminus\{v_{1}\}|=m$. Therefore the equation~\ref{eq:segre-psi}
becomes

\begin{align}\label{eq:top-segre-integral-w/Chiodo}
          \int_{\oM_{g,1}}\psi^{g-1}_{1}s_{g,2g-1}^{\pm}=\sum_{m=0}^{g-1}\frac{(-1)^{m}}{m!}
    \int_{\oM_{g-m,m+1}}&\psi_{1}^{g-1}[\Omega_{g-m,m+1}^{\pm,1^{m+1}}(2t)]_{2(g-m)-1}\ \times  \\
    &\prod_{i=1}^{m}\int_{\oM_{1,1}}
           [\Omega_{1,1}^{\pm,1}(2t)]_{1}.\nonumber
\end{align}

\noindent To compute the right-hand side we substitute formula \ref{eq:spinchiodo=2Lambda},
\[[\Omega_{g,n}^{\pm,1^{n}}(2t)]_{2g-1}=2^{2(g-1)}(-1)^{g-1}\lambda_{g}\lambda_{g-1}.\]
First, note that the projection formula along forgetting the last $m$-marked points then gives
\begin{equation*}
    \int_{\oM_{g-m,m+1}}\psi_{1}^{g-1}\Omega^{\pm,1^{m+1}}_{g-m,m+1}(2t)= \int_{\oM_{g-m,1}}\psi_{1}^{g-m-1}\Omega_{g-m,1}^{\pm,1}(2t).
\end{equation*}
Then, these integrals are the quadratic Hodge integrals explicitly computed in \cite{Fab}. Comparing with the explicit computation of the linear Hodge integral in \cite{FabPan} we get
\begin{align} 
    \int_{\oM_{g,1}}\psi_{1}^{g-1}\Omega_{g,1}^{\pm,1}(2t)
    &= 2^{2(g-1)}(-1)^{g-1}\cdot \int_{\oM_{g,1}}\psi_{1}^{g-1}\lambda_{g}\lambda_{g-1} \nonumber \\
    &= 2^{2(g-1)}(-1)^{g-1} \cdot \frac{2^{g-1}(g-1)!}{2^{2g-1}-1}b_{g} \nonumber \\
    &= 2^{2(g-1)}(-1)^{g} \cdot (g-1)!\ b_{g}^\pm. \label{eq:chiodopm-bgpm}
\end{align} 
Combined with Equality \ref{eq:top-segre-integral-w/Chiodo} this concludes 
\[\beta^\pm_g=(-1)^{g}
    \sum_{m=0}^{g-1}
    \frac{(g-m-1)!\ 2^{2(g-m-1)}}{m!}\left(b_1\right)^{m} b^{\pm}_{g-m}.\] 
\end{proof} 
\noindent This proves the second statement in Lemma \ref{lem:agpm-recursion}.

\subsection{Boundary term}
As stated before, this subsection is devoted to computing the boundary contribution in Proposition \ref{prop:2g-1-recursion} after multiplying by $\xi^{2g-1}$ and pushing forward to $\oM_{g,1}$. We start with a vanishing result which we will use along the way.
\begin{lemma}\label{lem:non-trivial-terms} Let $a\in\ZZ^{n}_{\geq0}$ be a vector of odd integers such 
that $|a|:=\sum a_{i}\leq2g-2+n$ and $t \in \NN$. Then 
\begin{equation*}
    \int_{\PP\oH_{g,n}}\xi^{ t}[\PP\oSQ_{g}(a)]=0
\end{equation*}
unless $t= 2g-1$, $n=1$, and $a=(2g-1)$. Moreover, this is 
true also for the class weighted by the spin sign.
\end{lemma}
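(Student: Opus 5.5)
We argue by dimension count combined with the vanishing of Lemma~\ref{lem:xi-vanishing}. The class $[\PP\oSQ_{g}(a)]$ (and likewise its spin-weighted version) is the class of a cycle of dimension
\[
\dim \PP\oH_{g,n} - {\rm codim}\,\PP\oSQ_g(a) = (4g-4+n) - \Bigl(g-1+\tfrac{|a|-n}{2}\Bigr) = 3g-3+n-\tfrac{|a|-n}{2}.
\]
Hence the integral can only be nonzero when $t$ equals this dimension; otherwise it vanishes for degree reasons. On the other hand, Lemma~\ref{lem:xi-vanishing} gives $\xi^{t}=0$ in $A^\bullet(\PP\oH_{g,n})$ whenever $t\geq 2g$. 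So a nonzero integral forces
\[
3g-3+n-\tfrac{|a|-n}{2}\leq 2g-1, \qquad\text{that is,}\qquad g-2+n\leq \tfrac{|a|-n}{2}.
\]
Combining this with the hypothesis $|a|\leq 2g-2+n$, which gives $\tfrac{|a|-n}{2}\leq g-1$, we obtain $g-2+n\leq g-1$, so $n\leq 1$.

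Next we dispose of $n=0$. In that case $a$ is empty, $|a|=0$, and the dimension is $3g-3$, which is at most $2g-1$ only if $g\leq 2$. For these small cases we check directly. For $g=2,\,n=0$, the dimension is $3 = 2g-1$, so the inequality holds and we must argue separately. Here I would use that $\PP\oH_{2,0}$ has fibre $\PP^1$ and that $\PP\oSQ_{2}$ is the locus of differentials with a double zero, i.e.\ Weierstrass points. This locus is a $6$-section, and $\int \xi^3$ over it equals $\int$ over the pushforward to $\oM_2$ of a Segre-type class of degree $3$. I expect this to be the main obstacle: either to rule out this case via the convention $n\geq1$ implicit throughout (the paper works on $\oM_{g,1}$), or to compute it and show it vanishes, or to note that the hypothesis $a\in\ZZ^n_{\geq 0}$ implicitly takes $n\geq 1$. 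I would first check whether the statement is only ever used with $n\geq1$, and restrict to that case.

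Assume now $n=1$. Then equality must hold throughout: $\tfrac{|a|-1}{2}= g-1$, so $a=(2g-1)$, and $t$ equals the dimension, which is $2g-1$. This is precisely the exceptional case of the statement. The spin-weighted version follows verbatim, since $[\PP\oSQ_g(a)]^{\pm}$ is a difference of classes of the same dimension and the argument used only dimension and the vanishing of $\xi^{\geq 2g}$.
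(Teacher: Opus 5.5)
Your argument is correct for $n\geq 1$ and is the paper's degree argument. The integral can be nonzero only if $t=\dim\PP\oSQ_g(a)=3g-3+n-\frac{|a|-n}{2}$, and $\xi^{2g}=0$ then forces $n\leq 1$, hence $n=1$, $a=(2g-1)$ and $t=2g-1$. The paper first expands $[\PP\oSQ_g(a)]=\sum_k \xi^k p^*\alpha_k$ by Leray--Hirsch, but that step is unnecessary: $\xi^t$ already vanishes in $A^\bullet(\PP\oH_{g,n})$ once $t\geq 2g$.

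On $n=0$, do not try to show the integral vanishes, because it does not.
\begin{itemize}
\item For $(g,n)=(2,0)$ one has $N=1$. The forgetful map therefore restricts to a birational map $\PP\oH_{2,1}(3)\to\PP\oSQ_{2}$ with $\pi_1^*\cO(1)=\cO(1)$ (Remark~\ref{rem:N!-factorial}).
\item Hence $\int\xi^3[\PP\oSQ_2]=a_2=\tfrac{1}{640}\neq 0$. This is the value giving $\mathrm{Vol}(\cH^1_{2,1}(3))=\pi^4/120$.
\item The spin-weighted integral is $-a_2$, since $\cO_C(w)$ is odd for a Weierstrass point $w$.
\end{itemize}
So the statement genuinely needs $n\geq 1$. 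The paper's proof also assumes this tacitly: its last step, ``zero unless $k=0$ and $n=1$'', does not rule out the surviving term $\xi^{3}p^*\alpha_0$ when $n=0$. The lemma is only ever applied in Lemma~\ref{lem:xi-int-gluing-vanishing}, where $n(v)\geq 1$. Restricting to $n\geq 1$, as you proposed, is therefore the right resolution.
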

\begin{proof}
This is essentially a degree argument similar to Lemma \ref{lem:xi-vanishing}.
Denote $\ell_{a}:=g-1+\frac{|a|-n}{2}$.
The Leray-Hirsch Theorem states that there exists $\alpha_{k}\in A^{\bullet}(\oM_{g,n})$ such that
$$[\PP\oSQ_{g}(a)]=\sum_{k=0}^{\ell_{a}}\xi^{k}p^*\alpha_{k} \in A^{\ell_{a}}(\PP\oH_{g,n}).$$
The integral is zero unless $t=\dim\PP\oSQ_{g}(a)=3g-3+n-\frac{|a|-n}{2}.$
Combining these we obtain $\xi^{t}[\PP\oSQ_{g}(a)]=\sum_{k=0}^{\ell_{a}}\xi^{3g-3+n-\frac{|a|-n}{2}+k}\,p^*\alpha_{k}$.
The assumption $|a|\leq 2g-2+n$ implies $
        3g-3+n-\frac{|a|-n}{2}+k\geq 2g-2+n+k$.
    Using Lemma ~\ref{lem:xi-vanishing}, the integrals are zero unless $k=0$ and $n=1$, in which case $a=2g-1$.   
\end{proof}
\begin{lemma}\label{lem:xi-int-gluing-vanishing} 
Let $V_{0}$ be a finite set, and let $\textbf{g}=(g_{v})_{v\in V_{0}}$ and $\textbf{n}=(n(v))_{v\in V_{0}}$ be vectors of positive integers such that $2g_v-2+n_{v}>0$ for all $v\in V_{0}$. Let $I=(I(v))_{v\in V_{0}}$ be a vector of vectors of odd positive integers such that for all $v\in V_{0}$ we have  ${\rm length}(I(v))=n(v)$. Consider the projectivization of product of cones
\begin{equation*}
    \PP\left(\prod_{v\in V_{0}}\oSQ_{g(v)}(I(v))
\right).
\end{equation*}
Then 
\begin{equation*}
    \int_{[\PP\left(\prod_{v\in V_{0}}\oSQ_{{g(v)}}(I(v))
\right)]}\xi^{2g-1}=0
\end{equation*}
unless for all vertices $v\in V_{0}$ we have $n(v)=1, I(v)=2g(v)-1$, and $\sum_{v\in V_{0}}g(v)=g$. The same holds if we consider the class weighted by the spin sign.
\end{lemma}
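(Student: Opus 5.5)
The plan is to reduce to the single-cone case of Lemma \ref{lem:non-trivial-terms} by a Künneth/degree argument on the product. Here $g$ is understood as $\sum_v g(v)$; otherwise $\xi^{2g-1}$ has the wrong degree and the integral vanishes for trivial dimensional reasons, which is part of the claimed condition. Write $X_v:=\oSQ_{g(v)}(I(v))$, a cone over $\oM_{g(v),n(v)}$ sitting inside $\oH_{g(v),n(v)}$. The product of cones $\prod_v X_v$ is a cone inside the direct sum bundle $E:=\boxplus_v \oH_{g(v),n(v)}$ over $\prod_v\oM_{g(v),n(v)}$, and $\xi$ denotes $c_1(\cO(1))$ on $\PP E$.

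The first step is to compute $\PP$-integrals of $\xi$ against the projectivized product via Segre classes. For a cone $C\subset E$ we have $p_*(\xi^{j}[\PP C])=s_{j-r_C+1}(C)$, where $r_C$ is the rank of the cone. Segre classes of a product of cones (in the join sense inside a direct sum) multiply: $s(\prod_v X_v)=\prod_v s(X_v)$, with each factor pulled back from its own $\oM_{g(v),n(v)}$. The analogous statement holds for the classes weighted by the spin sign, since those are differences of the classes of products of parity components, and the identity is bilinear. Each factor $X_v$ has rank $1$ over its image in $\oM_{g(v),n(v)}$, and the product has rank $|V_0|$. The total integral is therefore
\[
\int \xi^{2g-1}[\PP\textstyle\prod_v X_v]=\sum_{\sum_v j_v = 2g-1-|V_0|+|V_0|}\ \prod_v \int_{\oM_{g(v),n(v)}} p_*\bigl(\xi^{j_v}[\PP X_v]\bigr),
\]
where $j_v$ ranges over the exponents for which $p_*(\xi^{j_v}[\PP X_v])=s_{j_v}(X_v)$. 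Equivalently, the product integral splits into a sum over distributions $\sum_v t_v = 2g-1-(|V_0|-1)$ of $\prod_v\int_{\PP\oH_{g(v),n(v)}}\xi^{t_v}[\PP X_v]$. Concretely, one has to track that the fibre dimension of $\PP\prod$ is $|V_0|-1$ more than the sum of the fibre dimensions of the $\PP X_v$. I expect this bookkeeping of the rank shift to be the main technical point, and I would verify it by writing $\PP(\prod_v X_v)$ as a join and computing with Segre classes.

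Next, apply Lemma \ref{lem:non-trivial-terms} to each factor. A factor is nonzero only if $t_v=2g(v)-1$, $n(v)=1$ and $I(v)=(2g(v)-1)$. Summing, the exponents are forced to $\sum_v t_v=2\sum_v g(v)-|V_0|$, which matches $2g-1-(|V_0|-1)$ exactly when $\sum_v g(v)=g$. For any other data, either some factor vanishes or no admissible distribution of exponents exists, so the integral vanishes unless all three conditions hold. Finally, I would remark that the same argument goes through verbatim for the spin-weighted classes, because Lemma \ref{lem:non-trivial-terms} holds for them too.
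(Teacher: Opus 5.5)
Your proposal is correct and follows the paper's proof. Use multiplicativity of Segre classes on the product of cones to write the integral as a sum over $(t_v)$ with $\sum_v t_v = 2g-|V_0|$ of products of single-factor integrals, kill each factor with Lemma \ref{lem:non-trivial-terms}, and match degrees. The paper phrases that last step as the dimension count $\dim \PP\prod_v\oSQ_{g(v)}(2g(v)-1) = 2\sum_v g(v)-1$, which is equivalent to your exponent count.

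One correction: the constraint in your first display should read $\sum_v j_v = 2g-|V_0|$, as you state correctly in the next sentence, not $2g-1$.
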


\begin{proof} 
Using the Segre class decomposition on product of cones we obtain 
\begin{equation*}
    \xi^{2g-1}[\PP\left(\prod_{v\in V_{0}}\oSQ_{g(v)}(I(v))\right)]=\sum_{(t_{v})\ \vdash\ d}\ \bigotimes_{v\in V_{0}}\xi^{t_{v}}[\PP\oSQ_{g(v)}(I(v))],
\end{equation*}
where $d=2g-|V_{0}|$ and $(t_{v})_{v\in V(\Gamma)}$ 
is a partition of $d$. Therefore, we obtain
\begin{align}\label{eq:integrals-of-xi}
    \int_{[\PP\left(\prod_{v\in V_{0}}\oSQ_{g_{v}}(I(v))
\right)]}\xi^{2g-1}
    &= \sum_{(t_{v})\ \vdash\ d}\ \prod_{v\in V_{0}}\int_{\PP\oH_{g(v),n(v)}}\xi^{t_{v}}[
    \PP\oSQ_{g(v)}(I(v))].
\end{align}
Finally, by Lemma~\ref{lem:non-trivial-terms}, the RHS of the equation above vanish unless $n(v)=1$ and $I(v)=t_{v}=2g(v)-1$ for all vertices $v\in V_{0}$. 
Furthermore the dimension of 
$\oSQ_{g(v)}(2g(v)-1)$ is $2g(v)$,
thus we obtain
\begin{align*}
    \dim\PP\left(\prod_{v\in V_{0}}\oSQ_{g(v)}(2g(v)-1)\right)=-1+\sum_{v\in V_{0}}2g(v).
\end{align*} 
Therefore,
the LHS of equation ~\ref{eq:integrals-of-xi} will vanish unless 
\begin{equation*}
    -1+\sum_{v\in V_{0}}2g(v)=2g-1\Leftrightarrow\sum_{v\in V_{0}}g(v)=g.
\end{equation*}
Finally, we note that all arguments used hold both in weighted and non-weighted case.
\end{proof}

Recall the numbers $$h(k-1,-2\underline{g})=\int_{\oM_{0,m+1}}\psi_{1}^{N_{k,g}-1}p_{*}[\PP\oSQ^{\rm res}_{0}(k,\ (-2g_i+1)_{i = 1}^m)]$$ defined just above Lemma \ref{lem:agpm-recursion}. These can be used to compute contributions coming from the boundary. In the notation of Proposition \ref{prop:strata-recursion} we have:
\begin{proposition}\label{prop:boundary-contribution} Let $\oGamma$ be an odd bi-colored graph in ${\rm Bic}_{g,1+N_{k,g}}^*(\widetilde{2g-1-2k};\  1)$, and let $k\in\{1,\dots,2g-3\}$ be an odd number.
Then
\begin{equation*}
    \int_{\oM_{g,1}}p_{*}\left(\psi^{N_{k,g}-1}_{1}\xi^{2g-1}\pi_{N_{k,g}*}\zeta_{\oGamma*}[\PP\oSQ(\oGamma)]^{\pm}\right) = \begin{cases}  N_{k,g}!\ h\big(k-1,(-2g_v)_{v\in V_0}\big)\prod_{v\in V_{0}}a_{g_v}^{\pm} & \text{ if } \oGamma \text{ satisfies }(*), \\ 0 & \text{ else} . \end{cases}
\end{equation*}
Here $(*)$ means that $\oGamma$ is a back-bone graph such that the
unique vertex in $V_{-1}$ is of genus $0$, carries all $N_{k,g}$ markings, 
and for all $v\in V_{0}$ the twist $I$ satisfies $I(h)=2g(v)-1$ on the unique half-edge $h$ adjacent to $v$.
\end{proposition}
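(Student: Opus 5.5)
We would first pull the integral back to the space of the graph: by the projection formula, and since $\pi_N^*\cO(1)\cong\cO(1)$ and $\psi_1$ pulls back to itself up to boundary corrections that can be absorbed (leg $1$ lies on level $-1$), the integral equals
\[
\int_{\PP\oH(\oGamma)} \zeta_{\oGamma}^*\bigl(\psi_1^{N_{k,g}-1}\xi^{2g-1}\bigr)\,[\PP\oSQ(\oGamma)]^{\pm}.
\]
On the image of $\zeta_{\oGamma}$ the differential vanishes on level $-1$, so $\zeta_{\oGamma}^*\cO(-1)$ is the tautological line of the level-$0$ cone $\oSQ_{\textbf{g}[0]}(\textbf{a}[0])$. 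Thus $\xi$ only involves the level-$0$ factor, and $\psi_1$ lives on the level-$(-1)$ factor $\oM_{\textbf{g}[-1],\textbf{n}[-1]}$. The integral then factors as
\[
\Bigl(\int \xi^{2g-1}[\PP\oSQ_{\textbf{g}[0]}(\textbf{a}[0])]^\pm\Bigr)\cdot\Bigl(\int_{\oM_{\textbf{g}[-1],\textbf{n}[-1]}}\psi_1^{N_{k,g}-1}\,p_{-1*}[\PP\oSQ_{\textbf{g}[-1]}(\textbf{a}[-1],R[-1])]^\pm\Bigr).
\]

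We would first apply Lemma \ref{lem:xi-int-gluing-vanishing} to the level-$0$ factor. It vanishes unless every $v\in V_0$ has $n(v)=1$, twist $2g(v)-1$ on its unique half-edge, and $\sum_{v\in V_0}g(v)=g$. In particular no $V_0$ vertex carries the $N$ appended legs or leg $1$. Since $g=h^1(\Gamma)+\sum_v g(v)$, this forces $h^1(\Gamma)=0$ and genus $0$ on all level $-1$ vertices. Each top vertex has a single edge, so by connectedness there is exactly one level $-1$ vertex, of genus $0$, carrying leg $1$, all $N_{k,g}$ legs, and one edge to each $v\in V_0$. This is precisely the back-bone condition $(*)$. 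The condition that the stabilization after forgetting the $N$ legs is nontrivial is automatic here, since $|V_0|\ge 1$ and each $g_v\ge 1$. The level-$0$ factor then equals $\prod_v a^\pm_{g_v}$ by definition, together with Proposition \ref{prop:segreMV}.

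For the level $-1$ factor we use Lemma \ref{lem:BB-graph-residue}: on a back-bone graph the residues at the nodes vanish, so $R[-1]=\{0\}$. The edge twists are $-(2g_v-1)=-2g_v+1$, and leg $1$ has twist $2g-1-2k$ while the $N$ appended legs have twist $3$. Pushing forward along forgetting the $N$ points of order $2$ introduces the factor $N_{k,g}!$ by Remark \ref{rem:N!-factorial}. This identifies the factor with $N_{k,g}!\,h(k-1,(-2g_v))$. The genus-$0$ parity is even, so the $\pm$-weighting is trivial, and the $N!$ convention here accounts for the $1/N_{k,g}!$ in Proposition \ref{prop:2g-1-recursion}.

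The main obstacle is the bookkeeping of twist labels. Reconciling the leg value $2g-1-2k$ with the argument "$k$" in $h$ requires tracking the indexing in Proposition \ref{prop:2g-1-recursion} carefully. A dimension count gives the needed consistency check: $\dim\oM_{0,m+1}-\operatorname{codim}$ must equal $N_{k,g}-1$. A secondary issue is justifying that $\psi_1$ pulls back exactly, which holds because leg $1$ is on the unique bottom vertex.
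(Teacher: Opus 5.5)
Your proposal is correct and follows the paper's proof in its overall structure. You split the integrand by the projection formula into $\xi^{2g-1}$ on level $0$ and $\psi_1^{N_{k,g}-1}$ on level $-1$. You use Lemma~\ref{lem:xi-int-gluing-vanishing} to force $n(v)=1$, $I(h)=2g(v)-1$ and $\sum_{V_0}g(v)=g$. You get $R[-1]=\{0\}$ from Lemma~\ref{lem:BB-graph-residue} and pick up $N_{k,g}!$ from Remark~\ref{rem:N!-factorial}.

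The one step you do differently is the uniqueness of the bottom vertex.
\begin{itemize}
\item The paper argues by dimension: forgetting the differential on the projectivized product of $|V_{-1}|$ bottom cones loses $|V_{-1}|-1$ dimensions, so $p_{-1*}$ vanishes unless $|V_{-1}|=1$.
\item You note instead that every $v\in V_0$ has a single half-edge and that edges of a bi-colored graph only join $V_0$ to $V_{-1}$. Connectedness then forces a star with one bottom vertex.
\end{itemize}
Given the level-$0$ conclusion, which both proofs establish first, your argument is purely combinatorial and shorter. It also avoids the implicit assumption in the paper's count that $R[-1]$ does not link different bottom vertices, so that they can be rescaled independently. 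The paper's count, in turn, is in principle independent of the level-$0$ analysis.

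There are three small corrections.
\begin{itemize}
\item The ${\rm Bic}^*$ condition is not automatic. If $|V_0|=1$, the bottom vertex carries only leg $1$ and one edge after forgetting the $N_{k,g}$ legs, so it is contracted and the stabilization is trivial. It is exactly $|V_0|\geq 2$, guaranteed by ${\rm Bic}^*$, that keeps the bottom vertex stable. That stability is what lets $\pi_{N_{k,g}}\circ\zeta_{\oGamma}$ factor through $\zeta_{\Gamma'}$ with $\psi_1$ landing on the bottom factor. Leg $1$ lying on the unique bottom vertex is not enough: for $|V_0|=1$ the pull-back of $\psi_1$ is the $\psi$-class at the top vertex's half-edge.
\item $\psi_1$ does not pull back exactly along $\pi_{N_{k,g}}$. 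Keep $\pi_{N_{k,g}}^*\psi_1$ on $\oM_{0,m+1+N_{k,g}}$ and use the projection formula to land on $\int_{\oM_{0,m+1}}\psi_1^{N_{k,g}-1}\pi_{N_{k,g}*}p_*[\cdots]$. The paper achieves this by never moving $\psi_1$ inside $\pi_{N_{k,g}*}$.
\item Your ``main obstacle'' is settled by a degree count. On the genus-$0$ bottom vertex the orders sum to $-2$. The order-$2$ legs contribute $2N_{k,g}=2g-1-k$ and the poles contribute $-2g$, so leg $1$ has order $k-1$, i.e.\ twist $k$. This matches the fact that the $k$-th step of Proposition~\ref{prop:2g-1-recursion} is Proposition~\ref{prop:strata-recursion} applied to $a=(k)$. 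The label $\widetilde{2g-1-2k}$ in the statement should therefore be read as $\widetilde{k}=(k,3^{N_{k,g}})$.
\end{itemize}
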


\begin{proof}
First we will decompose the integrand into the level $0$ and level $-1$ contributions.
Note that, $\pi_{N_{k,g}}^*(\cO(1))=\cO(1)$ and that the differential vanishes identically on level $-1$ (implying that $\xi^{2g-1}$ is
supported only in level $0$). Then using repeatedly the projection formula we obtain
\begin{align*}
    p_{*}\bigg(\psi^{N_{k,g}-1}_{1}\xi^{2g-1}\pi_{N_{k,g}*}\zeta_{\oGamma*}[\PP\oSQ(\oGamma)]^{\pm}\bigg)&=\psi^{N_{k,g}-1}_{1}p_{*}\bigg(\xi^{2g-1}\pi_{N_{k,g}*}\zeta_{\oGamma*}[\PP\oSQ(\oGamma)]^{\pm}\bigg) \\
    &=\psi^{N_{k,g}-1}_{1}p_{*}\pi_{N_{k,g}*}\bigg(\xi^{2g-1}\zeta_{\oGamma*}[\PP\oSQ(\oGamma)]^{\pm}\bigg) \\
    &=\psi^{N_{k,g}-1}_{1}p_{*}\pi_{N_{k,g}*}\zeta_{\oGamma*}\bigg( \xi^{2g-1}\otimes 1\cdot[\PP\oSQ(\oGamma)]^{\pm}\bigg).
\end{align*}
Denote by superscript $\Gamma'$ the stabilised graph after forgetting $N_{k,g}$ additional markings. Then following commutative diagrams including forgetting markings, differentials and gluing morphisms we obtain
\begin{align*}&\psi^{N_{k,g}-1}_{1}\zeta_{\Gamma'*}\ (p_{0*}\otimes p_{-1*})\ \pi_{N_{k,g}*}\bigg(\xi^{2g-1}\otimes 1\cdot[\PP\oSQ(\oGamma)]^{\pm}\bigg).\\
\end{align*}
Which in turn, by the fact that the additional markings $N_{g,k}$ are distributed as $N_0$ resp. $N_{-1}$ over level $0$ and $-1$, equals
\begin{align*}
&\zeta_{\Gamma'*}\Bigg(1\otimes \psi^{N_{k,g}-1}_{1}  \cdot   (p_{0*}\otimes p_{-1*})\ \pi_{N_{k,g}*}\bigg(\xi^{2g-1}\otimes 1\cdot[\PP\oSQ(\oGamma)]^{\pm}\bigg)\Bigg)= \\
&\zeta_{\Gamma'*}\Bigg(1\otimes \psi^{N_{k,g}-1}_{1}  \cdot   (p_{0*}\otimes p_{-1*})\ (\pi_{N_{0}*}\otimes \pi_{N_{-1}*})\bigg(\xi^{2g-1}\otimes 1\cdot[\PP\oSQ(\oGamma)]^{\pm}\bigg)\Bigg)= \\ 
&\zeta_{\Gamma'*}\Bigg(p_{0*}\pi_{N_{0}*}\big(\xi^{2g-1}\cdot[\PP\oSQ_{\textbf{g}[0]}(\textbf{a}[0],R[0])]^{\pm}\big)\otimes\psi_1^{N_{g,k}-1}p_{-1*}\pi_{N_{-1}*}[\PP\oSQ_{\textbf{g}[-1]}(\textbf{a}[-1],R[-1])]^{\pm} \bigg)\Bigg).\\ 
\end{align*}

Therefore, our integral splits into a product of integrals in level $0$ and $-1$.
We will treat each level individually.  \\

\textbf{Level 0:}\\
By Lemma~\ref{lem:xi-int-gluing-vanishing}, the integral in level $0$ before forgetting markings and the differentials vanishes unless $n(v)=1$ and $I(v)=2g(v)-1$ for all $v\in V_{0}$ and $\sum_{v\in V_{0}}g(v)=g$. In particular $N_{0} = 0$
and all additional $N_{k,g}$ markings are concentrated in level $-1$. Altogether, we obtain that level $0$ contributes as 
\begin{equation*}
    \prod_{v\in V_{0}}\int_{[\PP\oSQ_{g(v)}(2g(v)-1)]^{\pm}}\xi^{2g(v)-1}=\prod_{v\in V_{0}}a_{g(v)}^{\pm}.\\
\end{equation*}

\textbf{Level -1:}\\
The fact that $\sum_{v\in V_{0}}g(v)=g$ implies
that the underlying graph ($\Gamma'$ and hence) $\Gamma$ is of compact type, i.e. $h^{1}(\Gamma)=0$.
Moreover, by a dimension argument, there
is a unique vertex $v$ in level $-1$ which necessarily satisfies $g(v)=0$. Indeed, in order for \[ \psi_{1}^{N_{k,g}-1}\left(\pi_{N_{k,g}}\circ p_{-1}\right)_{*}\left(\PP\oSQ_{\textbf{g}[-1]}(\textbf{a}[-1],R[-1])\right)\]
to not vanish, the push-forward must have the same dimension as $\PP\oSQ_{\textbf{g}[-1]}(\textbf{a}[-1],R[-1])$. Since the forgetful map to $\oM_{g,1}$ forgets all the differentials, we have 
\begin{align*}
    \dim \left(\pi_{N_{k,g}}\circ p_{-1}\right)\left(\PP\oSQ_{\textbf{g}[-1]}(\textbf{a}[-1],R[-1])\right) &\leq  \dim\left(\prod_{v\in V_{-1}}\PP\oSQ_{0}(a(v), R(v))\right) \\
   &\leq \dim\PP\left(\prod_{v\in V_{-1}}\oSQ_{0}(a(v), R(v))\right) - \left(|V_{-1}| -1\right).
\end{align*} 
In particular, $|V_{-1}| = 1$ and $\oGamma$ is a back-bone graph. Furthermore,
the contribution of the unique vertex in level $-1$
is given by 
\begin{equation*}
    \int_{\oM_{0,m+1}}\psi_{1}^{N_{k,g}-1}\pi_{N_{k,g}*}\left(p_{-1*}[\PP\oSQ_{0}(k,(-2g(v)+1)_{v\in V_{0}},\underbrace{3,\dots,3}_{\times N_{k,g}}, R[-1])]^{\pm}\right).
\end{equation*}
Since $\Gamma$ is a back-bone graph, the global residue condition implies   
the residue vanishes on all poles parametrized by the vertices in level $0$ (see Lemma ~\ref{lem:BB-graph-residue}), that is $R[-1]=\{0\}$. Finally, as discussed in Remark ~\ref{rem:N!-factorial} and by the commutativity of forgetting the differential and the markings we have 
\begin{equation*}
    \pi_{N_{k,g}*}\left(p_{*}[\PP\oSQ_{0}(k,(-2g(v)+1)_{v\in V_{0}},\underbrace{3,\dots,3}_{\times N_{k,g}}, R[-1])]^{\pm}\right)=N_{k,g}!p_{*}[\PP\oSQ_{0}(k,(-2g(v)+1)_{v\in V_{0}},\{0\})]^{\pm},
\end{equation*}
which proves the desired result.
\end{proof}

In particular, the boundary contributions always come from back-bone graphs, with contribution the one schematically presented in the following figure.

\[\begin{tikzpicture}[x=0.75pt,y=0.75pt,yscale=-1,xscale=1]
\draw   (238,98.22) .. controls (238,91.27) and (243.63,85.63) .. (250.58,85.63) .. controls (257.53,85.63) and (263.17,91.27) .. (263.17,98.22) .. controls (263.17,105.17) and (257.53,110.8) .. (250.58,110.8) .. controls (243.63,110.8) and (238,105.17) .. (238,98.22) -- cycle ;
\draw    (309.43,183.35) -- (310,196.92) ;
\draw    (258,108.8) -- (297.5,154.35) ;
\draw    (300,102.8) -- (309.43,150.03) ;
\draw    (393,109.8) -- (323.5,157.35) ;
\draw   (287.42,90.22) .. controls (287.42,83.27) and (293.05,77.63) .. (300,77.63) .. controls (306.95,77.63) and (312.58,83.27) .. (312.58,90.22) .. controls (312.58,97.17) and (306.95,102.8) .. (300,102.8) .. controls (293.05,102.8) and (287.42,97.17) .. (287.42,90.22) -- cycle ;
\draw   (292.78,166.69) .. controls (292.78,157.49) and (300.23,150.03) .. (309.43,150.03) .. controls (318.63,150.03) and (326.09,157.49) .. (326.09,166.69) .. controls (326.09,175.89) and (318.63,183.35) .. (309.43,183.35) .. controls (300.23,183.35) and (292.78,175.89) .. (292.78,166.69) -- cycle ;
\draw   (390,100.22) .. controls (390,93.27) and (395.63,87.63) .. (402.58,87.63) .. controls (409.53,87.63) and (415.17,93.27) .. (415.17,100.22) .. controls (415.17,107.17) and (409.53,112.8) .. (402.58,112.8) .. controls (395.63,112.8) and (390,107.17) .. (390,100.22) -- cycle ;

\draw (344,97.8) node [anchor=north west][inner sep=0.75pt]   [align=left] {$\displaystyle \dotsc $};
\draw (244,92.8) node [anchor=north west][inner sep=0.75pt]   [align=left] {$\displaystyle g_{1}$};
\draw (293,83.8) node [anchor=north west][inner sep=0.75pt]   [align=left] {$\displaystyle g_{2}$};
\draw (396,93.8) node [anchor=north west][inner sep=0.75pt]   [align=left] {$\displaystyle g_{m}$};
\draw (317.07,191) node [anchor=north west][inner sep=0.75pt]  [font=\small] [align=left] {$\displaystyle N_{g,k} !\ \psi _{1}^{N_{g,k} -1}$};
\draw (328.09,165.69) node [anchor=north west][inner sep=0.75pt]  [font=\tiny] [align=left] {$\displaystyle \left[\mathbb{P}\overline{\mathcal{S} Q}_{0}^{\text{res}}(\textcolor[rgb]{0.82,0.01,0.11}{k,1-2g}\textcolor[rgb]{0.82,0.01,0.11}{_{1}}\textcolor[rgb]{0.82,0.01,0.11}{,\dotsc ,1-2g}\textcolor[rgb]{0.82,0.01,0.11}{_{m}})\right]^{\pm }$};
\draw (185,119.8) node [anchor=north west][inner sep=0.75pt]  [font=\tiny] [align=left] {$\displaystyle [\mathbb{P}\overline{\mathcal{S} Q}_{g_{1}}(\textcolor[rgb]{0.82,0.01,0.11}{2g}\textcolor[rgb]{0.82,0.01,0.11}{_{1}}\textcolor[rgb]{0.82,0.01,0.11}{-1})]^{\pm }$};
\draw (206,69.8) node [anchor=north west][inner sep=0.75pt]   [align=left] {$\displaystyle \xi ^{2g_{1} -1}$};
\draw (288,55.8) node [anchor=north west][inner sep=0.75pt]   [align=left] {$\displaystyle \xi ^{2g_{2} -1}$};
\draw (404,64.8) node [anchor=north west][inner sep=0.75pt]   [align=left] {$\displaystyle \xi ^{2g_{m} -1}$};
\draw (302,105.8) node [anchor=north west][inner sep=0.75pt]  [font=\tiny] [align=left] {$\displaystyle [\mathbb{P}\overline{\mathcal{S} Q}_{g_{2}}(\textcolor[rgb]{0.82,0.01,0.11}{2g}\textcolor[rgb]{0.82,0.01,0.11}{_{2}}\textcolor[rgb]{0.82,0.01,0.11}{-1})]^{\pm }$};
\draw (391.07,120.8) node [anchor=north west][inner sep=0.75pt]  [font=\tiny] [align=left] {$\displaystyle [\mathbb{P}\overline{\mathcal{S} Q}_{g_{m}}(\textcolor[rgb]{0.82,0.01,0.11}{2g}\textcolor[rgb]{0.82,0.01,0.11}{_{m}}\textcolor[rgb]{0.82,0.01,0.11}{-1})]^{\pm }$};
\draw (305,160) node [anchor=north west][inner sep=0.75pt]   [align=left] {$\displaystyle 0$};

\end{tikzpicture}\]

As a corollary, we obtain the following computation of the boundary term. 

\begin{corollary}\label{cor:boundaryterm} In the notation of Lemma ~\ref{lem:agpm-recursion} and Proposition \ref{prop:2g-1-recursion}, we have 
\begin{equation*}
    p_{*}\left(\xi^{2g-1}\sum_{\substack{k=1, \\ k\ {\rm odd}}}^{2g-3}\frac{1}{N_{k,g}!}\frac{\Delta_{k}}{2}\prod_{\substack{\ell>k, \\ \ell\,{\rm odd}}}^{2g-3}\left(\frac{\xi+\ell\psi_{1}}{2}\right)\right)=\sum_{\substack{2\leq m\leq g,\\\underline{g}\in{\rm Part}(g)_{m}}}\frac{1}{m!} \left(\prod_{i=1}^{m}(2g_{i}-1)\,a_{g_{i}}^{\pm}\right)\,Hur(\underline{g}).
\end{equation*}
\end{corollary}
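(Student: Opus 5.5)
Multiply the boundary term of Proposition \ref{prop:2g-1-recursion} by $\xi^{2g-1}$ and expand the product $\prod_{\ell>k}\frac{\xi+\ell\psi_1}{2}$. Each $\Delta_k$ is a class of codimension $2g-1-(g-1-N_{k,g})-\#\{\ell>k\}$ on a space whose $\xi$-degree is already saturated. Indeed, $\xi^{2g-1}$ is entirely absorbed by level $0$ (Proposition \ref{prop:boundary-contribution}), and any further power of $\xi$ on level $0$ vanishes by Lemma \ref{lem:xi-int-gluing-vanishing}. So only the pure $\psi_1$-part of the product survives. The number of odd $\ell$ with $k<\ell\le 2g-3$ equals $N_{k,g}-1$, so this part is
\[
\prod_{\ell>k}\frac{\ell}{2}\,\psi_1^{N_{k,g}-1}=\frac{(2g-3)!!}{k!!\,2^{N_{k,g}-1}}\,\psi_1^{N_{k,g}-1}.
\]
Combined with the prefactor $\frac{1}{2}$, this gives exactly the weight $\frac{(2g-3)!!}{k!!\,2^{N_{k,g}}}$ appearing in $Hur(\underline g)$. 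I would verify this degree bookkeeping first, since it fixes the shape of the answer.

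Next I would apply Proposition \ref{prop:boundary-contribution} to each $\oGamma$ appearing in $\Delta_k$. Every graph failing $(*)$ contributes zero. Each graph satisfying $(*)$ contributes $N_{k,g}!\,h(k-1,(-2g_v))\prod_v a^\pm_{g_v}$, and the factor $N_{k,g}!$ cancels the $\frac{1}{N_{k,g}!}$ in front of $\Delta_k$. What remains is a sum over back-bone graphs with a rational central vertex carrying leg $1$ (twist $k$), the $N_{k,g}$ extra legs, and $m$ edges to vertices of genera $g_1,\dots,g_m$ summing to $g$. The multiplicity is $m(\oGamma)=\prod_i (2g_i-1)$, since each edge has twists $\pm(2g_i-1)$, and this supplies the factors $(2g_i-1)$ in the claimed formula.

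I expect the main obstacle to be the combinatorics of automorphisms. One has to show that summing $\frac{1}{|\Aut(\oGamma)|}$ over isomorphism classes of such graphs with given genus multiset turns into $\frac{1}{m!}$ summed over ordered tuples, equivalently over partitions with the appropriate multiplicity. I would handle this by labelling the $m$ upper vertices, which gives $m!$ labellings modulo $\Aut(\oGamma)$, and by checking that automorphisms permuting the $N_{k,g}$ extra legs are already accounted for in $\pi_{N_{k,g}*}$ through Remark \ref{rem:N!-factorial}.

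Finally I would justify $m\ge2$. If $m=1$, the central vertex has genus $0$ with twists $k$ and $1-2g$ and residue zero. A residueless genus-$0$ differential with a single pole is exact, so this is the stratum of polynomials. The class $p_*[\PP\oSQ_0^{\rm res}(k,1-2g)]$ paired with $\psi_1^{N_{k,g}-1}$ should then vanish for dimension reasons, or the graph is excluded by the ${\rm Bic}^*$ stabilization condition. Collecting terms over $k$ and partitions yields the stated identity.
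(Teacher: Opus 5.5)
Your proposal is correct and follows the paper's proof step by step: you reduce to the pure $\psi_1^{N_{k,g}-1}$ term with coefficient $\frac{(2g-3)!!}{k!!\,2^{N_{k,g}}}$, apply Proposition \ref{prop:boundary-contribution} with the $N_{k,g}!$ cancelling, use $m(\oGamma)=\prod_i(2g_i-1)$, convert $1/|\Aut(\oGamma)|$ into the $1/m!$ weighting by labelling the outlying vertices, and exclude $m=1$ via the ${\rm Bic}^*$ stabilization condition (this is the reason you should commit to, not a dimension count). The one step worth tidying is the first: the paper discards all positive powers of $\xi$ at once by Lemma \ref{lem:xi-vanishing}, since $\xi^{2g-1+j}=0$ already in $A^\bullet(\PP\oH_{g,1})$, so your level-$0$ argument is unnecessary (it would need Lemma \ref{lem:xi-int-gluing-vanishing} for higher powers of $\xi$ than it states), as is your codimension count (the codimension of $\Delta_k$ is $2g-1-N_{k,g}$, not what you wrote).
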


\begin{proof} First, we observe that for all $k$,
by Lemma ~\ref{lem:xi-vanishing} we have 
\begin{equation*}
    \xi^{2g-1}\sum_{\substack{k=1, \\ k\ {\rm odd}}}^{2g-3}\frac{1}{N_{k,g}!}\prod_{\substack{\ell>k \\ \ell \,{\rm odd}}}^{2g-3}\frac{(\xi+\ell\psi_{1})}{2}\frac{1}{2}\Delta_{k}=\sum_{\substack{k=1, \\ k\ {\rm odd}}}^{2g-3}\frac{1}{N_{k,g}!}\frac{1}{2}\left(\prod_{\substack{\ell>k \\ \ell \,{\rm odd}}}^{2g-3}\frac{\ell}{2}\right)\psi_{1}^{N_{k,g}-1}\xi^{2g-1}\Delta_{k}.
\end{equation*}
Moreover, we simplify 
\begin{equation}\label{eq:coef-Hur(g)}
    \frac{1}{2}\left(\prod_{\substack{\ell>k \\ \ell \,{\rm odd}}}^{2g-3}\frac{\ell}{2}\right)=\frac{(2g-3)!!}{k!!2^{N_{k,g}}}
\end{equation}
Furthermore, for each odd  $k\in\{1,\dots,2g-3\}$, the previous proposition describes the non-zero contribution of 
each term in $p_{*}(\psi_{1}^{N_{k,g}-1}\xi^{2g-1}\Delta_{k})$. In particular, the bi-colored graphs $\oGamma$ appearing in the definition of 
$\Delta_{k}$ (see ~\ref{def:Delta-k}) that contribute non-trivially have the following combinatorial structure:
\begin{enumerate}
    \item [(i)] $\oGamma$ is back-bone with a rational central vertex $v_{0}$ in level $-1$ carrying the unique leg, 
    and $m$ outlying vertices in level $0$ whose genera form a length $m$ partition of $g$.
    \item [(ii)] The twist $I$ at the unique 
    half-edge $h$ incident to each vertex $v\in V_{0}$
    is given by $I(h)=2g(v)-1$.
\end{enumerate} 
On the other hand, given a length $m\in\{1,\dots,g\}$ partition $\underline{g}$ of $g$ one can determine 
uniquely a back-bone graph $\oGamma$ 
with rational vertex satisfying the conditions $(i)$
and $(ii)$ above. Indeed, this is because the twist depends on the partition of $g$ given by the genera of the outlying vertices. Let, for each odd $k\in\{1,\dots,2g-3\}$, 
$\oGamma_{k,\underline{g}}$ be the back-bone graph 
determined by a length $m$ partition $\underline{g}$ of $g$
as discussed above.
Then we compute that\footnote{Note in the formula below we use $m\geq2$, since otherwise the stabilization of $\oGamma$ after forgetting markings would be trivial.}

\begin{align*}
    \frac{1}{N_{k,g}!}p_{*}\left(\psi_{1}^{N_{k,g}-1}\xi^{2g-1}\Delta_{k}\right)&=\sum_{\substack{2\leq m\leq g,\\\underline{g}\in{\rm Part}(g)_{m}}}\frac{\prod_{i=1}^{m}(2g_{i}-1)}{m!N_{k,g}!}p_{*}\left(\psi^{N_{k,g}-1}_{1}\xi^{2g-1}\pi_{N_{k,g}*}\zeta_{\oGamma}[\PP\oSQ(\oGamma_{k,\underline{g}})]^{\pm}\right) \\
        &=\sum_{\substack{2\leq m\leq g,\\\underline{g}\in{\rm Part}(g)_{m}}}\frac{\prod_{i=1}^{m}(2g_{i}-1)}{m!N_{k,g}!}N_{k,g}!\ \prod_{i=1}^{m}a_{g_{i}}^{\pm}\cdot h(k-1,-2\underline{g})
\end{align*}
Here, to go from the first to the second line, we used Proposition ~\ref{prop:boundary-contribution}, we substituted the multiplicity $m(\oGamma_{k,\underline{g}})$ by $\prod_{i=1}^{m}(2g_{i}-1)$, and rearranged the sum over graphs weighted by $1/|\Aut(\Gamma_{k,\underline{g}})|$ to a sum over unordered partitions of $g$ of length $m$ weighted by $1/m!$.
Finally, considering the sum over all odd $k$, the coefficient simplification in~\ref{eq:coef-Hur(g)} and changing the order of the summation between $k$ and partitions of $g$ we obtain 
\begin{equation*}
    p_{*}\left(\xi^{2g-1}\sum_{\substack{k=1, \\ k\ {\rm odd}}}^{2g-3}\frac{1}{N_{k,g}!}\prod_{\substack{\ell>k \\ \ell \,{\rm odd}}}^{2g-3}\frac{(\xi+\ell\psi_{1})}{2}\frac{1}{2}\Delta_{k}\right)=\sum_{\substack{2\leq m\leq g,\\\underline{g}\in{\rm Part}(g)_{m}}}\frac{1}{m!}\left(\prod_{i=1}^{m}(2g_{i}-1)a_{g_{i}}^{\pm}\right)Hur(\underline{g})
\end{equation*}
as desired.
\end{proof}

 Gathering the conclusions of the various propositions in this section, we obtain a complete proof of Lemma \ref{lem:agpm-recursion}. $\qed$

\section{Hurwitz integrals} \label{sec: hurwitz}

In this section we provide a closed formula for the factor $Hur$ appearing in the boundary contribution of Lemma \ref{lem:agpm-recursion}. We recall that, given a partition $\underline{g}$ of $g$, this factor was defined as 
 \[Hur(\underline{g}) \coloneqq \sum_{k=1,\ {\rm odd}}^{2g-3} \frac{(2g-3)!!}{k!!\,\,2^{N_{k,g}}}\,\,h(k-1,-2\underline{g}),\]
 where $N_{k,g} \coloneqq g-1 - \frac{k-1}{2}$ and
\begin{equation*}
    h(k-1,-2\underline{g})=\int_{\oM_{0,n+1}}\psi_{1}^{N_{k,g}-1}p_{*}[\PP\oSQ^{\rm res}_{0}(k,\ (-2g_i+1)_{i = 1}^n)].
\end{equation*}

In order to compute this formula, we will use that the integrals are in genus $0$. We will write $\psi_1$ in terms of boundary components of $\oM_{0,n+1}$ and use the boundary description of residueless strata (cf. \cite[Section 4]{CosMolZac}) in order to obtain a recursion formula. As we will see, $Hur(\underline{g})$ does not depend on the partition of $g$, but only on its length and number of entries equal to $1$. This will further simplify our computations. 

When we go to the boundary of $\PP\oSQ^{\rm res}_{0}(k,\ (-2g_i+1)_{i = 1}^n)$, we will see that  $0$-dimensional residueless strata will naturally appear. The cardinality count for this strata was already established in \cite{CheMolSauZag}:

\begin{proposition}[\cite{CheMolSauZag}]\label{prop:hurwitzCMSZ}
Let $n\geq 1$ and consider some integers $k_1,k_2\geq 0,\, d_1,\dots,d_n \geq 2$ satisfying $k_1 + k_2 -\sum_{i=1}^n d_i = -2$. Then the cardinality of $\PP\mathcal{H}^{\rm res}_{0}(k_1+1, k_2+1,\ (-d_i+1)_{i = 1}^n)$ is computed by the coefficient extraction
\[h^{\rm full}(k_1,k_2,-d_1,\dots, -d_n) \coloneqq (n-1)! [t^{k_1+1}]\left(\prod_{i=1}^n \frac{t-t^{d_i}}{1-t}\right).\]
\end{proposition}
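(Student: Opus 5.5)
The plan is to prove the count through the Hurwitz (dessin) interpretation of residueless genus $0$ differentials. Then I would match it with the coefficient extraction by a flat-geometric cutting argument. First, since all residues vanish and $g=0$, every $\omega\in\mathcal{H}^{\rm res}_0(k_1+1,k_2+1,(-d_i+1)_{i=1}^n)$ is exact, $\omega=df$. Here $f$ is a rational function, well defined up to an additive constant. It has poles of order $d_i-1$ at the $i$-th pole of $\omega$, and its only other critical points are the two zeros $z_1,z_2$, with ramification $k_1+1$ and $k_2+1$. Its degree is $D=\sum_i(d_i-1)$. The relation $k_1+k_2-\sum d_i=-2$ is exactly Riemann--Hurwitz for such an $f$.

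Projectivizing $\omega$ and dropping the additive constant amount to acting on the target by affine maps. I would normalize the two critical values to $0$ and $1$; a short argument is needed that they are distinct, since otherwise $f$ would have a single finite branch point and Riemann--Hurwitz fails. After this normalization, $\PP\mathcal{H}^{\rm res}_0$ is identified with the set of isomorphism classes of genus $0$ covers of $\PP^1$ branched only over $0,1,\infty$. The ramification profiles are $(k_1+1,1,\dots)$, $(k_2+1,1,\dots)$ and $(d_1-1,\dots,d_n-1)$, with the preimages of $\infty$ labelled. I would also check that this residueless stratum is reduced of dimension $0$ (no extra multiplicities), so that the cardinality is a plain count of these covers.

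Second, I would count these covers combinatorially. Fix the two critical points $z_1$ and $z_2$ and the flat structure of $\omega$. A generic residueless genus $0$ surface with two zeros is then a union of half-planes and half-infinite cylinders glued along saddle connections joining $z_1$ and $z_2$. The neighbourhood of the $i$-th pole is bounded by a cycle of $d_i-1$ saddle-connection ``sectors'' (the $d_i-1$ sheets of $f$ over $\infty$). Each of these sectors is attached either at $z_1$ or at $z_2$, and at least one sector of each pole must go to each zero. So pole $i$ contributes $a_i\in\{1,\dots,d_i-1\}$ sectors at $z_1$, with $\sum a_i=k_1+1$ forced by the cone angle $2\pi(k_1+1)$. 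The generating function of these choices is exactly $\prod_i\frac{t-t^{d_i}}{1-t}=\prod_i(t+\dots+t^{d_i-1})$. The remaining datum is the cyclic order in which the $n$ labelled pole-regions appear around $z_1$, which gives the factor $(n-1)!$. Equivalently, in dessin language, this is the count of genus $0$ factorizations of a permutation of cycle type $(d_i-1)_i$ into a $(k_1+1)$-cycle times a $(k_2+1)$-cycle. Such factorizations are determined by a cyclic order of the $n$ cycles together with how many letters of each cycle the first long cycle meets.

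The main obstacle is making this bijection rigorous. One must show that the cutting gives exactly one configuration per differential: no automorphisms (poles are labelled, and projectivization kills rotations), and no dependence on the chosen direction. Conversely, every choice of $(a_i)$ and cyclic order must glue to a genus $0$ surface with the prescribed cone angles and vanishing residues. I would first try to prove the bijection on the dessin side, where genus $0$ is the Euler characteristic condition $V-E+F=2$ and the count is purely permutational. The case $n=1$, where $f$ is a polynomial-type map with a unique pole and the count is $1$, serves as a sanity check. As a fallback, one can verify the formula by induction on $n$, degenerating two poles together and using the boundary description of residueless strata in \cite[Section 4]{CosMolZac}; the product structure of $\prod_i\frac{t-t^{d_i}}{1-t}$ matches this kind of splitting.
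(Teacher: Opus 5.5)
The paper gives no argument for this statement; it quotes the count from \cite{CheMolSauZag}, so your proposal has to stand on its own. Your reduction is fine: $\omega=df$ with $\deg f=D=\sum_i(d_i-1)$, Riemann--Hurwitz, and no automorphisms once the $n+2\geq 3$ points are labelled. Since $k_1$ or $k_2$ may be $0$, you should normalize the values $f(z_1)\neq f(z_2)$ at the two marked points rather than ``the two critical values''. These values are distinct: if they coincided at $c$, Riemann--Hurwitz would give $\#f^{-1}(c)=2-n<2$.

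The gap is in the step that carries all the content. The combinatorial model you write down is not correct, and the bijection is left open.
\begin{itemize}
\item There are no half-infinite cylinders, since all residues vanish.
\item More seriously, ``$d_i-1$ sectors, each attached at $z_1$ or at $z_2$, with at least one at each zero'' forces $1\le a_i\le d_i-2$. This contradicts the factor $t+\dots+t^{d_i-1}$ you then use. For $d_i=2$ it leaves no choice at all, so $\PP\mathcal{H}^{\rm res}_0(2,2,-1,-1)$, which is the single point $[d(z+z^{-1})]$, would come out empty.
\item Genus-$0$ factorizations of a \emph{fixed} permutation of type $(d_i-1)_i$ are not determined by a cyclic order and the numbers $a_i$. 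You also choose a rotation inside each cycle, which gives an extra factor $\prod_i(d_i-1)$. That factor is exactly the centralizer you must divide by to count covers with labelled poles.
\end{itemize}

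The missing idea is to study $G=f^{-1}(\gamma)$, where $\gamma=[f(z_1),f(z_2)]$. $G$ is intrinsic to the point of $\PP\mathcal{H}^{\rm res}_0$, because rescaling $\omega$ or changing the constant in $f$ moves $\gamma$ by an affine map.
\begin{itemize}
\item The complement of $G$ is $f^{-1}(\PP^1\setminus\gamma)$, which is $n$ discs; the $i$-th contains the $i$-th pole and has degree $2(d_i-1)$.
\item So $G$ is a connected planar bipartite map with $D$ edges, and every vertex other than $z_1$ (valence $k_1+1$) and $z_2$ (valence $k_2+1$) is a leaf.
\item If $s$ edges join $z_1$ to $z_2$, then $D=s+(k_1+1-s)+(k_2+1-s)$, hence $s=n$. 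There are exactly $n$ saddle connections, all parallel, and they cut the sphere into $n$ bigons.
\item Each bigon contains exactly one face of $G$, hence one pole. The $i$-th bigon contains $d_i-2$ pendant edges, say $a_i-1$ of them at $z_1$.
\item Its angles are therefore $2\pi a_i$ at $z_1$ and $2\pi(d_i-a_i)$ at $z_2$, with $1\le a_i\le d_i-1$. The valence of $z_1$ gives $\sum_i a_i=k_1+1$.
\end{itemize}
Conversely, a cyclic order of the labelled bigons around $z_1$ together with such a tuple $(a_i)$ determines this planar map up to isotopy. By the Riemann existence theorem it then determines a unique genus-$0$ cover with profiles $(k_1+1,1,\dots)$, $(k_2+1,1,\dots)$, $(d_i-1)_i$. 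Its differential is residueless because it is exact. This gives $(n-1)!\,[t^{k_1+1}]\prod_i(t+\dots+t^{d_i-1})$ directly.

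Your fallback of degenerating poles is unnecessary. It is also not available as stated, since the stratum is zero-dimensional and there is nothing to degenerate without adding marked points.
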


\begin{remark} \label{remark: simplified h^1}
	We will rely on a simplified version of this where the differential has only two or three poles. In this situation we have 
	\[ h^{\rm full}(k_1,k_2,-d_1,-d_2) = \min\{k_1,k_2, d_1-1, d_2-1\}\]
    and 
    \[ h^{\rm full}(d_1+ d_2 +d_3-4,2,-d_1,-d_2, -d_3) = 2.\]
\end{remark}

Moreover, we will use the result of Gendron and Tahar, see \cite{GenTah} in order to describe which residueless strata are empty (and hence do not contribute in our sum of integrals). 

\begin{lemma}[\cite{GenTah}]\label{lem:pruning}
Let $k_1,\dots,k_p \geq 0$ and $d_1,\dots,d_n \geq 2$ be non-negative integers such that 
$$\sum _{i=1}^pk_i - \sum_{j=1}^n d_j = -2.$$ 
Assume there exists an index $1\leq i \leq p$ such that 
$k_i > \sum_{j=1}^n d_j - (n+1)$. Then we have
\[\mathcal{H}_0^\res(k_1\!+\!1,\dots,k_p\!+\!1, -d_1\!+\!1,\dots,-d_n\!+\!1) = \emptyset.\]
\end{lemma}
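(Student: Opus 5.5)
The plan is to use the fact, already noted in the paper, that residueless differentials in genus $0$ are exact. We then translate the vanishing order at a zero into a ramification order of a rational function, which is bounded by that function's degree.

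\emph{Step 1: exactness.} Let $(\mathbb{P}^1, z_1,\dots,z_p,y_1,\dots,y_n,\omega)$ be a point of $\mathcal{H}_0^\res(k_1+1,\dots,k_p+1,-d_1+1,\dots,-d_n+1)$. Then $\omega$ has zeros of order $k_i$ at $z_i$ and poles of order $d_j\geq 2$ at $y_j$. The group $H_1(\mathbb{P}^1\setminus\{y_1,\dots,y_n\};\ZZ)$ is generated by small loops around the $y_j$. The period of $\omega$ along such a loop is $2\pi i\,\res_{y_j}\omega=0$, so every period of $\omega$ on the punctured sphere vanishes. Fix a base point $x_0$; then $f(x)\coloneqq\int_{x_0}^x\omega$ is a well-defined holomorphic function on $\mathbb{P}^1\setminus\{y_j\}$ with $df=\omega$. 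Near $y_j$ we have $\omega=(c\,u^{-d_j}+\dots)\,du$ with no $u^{-1}$ term. Integrating gives a Laurent expansion of $f$ with a pole of order exactly $d_j-1\geq 1$. Hence $f$ extends to a rational function $f\colon\mathbb{P}^1\to\mathbb{P}^1$ whose poles are exactly the $y_j$, and
\[
\deg f=\sum_{j=1}^n (d_j-1)=\sum_{j=1}^n d_j-n .
\]

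\emph{Step 2: ramification bound.} At the zero $z_i$, the value $f(z_i)$ is finite, since $z_i$ is not a pole. Because $\omega=df$ vanishes to order $k_i$ there, $f-f(z_i)$ vanishes to order exactly $k_i+1$ at $z_i$. So $f$ has local degree $k_i+1$ at $z_i$. The fiber $f^{-1}(f(z_i))$, counted with multiplicity, has cardinality $\deg f$, which gives
\[
k_i+1\leq \deg f=\sum_j d_j-n, \qquad\text{i.e.}\qquad k_i\leq \sum_j d_j-(n+1).
\]
If $k_i>\sum_j d_j-(n+1)$ for some $i$, no such differential can exist, and the stratum is empty.

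\emph{Remarks on the steps.} The only point needing some care is Step 1: checking that the vanishing of all residues makes the periods vanish, and that the poles of $f$ have the exact orders $d_j-1$ (this uses $d_j\geq 2$). The degree identity $\sum_i k_i-\sum_j d_j=-2$ is not needed for the bound itself; it only guarantees that the stratum is a priori of the expected type.
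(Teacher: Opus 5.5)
Your proof is correct and complete. Vanishing residues make $\omega$ exact on the punctured sphere, so $\omega=df$ with $f$ rational of degree $\sum_j(d_j-1)$, and the local degree $k_i+1$ of $f$ at a zero cannot exceed $\deg f$.

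The paper gives no proof of its own here and simply cites Gendron--Tahar. This emptiness statement is the easy, necessary half of their characterization of non-empty residueless genus-$0$ strata; the converse existence half is not needed. Your exactness-plus-degree argument is the standard proof of that half. It also fits the paper's own remark that residueless genus-$0$ differentials are exact, so these strata can be viewed as Hurwitz spaces.
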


We will need the following corollary, which is just a numerical consequence of the lemma. 

\begin{corollary} \label{cor:pruning-v2}
Let $n\geq 4$ and consider some non-negative integers $k_1, k_2 \geq 0$ and $d_1,\dots,d_n \geq 2$ satisfying $\sum k_i - \sum d_j = -2$. Assume that $k_2 = 2$. Then we have
\[\oSQ_0^\res(k_1\!+\!1 ,k_2\!+\!1, -d_1\!+\!1,\dots,-d_n\!+\!1) = \emptyset.\]
\end{corollary}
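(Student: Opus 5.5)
The plan is to deduce the emptiness directly from Lemma~\ref{lem:pruning}, applied to the zero of order $k_1$ rather than to the one of order $k_2=2$. Two preliminary reductions are needed before that numerical check.

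First, I reduce the statement about $\oSQ_0^\res$ to a statement about a residueless stratum of abelian differentials. The degree condition $k_1+k_2-\sum_j d_j=-2$ says that the prescribed orders $k_1,k_2,-d_1,\dots,-d_n$ already account for the full degree $-2$ of a meromorphic differential on $\PP^1$. Consequently the number of appended zeros of order $2$ in the definition of the cone of squares is $N=0$. A differential in $\SQ_0^\res(k_1+1,k_2+1,-d_1+1,\dots,-d_n+1)$ therefore has no zeros or poles other than at the marked points. Hence
\[
\SQ_0^\res(k_1+1,k_2+1,-d_1+1,\dots,-d_n+1)\subseteq \cH_0^\res(k_1+1,k_2+1,-d_1+1,\dots,-d_n+1),
\]
with equality when all $k_i,d_j$ are even; in general it is the intersection with the locus of even orders. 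The closure $\oSQ_0^\res$ is by definition the Zariski closure of the open part. So it suffices to show that the open residueless stratum $\cH_0^\res(\cdots)$ is empty, since the closure of the empty set is empty.

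Second, I verify the hypothesis of Lemma~\ref{lem:pruning}, taking $p=2$ and the index $i=1$. From the degree relation and $k_2=2$ we get
\[
k_1=\sum_{j=1}^n d_j-2-k_2=\sum_{j=1}^n d_j-4 .
\]
The lemma requires $k_1>\sum_j d_j-(n+1)$. With the value of $k_1$ above this becomes $-4>-(n+1)$, that is $n>3$, which is exactly the assumption $n\geq 4$. The remaining hypotheses of Lemma~\ref{lem:pruning} also hold: $k_1,k_2\geq 0$, each $d_j\geq 2$, and $\sum_i k_i-\sum_j d_j=-2$. Lemma~\ref{lem:pruning} then gives that $\cH_0^\res(k_1+1,2+1,-d_1+1,\dots,-d_n+1)$ is empty, and by the first step so is $\oSQ_0^\res$.

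I do not expect any step to be a genuine obstacle; the argument is a one-line inequality once the comparison with $\cH_0^\res$ is in place. The only point needing care is that comparison: one must observe that $N=0$ in genus $0$ under the degree condition, so that the square cone cannot escape the stratum by carrying extra zeros. One must also note that emptiness of the open stratum forces emptiness of its closure.
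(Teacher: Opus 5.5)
Your argument is correct and matches the paper's: the corollary is recorded there as a direct numerical consequence of Lemma~\ref{lem:pruning}, applied to the zero of order $k_1=\sum_j d_j-4$, for which $k_1>\sum_j d_j-(n+1)$ is equivalent to $n\geq 4$. Your extra step, that $N=0$ forces $\SQ_0^\res\subseteq\cH_0^\res$ so that emptiness passes to the closure, just makes explicit the identification $\SQ_g(a)=\cH_g(a)$ for $N=0$ that the paper already notes earlier.
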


\subsection{Hur formula}
 
We will next find a combinatorial formula for the Hurwitz numbers $Hur(\underline{g})$, where $\underline{g}$ is a length $n$ partition of $g$, with exactly $m$ entries equal to $1$. We state the result in Lemma \ref{lem:HurIC} and Theorem \ref{thm:Hurformula} and we will show inductively that they hold. To obtain a recursion of Hurwitz numbers, we will rely on the boundary description of residueless strata, which appears implicitly in \cite{BaiCheGenGruMol}, and on the genus $0$ interpretation of the psi classes in terms of boundary divisors. Both the Hurwitz numbers and the combinatorial formula satisfy the same recursion in $m$, $g$ and $n$. The initial condition $m=0$ looks as follows.

\begin{lemma}[Initial condition]\label{lem:HurIC} \label{lemma: formula no1}
Let $\underline{g}$ be a length $n$ partition of $g$, with no entries equal to $1$. Then we have \begin{align*}
Hur(\underline{g})
= \frac{(2g-3)!!2^{g-1}(g-1)!}{(2g-n)!} 
= (2g-2)\dots(2g-n + 1).
\end{align*}

\end{lemma}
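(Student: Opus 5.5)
I would prove Lemma \ref{lem:HurIC} by computing each $h(k-1,-2\underline{g})$ through a recursion on the number of poles, using the genus-$0$ boundary expression of $\psi_1$, and then summing over $k$. First, the dimension bookkeeping. The space $\PP\oSQ^{\rm res}_0(k,(1-2g_i)_{i=1}^n)$ has $N_{k,g}$ forgotten double zeros. Its image in $\oM_{0,n+1}$ has dimension $N_{k,g}-1$, matching the power of $\psi_1$. Since every $g_i\geq 2$, all poles have order $2g_i\geq 4$. I expect Lemma \ref{lem:pruning} and Corollary \ref{cor:pruning-v2} to force many terms to vanish. In particular, $N_{k,g}-1\le n-2$ is required for the image to be nonempty-dimensional, so only $k$ close to $2g-3$ (roughly $k\geq 2g-2n+1$) should contribute.

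Second, I would write $\psi_1$ on $\oM_{0,n+1}$ as the sum of boundary divisors $D_{1|j,\ell}$ separating the leg $1$ from two fixed poles $j,\ell$. I would then intersect with the pushforward of the residueless square stratum via the boundary description of residueless strata \cite[Section 4]{CosMolZac}. On such a divisor the differential degenerates into a two-level (or horizontal) configuration. Since the differential is exact, the configuration is built from smaller residueless genus-$0$ strata, with the GRC killing the remaining residues. The result is a recursion expressing $h(k-1,-2\underline{g})$ in terms of $h$-numbers with fewer poles, merging two poles $2g_j,2g_\ell$ into a single pole of order $2(g_j+g_\ell)$. The factor coming from the rigid lower level is a zero-dimensional count, given by Proposition \ref{prop:hurwitzCMSZ} and Remark \ref{remark: simplified h^1}; the count $\min\{\cdot\}$ for two poles should be the relevant one. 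Merging two entries $\geq 2$ keeps all entries $\geq2$, so the recursion closes within the class covered by the lemma. The base case $n=2$ is directly $h^{\rm full}$ of a two-pole stratum.

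Third, I would insert the recursion into the definition of $Hur(\underline{g})$ and check that the right-hand side $(2g-2)(2g-3)\cdots(2g-n+1)$ satisfies the same recursion: $Hur$ for length $n$ should equal $(2g-n+1)$ times $Hur$ for length $n-1$, summed appropriately over merges and weighted by the combinatorial coefficients $\frac{(2g-3)!!}{k!!2^{N_{k,g}}}$. The claimed independence from the actual entries of $\underline{g}$ must come out of this summation, which I would verify by induction on $n$. The equivalence of the two displayed closed forms is elementary, using $(2g-3)!!\,2^{g-1}(g-1)!=(2g-2)!$.

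The main obstacle is the boundary analysis. I need to identify precisely which level graphs occur on $D_{1|j,\ell}\cap\PP\oSQ^{\rm res}_0$ and with what multiplicities (the products of twists). I also need to track how the forgotten double zeros distribute between the two levels, and to show via Corollary \ref{cor:pruning-v2} that configurations with a double zero on a level carrying four or more poles vanish. I would first work out the cases $n=2,3$ by hand to fix the multiplicities, then guess and prove the general recursion.
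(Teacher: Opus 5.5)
Your overall strategy is the paper's: induction on $n$, trading one factor of $\psi_1$ for the boundary divisor $\delta_{1|23}$ of $\oM_{0,n+1}$, reading off the degenerations from the boundary of the residueless stratum, and evaluating the rigid pieces with Proposition~\ref{prop:hurwitzCMSZ}. There is a genuine gap, though. The recursion, which carries all the content, is left to be guessed, and the recursion you predict is not the one the geometry produces. Your third step would therefore verify the wrong identity.

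First, consider a contributing divisor. The component containing leg $1$ carries the pole at the node and $N_{k,g}-1$ of the forgotten double zeros. The other component is rigid (it is the upper level, since it has the zero at the node). It carries exactly one double zero and a zero of order $2g_1+2g_2-4$ at the node, because the degree sum is $-2$. So the merged pole has order $2(g_1+g_2-1)$, not $2(g_1+g_2)$. The new partition $\underline g_{12}=(\underline g\setminus(g_1,g_2),\,g_1+g_2-1)$ is therefore a partition of $g-1$, with $N_{k,g-1}=N_{k,g}-1$. This drop in $g$, combined with $\frac{(2g-3)!!}{k!!\,2^{N_{k,g}}}=\frac{2g-3}{2}\cdot\frac{(2g-5)!!}{k!!\,2^{N_{k,g-1}}}$, is what produces the factor $2g-3$ after summing over $k$. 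Your relation ``length $n$ equals $(2g-n+1)$ times length $n-1$'' presupposes merged partitions of the same $g$, which never occur.

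Second, Corollary~\ref{cor:pruning-v2} only excludes four or more poles on the rigid component. Configurations with three poles $2g_1,2g_2,2g_i$, $i\geq 3$, survive. Each has count $2$ by Remark~\ref{remark: simplified h^1} and contributes $h$ of the length-$(n-2)$ partition $\underline g_{12i}$ of $g-1$. Your recursion (two-pole merges with the $\min$ count) omits them, and they are indispensable. The two-pole terms alone give $(2g-3)Hur(\underline g_{12})=\frac{2g-n}{2g-2}\cdot\frac{(2g-2)!}{(2g-n)!}$. The missing fraction $\frac{n-2}{2g-2}$ comes exactly from the $n-2$ three-pole merges.

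Two further points are also missing. The summand $k=2g-3$ has $N_{k,g}=1$, so there is no $\psi_1$ to trade, and it must be evaluated directly as a zero-dimensional count. It gives $0$ for $n\geq 4$ by Corollary~\ref{cor:pruning-v2}. For $n=3$ it gives $\tfrac12\cdot 2=1$, with all three poles on one rigid component, and this term replaces the absent three-pole sum. Also, the contributing divisors have a single vertical edge and enter the pullback of $\delta_{1|23}$ with multiplicity one, not with a product of twists. A factor such as $2g_1+2g_2-3$ would make the result depend on the entries of $\underline g$, contradicting the lemma.

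With these corrections one obtains
\[
Hur(\underline g)=\delta_{n=3}+(2g-3)\,Hur(\underline g_{12})+(2g-3)\,\delta_{n\geq 4}\sum_{i=3}^{n}Hur(\underline g_{12i}).
\]
The induction hypothesis turns the right-hand side into $\frac{(2g-2)!}{(2g-n)!}\bigl(\frac{2g-n}{2g-2}+\frac{n-2}{2g-2}\bigr)$, which closes the induction.
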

\begin{proof} We will prove this by induction on the length $n$ of the partition $\underline{g}$,. We start with the case $n = 2$. Then the value
	\begin{equation*}
		h(k-1,-2\underline{g}) \coloneqq \int_{\oM_{0,3}}\psi_{1}^{N_{k,g}-1}p_{*}[\PP\oSQ^{\rm res}_{0}(k,\ -2g_1+1, -2g_2 +1)]
	\end{equation*}
can be non-zero only if $N_{k,g} \coloneqq g-1-\frac{k-1}{2} = 1$, since it is an integral on a $0$-dimensional space. When $k = 2g-3$, the space $\PP\oSQ^{\rm res}_{0}(k,\ -2g_1+1, -2g_2 +1) $ is $0$-dimensional and hence we have an equality between the intersection number and cardinality. Using this and Remark \ref{remark: simplified h^1} we obtain
$$h(2g-4, -2g_1, - 2g_2) = h^{\rm full}(2g-4, 2, -2g_1, -2g_2) = 2.$$

We obtain \[Hur(g_1,g_2) = \frac{h(2g-4,-2g_1,-2g_2)}{2} = 1\]
Now on the right-hand side we get \[\frac{(2g-3)!!2^{g-1}(g-1)!}{(2g-2)!} = 1.\]

\noindent Assume now that our lemma holds for all partitions of length less than $n>2$. Then, define the partitions $\underline{g}_{12} := ( \underline{g}\setminus (g_1,g_2),\ g_1+g_2-1)$ and $\underline{g}_{12i} := ( \underline{g}\setminus (g_1,g_2,g_i),\ g_1+g_2+ g_i-1)$. Furthermore, assume that $k$ is not $2g-3$.

In order to compute $h(k-1, -2\underline{g})$, we will use the equality $\psi_1 = \delta_{1|23}$, i.e. the sum of all boundary divisors in $\oM_{0,n+1}$ for which the marking corresponding to the zero of order $k$ is on one component, and the markings corresponding to the poles of orders $2g_1, 2g_2$ are on the other. Moreover, we will work with the multi-scale compactification $\PP\Xi\oM_{0,n+1+N_{k,g}}^{\rm res}(k, 3^{\times N_{k,g}}, -2\underline{g}+1)$, and we will use the boundary description of residueless strata, see \cite{BaiCheGenGruMol} and \cite{BurRosZvo}, in order to understand the pullback of $\Delta_{1|23}$. By abuse of notation, we will denote again 
\[ p \colon \PP\Xi\oM_{0,n+1+N_{k,g}}^{\rm res}(k, 3^{\times N_{k,g}}, -2\underline{g}+1) \rightarrow \oM_{0,n+1}  \]
the map forgetting the differential, as well as the $N_{k,g}$ zeroes of order $2$.

Because we want to compute the integral 
\[ \int_{\oM_{0,n+1}}\psi_1^{N_{k,g}-2}p_{*}[\PP\oSQ^{\rm res}_{0}(k,\ -2\underline{g}+1)]  \delta_{1|23} = \frac{1}{N_{k,g}!}\int_{\PP\Xi\oM_{0,n+1+N_{k,g}}^{\rm res}(k, 3^{\times N_{k,g}}, -2\underline{g}+1)}\psi_1^{N_{k,g}-2}  p^*\delta_{1|23}\]
we are only interested in understanding the boundary divisors in  $p^*\Delta_{1|23}$
where the component containing the zero of order $k-1$ has dimension $N_{k,g}-2$ and all other components are $0$-dimensional (all other divisors in $p^*\Delta_{1|23}$ will vanish when multiplied with $\psi_1^{N_{k,g}-2}$). Because the dimension of residueless strata is equal to the number of zeroes minus 2, we want to understand divisors in $p^*\Delta_{1|23}$ where the component containing the zero of order $k-1$ has an additional $N_{k,g}-1$ marked zeroes of order $2$. This implies that \begin{itemize}
    \item There is only one other component, and that component has a unique order $2$ marked zero, as well as another zero at the node. 
    \item This component contains the poles of order $2g_1$ and $2g_2$ by definition of $\Delta_{1|23}$, and contains at most one other pole (cf. Corollary \ref{cor:pruning-v2}). 
\end{itemize}

As such, these components are uniquely determined by what other pole is on the rational curve containing the first two poles.

For $j = 2$ and $3$, we denote by
\[ \zeta_j\colon \oM_{0, 2 + n- j} \times \oM_{0,1+j} \rightarrow \oM_{0,n+1} \]
the morphism glueing together the last marking of the first component to the first marking of the second one. Then we can compute the integral $\int_{\oM_{0,n+1}}\psi_1^{N_{k,g}-2}p_{*}[\PP\oSQ^{\rm res}_{0}(k,\ -2\underline{g}+1))]  \delta_{1|23}$  as 
\begin{align*}
\label{splittingproperty}
&\int_{\oM_{0,n+1}}\zeta_{2*}\Big(
\psi_1^{N_{k,g}-2}\frac{1}{(N_{k,g}-1)!}p_{*}[\PP\Xi\oM_{0,n+N_{k,g}-1}^{\rm res}
(k,3^{\times N_{k,g}-1},-2\underline{g}_{12}+1,\,-2g_1-2g_2-1)] \notag
\\
&\qquad\qquad\qquad\qquad
\otimes\ p_{*}[\PP\Xi\oM_{0,4}^{\rm res}
(2g_1+2g_2+1, 3 ,-2g_1+1,\,-2g_2+1]
\Big) \notag
\\
&
+\sum_{i=3}^{n}
\int_{\oM_{0,n+1}}\zeta_{3*}\Big(
\psi_1^{N_{k,g}-2}\frac{1}{(N_{k,g}-1)!}p_*[\PP\Xi\oM_{0,n+N_{k,g}-2}^{\rm res}
(k,3^{\times N_{k,g}-1},-2\underline{g}_{12i}+1,\,-2g_1-2g_2-2g_i-1)] \notag
\\
&\qquad\qquad\qquad\qquad
\otimes\ p_{*}[\PP\Xi\oM_{0,5}^{\rm res}
(2g_1+2g_2+2g_i+1,3,-2g_1+1,\,-2g_2+1,\,-2g_i+1)]
\Big).
\end{align*}

In the above formula, the factorial appear simply to account for the marked zeroes of order $2$. Moreover, we used that all contributing divisors in $p^*\Delta_{1|23}$ appear with multiplicity one. This is a residueless analogue of \cite[Lemma 6.4]{CosChenMol24}. Indeed, the same multi-scale compactification construction works for strata with residue conditions and the analogues of \cite[Equation (12.6) and Equation (12.8)]{BaiCheGenGruMol} imply the required multiplicities in the pullback of $\Delta_{1|23}$.

Going back to the recurrence of $h(k-1, -2\underline{g})$, the only terms that contribute in the computation are the ones for which there are either two or three poles on the component not containing the marked zero. Moreover, because of stability, any component of $\Delta_{1|23}$ must contain another marked pole on the component containing the marked zero. As such, we can write the formula above as the recursive formula 

\begin{align*} h(k-1,-2\underline{g}) &= h^{\rm full}(2g_1+2g_2-4,2, -2g_1, -2g_2)\cdot h(k-1,-2\underline{g}_{12}) \\&\qquad+ \delta_{n\geq 4} \cdot \sum_{i=3}^n h^{\rm full}(2g_1+2g_2 + 2g_i-4,2, -2g_1, -2g_2, -2g_i) \cdot h(k-1, -2\underline{g}_{12i}).
\end{align*}

By Remark \ref{remark: simplified h^1}, we obtain that 
\[ h^{\rm full}(2g_1+2g_2-4,2, -2g_1, -2g_2) =h^{\rm full}(2g_1+2g_2 + 2g_i-4,2, -2g_1, -2g_2, -2g_i) = 2. \] 

Moreover, the partitions $\underline{g}_{12}$ and $\underline{g}_{12i}$ of $g-1$ do not have entries equal to $1$, and have lengths $n-1$ and $n-2$ respectively. Thus, we have the formula
\begin{align*}
    Hur(\underline{g}) &\coloneqq \sum_{k=1,\ {\rm odd}}^{2g-3} \frac{(2g-3)!!}{k!!\,\,2^{N_{k,g}}}\,\,h(k-1,-2\underline{g}) \\
    & = \frac{h(2g-4, -2\underline{g})}{2} + (2g-3)\sum_{k=1,\ {\rm odd}}^{2g-5} \frac{(2g-5)!!}{k!!\,\,2^{N_{k,g-1}}} \left(h(k-1, \underline{g}_{12}) + \sum_{i=3}^nh(k-1, \underline{g}_{12})\right) \\
    &= \delta_{n \leq 3} + (2g-3)Hur(\underline{g}_{12}) + (2g-3) \delta_{n\geq 4}\sum_{i=3}^nHur(\underline{g}_{12i}).
\end{align*}

Using the induction hypothesis, together with the equality 
\[ Hur(\underline{g}_{12i}) = Hur(\underline{g}_{12j})  \]
for any $3\leq i, j \leq n$, we obtain
\[ Hur(\underline{g}) = \delta_{n \leq  3} + \frac{(2g-3)!!2^{g-1}(g-1)!}{(2g-n)!} \Bigg(\frac{2g-n}{2g-2} + \delta_{n\geq 4}\frac{n-2}{2g-2}\Bigg) \]
and the conclusion follows. 

\end{proof}
Similar ideas can be used for the case $m > 0$. We obtain: 
\begin{theorem}[$Hur$ formula]\label{thm:Hurformula}\label{thm:hur-b(a)} Let $\underline{g}$ be a length $n$ partition of $g$ with exactly $m$ of its entries equal to $1$. Then we have 
 \[Hur(\underline{g}) = (2g-3)!!\sum_{i=0}^{g-1}(-1)^i\begin{pmatrix}m\\i\end{pmatrix}\ 2^{g-2i-1}\frac{(g-i-1)!}{(2g-i-n)!},\] 
where $\binom{m}{i} \coloneqq 0 $ if $i > m$. 
\end{theorem}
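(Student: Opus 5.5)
We argue by induction on $m$, the number of entries of $\underline{g}$ equal to $1$. The base case $m=0$ is Lemma~\ref{lem:HurIC}: only the $i=0$ term survives, and it equals $(2g-3)!!\,2^{g-1}(g-1)!/(2g-n)!$. For $m\geq 1$ we derive a recursion for $h(k-1,-2\underline{g})$ in the same way as in the proof of Lemma~\ref{lem:HurIC}.

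\textbf{Setting up the boundary recursion.} Order the partition so that $g_1=1$, i.e.\ the first pole has order $2$. Write $\psi_1=\delta_{1|23}$ on $\oM_{0,n+1}$, where markings $2,3$ are the poles of orders $2g_1=2$ and $2g_2$. Pull this divisor back to the multi-scale space $\PP\Xi\oM^{\rm res}_{0,n+1+N_{k,g}}(k,3^{\times N_{k,g}},-2\underline{g}+1)$.

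The dimension count is unchanged. The only contributing divisors have a top component carrying the zero of order $k-1$ and $N_{k,g}-1$ double zeros. They also have a $0$-dimensional rational component carrying one double zero, the node, the poles $2,3$, and at most one further pole $i$. Corollary~\ref{cor:pruning-v2} excludes four or more poles, and Lemma~\ref{lem:pruning} may kill further configurations.

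\textbf{Computing the local counts.} The multiplicity of these components is given by $h^{\rm full}$ from Proposition~\ref{prop:hurwitzCMSZ} and Remark~\ref{remark: simplified h^1}. The new feature is that a pole of order $2$ (simple pole of the differential with zero residue is impossible) changes the local count.

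For two poles, $h^{\rm full}(2g_2-2,2,-2,-2g_2)=\min\{2g_2-2,2,1,2g_2-1\}=1$ instead of $2$. If $g_2=1$ as well, the configuration degenerates: the merged pole has order $2$ and the zero order is $0$, so this case must be treated separately via Lemma~\ref{lem:pruning}.

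For three poles I expect the count to still be $2$ when $g_i\geq 2$. The values need checking with the coefficient extraction $(n-1)![t^{k_1+1}]\prod (t-t^{d_i})/(1-t)$ when several $d_i=2$.

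This yields a recursion of the form
\[
Hur(\underline{g})=c_0\,\delta_{n\le 3}+(2g-3)\Big(c_{12}\,Hur(\underline{g}_{12})+\sum_{i\ge3}c_{12i}\,Hur(\underline{g}_{12i})\Big).
\]
The constants depend only on how many of $g_2,g_i$ equal $1$. The new partitions have length $n-1$ or $n-2$ and $m-1$, $m-2$ or $m-3$ ones. Since only the multiset of types matters, the sum over $i$ splits into $(m-1)$ terms with $g_i=1$ and $(n-m)$ terms with $g_i\ge2$ (fewer if $g_2=1$).

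\textbf{Closing the induction.} Substitute the claimed closed formula (as a function $F(g,n,m)$) into the right-hand side. Verifying the recursion then reduces to binomial identities via $\binom{m}{i}=\binom{m-1}{i}+\binom{m-1}{i-1}$, after shifting the summation index $i$ and matching the factorials $(g-i-1)!/(2g-i-n)!$ at $(g-1,n-1)$ and $(g-1,n-2)$.

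Choosing $g_2\ge 2$ whenever $m<n$ simplifies the bookkeeping. The all-ones partition ($m=n=g$) is handled separately, with $g_2=1$.

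\textbf{Main obstacle.} The hardest step is determining the local genus-$0$ counts and the pruning correctly when order-$2$ poles are present, since the formula for $h^{\rm full}$ changes and some strata become empty. I would first compute small cases ($g\le4$) explicitly, both from Proposition~\ref{prop:hurwitzCMSZ} and from the closed formula, to pin down these constants before performing the general binomial check.
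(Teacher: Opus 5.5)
For $m<n$ your plan is the paper's proof. The paper also puts one pole of order $2$ and one of order $\geq 4$ into $\delta_{1|23}$. It gets two-pole coefficient $h^{\rm full}=1$ and three-pole coefficient $2$; the latter holds for every $g_i\geq 1$, since it is the coefficient of the top-degree monomial. It then closes with Pascal's rule in Proposition~\ref{prop:CombiRec}.

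The gap is the all-ones partition. There you only say that the two-pole term is to be discarded by Lemma~\ref{lem:pruning}. Your list of contributing divisors contains only configurations where the bubble carrying the two singled-out poles has a zero at the node. Such a bubble carries one double zero and sits on the upper level. For that orientation pruning is correct, since $h^{\rm full}(0,2,-2,-2)=0$.

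However, when both singled-out poles have order $2$, a second divisor of the right dimension appears, with the levels swapped:
\begin{itemize}
\item The two poles lie on a lower-level rational component with a double pole at the node and two of the double zeros. This is a point of $\PP\oSQ^{\rm res}_0(-1,-1,-1)$, of cardinality $h^{\rm full}(2,2,-2,-2,-2)/2=1$.
\item The main component carries the zero of order $k-1$, the remaining $g-2$ double poles, $N_{k,g}-2$ double zeros, and an \emph{ordinary} point at the node.
\end{itemize}
Forgetting that ordinary point via the string equation, this divisor contributes $h(k-1,(-2)^{g-2})$. The correct recursion is therefore
\[
Hur(1^g)=(2g-3)(g-2)\,Hur(1^{g-3},2)+\frac{(2g-3)(2g-5)}{4}\,Hur(1^{g-2}).
\]
Verifying it needs the separate identity of Proposition~\ref{prop:CombiRec-1g}, not the $m<n$ identity. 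In general, the node can be a pole on the bubble exactly when $\sum_{i\in H}g_i\leq 2$, i.e.\ $H$ consists of the two singled-out poles, both of order $2$. So your claim that the dimension count is unchanged fails precisely in this case.

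The missing term is not negligible. From the closed formula, $Hur(1^5)=\tfrac{1001}{16}$, $Hur(1,1,2)=\tfrac94$ and $Hur(1^3)=\tfrac74$. Then $21\cdot\tfrac94=\tfrac{189}{4}$ falls short of $\tfrac{1001}{16}$ by exactly $\tfrac{35}{4}\cdot\tfrac74$.

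Your proposed sanity check in genera $g\leq 4$ would not catch this. At $g=4$ the missing term is proportional to $Hur(1,1)=0$, because $h(0,-2,-2)=0$. The truncated recursion $Hur(1^4)=10\,Hur(1,2)=5$ therefore agrees with the closed formula, and the discrepancy first shows at $g=5$.
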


\begin{proof}
If $m=0$ we are done by Lemma \ref{lem:HurIC}. We will assume the theorem to be correct when the number of entries equal to $1$ is less than $m$, and prove it for $m$. We then split the problem into two cases that we will treat separately: \begin{itemize}
    \item Case I: we have $1\leq m \leq n-1$ and 
    \item Case II: we have $m = n = g$. 
\end{itemize}

In the first case, the same method of computing $h(k-1, -2\underline{g})$ as in Lemma \ref{lem:HurIC} can be applied here. Since this integral is independent of the ordering in the partition $\underline{g}$, we assume $g_1 > 1$ and $g_2 = 1$. Then the same method as in Lemma \ref{lem:HurIC} yields 

\begin{align*} h(k-1,-2\underline{g}) &= h(2g_1+2g_2-4, -2g_1, -2g_2)\cdot h(k-1,-2\underline{g}_{12}) \\&\qquad+ \delta_{n\geq 4} \cdot \sum_{i=3}^n h(2g_1+2g_2 + 2g_i-4, -2g_1, -2g_2, -2g_i) \cdot h(k-1, -2\underline{g}_{12i}).
\end{align*}

In this situation, Remark \ref{remark: simplified h^1} implies
\[ h(2g_1-2, -2g_1, -2)  = h^{\rm full}(2g_1-2,2, -2g_1, -2) = 1 \] 
and similarly
\[ h(2g_1 + 2g_i-2, -2g_1, -2, -2g_i) = h^{\rm full}(2g_1 + 2g_i-2,2, -2g_1, -2, -2g_i) = 2.\] 
After summing up (with suitable coefficients) over all $k$, we obtain the formula 
\[ Hur(\underline{g}) =  \frac{1}{2}\delta_{n=2} + \delta_{n=3} + \frac{(2g-3)}{2}\delta_{n\geq 3}Hur(\underline{g}_{12}) + (2g-3) \delta_{n\geq 4}\sum_{i=3}^nHur(\underline{g}_{12i}).\]
In this case, we know that the partition $\underline{g}_{12}$ of $g-1$ has $n-1$ entries, out of which $m-1$ are equal to $1$. Similarly, the partition $\underline{g}_{12i}$ of $g-1$ has $n-1$ entries, and it has \begin{itemize}
    \item $m-1$ entries equal to $1$ if $g_i > 1$, 
    \item $m-2$ entries equal to $1$ if $g_i = 1$. 
\end{itemize}
Furthermore, we used that 
\[
h(2g-4,-2\underline{g})
=
\begin{cases}
1, & \text{if } n = 2,\\
2, & \text{if } n = 3.
\end{cases}
\]

We denote $H(m,g,n) \coloneqq (2g-3)!!\sum_{i=0}^{g-1}(-1)^i\begin{pmatrix}m\\i\end{pmatrix}\ 2^{g-2i-1}\frac{(g-i-1)!}{(2g-i-n)!}$. Since we have $m-1$ indices $3\leq i\leq n$ for which $g_i = 1$, we obtain the formula 
\begin{align*}
    Hur(\underline{g}) =  \frac{1}{2}\delta_{n=2} &+ \delta_{n=3} + \frac{(2g-3)}{2}\delta_{n\geq 3} H(m-1,g-1,n-1) \\ &+(2g-3) (m-1) \delta_{n\geq 4} H(m-2,g-1,n-2)  \\
    &+ (2g-3) (n-m-1)\delta_{n\geq 4} H(m-1,g-1,n-2) .
\end{align*} 

It is then an immediate consequence of Proposition \ref{prop:CombiRec} below that $Hur(\underline{g}) = H(m,g,n)$, which is what we wanted to prove.

We are left to treat the case $\underline{g} = (\underbrace{1,1,\ldots,1}_{g \ \textrm{times}})$. We will assume that $g\geq 4$, since for low genera a simple check gives the required equalities, i.e. $Hur(1,1) = 0$ and $Hur(1,1,1) = \frac{7}{4}$. We will assume the theorem holds up for all partitions of length $g-1$ or smaller, and prove it for length $g$. As before, in order to compute $h(k-1, -2\underline{g})$, we will use the equality $\psi_1 = \Delta_{1|23}$ and reason inductively. As such, we have to describe the components of the intersection $p_{*}\PP\oSQ^{\rm res}_{0}(k,\ -2\underline{g}+1)) \cap \Delta_{1|23}$. These depend on the indices $H \subseteq \{1,2,\ldots g\}$ corresponding to the poles that are not on the same component as the marked zero, and on the order $k'$ at the node. 

As in the proof of Lemma \ref{lem:HurIC}, we can only get non-zero contribution from the integral when 

\[ \dim \PP\oSQ^{\rm res}_{0}(k',\ -2\underline{g}_H + 1)  ) = 0 \ \textrm{and}\]
\[ \dim \PP\oSQ^{\rm res}_{0}(k,\ -2\underline{g}_{H^c} +1, -k') = N_{k,g} - 2. \]

We distinguish two different cases.
\begin{itemize}
    \item If $k' \geq 0$, then the first dimension condition implies $k' = 2\sum_{i\in H}g_i -3 = 2|H|-3$. In this case, Lemma \ref{lem:pruning} and Corollary \ref{cor:pruning-v2} imply $|H| = 3$ and hence $k' = 3$. Thus for any $|H| = 3$ with $\{1,2\}\subseteq H$ we obtain a contribution to the integral coming from the boundary loci 
   \[ \zeta_{H*}\left(p_{*}\PP\oSQ^{\rm res}_{0}(k,\ \underbrace{-1,\ldots,-1}_{g-3 \ \textrm{times}}, -3) \times p_{*}\PP\oSQ^{\rm res}_{0}(3,\ -1, - 1, -1)  \right). \]
   
    \item If $k' < 0$, the first dimension condition implies that $|H| = 2$ and there are two unmarked zeroes corresponding to the differential on the component. Thus $k' = -1$ and we obtain a contribution to the integral coming from the boundary locus
    \[ \zeta_{H*}\left(p_{*}\PP\oSQ^{\rm res}_{0}(k,\ \underbrace{-1,\ldots,-1}_{g-2 \ \textrm{times}}, 1) \times p_{*}\PP\oSQ^{\rm res}_{0}(-1, - 1, -1)\right). \]
\end{itemize}

Since there are $g-2$ choices of $H$ in the first case, and only one in the second, we multiply $p_{*}\PP\oSQ^{\rm res}_{0}(k,\ -2\underline{g}+1)) \cap \Delta_{1|23}$ with $\psi_1^{N_{k,g}-2}$ and obtain the recursion formula 
\[ h(k-1, -2\underline{g}) = (g-2)h(2, -2, -2, -2) h(k-1, \underbrace{-2,\ldots, -2}_{g-3 \ \textrm{times}}, -4) + h(-2, -2, -2) h(k-1, \underbrace{-2,\ldots, -2}_{g-2 \ \textrm{times}}). \]

We have the equalities 
\[ h(2,-2, -2, -2) = h^{\rm full}(2, 2, -2, -2, -2) = 2  \ \textrm{and}\]
\[ h(-2,-2,-2) = \frac{h^{\rm full}(2, 2, -2, -2, -2)}{2} = 1.\]
For the second cardinality we divided by $2$ since the order of the two unmarked zeros is irrelevant. We thus have
\[ h(k-1, -2\underline{g}) = 2(g-2)h(k-1, \underbrace{-2,\ldots, -2}_{g-3 \ \textrm{times}}, -4) + h(k-1, \underbrace{-2,\ldots, -2}_{g-2 \ \textrm{times}}). \]
Substituting this formula, along with $h(2g-4,\underbrace{-2,\ldots, -2}_{g \ \textrm{times}}) = 0$ (cf. Lemma \ref{lem:pruning}), in the definition of $Hur$ we obtain 
\[ Hur(\underbrace{1,\ldots, 1}_{g \ \textrm{times}}) = (2g-3)(g-2)Hur(\underbrace{1,\ldots, 1}_{g-3 \ \textrm{times}},2) + \frac{(2g-3)(2g-5)}{4}Hur(\underbrace{1,\ldots, 1}_{g-2 \ \textrm{times}}).\]
The formula for $ Hur(\underbrace{1,\ldots, 1}_{g \ \textrm{times}})$ is now a consequence of the induction hypothesis, the formula for $Hur(\underbrace{1,\ldots, 1}_{g-3 \ \textrm{times}},2)$ and of Proposition \ref{prop:CombiRec-1g} below. This concludes the proof.
\end{proof}

\subsubsection{Combinatorics}

In order to complete Theorem \ref{thm:hur-b(a)}, we are left to find recurrence formulas for the values 
$$H(m,g,n) \coloneqq (2g-3)!!\sum_{i=0}^{g-1}(-1)^i\begin{pmatrix}m\\i\end{pmatrix}\ 2^{g-2i-1}\frac{(g-i-1)!}{(2g-i-n)!}.$$ We already saw in Lemma \ref{lem:HurIC} and in the proof of Theorem \ref{thm:hur-b(a)} that the function $H$ has the same initial values as the Hurwitz integrals. We are left to show they satisfy the same recurrence formulas. 

In order to simplify the combinatorial arguments, we consider 
\[H'(m,g,n) = \frac{2^{g+1}}{(2g-3)!!}H(m,g,n) = \sum_{i=0}^{g-1}(-1)^i\binom{m}{i}\frac{2^{2g-2i}(g-i-1)!}{(2g-n-i)!}.\]

The recurrence for the right-hand side of Theorem~\ref{thm:Hurformula} takes the following shape for $m \neq g$.
\begin{proposition}[Combinatorial recurrence]\label{prop:CombiRec}
    Let $g\geq n > m \geq 1$ and $ n\geq 3$. Then the function above satisfies the recurrence 
    \begin{align*}
        H(m,g,n) &= \frac{2g-3}{2}\cdot H(m-1,g-1,n-1) + (n-m-1)(2g-3)H(m-1,g-1,n-2) \\ 
        &+ (m-1)(2g-3)H(m-2,g-1,n-2)
    \end{align*}
\end{proposition}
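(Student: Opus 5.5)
The identity is elementary, and I would prove it by comparing the two sides term by term in the summation index $i$. First I would pass to the normalisation $H'$. Since $(2g-3)\,(2g-5)!!/2^{g}=(2g-3)!!/2^{g}$, multiplying the claimed recurrence by $2^{g+1}/(2g-3)!!$ turns it into
\[
H'(m,g,n)=H'(m-1,g-1,n-1)+2(n-m-1)\,H'(m-1,g-1,n-2)+2(m-1)\,H'(m-2,g-1,n-2).
\]
All three terms on the right-hand side have genus $g-1$. Both $H'(\cdot,g-1,n-1)$ and $H'(\cdot,g-1,n-2)$ have denominators $(2g-n-1-i)!$ and $(2g-n-i)!$ respectively. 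So every term can be written over the common factor
\[
c_i:=\frac{4^{g-1-i}(g-i-2)!}{(2g-n-i)!}.
\]

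The second step is to simplify the right-hand side. I would use $(m-1)\binom{m-2}{i}=(m-1-i)\binom{m-1}{i}$ to merge the last two terms. Their combined coefficient becomes $2\binom{m-1}{i}\bigl((n-m-1)+(m-1-i)\bigr)=2(n-2-i)\binom{m-1}{i}$. The first term contributes $\binom{m-1}{i}(2g-n-i)\,c_i$. Hence
\[
\mathrm{RHS}=\sum_i(-1)^i\binom{m-1}{i}\,c_i\,\bigl(2g+n-4-3i\bigr).
\]

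Third, I would treat the left-hand side. Put $A_i:=4^{g-i}(g-i-1)!/(2g-n-i)!$ and apply Pascal's rule $\binom{m}{i}=\binom{m-1}{i}+\binom{m-1}{i-1}$. Shifting the index in the second piece gives
\[
H'(m,g,n)=\sum_i(-1)^i\binom{m-1}{i}\,(A_i-A_{i+1}).
\]
A direct computation then yields
\[
A_i-A_{i+1}=c_i\bigl(4(g-i-1)-(2g-n-i)\bigr)=c_i\,(2g+n-4-3i),
\]
which matches the right-hand side term by term.

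The only real care needed is at the boundaries of the summation, and I expect this to be the one place requiring attention. The hypothesis $m<n\le g$ gives $m-1\le g-2$. Therefore $\binom{m-1}{i}$ vanishes whenever $c_i$ or $A_{i+1}$ would involve a negative factorial, namely when $i\ge g-1$. Also $2g-n-i\ge g-i\ge 0$ in the relevant range, so no denominators become ill-defined. The condition $n\ge3$ ensures that $H'(\cdot,g-1,n-2)$ is evaluated at a valid length. With the conventions $\binom{m-2}{i}=0$ for negative upper arguments and $\binom{m}{i}=0$ for $i>m$, the manipulations above are valid for all $i$, which completes the proof.
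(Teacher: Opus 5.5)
Your proof is correct and follows essentially the same route as the paper: normalise to $H'$, apply Pascal's rule $\binom{m}{i}=\binom{m-1}{i}+\binom{m-1}{i-1}$, absorb $(m-1)\binom{m-2}{i}=(m-1-i)\binom{m-1}{i}$, and compare the coefficients of $\binom{m-1}{i}$ term by term. The only difference is bookkeeping. The paper identifies the shifted Pascal piece as $-H'(m-1,g-1,n-1)$ and shows the remainder equals $2H'(m-1,g-1,n-1)$, whereas you compare $A_i-A_{i+1}$ directly with the merged coefficient $c_i(2g+n-4-3i)$; your explicit check of the summation boundaries is a welcome addition.
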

\begin{proof}
    In terms of the function $H'$ we want to prove
      \[H'(m,g,n) = H'(m-1,g-1,n-1) + 2(n-m-1)H'(m-1,g-1,n-2) + 2(m-1)H'(m-2,g-1,n-2). \]
    Using the identity $\binom{m}{i} = \binom{m-1}{i} + \binom{m-1}{i-1}$, we obtain 
    \[ H'(m,g,n) = -H'(m-1,g-1,n-1) + \sum_{i=0}^{m-1}(-1)^i\binom{m-1}{i}\cdot \frac{2^{2g-2i}(g-i-1)!}{(2g-n-i)!}.\]
    Next we compute 
    \begin{align*}
        2(m-1)H'(m-2,g-1,n-2) &= 2(m-1)\sum_{i=0}^{m-2}(-1)^i\binom{m-2}{i} \cdot \frac{2^{2g-2-2i}\cdot(g-1-i-1)!}{(2g-2-n+2-i)!} \\
        & = \sum_{i=0}^m (-1)^i \binom{m-1}{i} \cdot \frac{2^{2g-2i}(g-i-1)!}{(2g-n-i)!} \cdot \frac{m-1-i}{2(g-i-1)}. 
    \end{align*}
    Similarly
$$        2(n-m-1)H'(m-1,g-1,n-2) = \sum_{i=0}^m (-1)^i  \binom{m-1}{i} \cdot \frac{2^{2g-2i}(g-i-1)!}{(2g-n-i)!} \cdot \frac{n-m-1}{2(g-i-1)}.$$
We now look at 
$$\sum_{i=0}^{m-1}(-1)^i\binom{m-1}{i}\cdot \frac{2^{2g-2i}(g-i-1)!}{(2g-n-i)!} - 2(n-m-1)H'(m-1,g-1,n-2) - 2(m-1)H'(m-2,g-1,n-2) $$ 
which is equal to 
$$\sum_{i=0}^m(-1)^i\binom{m-1}{i}\cdot \frac{2^{2g-2i}(g-i-1)!}{(2g-n-i)!} \cdot (1- \frac{n-m-1}{2(g-i-1)}- \frac{m-1-i}{2(g-1-i)})$$
\[ = \sum_{i=0}^m(-1)^i\binom{m-1}{i}\cdot \frac{2^{2g-2i}(g-i-1)!}{(2g-n-i)!} \cdot \frac{2g-2-2i-n+m+1-m+i+1}{2(g-i-1)}   \]
\[= \sum_{i=0}^m(-1)^i\binom{m-1}{i}\cdot \frac{2^{2g-2i}(g-i-1)!}{(2g-n-i)!} \cdot \frac{2g-n-i}{2(g-i-1)}. \]
After simplifying the fraction, we obtain that the sum is equal to $2H'(m-1,g-1,n-1)$, which is what we wanted to prove. 
\end{proof}

While the combinatorial identity of Proposition \ref{prop:CombiRec} holds for $n = 3$, this is not the form that we needed in Theorem \ref{thm:Hurformula}. Using the equality $H(0,g-1,1) = \frac{1}{2g-3}$ we obtain that 
\[ (n-m-1)(2g-3)H(m-1,g-1,n-2) + (m-1)(2g-3)H(m-2,g-1,n-2) =  1\]
for $n=3$ and $1\leq m \leq 2$. We can thus rewrite the combinatorial identity in the form required in Theorem \ref{thm:Hurformula}: 
\[ H(m,g,3) = 1  + \frac{2g-3}{2}H(m-1,g-1,2)\]
for $m \in \{1,2\}$. For the same reason, we obtain $ H(1,g,2) = \frac{1}{2}$. 

The recurrence in Theorem~\ref{thm:Hurformula} takes the following shape when $m = g$.
\begin{proposition}[Combinatorial recurrence 2]\label{prop:CombiRec-1g}   
    For $g \geq 4$ we have the recursion formula
    \[H(g,g,g) = \frac{(2g-5)(2g-3)}{4}H(g-2,g-2,g-2) + (g-2)(2g-3)H(g-3,g-1,g-2). \]
\end{proposition}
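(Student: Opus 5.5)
The plan is to verify the identity directly from the closed form of $H$. Both sides become explicit finite alternating sums, and after an index shift the comparison reduces to a single identity that I would prove with an integral (generating function) representation.

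\textbf{Step 1: simplify each term.} With $m=n=g$ the factor $(g-i-1)!/(2g-i-n)!$ becomes $1/(g-i)$. Hence
\[
H(g,g,g)=(2g-3)!!\sum_{i=0}^{g-1}(-1)^i\binom{g}{i}\frac{2^{g-2i-1}}{g-i}.
\]
Likewise
\[
H(g-2,g-2,g-2)=(2g-7)!!\sum_{i=0}^{g-3}(-1)^i\binom{g-2}{i}\frac{2^{g-2i-3}}{g-2-i}.
\]
For the third term, $(g-i-2)!/(g-i)!=1/((g-i)(g-i-1))$, so
\[
H(g-3,g-1,g-2)=(2g-5)!!\sum_{i}(-1)^i\binom{g-3}{i}\frac{2^{g-2i-2}}{(g-i)(g-i-1)}.
\]
The prefactors $\frac{(2g-5)(2g-3)}{4}(2g-7)!!$ and $(2g-3)(2g-5)!!$ both equal multiples of $(2g-3)!!$. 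So after dividing by $(2g-3)!!$ the claim becomes a purely numerical identity between three sums.

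\textbf{Step 2: align indices.} In the first right-hand sum substitute $i=j-2$; it becomes $\sum_j(-1)^j\binom{g-2}{j-2}\frac{2^{g-2j-1}}{g-j}$. In the second substitute $i=j-1$; it becomes $-(g-2)\sum_j(-1)^{j}\binom{g-3}{j-1}\frac{2^{g-2j}}{(g-j+1)(g-j)}$. Factoring out $(-1)^j2^{g-2j-1}/(g-j)$, it remains to prove
\[
\sum_{j\le g-1}(-1)^j\frac{2^{-2j}}{g-j}\Big[\binom{g}{j}-\binom{g-2}{j-2}+\frac{2(g-2)}{g-j+1}\binom{g-3}{j-1}\Big]=0 .
\]
The summation ranges differ at the endpoints ($j=g$ is excluded on the left, and $j\le 1$ is absent on the right). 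For this reason I do not expect the identity to hold termwise. Instead I would write $\frac{1}{g-j}=\int_0^1y^{g-j-1}\,dy$ and $\frac{1}{(g-j)(g-j+1)}=\int_0^1(1-y)y^{g-j-1}\,dy$. Each sum then becomes an integral of a binomial expansion: $\sum_j\binom{g}{j}(-1/4)^j y^{g-j}=(y-\tfrac14)^g$ minus the missing $j=g$ term $(-1/4)^g$, and similarly for the others. The identity is thereby reduced to an equality of explicit integrals of polynomials in $y$ of the form $(y-\frac14)^{g}$, $(y-\frac14)^{g-2}y^{2}$ and $(1-y)(y-\frac14)^{g-3}y^{2}$, with the subtracted endpoint constants treated separately.

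\textbf{Main obstacle.} The hard part is this final polynomial identity, together with the bookkeeping of the boundary terms: the term $j=g$ removed on the left, and the cases where $\binom{g-3}{j-1}$ or $\binom{g-2}{j-2}$ vanish. My first approach would be to differentiate the integrands with respect to $y$, or equivalently to integrate by parts, to show that the combined integrand is an exact derivative vanishing at $y=0$ and $y=1$. If that fails, I would instead check the identity for small $g\geq 4$, for instance $g=4,5$. Consistency with the recursion for $Hur$ requires these cases to hold, since $H(1,1,1)$-type low values were already matched. I would then extract a first-order recurrence in $g$ for both sides, in the style of the proof of Proposition \ref{prop:CombiRec}, using Pascal's rule on the binomial coefficients.
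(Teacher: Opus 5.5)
Your route is correct, and it is genuinely different from the paper's.

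\textbf{The paper's route.} The paper stays with finite sums throughout. After rescaling to $H'$, it splits $\binom{g}{i}=\binom{g-2}{i}+2\binom{g-2}{i-1}+\binom{g-2}{i-2}$ and identifies the $\binom{g-2}{i-2}$ part with $H'(g-2,g-2,g-2)$ by an index shift. It then rewrites $(g-2)\binom{g-3}{i}=(g-2-i)\binom{g-2}{i}$. Finally it uses $1-\frac{g-2-i}{2(g-i-1)}=\frac{g-i}{2(g-i-1)}$ plus one more shift to cancel the $2\binom{g-2}{i-1}$ part, in the same style as Proposition~\ref{prop:CombiRec}.

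\textbf{Corrections to your sketch.} Your integral representation does go through, but two details need fixing. First, the middle and last generating polynomials are
\[
\sum_{j\le g-1}\binom{g-2}{j-2}\Big(-\tfrac14\Big)^{j}y^{g-j-1}=\frac{(y-\tfrac14)^{g-2}-(-\tfrac14)^{g-2}}{16\,y},\qquad
\sum_{j\le g-1}\binom{g-3}{j-1}\Big(-\tfrac14\Big)^{j}y^{g-j-1}=-\frac{y}{4}\Big(y-\tfrac14\Big)^{g-3},
\]
not the forms with a factor $y^2$ that you list. Second, the subtracted constants cannot be ``treated separately'': each alone gives a divergent $\int_0^1 dy/y$. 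Once you combine the first two sums, they cancel because $(-\tfrac14)^{g}=\tfrac1{16}(-\tfrac14)^{g-2}$. Using $(y-\tfrac14)^2-\tfrac1{16}=y(y-\tfrac12)$, your identity becomes
\[
\int_0^1\Big[\big(y-\tfrac12\big)\big(y-\tfrac14\big)^{g-2}-\tfrac{g-2}{2}\,y(1-y)\big(y-\tfrac14\big)^{g-3}\Big]\,dy
=-\tfrac12\Big[y(1-y)\big(y-\tfrac14\big)^{g-2}\Big]_{0}^{1}=0 .
\]
This is exactly the exact-derivative argument you proposed first, so your fallback (checking small $g$, then a recurrence in $g$) is not needed. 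Also, appealing to the $Hur$ recursion as a consistency check would be circular, since this proposition is an input to Theorem~\ref{thm:Hurformula}.

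\textbf{Comparison.} Your version explains the identity by a single boundary term. It also absorbs the endpoint bookkeeping (the excluded $j=g$ term and the vanishing binomials) automatically. The paper's version is purely algebraic and parallels its other combinatorial recurrence, but it has to track the summation ranges by hand.
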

\begin{proof}
After scaling and rewritting the identity in terms of $H'$, we need to prove
\[H'(g,g,g) = H'(g-2,g-2,g-2) + 2(g-2)H'(g-3,g-1,g-2). \]

    Using the identity $\binom{g}{i} = \binom{g-2}{i} + 2\binom{g-2}{i-1} + \binom{g-2}{i-2}$ we obtain 
    \[ H'(g,g,g) = \sum_{i=0}^{g-1}(-1)^i\cdot \text{\Huge[}\binom{g-2}{i} + 2\binom{g-2}{i-1} + \binom{g-2}{i-2}\text{\Huge ]} \cdot \frac{2^{2g-2i}(g-i-1)!}{(g-i)!}. \]
    We also have 
    \begin{align*}
        H'(g-2,g-2,g-2) &= \sum_{i=0}^{g-2}(-1)^i\binom{g-2}{i}\cdot \frac{2^{2g-4-2i}(g-3-i)!}{(g-2-i)!} \\
        & = \sum_{i=2}^{g}(-1)^i\binom{g-2}{i-2}\cdot \frac{2^{2g-2i}(g-1-i)!}{(g-i)!}
    \end{align*}
    In particular, we have 
    \[ H'(g,g,g)-H'(g-2,g-2,g-2) = \sum_{i=0}^{g-1}(-1)^i\cdot \text{\Huge[}\binom{g-2}{i} + 2\binom{g-2}{i-1}\text{\Huge ]} \cdot \frac{2^{2g-2i}(g-i-1)!}{(g-i)!},  \]
    and want this value to be equal to $2(g-2)H'(g-3,g-1,g-2)$.

    We have
    \begin{align*}
        2(g-2)H'(g-3,g-1,g-2) &= 2(g-2)\sum_{i=0}^{g-3} (-1)^i \frac{(g-3)!}{i!(g-3-i)!} \frac{2^{2g-2-2i}\cdot (g-2-i)!}{(g-i)!}  \\
        & = \sum_{i=0}^{g-3} (-1)^i \binom{g-2}{i}\cdot \frac{2^{2g-2i}(g-i-1)!}{(g-i)!} \cdot \frac{g-2-i}{2(g-i-1)} 
    \end{align*}
    Finally, we look at the difference 
    \begin{align*}
        &\sum_{i=0}^{g-2} (-1)^i \binom{g-2}{i} \cdot \frac{2^{2g-2i}(g-i-1)!}{(g-i)!} - 2(g-2)H'(g-3,g-1,g-2)  \\
        &= \sum_{i=0}^{g-2} (-1)^i \binom{g-2}{i} \cdot \frac{2^{2g-2i}(g-i-1)!}{(g-i)!} \text{\Large [}1-\frac{g-2-i}{2(g-i-1)}\text{\Large ]} \\
         &= \sum_{i=0}^{g-2} (-1)^i \binom{g-2}{i} \cdot \frac{2^{2g-2i}(g-i-1)!}{(g-i)!} \cdot \frac{g-i}{2(g-i-1)} \\
         &= \sum_{i=1}^{g-1} (-1)^{i-1} \binom{g-2}{i-1} \frac{2^{2g-2i+2}(g-i-1)!}{(g-i)!} \cdot \frac{1}{2}\\
         &= 2\sum_{i=0}^{g-1} (-1)^{i-1} \binom{g-2}{i-1} \frac{2^{2g-2i}(g-i-1)!}{(g-i)!}. \\
    \end{align*}  
    The conclusion follows. 
\end{proof}

For completeness, we remark that the splitting property used to provide the geometric recurrence relations above holds more generally. We shall not use this result in this article.
\begin{remark}[pCohFT property]
\noindent The notation $S_g(\alpha_1,\dots,\alpha_n)$ indicates either of \begin{align*}
&p_*[\PP\oH_{g}^{\res}(\alpha_1\!+\!1,\dots, \alpha_n\!+\!1)] \in  A^{-g+1 + \#\{\alpha_i <0\} + |\alpha|}(\overline{\mathcal{M}}_{g,n}),\\
&p_*[\PP\oSQ_{g}^{\res}(\alpha_1\!+\!1,\dots, \alpha_n\!+\!1)], \quad  p_*[\PP\oSQ_{g}^{\res}(\alpha_1\!+\!1,\dots, \alpha_n\!+\!1)]^\pm \in  A^{\#\{\alpha_i <0\} + |\alpha|/2}(\overline{\mathcal{M}}_{g,n}),
\end{align*} with $\alpha_i$ in $\ZZ^\pm = 2\ZZ$, or $\ZZ^\emptyset = \ZZ\setminus\{-1\}$ as appropriate. We write $S_{g}^{\rm full}(\alpha_1,\dots,\alpha_n)$ to emphasize when the order condition is full, i.e. the sum of the specified orders is maximal. In the full case these classes form a partial cohomological field theory of infinite rank related to the KP hierarchy \cite{BurRosZvo} (or BKP hierarchy \cite{KloVel} for spin-parity). We extend this pCohFT result to the non-full case. 
\begin{proposition}\label{prop:pcohft}
The classes $S_g(\alpha_1,\dots,\alpha_n)$ in cohomology define a partial cohomological field theory of infinite rank\footnote{As in \cite{BR21b}.} over $\left(V:= \bigoplus_{\alpha\in \ZZ^\bullet}\QQ\cdot e_\alpha\,,\quad \eta_{\alpha\beta}:=\delta_{\alpha+\beta,-2}\,,\quad \mathbf{1}:= e_0\right),$ with $\bullet = \emptyset, \pm$ as appropriate.
\end{proposition}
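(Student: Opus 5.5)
We need to show that the classes $S_g(\alpha_1,\dots,\alpha_n)$ satisfy the axioms of a partial cohomological field theory. These are: $S_n$-equivariance, the unit axiom (compatibility with forgetting a marking decorated by $e_0$), and the splitting axiom along separating nodes. There is no loop axiom, since the theory is only partial. The plan is to reduce the non-full case to the full case of \cite{BurRosZvo,KloVel}, using the forgetful description of squares from Section~\ref{sec:squares}.

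\textbf{Reduction to the full case.} Write $N$ for the number of missing zeros. In the square case each missing zero has order $2$, so $\alpha=2$. In the Hodge case each missing zero has order $1$, so $\alpha=1$. By Remark~\ref{rem:N!-factorial},
\[
S_g(\alpha_1,\dots,\alpha_n)=\frac{1}{N!}\,\pi_{N*}\,S^{\rm full}_g(\alpha_1,\dots,\alpha_n,2,\dots,2),
\]
and the analogous formula holds with $1$'s in the Hodge case. Here $\pi_N$ forgets the last $N$ markings, and the residue condition $R=\{0\}$ is unaffected because the extra points are zeros.

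\textbf{Axioms other than splitting.} Equivariance is immediate from the formula above. For the unit, observe that $e_0$ corresponds to an inserted point with $\alpha=0$, i.e.\ a regular marked point. Forgetting such a point commutes with $\pi_N$, and the full pCohFT already satisfies the unit axiom, so the unit axiom transfers.

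\textbf{Splitting.} Let $\zeta\colon\oM_{g_1,n_1+1}\times\oM_{g_2,n_2+1}\to\oM_{g,n}$ be a separating gluing map. We must show
\[
\zeta^*S_g(\alpha)=\sum_{\beta+\gamma=-2}S_{g_1}(\alpha_{I},\beta)\otimes S_{g_2}(\alpha_{J},\gamma).
\]
The approach is to pull back the full formula through the fiber square formed by $\pi_N$ and $\zeta$. The preimage of the divisor is the union of the divisors $D_{A}$ of $\oM_{g,n+N}$, where $A\sqcup A^c$ is a distribution of the $N$ extra markings over the two sides. Since $\pi_N$ is flat, base change gives
\[
\zeta^*\pi_{N*}=\sum_{A}(\pi_{|A|}\times\pi_{|A^c|})_*\,\zeta_A^*.
\]
We then apply the full splitting to each $\zeta_A^*S^{\rm full}$, obtaining $\sum_{\beta+\gamma=-2}S^{\rm full}\otimes S^{\rm full}$. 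For a given $\beta$ the full condition on each side forces the size $|A|$, and the sum over subsets of that size yields $\binom{N}{|A|}$. Combined with $1/N!$ this produces $\frac{1}{|A|!\,|A^c|!}$, which matches the normalizations of $S_{g_1}$ and $S_{g_2}$.

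\textbf{Main obstacle: the index bookkeeping in the splitting.} The full splitting sum ranges over all $\beta\in\ZZ^\bullet$. We must show that only terms with both sides of non-full-compatible degree survive the pushforward, and that each non-full pair $(\beta,\gamma)$ is hit exactly once. The fact needed is that a term pushes forward to zero unless the number of forgotten extra points on each side equals that side's deficit. If a side carries more extra points than its deficit, the pushforward vanishes for dimension reasons, as in Lemma~\ref{lem:non-trivial-terms}. Parity of the spin class is multiplicative on compact type nodes, so the $\pm$ version works identically. Since the statement is asserted in cohomology, we may simply use the cohomological pCohFT of the full classes.
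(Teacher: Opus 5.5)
Your reduction $S_g(\alpha)=\frac{1}{N!}\pi_{N*}S^{\rm full}_g(\alpha,2^N)$, your symmetry argument and your splitting argument follow the paper. The gap is in the unit axiom. You only verify $\pi^*S_g(\alpha)=S_g(\alpha,0)$. A partial CohFT over $(V,\eta,\mathbf{1})$ also requires the normalization $S_0(\alpha,\beta,0)=\eta_{\alpha\beta}=\delta_{\alpha+\beta,-2}$, and this part does not transfer from the full theory.

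When $\alpha+\beta<-2$, the number $S_0(\alpha,\beta,0)=\frac{1}{N!}\pi_{N*}S^{\rm full}_0(\alpha,\beta,0,2^N)$ is a new object, and you must show it vanishes. The paper does this as follows.
\begin{itemize}
\item A dimension count shows the pushforward to $\oM_{0,3}$ can only be non-zero if the stratum upstairs is zero-dimensional. In the square case this forces $\alpha,\beta<0$ and $N=1$.
\item The full unit axiom gives $S^{\rm full}_0(\alpha,\beta,0,2)=\pi^*S^{\rm full}_0(\alpha,\beta,2)$, where $\pi$ forgets the point decorated by $e_0$.
\item $S^{\rm full}_0(\alpha,\beta,2)=0$, because a genus-$0$ residueless differential with two poles and a single double zero does not exist by Lemma~\ref{lem:pruning}.
\end{itemize}
For $\alpha+\beta>-2$ the class is zero since $N<0$, and $\alpha+\beta=-2$ is the full case. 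Adding this step completes the unit axiom.

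Your splitting argument is the paper's: distribute the extra zeros as $N_1+N_2=N$, apply the full splitting of \cite{BurRosZvo} (resp.\ \cite{KloVel}), and absorb $\binom{N}{N_1}/N!$ into $1/(N_1!\,N_2!)$. However, the ``main obstacle'' you single out is not real, and the dimension argument you propose for it is misplaced. For each distribution $(N_1,N_2)$, exactly one $\mu$ survives in the full splitting: the one making the side with $\alpha_I$ full, after which the other side is automatically full. Conversely, every $\mu$ for which both deficits $N_1,N_2$ are non-negative satisfies $N_1+N_2=N$. This bijection gives the ``hit exactly once'' statement directly. It also gives finiteness of the sum over $\mu$, which is required for an infinite-rank theory and which you should state explicitly. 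Lemma~\ref{lem:non-trivial-terms} concerns $\xi$-integrals on $\PP\oH_{g,n}$ and plays no role here.
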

\begin{proof} 
We proof it without loss of generality for the classes of cones of squares.
The symmetry is clear.
The case of full order conditions $\sum_i \alpha_i = 2g-2$ is \cite[Proposition 1.8]{BurRosZvo} (or similarly \cite[Proposition 1.7]{KloVel} in the spin-parity case). By Remark \ref{rem:N!-factorial}, we have $$S_g(\alpha_1,\dots,\alpha_n) = \frac{(\pi_N)_*}{N!}S_g^{\rm full}(\alpha_1,\dots,\alpha_n,2,\dots,2)$$ 
for $N$ such that $\sum_i \alpha_i + 2N = 2g-2$.

Let $\Gamma$ be the stable graph separating a $g_1$ vertex named $1$ with markings $I$ from a $g_2$ vertex with markings $J$ for partitions $g_1+g_2 = g$ and $I\sqcup J = \{1,\dots,n\}$. Let $\Gamma_{N_1,N_2}$ for some natural numbers $N_i$ be the same graph, appended by $N_i$ additional markings on the vertex named $i$.
Then a standard push-pull argument gives \begin{align*}
(\zeta_\Gamma)^*S_g(\alpha) &= \frac{1}{N!}(\zeta_\Gamma)^*\ (\pi_N)_*\ S_g^{\rm full}(\alpha,2^N)\\
&=  \frac{1}{N!}\sum_{\substack{N_1,N_2 \geq 0\\ N_1+N_2 = N}}\binom{N}{N_1,N_2} \ (\pi_{N_1} \times \pi_{N_2})_* \ (\zeta_{\Gamma_{N_1,N_2}})^*\ S_g^{\rm full}(\alpha,2^N)\\
&=  \frac{1}{N!}\sum_{\substack{N_1,N_2 \geq 0\\ N_1+N_2 = N}}\binom{N}{N_1,N_2} \ (\pi_{N_1})_* \otimes (\pi_{N_2})_* \ \sum_{\mu\in 2\ZZ} \delta_{\mu,N_1,N_2}\bigg(S_{g_2}^{\rm full}(\alpha_I,2^{N_1},\mu)\otimes S_{g_1}^{\rm full}(\alpha_J,2^{N_2},-\mu-2)\bigg)\\
&=\sum_{\substack{N_1,N_2 \geq 0\\ N_1+N_2 = N}} \sum_{\mu\in 2\ZZ}
\delta_{\mu,N_1,N_2}\bigg(S_{g_2}(\alpha_I,\mu)\otimes S_{g_1}(\alpha_J,-\mu-2)\bigg)\\
&= \sum_{\mu\in 2\ZZ} S_{g_1}(\alpha_I,\mu)\otimes S_{g_2}(\alpha_J,-\mu-2).
\end{align*}
Here the symbol $\delta_{\mu,N_1,N_2}$ enforces the full-ness condition by vanishing unless $2g_1 -2 = \sum \alpha_I + \mu + 2N_1$ and $2g_2 -2 = \sum\alpha_J - \mu - 2 + 2N_2$, in which case it has value $1$.
Note that this is a finite sum, since the sums of the orders satisfy $2g_1 -2 = \sum \alpha_I + \mu + 2N_1$ and $2g_2 -2 = \sum\alpha_J - \mu - 2 + 2N_2$ for some non-negative $N_1,N_2$. Indeed, if for example $\mu$ runs to negative infinity, $N_1$ can grow to compensate, but then $-\mu-2$ grows larger, which $N_2$ is unable to compensate.

For the unit axiom consider (c.f. \cite[Lemma 2.11]{PanPixZvo}),
\[\pi^*S_g(\alpha) = \pi^*\frac{(\pi_N)_*}{N!}S^{\rm full}_g(\alpha,2^N) = \frac{(\pi_N)_*}{N!}\pi^*S^{\rm full}_g(\alpha,2^N) = \frac{(\pi_N)_*}{N!}S^{\rm full}_g(\alpha,0,2^N) = S_g(\alpha,0).\]
Furthermore, if $\alpha+\beta$ exceeds $-2$, $S_0(\alpha,\beta,0)$ is zero. If their sum equals $-2$, we are in the full case for which the unit axiom is satisfied. In the remaining case, the class vanishes for dimension reasons or $N = 1$. Then, $S^{\rm full}_0(\alpha,\beta,2)$ vanishes by inspection of Lemma \ref{lem:pruning} and so 
\[S_0(\alpha,\beta,0) = (\pi_4)_*S^{\rm full}_0(\alpha,\beta,0,2) = (\pi_4)_*\pi_3^*S^{\rm full}_0(\alpha,\beta,2) = 0.\]
Collecting these results we see that $S_0(\alpha,\beta,0) = \delta_{\alpha+\beta,-2}$.
\end{proof}

\end{remark}

\section{Solving the recursion} \label{sec: solve recursion}

Now that all the terms in the volume recursion are understood explicitly, it is solved formally via coordinate transformations and the following observation for the base case.

\begin{example}(genus 1)
Note that the canonical spin structure is trivial on curves $E$ in $\cH_{1,1}(1)$ and of parity $h^0(E, \mathcal{O}_E) = 1 \text{ mod } 2$. Thus, $\cH_{1,1}(1)^-=\cH_{1,1}(1) =\SQ_1(1)=\cH_{1,1}$. Since the $\CC^\times$-action on this cone of squares scales the 1-dimensional family of holomorphic 1-forms, $\PP\mathcal{SQ}_1$ is the moduli space of elliptic curves itself. The tautological line bundle $\mathcal{O}(-1)$ then coincides with the Hodge bundle. After taking the closures we get \[a_1 = \int_{[\PP\oSQ_1]} \xi = - \int_{\oM_{1,1}}\lambda_1 = -b_1,\] and equivalently  $a_1^\pm = -b_1^\pm$. \qed
\end{example}

\noindent Recall the notation $$D(g) = (-1)^{g}\ 2^{2g-2}\ (g-1)!.$$

\noindent Define three families of polynomials 
with $g>0$ \begin{align*}
M_g &:= \frac{1}{(2g-1)}\sum_{\substack{n\geq1,\, g_1,\dots,g_n\geq 1\\ g_1+\dots+ g_n = g}}\frac{(1-2g)^{n-1}}{n!}\prod_{i=1}^n(2g_i-1)!(-1)^{g_i}x_{g_i} &&\in \QQ[x_1,\dots,x_g]\subset \QQ[x_1,x_2,\dots]\\ 
S_g &:= \sum_{m=0}^{g-1}\frac{(-1)^m}{m!}\ \left(z_1\right)^m\ z_{g-m}  &&\in\QQ[z_1,\dots,z_g]\subset \QQ[z_1,z_2,\dots]\\
R_g &:= \frac{2^{g-1}}{(2g-3)!!}\left(y_g + \sum_{\substack{m \geq 2,\\ \underline{g} \in {\rm Part}(g)_{m}}}\prod_i (2g_i-1)y_{g_i} \, \cdot \frac{Hur(\underline{g})}{m!}\right)  &&\in \QQ[y_1,\dots,y_g]\subset\QQ[y_1,y_2,\dots]\\
\end{align*}
The following Proposition shows that the combinatorics of the recursion in Lemma \ref{lem:agpm-recursion} and its (to be proven) solution in Theorem \ref{thm:maintheorem} are equivalent.
\begin{proposition}\label{prop:rec=main}
The family of coordinate transformations \[y_g = M_g(x_1,\dots, x_g)\]
coincides with the family of coordinate transformations defined by \[ R_g(y_1,\dots,y_g)= S_g\bigg(D(1)\cdot x_1,\dots, D(g)\cdot x_g\bigg).\]
\end{proposition}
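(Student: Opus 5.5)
Both families $y_g=M_g(x)$ and $R_g(y)=S_g(D(1)x_1,\dots,D(g)x_g)$ are triangular: the coefficient of $x_g$ (resp.\ $y_g$) is a nonzero constant, and the remaining terms involve only lower-index variables. Each family therefore defines an invertible change of coordinates on $\QQ[x_1,x_2,\dots]$. I will substitute $y_h=M_h(x)$ into $R_g$ and show the result is the polynomial $S_g(D(1)x_1,\dots,D(g)x_g)$ in $\QQ[x_1,\dots,x_g]$. I would first check the leading coefficients: $R_g$ has leading term $\frac{2^{g-1}}{(2g-3)!!}y_g$, $M_g$ has leading term $(2g-2)!(-1)^g x_g$, and $S_g(Dx)$ has leading term $D(g)x_g$; these should agree, as a sanity check. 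I would also check the cases $g=1,2,3$ against $Hur(1,1)=0$ and $Hur(1,1,1)=\tfrac74$.

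\textbf{Step 1 (generating function for $M$).} Set $X(t)=\sum_{h\geq1}(2h-1)!(-1)^hx_ht^{2h}$. Then
\[
(2g-1)M_g=[t^{2g}]\frac{e^{(1-2g)X(t)}-1}{1-2g}.
\]
This is exactly the shape inverted in the introduction, via Lagrange inversion. With $\mathcal F(t)=1+\sum_h(2h-1)y_ht^{2h}$, it is equivalent to
\[
[t^{2g}]e^{X(t)}=\frac{1}{(2g)!}[t^{2g}]\mathcal F(t)^{2g}.
\]
I will take this equivalence as the working form of $y=M(x)$.

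\textbf{Step 2 (generating function for $R$).} Insert Theorem \ref{thm:Hurformula} into $R_g$. For a partition with $n$ parts, $m$ of them equal to $1$, I split the weight as follows.
\begin{itemize}[label={}]
\item The factor $(-1)^i\binom{m}{i}$ is read as choosing $i$ of the parts equal to $1$ and giving each the weight $-\tfrac14\cdot\tfrac{2}{2}$; up to the normalisation $2^{-2i}$ these chosen parts carry $y_1$ separately.
\item The factor $\frac{(2g-i)!}{n!\,(2g-i-n)!}=\binom{2g-i}{n}$ counts choices of $n$ nontrivial factors in $\mathcal F^{2g-i}$.
\item The leftover factor $(g-i-1)!/(2g-i)!$ depends only on $g$ and $i$.
\end{itemize}
The term $y_g$ (the case $m=1$, $n=1$) must also be shown to fit this scheme, so that the sum runs over all $n\geq1$. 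Carrying this out should give
\[
R_g=\sum_{i\ge0}c_{g,i}\,\frac{(y_1)^i}{i!}\,[t^{2g-2i}]\mathcal F(t)^{2g-i},
\]
with explicit constants $c_{g,i}$ built from $(g-i-1)!/(2g-i)!$ and powers of $2$ and $(-1)$. Some care is needed because the chosen ones are removed from the partition while the exponent $2g-i$ still counts them.

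\textbf{Step 3 (comparison).} The exponent $2g-i$ differs from the exponent $2g$ in Step 1, so I would rewrite $[t^{2g-2i}]\mathcal F^{2g-i}$ through the Lagrange-inversion identity in its shifted form. This expresses it in terms of $[t^{2g-2i}]e^{\frac{2g-i}{2g}X}$-type quantities, or better through the derivative form $[t^{2g}]\,t\,\mathcal F'\mathcal F^{2g-1}$. Then $S_g(Dz)=[\,\cdot\,]\,e^{-z_1t^2}\sum_h D(h)z_ht^{2h}$ should emerge: the $y_1^i/i!$ factors resum to the exponential $e^{-D(1)x_1t^2}$, using $y_1=M_1=-x_1$ and $D(1)=-1$. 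The factors $(g-i-1)!\,2^{2g-2i-2}$ produce $D(g-i)$.

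I expect Step 3 to be the main obstacle: matching the constants $c_{g,i}$ exactly with the Lagrange-inversion coefficients for powers $2g-i\neq 2g$. If a clean generating-function argument resists, my fallback is an induction on $g$. Both sides define triangular transformations, so it suffices to verify the identity coefficientwise in monomials $x_1^i\prod x_{g_j}$. Since $M$ is multiplicative in a simple exponential way, this reduces to a binomial identity in the spirit of Propositions \ref{prop:CombiRec} and \ref{prop:CombiRec-1g}.
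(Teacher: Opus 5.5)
There is a genuine gap: your overall plan is the right one, but two identities you commit to are false, so the argument does not go through as written. The plan itself (substitute $y=M(x)$, rewrite $R_g$ through coefficients of powers of $\mathcal F(t)=1+\sum_h(2h-1)y_ht^{2h}$, recognise the inverse transform, and resum the $y_1$'s) is sound, and done correctly it is the paper's coefficient comparison made explicit.

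\textbf{Step~1 uses the wrong inverse.} Lagrange inversion of $(2g-1)y_g=\frac{1}{1-2g}[t^{2g}]e^{(1-2g)X}$ gives
\[
\frac{1}{(2g)!}[t^{2g}]\mathcal F(t)^{2g}=\frac{1}{(2g-1)!}[t^{2g}]X(t)=(-1)^g x_g .
\]
So the logarithm $X$ of $e^{X}$ appears, and the result is linear in $x_g$; this is exactly the paper's inverse \eqref{eq:Bellinverse}. Already for $g=2$ one has $\frac{1}{24}[t^4]\mathcal F^4=\frac{y_2}{2}+\frac{y_1^2}{4}=x_2$, whereas $[t^4]e^{X}=6x_2+\frac12 x_1^2$. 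You seem to have identified $e^X$ with $\mathcal S(t)$ from the introduction, but $[t^{2g}]\mathcal S=(-1)^g b_g$ is the linear quantity.

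\textbf{Step~2 uses the wrong exponent.} The claimed shape $R_g=\sum_i c_{g,i}\frac{y_1^i}{i!}[t^{2g-2i}]\mathcal F^{2g-i}$ is false in general. In $R_5$ the monomials $y_1y_4$ and $y_1y_2^2$ receive contributions only from $i\le 1$, and the $i=0$ term is fixed by the $y_5$-coefficient. Your $i=1$ part would then have to be proportional to $(63,324)$, the $(y_4,y_2^2)$-coefficients of $[t^8]\mathcal F^9$. The actual one (computed below) is proportional to $(56,252)$, those of $[t^8]\mathcal F^8$, and these vectors are not proportional.

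The error is exactly the bookkeeping you flagged yourself. After marking $i$ of the ones (the factor $\binom{m}{i}$) and deleting them, you are left with an ordered composition of $g-i$ into $n-i$ parts. The positions of the marked ones contribute $\frac{1}{n!}\binom{n}{i}=\frac{1}{i!\,(n-i)!}$, and $(2g-i-n)!=(2(g-i)-(n-i))!$. Hence the binomial is $\binom{2(g-i)}{n-i}$ and the exponent is $2(g-i)$, not $2g-i$. Summing over $n$ gives
\[
R_g=\sum_{i=0}^{g-1}\frac{(-y_1)^i}{i!}\,2^{2(g-i)-2}\,(g-i-1)!\,\frac{[t^{2(g-i)}]\mathcal F(t)^{2(g-i)}}{(2(g-i))!}.
\]
The $n=1$ case fits, as you anticipated: the formula gives $Hur=\frac{1}{2g-1}$ there, which recovers the $y_g$ term.

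\textbf{Step~3 then becomes immediate.} By the corrected Step~1 the last factor is $(-1)^{g-i}x_{g-i}$. With $-y_1=x_1$ this gives $\sum_i\frac{x_1^i}{i!}D(g-i)x_{g-i}=S_g(D(1)x_1,\dots,D(g)x_g)$, because $(-1)^m(D(1)x_1)^m=x_1^m$. No shifted Lagrange inversion is needed: the ``main obstacle'' of Step~3 is an artifact of the wrong exponent. Your fallback induction is too vague to replace it.

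Finally, the $\frac{1}{m!}$-weighted sums in $R_g$ must be read over ordered compositions. Your $\mathcal F$-power reading already presupposes this, and the graph-automorphism weights in Corollary~\ref{cor:boundaryterm} force it. Over unordered partitions the identity already fails at $g=3$.
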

\begin{proof}
Note firstly by induction that these families of coordinate transformations are well-defined and invertible, e.g. $M_g$ reads
\begin{align*}
y_1 &= -x_1\\
y_2 &= \frac{1}{3} \left( 3!\ x_2 + \frac{-3}{2!}x_1^2\right)\\
y_3 &= \frac{1}{5}\left( 5!\ (-x_3) + \frac{-5}{2!}3!\ x_2 (-x_1) + \frac{5^2}{3!} (-x_1)^3\right)\\
&\vdots
\end{align*}
As in \cite{CosSauSch}, we use the combinatorial result of \cite[Corollary 2.(ii)]{BirGilWei} to write the above coordinate inversion as 
\begin{equation}\label{eq:Bellinverse}x_g = M_g^{\rm inv}(y_1,\dots,y_g):= \sum_{n\geq 1}\frac{1}{n!(2g-n)!}\sum_{g_1+\dots + g_n = g}\prod_{j=1}^n(2g_j-1)(-1)^{g_j}y_{g_j}.\end{equation}
In order to then prove the asserted coincidence, we write all in terms of the $y$-variables and compare coefficients. That is, we check that the following family of equations holds:
\[ R_g(y_1,\dots,y_g) = S_g\bigg(D(1)\cdot M_1^{\rm inv}(y_1),\dots, D(g)\cdot M_g^{\rm inv}(y_1,\dots,y_g)\bigg).\]

\noindent For convenience, write 

\begin{align*}
d_g(\underline{r})&:= \left((-1)^{g-1}\frac{2^{2(g-m)-2}}{m!}\frac{2^{2(g-m)-1}\,(g-m-1)!}{2^{g-m}\,({2^{2(g-m)-1}-1})}\right)\times \\ 
&\left( \frac{{2^{2(g-m)-1}-1}}{{-2^{(g-m)-1}}\ n!(2(g-m)-n)!}(-1)^{g-m}\prod_{j=1}^n(2r_j-1)\right)   
\end{align*}
such that 
\begin{align*}
    Q_g(y_1,\dots,y_g)&:=S_g\bigg(D(1)\cdot M_1^{\rm inv}(y_1),\dots, D(g)\cdot M_g^{\rm inv}(y_1,\dots,y_g)\bigg) \\
    &=\sum_{m=0}^{g-1}(y_1)^m\ \left(\sum_{\substack{r_1+\dots + r_n = g-m  \\ n\geq 1}}d_g(\underline{r})\prod_{j=1}^ny_{r_j}\right).
\end{align*}

\noindent 
Note that the coefficient of $y_g$ in $R_g, Q_g$ can be seen to coincide using the identity $$ (2k-1)!! = (2k)!/(2^k k!).$$

\noindent It remains to prove \begin{align*}
\frac{2^{g-1}}{(2g-3)!!}\left(\sum_{\substack{m \geq 2,\\ \underline{g} \in {\rm Part}(g)_{m}}}\prod_i (2g_i-1)y_{g_i} \, \cdot \frac{Hur(\underline{g})}{m!}\right) = \sum_{m=1}^{g-1}(y_1)^m\ \left(\sum_{\substack{r_1+\dots + r_n = g-m  \\ n\geq 1}}d_g(\underline{r})\prod_{j=1}^ny_{r_j}\right).
\end{align*}
This follows from inspection of the formula for $Hur$ in Theorem \ref{thm:Hurformula}.
\end{proof}

\begin{proof}[Proof of Main Theorem~\ref{thm:maintheorem}]\label{proofofmain}
For $g=1$ this follows from the above example.
By Lemma \ref{lem:agpm-recursion}, \[R_g(a_1^\pm,\dots,a_g^\pm) = \beta_g^\pm = S_g\bigg(D(1)\cdot b_1^\pm, \dots, D(g)\cdot b_g^\pm\bigg) \in \QQ.\] By Proposition \ref{prop:rec=main}, this is equivalent to \[a_g^\pm = M_g(b_1^\pm,\dots,b_g^\pm) \in \QQ,\] which is Theorem \ref{thm:maintheorem}.
\noindent Now that we solved the recursion, one can write the result in terms of powerseries coefficients as follows.
Since $S^\pm=\frac{\cosh(t/2 )\ t/2}{\sinh(t/2)} =  \sum_n \frac{B_{2n}(1) \ t^{2n}}{(2n)!}$ and by \cite{FabPan}
\[ (-)^gb_g^\pm = \frac{-2^{g-1}(-)^g b_g}{2^{2g-1}-1} =  \frac{-2^{g-1}(-)^g }{2^{2g-1}-1}\frac{2^{2g-1}-1}{2^{2g-1}}\frac{|B_{2g}|}{(2g)!} = 2^{-g}\ {(-)^{g-1}}\frac{|B_{2g}|}{(2g)!},\] one sees $2^{-g}[t^{2g}]S^\pm = (-)^gb_g^\pm$. Then, since $a_g^\pm = M_g(b_1^\pm,\dots,b_g^\pm)$
is inverse to $(-)^gb_g^\pm = \frac{1}{(2g)!}[t^{2g}]\mathcal{F}^\pm(t)^{2g}$, we obtain the first powerseries expression.
To encode $a_g^\pm = M_g(b_1^\pm,\dots,b_g^\pm)$ directly, note that it is equivalent to \begin{equation}[t^{2g}]\mathcal{F}^\pm(t) = \frac{1}{1-2g}[t^{2g}]e^{(1-2g)\bigg(\sum_{h\geq 1} (2h-1)!(-)^hb_h^\pm t^{2h}\bigg)}.\end{equation}\label{eq:closedpowerseries}
One can rewrite the exponentiated formal series by $\sum_{h\geq 1} (2h-1)! \ 2^{-h}\ \frac{B_{2h}}{(2h)!}\ t^{2h} = \sum_{h\geq 1} \ \frac{B_{2h}}{(2h)}\ \left(\frac{t}{\sqrt2}\right)^{2h}.$
\end{proof}

\section{Spin virtual volumes} \label{sec: areaSV}
In this section, we will adapt the methods used to compute spin volumes in order to obtain recursion formulas for $d_g^{\pm}$ (and hence for area Siegel-Veech constants) and for the generalization $d_{g,p}^{\pm}$ of these integrals. Our goal will be to provide analogues of Proposition \ref{prop:boundary-contribution}, as well as use the Segre-Chiodo correspondence to compute the principal part contribution. 

\subsection{Spin Siegel-Veech constants} \label{subsection:dg_computation}
\subsubsection{The recursion statement}
In order to compute $d_g^\pm$, we will integrate the formula in Proposition \ref{prop:2g-1-recursion} against $\xi^{2g-2}\delta_0$. This computation shares many similarities with the computation of $a_g^{\pm}$. The principal part is obtained from the same Chiodo - Segre correspondence in Theorem \ref{thm:Segre-Chiodo}. Moreover, because of the vanishings  $\xi^{2g} = \xi^{2g-1}\delta_0 = 0$, the boundary contributions will come from the same graphs.

We have an analogue of Proposition \ref{prop:boundary-contribution}, accounting for the boundary contribution. In the same notation as \textit{loc. cit.} we have:

\begin{proposition}\label{prop:boundary-cont-d_g} Let $\oGamma$ be an odd bi-colored graph, and let $k\in\{1,\dots,2g-3\}$ be an odd number.
Then if $\oGamma $ satisfies $(*)$, we have 
    $$\int_{\oM_{g,1}}p_{*}\left(\psi^{N_{k,g}-1}_{1}\xi^{2g-2}\delta_0\pi_{N_{k,g}*}\zeta_{\oGamma*}[\PP\oSQ(\oGamma)]^{\pm}\right) =   N_{k,g}!h\big(k-1,(-2g_v)_{v\in V_0}\big)\sum_{v\in V_0}\left( d_v^{\pm}\cdot \prod_{w\in V_{0}, w\neq v}a_{g_w}^{\pm}\right).$$

Otherwise, if $\oGamma$ does not satisfy $(*)$, we obtain 
$$\int_{\oM_{g,1}}p_{*}\left(\psi^{N_{k,g}-1}_{1}\xi^{2g-2}\delta_0\pi_{N_{k,g}*}\zeta_{\oGamma*}[\PP\oSQ(\oGamma)]^{\pm}\right) =  0.$$
\end{proposition}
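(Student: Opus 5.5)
I would follow the proof of Proposition \ref{prop:boundary-contribution} essentially line by line, with $\xi^{2g-1}$ replaced by $\xi^{2g-2}\delta_0$. First I use the projection formula, $\pi_{N_{k,g}}^*\cO(1)=\cO(1)$ and the fact that $\pi_{N_{k,g}}^*\delta_0=\delta_0$ to bring $\xi^{2g-2}\delta_0$ inside $\zeta_{\oGamma*}$. I then pull $\delta_0$ back along the gluing map. Its pullback $\zeta_{\oGamma}^*\delta_0$ decomposes as a sum $\sum_{v}\delta_0^{(v)}$ of the $\delta_0$-classes of the individual vertex moduli spaces. There is also an excess/self-intersection contribution from non-separating edges of $\oGamma$, which I expect to be harmless: it is supported on non-separating edges, and those are killed by the level $0$ analysis below.

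Since the differential vanishes identically on level $-1$, the class $\xi^{2g-2}$ lives entirely on level $0$. As before, the integrand splits as a tensor product of a level $0$ factor and a level $-1$ factor $\psi_1^{N_{k,g}-1}p_{-1*}\pi_{N_{-1}*}[\cdots]^\pm$. The terms in which $\delta_0^{(v)}$ sits on a level $-1$ vertex would force that factor to carry $\psi_1^{N_{k,g}-1}\delta_0$. That product is already over-dimensional by the same dimension count that gave $|V_{-1}|=1$ in Proposition \ref{prop:boundary-contribution}, so these terms should vanish. Only terms with $\delta_0$ on a level $0$ vertex survive.

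\textbf{Level 0.} I need an analogue of Lemma \ref{lem:xi-int-gluing-vanishing} for $\xi^{2g-2}\delta_0^{(v)}$. Expanding $\xi^{2g-2}$ over the product of cones, vertex $v$ receives $\xi^{t_v}\delta_0$ and every other vertex $w$ receives $\xi^{t_w}$. Lemma \ref{lem:non-trivial-terms} forces each $w\neq v$ to be of type $(g_w,1,(2g_w-1))$ with $t_w=2g_w-1$. For $v$, I need the refinement that $\int\xi^{t}\delta_0[\PP\oSQ_{g}(a)]=0$ unless $n=1$, $a=(2g-1)$ and $t=2g-2$. I would prove this by the same Leray--Hirsch degree argument as Lemma \ref{lem:non-trivial-terms}, now combined with the vanishing $\xi^{2g-1}\delta_0=0$ on $\PP\oH_{g,1}$. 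That vanishing is itself proved by restricting to $\delta_0$, where the Hodge bundle is an extension of a genus $g-1$ Hodge bundle by a trivial line, together with Lemma \ref{lem:xi-vanishing}. Counting dimensions then gives $\sum_{v\in V_0}g_v=g$ exactly as before. The surviving level $0$ factor is therefore $d_v^{\pm}\prod_{w\neq v}a_{g_w}^\pm$, where $d_v^\pm$ means $d_{g_v}^\pm$.

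\textbf{Level $-1$ and conclusion.} Because the genera at level $0$ sum to $g$, the graph is of compact type. The level $-1$ analysis is then literally identical to that of Proposition \ref{prop:boundary-contribution}: it yields condition $(*)$, the residue condition $R[-1]=\{0\}$ via Lemma \ref{lem:BB-graph-residue}, and the factor $N_{k,g}!\,h(k-1,(-2g_v)_{v\in V_0})$. Summing over the choice of $v$ gives the stated formula, and when $(*)$ fails the integral vanishes.

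The main obstacle is the $\delta_0$ bookkeeping. I must check that $\delta_0$ restricted to the image of $\zeta_{\oGamma}$ contributes no extra terms, whether excess terms or terms with multiplicities, beyond the vertexwise sum. I must also justify the refined vanishing $\xi^{2g-1}\delta_0=0$ for the cones of squares, including in the spin-weighted case.
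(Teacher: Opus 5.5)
Your proposal is correct and is essentially the argument the paper intends: the paper gives no separate proof here, only remarking that the proof of Proposition \ref{prop:boundary-contribution} goes through using $\xi^{2g}=\xi^{2g-1}\delta_0=0$, and in the general-$p$ version (Proposition \ref{prop:boundary-contribution-dg,k}) it distributes $[D_p]$ over the level-$0$ vertices just as you distribute $\delta_0$. One step needs a different justification: the vanishing of the terms with $\delta_0$ on level $-1$, together with the excess terms and the globally non-separating vertex divisors in $\zeta_\Gamma^*\delta_0$ for non-tree $\Gamma$ (the latter you did not list). A dimension count on level $-1$ alone does not give this; use instead that if $\sum_{v\in V_0}g_v\le g-1$ then $\xi^{2g-2}$ already vanishes on the level-$0$ cone, because that cone sits in a Hodge bundle of rank $\le g-1$, while if $\sum_{v\in V_0}g_v=g$ then $\Gamma$ is a tree whose level $-1$ vertices are rational and carry no $\delta_0$.
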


\noindent Using this proposition, we can evaluate the boundary contribution, and we have: 

\begin{corollary}\label{cor:boundarytermd_g} In the notation of Lemma \ref{lem:agpm-recursion} and  Proposition \ref{prop:2g-1-recursion}, we have 
\begin{equation*}
    p_{*}\left(\xi^{2g-2}\delta_0\sum_{\substack{k=1, \\ k\ {\rm odd}}}^{2g-3}\frac{1}{N_{k,g}!}\frac{\Delta_{k}}{2}\prod_{\substack{\ell>k, \\ \ell\,{\rm odd}}}^{2g-3}\left(\frac{\xi+\ell\psi_{1}}{2}\right)\right)=\sum_{g_1 > 0} \sum_{\underline{h}\vdash g-g_1} \frac{1}{|Aut(\underline{h})|} \cdot  Hur(\underline{g}) \cdot (2g_1-1)d_{g_1}^\pm \prod_{i\geq 2} (2g_i-1)a_{g_i}^\pm.
\end{equation*}
\end{corollary}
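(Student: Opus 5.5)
The plan is to mirror the proof of Corollary \ref{cor:boundaryterm} step by step, with $\xi^{2g-1}$ replaced by $\xi^{2g-2}\delta_0$, and to use Proposition \ref{prop:boundary-cont-d_g} in place of Proposition \ref{prop:boundary-contribution}.

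First, I would reduce the product $\prod_{\ell>k}\frac{\xi+\ell\psi_1}{2}$ to its $\xi$-free term $\prod_{\ell>k}\frac{\ell}{2}\,\psi_1^{N_{k,g}-1}$. For this I need the vanishing $\xi^{2g-1}\delta_0=0$ (together with $\xi^{2g}=0$) on $\PP\oH_{g,1}$. I would justify it as in Lemma \ref{lem:xi-vanishing}: $\delta_0$ is the pushforward from $\PP\oH_{g-1,3}$-type boundary, where the Hodge bundle restricts to an extension of $\oH_{g-1}[0,1,1]$, a bundle of rank $g$ with Chern roots bounded so that Mumford's relation kills $\xi^{2g-1}$. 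Alternatively, this vanishing is taken as the input already invoked at the start of Section \ref{sec: areaSV}. The coefficient then simplifies via \eqref{eq:coef-Hur(g)} to $\frac{(2g-3)!!}{k!!\,2^{N_{k,g}}}$, exactly as before.

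Next, for each odd $k$, I would expand $\Delta_k$ as the sum over bicolored graphs from \eqref{def:Delta-k}. Applying Proposition \ref{prop:boundary-cont-d_g}, only graphs satisfying $(*)$ survive. These are back-bone graphs with a rational central vertex at level $-1$ carrying the leg and all $N_{k,g}$ extra markings, and with $m\ge 2$ outlying vertices of genera $\underline{g}=(g_1,\dots,g_m)$ partitioning $g$. Each such graph has multiplicity $\prod_i(2g_i-1)$, and the factor $N_{k,g}!$ cancels the prefactor $1/N_{k,g}!$. The contribution of the graph $\oGamma_{k,\underline g}$ is therefore
\[
\frac{\prod_i(2g_i-1)}{|\Aut(\oGamma_{k,\underline g})|}\,h(k-1,-2\underline g)\sum_{v}d^{\pm}_{g_v}\prod_{w\neq v}a^{\pm}_{g_w}.
\]
Summing over odd $k$ with the coefficients above assembles $Hur(\underline g)$, since the $k$-dependence sits only in $h$ and the coefficient.

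The remaining step is purely combinatorial: re-index the sum over unordered partitions weighted by $1/m!$ (equivalently $1/|\Aut|$), together with a choice of distinguished vertex $v$ carrying $d^\pm$. Choosing the distinguished part first as $g_1$ and letting the rest form a partition $\underline h\vdash g-g_1$ turns the symmetry factor $1/|\Aut(\underline g)|$ with a marked element into $1/|\Aut(\underline h)|$. This gives the stated right-hand side, with $\underline g=(g_1,\underline h)$. The main obstacle is the automorphism bookkeeping: one must check that marking a vertex exactly converts the orbit count correctly when $g_1$ coincides with some $h_i$. I would verify this by the orbit–stabilizer identity $\sum_{v}\frac{1}{|\Aut(\underline g)|}=\sum_{\text{orbits of }v}\frac{1}{|\Aut(\underline g,v)|}$, noting $\Aut(\underline g,v)=\Aut(\underline h)$. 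Finally, the $m\ge2$ restriction is automatic, because $\underline h$ must be nonempty for the stabilization to be nontrivial, and this is what the sum over $g_1>0$, $\underline h$ implicitly encodes.
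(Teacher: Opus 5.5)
Your proposal is correct and follows the paper's route. You kill the $\xi$-dependent terms of $\prod_{\ell>k}\frac{\xi+\ell\psi_1}{2}$ via $\xi^{2g-1}\delta_0=0$, then apply Proposition \ref{prop:boundary-cont-d_g} graph by graph exactly as in Corollary \ref{cor:boundaryterm}, so that summing over $k$ assembles $Hur(\underline g)$, and you re-index using $|\Aut(\underline g)|/|\Aut(\underline h)|=$ the number of parts of $\underline g$ equal to $g_1$, which is precisely the paper's remark and your orbit--stabilizer count; your reading of the right-hand side with $\underline h$ nonempty (i.e.\ $m\geq 2$, coming from ${\rm Bic}^*$) is the intended one.
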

Here we encounter the same function $Hur$ defined just before Lemma \ref{lem:agpm-recursion}.
Notice that, if we denote $\underline{g} = (g_1,\underline{h})$, we have 
\[\frac{|Aut(\underline{g})|}{|Aut(\underline{h})|} = \ \textrm{number of entries in} \ \underline{g} \ \textrm{equal to} \ g_1.\]
Our choice of summation in Corollary \ref{cor:boundarytermd_g} is just a way of keeping track of equal terms in Proposition \ref{prop:boundary-cont-d_g}.
\subsubsection{Contribution from principal part}

Using again the vanishing $\xi^{2g-1}\delta_0 = 0$, we see that the  contribution from the principal part is equal to 
\[ \frac{(2g-3)!!}{2^{g-1}}[\PP\oSQ_{g}(1)] \psi_1^{g-1}\xi^{2g-2}\delta_0 = \frac{(2g-3)!!}{2^{g-1}}\int_{\overline{\mathcal{M}}_{g,1}}
s_{g,2g-2}^{\pm} \, \delta_0 \, \psi_1^{g-1}.   \]

\begin{lemma} We have the equality
    \[
\int_{\overline{\mathcal{M}}_{g,1}}
s_{g,2g-2}^{\pm} \, \delta_0 \, \psi_1^{g-1}
= \frac{1}{2}\left(\beta_{g-1}^{\pm} + \frac{(-1)^g}{(g-1)!}(b_1)^{g-1}\right). 
\]
\end{lemma}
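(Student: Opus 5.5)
We follow the proof of Lemma \ref{lem:bg-pm-computation}. First we expand $s^{\pm}_{g,2g-2}$ via the Segre--Chiodo correspondence \eqref{eq:weighted-segre}. Since $\delta_0$ is now inserted, we may no longer restrict to top-degree Chiodo parts. The natural approach is to pull $\delta_0$ back to the vertices. We write $\delta_0=\frac12\xi_{0*}[\oM_{g-1,3}]$, where $\xi_0$ glues the last two markings into a non-separating node. By the projection formula,
\[\int_{\oM_{g,1}} s^{\pm}_{g,2g-2}\,\delta_0\,\psi_1^{g-1}=\frac12\int_{\oM_{g-1,3}}\xi_0^*s^{\pm}_{g,2g-2}\cdot\psi_1^{g-1}.\]
To compute $\xi_0^*s^\pm_g$, we pull back each tree term $\zeta_{\Gamma*}(\otimes_v \Omega^{\pm}_{g(v)}-2^{2g(v)-1})$ along $\xi_0$. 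The genus-reducing degeneration lands on a single vertex. For the Chiodo factor we use Lemma \ref{eq:spinchiodo=2Lambda}, $\Omega^{\pm}=2^{g-1}\Lambda_g(-t)\Lambda_g(2t)$, together with $\xi_0^*\lambda_i=\lambda_i$ (pulled back from genus $g-1$) and $\xi_0^*\Lambda_g(t)=\Lambda_{g-1}(t)$. The pieces $-2^{2g(v)-1}$ pull back to constants. The expected outcome is that $\xi_0^*s^\pm_g(t)$ reorganises as a tree sum for genus $g-1$ with three legs, that is, $s^\pm_{g-1}(t)$ on $\oM_{g-1,3}$, plus correction terms coming from the mismatch of the constants $2^{2g(v)-1}$ versus $2^{2g(v)-3}$ and from the prefactor $2^{g-1}$ versus $2^{g-2}$.

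We then take the degree-$(2g-2)$ part and pair it with $\psi_1^{g-1}$ on $\oM_{g-1,3}$. Forgetting the two extra markings, whose $\psi$-insertion is trivial, the leading term becomes $\int_{\oM_{g-1,1}}\psi_1^{g-2}s^\pm_{g-1,2g-3}=\beta^\pm_{g-1}$. Here we use the string and dilaton equations, since $\psi_1^{g-1}$ on $\oM_{g-1,3}$ pushes forward to $\psi_1^{g-2}$ times a multiplicity, and we must check that this multiplicity is $1$ after the factor $\frac12$. The correction term should only survive on the graph where every vertex is an elliptic tail of degree-one Chiodo part $\lambda_1$, attached to a genus-$0$ vertex carrying the marking and the two node branches. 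This produces $\frac{(-1)^{g}}{(g-1)!}b_1^{g-1}$: there are $g-1$ elliptic tails with automorphism factor $(g-1)!$, each contributing $-b_1$ up to sign, and the genus-$0$ vertex integral of $\psi_1^{g-1}$ on $\oM_{0,g+2}$ equals $1$.

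The main obstacle is the bookkeeping in the degeneration of the Chiodo/Segre tree formula. One must verify that the constant shifts $-2^{2g(v)-1}$ and the distinction between non-separating pullbacks on different vertices collapse exactly to $\beta^\pm_{g-1}$ plus the single elliptic-tail correction. If the direct pullback becomes unwieldy, we will instead use the Segre-side description: $\xi_0^*$ of the cone of squares is the cone of squares in genus $g-1$ with a pair of simple poles of opposite residue, and the zero-residue part gives $s^\pm_{g-1}$. The residue-$\neq0$ (or parity-mixing) part would then account for the $b_1^{g-1}$ term. We would compare low genus ($g=1,2$) to fix signs.
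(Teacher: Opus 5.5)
Your set-up is sound and is in substance the paper's. Pulling $\delta_0=\frac12\xi_{0*}[\oM_{g-1,3}]$ back to the tree expansion of $s^\pm_{g,2g-2}$ places the non-separating node on a single vertex, with no excess terms. Moreover, $\xi_0^*\Omega^{\pm}_{h}(2t)=2\,\Omega^\pm_{h-1}(2t)$ encodes exactly the paper's vanishings $\delta_0\lambda_h=0$ and $\delta_0\lambda_{h-1}^2=0$. The gap is in how you expect the terms to organise. Your ``leading term'' coming from $s^\pm_{g-1}$ on $\oM_{g-1,3}$ is identically zero: since $\xi^{2g-2}=0$ on $\PP\oH_{g-1,n}$, the top degree of $s^\pm_{g-1}$ is $2g-3$, so $s^\pm_{g-1,2g-2}=0$. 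The string-equation step is also dimensionally inconsistent. Forgetting two points sends $\psi_1^{g-1}$ to $\psi_1^{g-3}$, not $\psi_1^{g-2}$, so $\int_{\oM_{g-1,1}}\psi_1^{g-2}s^\pm_{g-1,2g-3}$ cannot arise this way. As written, your bookkeeping produces only the $b_1^{g-1}$ term and loses $\frac12\beta^\pm_{g-1}$; checking signs in genus $1,2$ would not repair this.

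The missing step is a degree count. If the loop sits on a vertex $v$ with $g(v)\ge 2$, the pulled-back factor $2\Omega^\pm_{g(v)-1}(2t)-2^{2g(v)-1}$ has degree at most $2g(v)-3$, two below the top degree $2g(v)-1$. That term then has total degree at most $2g-3$ and vanishes in degree $2g-2$. Hence the loop must sit on a genus-one vertex, whose factor becomes the constant $2\cdot 2^{-1}-2=-1$. All other vertices then carry top-degree parts, which forces them to be elliptic tails unless they carry the marking.

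Both summands of the statement are therefore such ``correction'' terms, and nothing comes from $s^\pm_{g-1}$ itself:
\begin{itemize}
\item If the loop is on one of the $m$ elliptic tails, that tail becomes a rational tail $\oM_{0,3}$ carrying the two branches. The integral reduces to $\int_{\oM_{g-m,m+1}}\psi_1^{g-1}[\Omega^{\pm}_{g-m,m+1}(2t)]_{2(g-m)-1}$ on the central vertex, and the string equation brings this to $\oM_{g-m,1}$. Summing over $m$ with weight $m/m!$ and shifting $m\mapsto m-1$ gives exactly $\frac12\beta^\pm_{g-1}$.
\item If the loop is on the marked genus-one vertex with $g-1$ elliptic tails, you get $\frac12\frac{(-1)^g}{(g-1)!}b_1^{g-1}$, as you computed.
\end{itemize}
This is precisely the two-case analysis in the paper's proof.
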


\begin{proof}
  Using again the formula in Theorem \ref{thm:Segre-Chiodo} we obtain:
\[
s_{g,2g-2}^{\pm}
=
\sum_{\Gamma \in \mathrm{Tree}_{g,1}}
\frac{(-1)^{|E(\Gamma)|}}{|\mathrm{Aut}(\Gamma)|}
\cdot \zeta_{\Gamma*}
\left(
\sum_{v \in V(\Gamma)}
\left[
\Omega^{\pm, 1^{n(v)}}_{g(v),\,n(v)} - 2^{2g(v)-1}
\right]_{2g(v)-2}
\otimes \bigotimes_{w\neq v} \left[
\Omega^{\pm, 1^{n(w)}}_{g(w),\,n(w)}
\right]_{2g(w)-1}
\right).
\]

Looking at Lemma \ref{eq:spinchiodo=2Lambda} in degree \(2g-2\), we obtain: \\

$\Big[
\Omega^{\pm, 1^n}_{g,\,n}
- 2^{2g-1}
\Big]_{2g-2} = 
\begin{cases}
    2^{2g-2}
\cdot (-1)^{g-1}
\cdot
\left[
\lambda_{g-1}^{2}
- \frac{5}{2} \, \lambda_g \lambda_{g-2}
\right],
\quad \text{if } g\geq 2, \\ 
-1 \quad \text{if } g = 1.
\end{cases}$

Using that \(\delta_0 \lambda_g = 0\) for any \(g \geq 0\), and
\(\delta_0 \lambda_{g-1}^2 = 0\) for all \(g \neq 1\),
we obtain the vanishing of many terms appearing in the summation above.

In particular,
\begin{align*}
    \int_{\overline{\mathcal{M}}_{g,1}}
s^{\pm}_{g,2g-2} \, \delta_0 \, \psi_1^{g-1}
&= \sum_{\Gamma \in \mathrm{Tree}_{g,1}}
\frac{(-1)^{|E(\Gamma)|}}{|\mathrm{Aut}(\Gamma)|}
\Bigg[
\sum_{v \neq v_1} \int_{\overline{\mathcal{M}}_{g(v_1),\,n(v_1)}}
\psi^{g-1}_1 \,
\big[ \Omega^{\pm, 1^{n(v_1)}}_{g(v_1), n(v_1)} \big]_{2g(v_1)-1} \times 
 \\ 
& 
\times \int_{\overline{\mathcal{M}}_{g(v),\,n(v)}}
\delta_0\cdot \big[ \Omega^{\pm, 1^{n(v)}}_{g(v), n(v)} \big]_{2g(v)-2}   \times \prod_{\substack{w \neq v_1 \\ w \neq v}}
\int_{\overline{\mathcal{M}}_{g(w),\,n(w)}}
\big[ \Omega^{\pm, 1^{n(w)}}_{g(w), n(w)} \big]_{2g(w)-1}  +  \\
& + \int_{\overline{\mathcal{M}}_{g(v_1),\,n(v_1)}}
\delta_0 \, \psi_1^{g-1} \big[ \Omega^{\pm, 1^{n(v_1)}}_{g(v_1), n(v_1)} \big]_{2g(v_1)-2} \times  \prod_{w \neq v_1}
\int_{\overline{\mathcal{M}}_{g(w),\,n(w)}}
\big[ \Omega^{\pm, 1^{n(w)}}_{g(w), n(w)} \big]_{2g(w)-1} \Bigg]
\end{align*}

By degree considerations,
\[
\int_{\overline{\mathcal{M}}_{g(v),\,n(v)}}
\big[ \Omega^{\pm, 1^{n(v)}}_{g(v), n(v)} \big]_{2g(v)-1}
\neq 0
\quad \text{only if } g(v)=1,\; n(v)=1
\]
and also 
\[
\int_{\overline{\mathcal{M}}_{g(v),\,n(v)}}
\delta_0 \cdot
\big[ \Omega^{\pm, 1^{n(v)}}_{g(v), n(v)} \big]_{2g(v)-2}
\neq 0
\quad \text{only if } g(v)=1,\; n(v)=1.
\]

As a consequence, the only contributing graphs are those satisfying $g(v)=1, n(v)=1$ for any $v\neq v_1$ (i.e. different from the vertex containing the marking). Moreover, we have

\[
\int_{\overline{\mathcal{M}}_{g(v),\,n(v)}}
\psi_1^{g-1} \, \delta_0 \cdot
\big[ \Omega^{\pm, 1^{n(v)}}_{g(v),\,n(v)} \big]_{2g(v)-2}
\neq 0
\quad \text{if and only if} \ g(v)=1 \ \textrm{and} \ n(v) = g.
\]

Therefore, the equality above becomes
\begin{align*}
    \int_{\overline{\mathcal{M}}_{g,1}}
\psi_1^{g-2} \, \delta_0 \, s^\pm_{g,2g-2}
&= \sum_{m = 1}^{g-1}
\frac{(-1)^m}{m!}
\cdot m \cdot
\int_{\overline{\mathcal{M}}_{g-m,\,1}}
\psi_1^{g-m-1} \,
\lambda_{g-m} \lambda_{g-m-1}
\cdot (-1)^{g-m-1}
\cdot 2^{2(g-m-1)} \times  \\
& \times \left(
\int_{\overline{\mathcal{M}}_{1,1}} \lambda_1
\right)^{m-1} \times (-1) \cdot \int_{\overline{\mathcal{M}}_{1,1}} \delta_0 \\
&+ \frac{(-1)^{g-1}}{(g-1)!}
\cdot
\int_{\overline{\mathcal{M}}_{1,\,g}}
\psi_1^{g-1} \, \delta_0 \cdot (-1)
\times
\left(
\int_{\overline{\mathcal{M}}_{1,1}} \lambda_1
\right)^{g-1}.
\end{align*}

Using the formula for $\beta_g^\pm$ in Lemma \ref{lem:bg-pm-computation} as well as $\int_{\overline{\mathcal{M}}_{1,1}} \delta_0 = \frac{1}{2}$ and $\int_{\overline{\mathcal{M}}_{1,1}} \lambda_1 =  b_1$, the conclusion follows.
\end{proof}

We have computed both the principal and boundary contribution for the formula in Proposition \ref{prop:2g-1-recursion} integrated against $\xi^{2g-2}\delta_0$. Putting everything together in a formula, we obtain an analogue of Lemma \ref{lem:agpm-recursion}. 

\begin{lemma} \label{lem:dgpm-recursion}
The values $d_g^\pm$ satisfy the recursion formula 
    \begin{align*}
    	d_g^\pm &= \frac{(2g-3)!!}{2^{g}}\left(\beta_{g-1}^\pm + \frac{(-1)^g}{(g-1)!}\cdot \frac{1}{24^{g-1}}\right) \\
    	&-\sum_{g_1 > 0} \sum_{\underline{h}\vdash g-g_1} \frac{1}{|Aut(\underline{h})|} \cdot  Hur(\underline{g}) \cdot (2g_1-1)d_{g_1}^\pm \prod_{i\geq 2} (2g_i-1)a_{g_i}^\pm.
    \end{align*}
\end{lemma}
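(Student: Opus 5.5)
We intersect the identity of Proposition \ref{prop:2g-1-recursion} with $\xi^{2g-2}\delta_0$ and push forward along $p\colon\PP\oH_{g,1}\to\oM_{g,1}$. On the left-hand side, the pushforward of $\xi^{2g-2}\delta_0\,[\PP\oSQ_g(2g-1)]^\pm$ is by definition $d_g^\pm$. The right-hand side splits, as in Section \ref{sec: volume}, into a principal term and a boundary term. We then show that each term matches the corresponding summand in the statement, with the boundary term entering with a minus sign as in Lemma \ref{lem:agpm-recursion}.

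\emph{Principal term.} Expand $\prod_{k}(\xi+k\psi_1)$ and use the vanishings $\xi^{2g}=0$ (Lemma \ref{lem:xi-vanishing}) and $\xi^{2g-1}\delta_0=0$. Then only the $\xi$-constant coefficient $(2g-3)!!\,\psi_1^{g-1}$ survives. This leaves
\[
\frac{(2g-3)!!}{2^{g-1}}\int_{\oM_{g,1}}s^\pm_{g,2g-2}\,\delta_0\,\psi_1^{g-1}.
\]
The preceding lemma evaluates this integral as
\[
\tfrac12\Bigl(\beta^\pm_{g-1}+\tfrac{(-1)^g}{(g-1)!}b_1^{g-1}\Bigr).
\]
Substituting $b_1=\int_{\oM_{1,1}}\lambda_1=\frac1{24}$ produces the stated first line, and the factor $\frac12$ turns $2^{g-1}$ into $2^g$.

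\emph{Boundary term.} This is Corollary \ref{cor:boundarytermd_g}, which I would derive from Proposition \ref{prop:boundary-cont-d_g} following the proof of Corollary \ref{cor:boundaryterm} verbatim.
\begin{enumerate}
\item Multiplying $\prod_{\ell>k}\frac{\xi+\ell\psi_1}{2}$ by $\xi^{2g-2}\delta_0$ again kills every power of $\xi$, by the same two vanishings. This yields the coefficient $\frac{(2g-3)!!}{k!!\,2^{N_{k,g}}}$ multiplying $p_*\bigl(\psi_1^{N_{k,g}-1}\xi^{2g-2}\delta_0\Delta_k\bigr)$.
\item Only back-bone graphs satisfying $(*)$ contribute. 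For each such graph, $\delta_0$ together with $\xi^{2g-2}$ restricts to level $0$. There the integral splits as a sum over the vertex $v$ carrying the $\delta_0\xi^{2g_v-2}$ factor, giving $d^\pm_{g_v}$, with all other vertices giving $a^\pm_{g_w}$. The level $-1$ factor is again $N_{k,g}!\,h(k-1,-2\underline g)$, by exactly the argument of Proposition \ref{prop:boundary-contribution}.
\item The multiplicity is $m(\oGamma)=\prod(2g_i-1)$. Summing over $k$ reassembles $Hur(\underline g)$. Organizing the sum over graphs by the distinguished genus $g_1$ and the unordered remainder $\underline h$ replaces $\frac1{m!}\sum_v$ by $\frac{1}{|\Aut(\underline h)|}$, using the automorphism count noted after Corollary \ref{cor:boundarytermd_g}.
\end{enumerate}

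Combining the principal and boundary terms with the minus sign from Proposition \ref{prop:2g-1-recursion} gives the claimed recursion for $d_g^\pm$.

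The main obstacle is Proposition \ref{prop:boundary-cont-d_g}, specifically the behaviour of $\delta_0$ under $\pi_{N*}\zeta_{\oGamma*}$. One has to check that the pullback of $\delta_0$ to the level graph boundary is the sum of the $\delta_0$ classes on the vertices, and that nonseparating nodes at level $-1$ give nothing. The latter holds because the level $-1$ vertex is rational with the $\psi_1^{N-1}$ insertion: its $\delta_0$ vanishes in genus $0$, and any extra edges between levels break compact type and contradict the dimension count. Edge terms from self-intersection are excluded because $\Gamma$ is a tree.

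I would first verify the dimension argument of Lemma \ref{lem:xi-int-gluing-vanishing} with $\xi^{2g-1}$ replaced by $\xi^{2g-2}\delta_0$. The expected outcome is that exactly one top vertex absorbs $\delta_0$ and still requires $n(v)=1$, $I(v)=2g_v-1$; this uses $\xi^{2g_v-1}\delta_0=0$ on each factor. The argument on $\oM_{g,1}$ should then transfer.
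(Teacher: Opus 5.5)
Your proposal is correct and follows essentially the same route as the paper. You intersect Proposition~\ref{prop:2g-1-recursion} with $\xi^{2g-2}\delta_0$, evaluate the principal term through the preceding Segre--Chiodo lemma (with $b_1=\tfrac{1}{24}$ giving the $2^{g}$ and $24^{-(g-1)}$ factors), and take the boundary term from Proposition~\ref{prop:boundary-cont-d_g} and Corollary~\ref{cor:boundarytermd_g}. Your extra justification of why only $(*)$-graphs contribute fills in a step the paper only asserts by analogy: no excess terms in the pullback of $\delta_0$ along trees, a genus-zero bottom level, and the level-$0$ dimension count with $\xi^{2g_v-1}\delta_0=0$ forcing $n(v)=1$, $I(v)=2g_v-1$ and $\sum g_v=g$.
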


\subsection{Computing spin flower integrals}

In fact, the methods of Subsection \ref{subsection:dg_computation} can be applied in more generality. To make this precise, we first need some definitions. 

\begin{definition}
    Let $R = \{ r_1 + r_2 = 0, r_3 + r_4 = 0, \ldots, r_{2p-1}+r_{2p} = 0\}$ be the space of residue conditions for some $p\geq 0$ and consider the sublocus of the stratum $\mathbb{P}\overline{\mathcal{H}}_{g-p, 1+ 2p}(2g-1, \underbrace{0, \dots, 0}_{\text{2p times}})$ defined by $R$: 
    \[ \mathbb{P}\overline{\mathcal{H}}_{g-p, 1+ 2p}^R(2g-1, \underbrace{0, \dots, 0}_{\text{2p times}}) \coloneqq \{[C,x_0,x_1,\ldots, x_{2p}, \varphi]\ \mid \ \textrm{Res}_{x_{2i-1}}(\varphi) + \textrm{Res}_{x_{2i}}(\varphi) = 0 \ \textrm{for all} \ 1\leq i \leq p\} . \]
    We consider the map
    $$\zeta\colon \mathbb{P}\overline{\mathcal{H}}_{g-p, 1+ 2p}^R(2g-1, \underbrace{0, \dots, 0}_{\text{2p times}}) \rightarrow \mathbb{P}\oH_{g, 1}(2g-1)$$
    obtained by glueing together points $x_{2i-1}$ and $x_{2i}$ for all $1 \leq i \leq p$. We define $[D_p]$ to be the push-forward 
    \[ [D_p] \coloneqq  \zeta_*[\mathbb{P}\overline{\mathcal{H}}_{g-p, 1+ 2p}^R(2g-1, \underbrace{0, \dots, 0}_{\text{2p times}})] \]
    Furthermore, we define the integral 
    \[ d_{g,p}^\pm \coloneqq \int_{\mathbb{P}\overline{\mathcal{H}}_g(2g-1)^\pm} \xi^{2g-1-p}[D_p]. \]
\end{definition}

The same methods already outlined twice in this paper will work for this case, and obtain a recursion formula for $d_{g,p}^\pm $. First, we extend the definition of $[D_p]$ to $\PP\oH_{g,1}$ as 
 \[ [D_p] \coloneqq  \zeta_*[\mathbb{P}\overline{\mathcal{H}}_{g-p, 1+ 2p}^R(\underbrace{2,2,\ldots,2}_{2g-2 \ \text{times}}, \underbrace{0, \dots, 0}_{\text{2p times}})] \]
 with  $R = \{ r_1 + r_2 = 0, r_3 + r_4 = 0, \ldots, r_{2p-1}+r_{2p} = 0\}$ as before. The restriction of this class to $\PP\overline{\mathcal{H}}_g(2g-1)$ coincides with the previously defined class, and we therefore denote both by $[D_p]$.

We have the following vanishing result, cf. \cite[Corollary 2.7]{Sauvaget-virtual}:
\begin{lemma} \label{lemma:flower-vanishing}
    For any $p \geq 0$, we have the vanishing 
    \[\xi^{2g-p} \cdot [D_p] = 0 \in A^\bullet(\PP\oH_{g,n}). \]
\end{lemma}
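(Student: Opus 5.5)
The plan is to reduce the statement, via the gluing map $\zeta$ and the projection formula, to the vanishing of Lemma \ref{lem:xi-vanishing} on a twisted Hodge bundle over the lower-genus moduli space. First, since $\zeta$ is induced by the gluing morphism $\oM_{g-p,1+2p}\to\oM_{g,1}$, there is a natural map of bundles from $\oH_{g-p}[P]$ with $P=(0,1^{2p})$ (differentials with at most simple poles at $x_1,\dots,x_{2p}$) into the pullback of $\oH_{g,1}$. We restrict to the subbundle $\oH^R_{g-p}[P]$ cut out by the residue conditions $R$. This subbundle is exactly the pullback of the Hodge bundle along the gluing map: a stable differential on the glued curve is the same as a differential on the normalization with opposite residues at the paired points. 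Under this identification $\zeta^*\cO(1)=\cO(1)$. The projection formula then gives $\xi^{2g-p}\cdot[D_p]=\zeta_*\bigl(\xi^{2g-p}\cdot[\PP\oH^R_{g-p,1+2p}(2^{2g-2},0^{2p})]\bigr)$. It therefore suffices to prove that $\xi^{2g-p}$ kills the class of this closed sublocus inside $\PP\oH^R_{g-p}[P]$, and for that it is enough to show $\xi^{2g-p}=0$ in $A^\bullet(\PP\oH^R_{g-p}[P])$ itself.

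Second, I would compute the rank and Chern classes of $E:=\oH^R_{g-p}[P]$. The bundle $\oH_{g-p}[P]$ has rank $g-p+2p-1=g+p-1$ by Riemann--Roch. The residue map to $\CC^{2p}$ lands in the hyperplane where the residues sum to zero, so $R$ imposes $p$ independent conditions modulo that relation, giving $\mathrm{rk}\,E=g$, as it should. For the Chern classes, the residue sequence $0\to\oH_{g-p}\to E\to\CC^{p}\to 0$ has trivial quotient, spanned by the $p$ residue coordinates $r_1,r_3,\dots$. Hence $c(E)=c(\oH_{g-p})=\Lambda_{g-p}$, pulled back.

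Third, I would run the Mumford-type argument exactly as in Lemma \ref{lem:xi-vanishing}. By Leray--Hirsch on $\PP E$, one has $\sum_i\xi^{g-i}c_i(E)=0$, so every power $\xi^{k}$ with $k\ge g$ is expressed through the Segre classes $s_j(E)$. Since $c(E)=\Lambda_{g-p}$ and Mumford's relation gives $\Lambda_{h}(t)\Lambda_h(-t)=1$, the Segre classes satisfy $s(E)=\Lambda_{g-p}(-1)$, which vanishes above degree $g-p$. Consequently $p_*\xi^{g-1+j}=s_j(E)=0$ for $j>g-p$, and more sharply $\xi^{(g-1)+(g-p)+1}=\xi^{2g-p}=0$. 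This upgrade from a pushforward vanishing to a vanishing in the Chow ring is the same step as in the cited Lemma \ref{lem:xi-vanishing}, where the rank is $g$ and the Segre degree is at most $g$, giving $\xi^{2g}=0$. Here the rank is still $g$ but the Segre degree is at most $g-p$, which is where the shift by $p$ comes from.

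The main obstacle is this last upgrade. I would reproduce Sauvaget's argument in \cite[Corollary 2.7]{Sauvaget-virtual}. The key point is that $\cO(-1)$ restricted to $\PP\oH_{g-p}\subset\PP E$ is the relevant tautological piece, and the trivial quotient $\CC^p$ contributes a linear projection along which $\xi$ behaves like a hyperplane class. So I would first show $\xi^{2(g-p)}=0$ on $\PP\oH_{g-p}$ via Lemma \ref{lem:xi-vanishing}. Then I would use that $\PP\oH_{g-p}$ is cut out in $\PP E$ by $p$ sections of $\cO(1)$, namely the residue coordinates, giving $\xi^{p}\cdot[\PP E]=[\PP\oH_{g-p}]$ up to boundary corrections. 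Multiplying yields $\xi^{2g-p}=\xi^{2(g-p)}\cdot\xi^{p}=0$. The delicate point is that those $p$ sections vanish transversally on the whole of $\PP E$, which follows from the surjectivity of the residue map onto the trivial bundle.
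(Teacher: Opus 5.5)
Your proof is correct, and it follows a genuinely different route from the one the paper writes down.

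\textbf{What the paper does.} The paper gives no proof of its own. It cites \cite[Corollary 2.7]{Sauvaget-virtual} and then sketches a stratum-level picture on $\PP\oH_{g-p,2g-2+2p}(2^{2g-2},0^{2p})$. There, via \cite[Proposition 7.6]{Sau2}, each factor of $\xi$ is traded for one more residue-vanishing condition plus boundary strata on which level $0$ loses poles. After $p$ steps only loci with holomorphic top level survive, and $\xi^{2(g-p)}$ kills these by Lemma~\ref{lem:xi-vanishing}.

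\textbf{What you do.} You work instead on the honest bundle $E=\zeta^*\oH_{g,1}$ over $\oM_{g-p,1+2p}$. The residue sequence $0\to\oH_{g-p}\to E\to\CC^p\to 0$ is exact on every fibre, because $H^1(C,\omega_C(x_1+\dots+x_{2p}))=0$ for connected $C$, so the residue map is onto the sum-zero hyperplane. On that hyperplane $R$ imposes $p-1$ conditions, not $p$; this is what gives rank $g$. Hence $s(E)=\Lambda_{g-p}(-1)$, which vanishes above degree $g-p$.

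\textbf{The step you flag as the main obstacle is formal.} Reducing modulo $\sum_i c_i(E)\xi^{g-i}=0$ gives
\[
\xi^N=\sum_{k=0}^{g-1}\xi^k\sum_{i=0}^{g-1-k}c_i(E)\,s_{N-k-i}(E).
\]
For $N=2g-p$ every index $N-k-i$ is at least $g-p+1$. So $\xi^{2g-p}=0$ in $A^\bullet(\PP E)$ already at the end of your third paragraph.

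\textbf{Your last paragraph is a valid second proof, and needs no boundary corrections.} The functionals $r_1,r_3,\dots,r_{2p-1}$ give a surjection of vector bundles $E\to\CC^p$. This is a regular section of $\cO(1)^{\oplus p}$ whose zero scheme is exactly $\PP\oH_{g-p,1+2p}$. Hence $\xi^p=[\PP\oH_{g-p,1+2p}]$ on the nose, and $\xi^{2g-p}=\iota_*\big(\xi^{2(g-p)}\big)=0$, where $\iota\colon\PP\oH_{g-p,1+2p}\hookrightarrow\PP E$ is the inclusion.

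\textbf{What each route buys.} Yours uses only Mumford's relation and the projective bundle formula, with no multi-scale compactification. It also kills $\xi^{2g-p}$ on all of $\zeta^*\PP\oH_{g,1}$, so it covers both definitions of $[D_p]$ in the paper at once. The paper's stratum-level viewpoint is the one that generalizes to Lemma~\ref{lemma:nonflower-vanishing}. There the loci are boundary strata of level graphs, and one must count independent residues on level $0$ rather than pull back the Hodge bundle along a single gluing map.
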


In fact, another way to view this vanishing, is to look at the class $\xi^{2g-p}$ in the Chow ring of $$\mathbb{P}\overline{\mathcal{H}}_{g-p, 2g-2+ 2p}(\underbrace{2,2,\ldots,2}_{2g-2 \ \textrm{times}}, \underbrace{0, \dots, 0}_{2p \ \text{times}}).$$ Each multiplication with a $\xi$ on this space can be interpreted as adding a residue vanishing condition, so we get a linear equivalence with boundary loci where the number of poles on level $0$ decreases with at least two, cf. \cite[Proposition 7.6]{Sau2}. After multiplying $p$ times with $\xi$, we get that $\xi^p$ is linearly equivalent to boundary loci for which the level $0$ component is holomorphic. Since we land in the holomorphic case in the boundary, we know that $\xi^{2(g-p)}$ always vanish on these loci.

In fact, this result can be generalized to other loci in the boundary of $\PP\oH_{g,n}$. For $\overline{\Gamma}$ a bi-colored graph in ${\rm Bic}_{g,1}(2,2,\ldots,2)$ (i.e. compatible with the partition $a= (\underbrace{2,2,\ldots,2}_{2g-2 \ \text{times}})$) we look at the corresponding morphism

\begin{equation*}
    \zeta_{\oGamma}\colon \oH(\oGamma)\rightarrow \oH_{g,1}.
\end{equation*}
This admits a projectivisation  
\begin{equation*}
    \zeta_{\oGamma}\colon \mathbb{P}\oH(\oGamma)\to\mathbb{P}\oH_{g,1} ,
\end{equation*}
where $\mathbb{C}^*$ acts on the left-hand-side by scaling simultaneously all differentials corresponding to vertices in level $0$. Then, if we denote by $[D(\overline{\Gamma})] \coloneqq \zeta_{\oGamma*}[\mathbb{P}\oH(\oGamma)] $, we have the following vanishing property.

\begin{lemma}  \label{lemma:nonflower-vanishing} 
    For any $p \geq 1$, let $\overline{\Gamma}$ be a bi-colored graph in ${\rm Bic}_{g,1}(2,2,\ldots,2)$ satisfying $h^1(\overline{\Gamma}) \geq p$, and assume that $D(\overline{\Gamma})$ is not contained in $D_p$. Then we have the vanishing
\[ \xi^{2g-1-p}[D(\overline{\Gamma})] = 0.\]
\end{lemma}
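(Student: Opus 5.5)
We reduce the vanishing to a degree count on the levels of $\oGamma$, as in the proofs of Lemma \ref{lem:xi-vanishing} and Lemma \ref{lemma:flower-vanishing}.

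\textbf{Reduction to level $0$.} Since the differential vanishes identically on level $-1$, the class $\zeta_{\oGamma}^*\xi$ is the tautological class of the projectivised level $0$ cone. The projection formula then gives
\[
\xi^{2g-1-p}[D(\oGamma)]=\zeta_{\oGamma*}\big(\xi^{2g-1-p}\otimes 1\cdot[\PP\oH(\oGamma)]\big).
\]
The level $0$ factor is the projectivisation of a product $\prod_{v\in V_0}\oH_{g(v)}(\textbf{a}[0]_v,R[0]_v)$. Expanding $\xi^{2g-1-p}$ over the vertices, exactly as in the proof of Lemma \ref{lem:xi-int-gluing-vanishing}, reduces the claim to a vertexwise statement. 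Each term is a product over $v\in V_0$ of classes $\xi^{t_v}$ on $\PP\oH_{g(v)}(\textbf{a}[0]_v,R[0]_v)$, with $\sum_v (t_v+1)=2g-p$.

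\textbf{Per-vertex vanishing.} At a vertex $v$ of level $0$, the half-edges going down carry poles; call their number $q_v$. The GRC imposes residue conditions on them. The generalisation of the lemma I would use, applied to the flower mechanism of Lemma \ref{lemma:flower-vanishing}, is:
\[
\xi^{\,2g(v)+r_v}=0 \quad\text{on } \PP\oH_{g(v)}(\textbf{a}[0]_v,R[0]_v),
\]
where $r_v$ is the number of independent \emph{free} residues at $v$. This follows by iterating \cite[Proposition 7.6]{Sau2}. Each power of $\xi$ is a residue-vanishing condition, so it is linearly equivalent to boundary loci with fewer level $0$ poles. After $r_v$ steps we reach the holomorphic case, where Lemma \ref{lem:xi-vanishing} gives $\xi^{2g(v)}=0$.

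A nonzero term therefore requires $t_v+1\le 2g(v)+r_v$ for every $v$. Summing over $V_0$ gives
\[
2g-p\le 2\sum_{v\in V_0}g(v)+r,\qquad r=\sum_v r_v.
\]

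\textbf{Combinatorics of $\oGamma$ (the main obstacle).} We need $2\sum_{V_0}g(v)+r<2g-p$. Write
\[
g=\sum_{V_0}g(v)+\sum_{V_{-1}}g(v)+h^1(\oGamma).
\]
The residue space of $\oGamma$ is governed by its cycles through level $-1$, so $r\le h^1(\oGamma)$. Equality should hold only when every cycle of $\oGamma$ is a single loop edge through a genus $0$ lower vertex with two legs, carrying the pair condition $r_{2i-1}+r_{2i}=0$. That is exactly the flower configuration, whose locus lies in $D_p$ once $h^1\ge p$.

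Outside this case, some $v\in V_{-1}$ contributes positive genus, or a cycle imposes an extra GRC condition. This forces either $r\le h^1-1$ or $\sum_{V_{-1}}g(v)\ge 1$. Combined with $h^1(\oGamma)\ge p$, the inequality then becomes strict and gives the vanishing.

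The hard step is making $r\le h^1$ precise from the GRC, and showing that equality forces the flower shape. I would first try to express $R[0]$ via the cycle space of the graph obtained by contracting level $0$. Each connected component of level $-1$ then contributes at most (its number of edges minus one) free residue sums, with the loss of one residue degree coming from the residue theorem on that component. The failure of the flower shape should then cost one more degree, via the GRC at components of level $-1$ with positive genus or more than two adjacent edges.
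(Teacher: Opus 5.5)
Your skeleton (restrict $\xi$ to level $0$, bound the top non-vanishing power of $\xi$ by genus plus free residues, then use $g=\sum_{V_0}g(v)+\sum_{V_{-1}}g(v)+h^1(\oGamma)$) is the same as the paper's. However, the step that carries the content rests on a reversed picture of the levels. In a bi-colored graph the twist is positive on the half-edge at the level-$0$ vertex, so the level-$0$ differential has a \emph{zero} of order $I(h)-1\ge 0$ at each vertical node. The poles and the global residue condition live on level $-1$, where the differential is identically zero and $\xi$ sees nothing.

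Hence the ``free residues at $v$'' you propose to control via the GRC do not exist. Your equality configuration (cycles running through a rational vertex of level $-1$) is also not where non-vanishing can occur. If $\oGamma$ had only vertical edges, level $0$ would be holomorphic, so $\xi^{2\sum_{V_0}g(v)}$ would already vanish, and $2\sum_{V_0}g(v)\le 2g-2p\le 2g-1-p$ finishes at once. The lemma has content only because level $0$ may contain horizontal nodes. There the differential has simple poles with opposite residues on the two branches, and these are the residues that raise the top power of $\xi$ (the mechanism of Lemma \ref{lemma:flower-vanishing}). The flower $D_p$ itself is a genus $g-p$ level-$0$ vertex with $p$ horizontal self-loops. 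Your proposal never mentions these nodes.

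With the residues placed correctly, the step you call the main obstacle is a short count, which is what the paper does. Write $E[0]$ and $V[0]$ for the horizontal edges and vertices of level $0$, and $s$ for its number of connected components. There are $2|E[0]|$ residues, $|E[0]|$ pairing relations and $|V[0]|-s$ independent residue-theorem relations, leaving $h^1(\oGamma[0])\le h^1(\oGamma)$ free residues.

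A horizontal edge joining two distinct level-$0$ vertices couples their residues, so the expansion of $\xi^{2g-1-p}$ cannot be done vertex by vertex as you propose. The paper instead bounds the top non-vanishing power globally by $\sum_{v\in V[0]}(2g_v-1)+h^1(\oGamma[0])+s-1$. Combining this with $\sum_{V_0}g(v)\le g-h^1(\oGamma)$, $h^1(\oGamma)\ge p$ and $|V[0]|\ge s$ gives a bound of $2g-1-p$. Equality requires $h^1(\oGamma[0])=h^1(\oGamma)=p$, $|V[0]|=s$ and $\sum_{V_0}g(v)=g-p$. In that case all $p$ loops are horizontal at level $0$ and $D(\oGamma)\subseteq D_p$; otherwise $\xi^{2g-1-p}[D(\oGamma)]=0$.
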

\begin{proof}
     Let $\overline{\Gamma}[0]$ be the level $0$ part of $\overline{\Gamma}$, with $V[0]$ its set of vertices and $E[0]$ its set of edges. We will denote by $s$ the number of connected components of this graph.
       
       We know that each horizontal edge imposes a residue condition (the sum of residues of the corresponding two simple poles is $0$) and each vertex imposes one more residue condition (the global residue condition). After imposing all conditions, we have exactly $h^1(\overline{\Gamma}[0])$ independent residues. 
       
       Indeed, we have $2|E[0]|$ (simple) poles, and the residue conditions at edges already give us $|E[0]|$ independent constraints. Furthermore, we have $|V[0]|$ global residue conditions, but for each connected component of $\overline{\Gamma}[0]$, one such condition is determined by the edge conditions and the other global residue conditions of that component. In total, we count $|E[0]| + |V[0]| - s$ independent linear conditions on the residues. This means that the number of independent residues is 
       $$2|E[0]| -(|E[0]| + |V[0]| - s)  = |E[0]| - |V[0]| + s =  h^1(\overline{\Gamma}[0]).$$  
       
       In particular, reasoning as in \cite[Section 2]{Sauvaget-virtual}, the maximal non-vanishing power of $\xi$ is at most  
       \[ \sum_{v\in V[0]} (2g_v-1)  + h^1(\overline{\Gamma}[0]) + (s-1). \]
       But we have  
       \[ \sum_{v\in V[0]} g_v \leq g - h^1(\overline{\Gamma})\]
       and hence we get 
       \[ \sum_{v\in V[0]} (2g_v-1)  + h^1(\overline{\Gamma}[0]) + (s-1) \leq 2g - 2h^1(\overline{\Gamma}) - |V[0]| + h^1(\overline{\Gamma}[0]) + (s-1). \]
       Using $h^1(\overline{\Gamma}) \geq p$, $h^1(\overline{\Gamma}) \geq h^1(\overline{\Gamma}[0])$ and $|V[0]| \geq s$, we obtain the required inequality 
       \[ \sum_{v\in V[0]} (2g_v-1)  + h^1(\overline{\Gamma}[0]) + (s-1) \leq 2g - p-1. \]
       In particular we have 
\[ \xi^{2g-1-p}[D(\overline{\Gamma})] = 0\]
unless $h^1(\overline{\Gamma}) = h^1(\overline{\Gamma}[0]) = p$, $|V[0]| =  s$ and $\sum_{v\in V[0]} g_v = g - h^1(\overline{\Gamma})$. We will see that all graphs respecting these numerical conditions produce loci contained in $D_p$. 

Indeed, when we collapse all lower levels, all nodes on top level get identified into a single node possesing $p$ loops, and hence $D(\overline{\Gamma}) \subseteq D_p$.  
\end{proof}

We consider the maps
\[ \zeta_p\colon \oM_{g-p,n+2p} \rightarrow \oM_{g,n} \]
obtained by gluing together the points $x_{2i-1}$ and $x_{2i}$ for all $n\leq i \leq p$ (where the numbering starts from $0$). We define the push-forward 
\[ [F_p] \coloneqq \zeta_{p,*}[\oM_{g-p,n+2p}],\]
as the boundary locus with generic dual graph 
\[\begin{tikzpicture}[x=0.75pt,y=0.75pt,yscale=-1,xscale=1]

\draw   (275.83,140.7) .. controls (275.83,132.9) and (282.08,126.59) .. (289.8,126.59) .. controls (297.51,126.59) and (303.76,132.9) .. (303.76,140.7) .. controls (303.76,148.49) and (297.51,154.8) .. (289.8,154.8) .. controls (282.08,154.8) and (275.83,148.49) .. (275.83,140.7) -- cycle ;
\draw    (289.8,154.8) -- (289.56,193.65) ;
\draw    (297.97,128.22) .. controls (339.98,75.34) and (360.45,142.8) .. (303.76,140.7) ;
\draw    (275.83,140.7) .. controls (249.95,140.16) and (242.47,149.92) .. (244.98,157.31) .. controls (247.94,166.04) and (264.87,171.46) .. (281.81,152.59) ;
\draw    (282.7,127.71) .. controls (253.8,79.69) and (212.86,134.09) .. (275.83,140.7) ;
\draw    (297.97,128.22) .. controls (330.28,73.16) and (258.11,77.51) .. (282.7,127.71) ;

\draw (299.38,155.63) node [anchor=north west][inner sep=0.75pt]   [align=left] {$\displaystyle \dotsc $};
\draw (275.96,136.01) node [anchor=north west][inner sep=0.75pt]  [font=\tiny] [align=left] {$\displaystyle g-p$};
\draw (385.05,172.55) node [anchor=north west][inner sep=0.75pt]  [font=\LARGE,color={rgb, 255:red, 155; green, 155; blue, 155 }  ,opacity=1 ,rotate=-180.86] [align=left] {$\displaystyle \begin{cases}
 & \\
 & 
\end{cases}$};
\draw (397.35,125.75) node [anchor=north west][inner sep=0.75pt]   [align=left] {$\displaystyle p$ loops};

\end{tikzpicture}\]

 We can then compute the principal part contribution to $d^\pm_{g,p}$.  We have: 
\begin{lemma} \label{lemma: D_p = F_p - correspondence}Let $p\colon \PP\oH_{g,1} \rightarrow \oM_{g,1}$ be the forgetful map. Then we have the equality 
\begin{align*}
     \frac{1}{2^{g-1}}[\PP\oSQ_{g}(1)]^{\pm}\prod_{\substack{k=1, \\ k\ {\rm odd}}}^{2g-3}(\xi+k\psi_{1}) \cdot \xi^{2g-1-p} \cdot [D_p] &= \frac{(2g-3)!!}{2^{g-1}} \cdot [\PP\oSQ_{g}(1)]^{\pm} \psi_1^{g-1}  \cdot \xi^{2g-1-p} \cdot [D_p]\\
     &= \frac{(2g-3)!!}{2^{g-1}} \cdot [\PP\oSQ_{g}(1)]^{\pm} \psi_1^{g-1}  \cdot \xi^{2g-1-p} \cdot p^*[F_p] \\
     & = \frac{(2g-3)!!}{2^{g-1}} \cdot s_{g,2g-1-p}^\pm \cdot \psi_1^{g-1} \cdot [F_p].
\end{align*}
\end{lemma}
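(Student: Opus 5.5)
The plan is to prove the three equalities in order, each by a vanishing or projection-formula argument parallel to the principal-term computation of Section~\ref{sec: volume}.

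\textbf{First equality.} Expand $\prod_{k\ {\rm odd}}^{2g-3}(\xi+k\psi_1)$ as a polynomial in $\xi$ with coefficients in $\psi_1$. Any term containing $\xi^{j}$ with $j\geq 1$ combines with $\xi^{2g-1-p}$ to give $\xi^{2g-1-p+j}[D_p]$. This vanishes by Lemma~\ref{lemma:flower-vanishing}, since $2g-1-p+j\geq 2g-p$. Only the constant term survives, and it equals $\prod_{k} k\,\psi_1 = (2g-3)!!\,\psi_1^{g-1}$. Because all classes involved commute in the Chow ring, this gives the first line.

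\textbf{Second equality.} This step replaces $[D_p]$ by $p^*[F_p]$. By construction $D_p$ lies over $F_p$: it is a sublocus of $p^{-1}(F_p)$ cut out by the $p$ residue conditions $r_{2i-1}+r_{2i}=0$. Over the generic point of $\zeta_p(\oM_{g-p,1+2p})$, the fiber of $\PP\oH_{g,1}$ is $\PP H^0(C,\omega_C)$ with $C$ a curve having $p$ non-separating nodes. The differentials in this fiber are exactly the stable differentials on the normalization with simple poles at the node branches and opposite residues. So $p^{-1}(F_p)$ and $D_p$ agree generically along the main component of $p^*[F_p]$.

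The difference $p^*[F_p]-[D_p]$ is therefore supported on loci where the differential degenerates: it vanishes on some components, or more residues are forced to zero. These are boundary loci $D(\oGamma)$ with $\oGamma\in{\rm Bic}_{g,1}(2,\ldots,2)$, $h^1(\oGamma)\geq p$, and $D(\oGamma)\not\subseteq D_p$. By Lemma~\ref{lemma:nonflower-vanishing}, each of them is killed by $\xi^{2g-1-p}$. Multiplying through by $[\PP\oSQ_g(1)]^\pm\psi_1^{g-1}$ then gives the second line. I expect this to be the main obstacle. One must check that the excess-intersection and multiplicity contributions of $p^*[F_p]$ really decompose into such $D(\oGamma)$ pieces with multiplicity one on the main component, for instance by the level-graph description of \cite{BaiCheGenGruMol}. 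The first thing to try is to compare $p^*[F_p]=\zeta_{p*}[\PP\oH_{g-p,1+2p}[0]\text{-type bundle}]$ with its residue subbundle, and to note that $\xi$-vanishing handles whatever excess is left.

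\textbf{Third equality.} Push forward along $p$ and apply the projection formula. Since $\psi_1$ and $[F_p]$ are pulled back from $\oM_{g,1}$,
\[ p_*\bigl([\PP\oSQ_g(1)]^\pm\,\xi^{2g-1-p}\,\psi_1^{g-1}\,p^*[F_p]\bigr)=\psi_1^{g-1}[F_p]\cdot p_*\bigl(\xi^{2g-1-p}[\PP\oSQ_g(1)]^\pm\bigr)=s^\pm_{g,2g-1-p}\,\psi_1^{g-1}[F_p]. \]
The last step uses the definition of the spin Segre classes $s^\pm_{g,j}$. Here the equality is read after applying $p_*$, which is how the statement is used.
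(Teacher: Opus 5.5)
Your proposal is correct and follows the paper's proof step for step: Lemma~\ref{lemma:flower-vanishing} kills the $\xi$-terms of the product, Lemma~\ref{lemma:nonflower-vanishing} is used to replace $[D_p]$ by $p^*[F_p]$, and the projection formula for $p$ (read after pushing forward) gives the last line. The multiplicity and excess worry you flag is in fact harmless: $p$ is flat, and $\zeta_p^*\oH_{g,1}$ is exactly the rank-$g$ residue-constrained bundle over $\oM_{g-p,1+2p}$, so $p^{-1}(F_p)$ is the irreducible projectivization of that bundle, which is precisely the closure defining the extended $[D_p]$, and there are no extra components to discard.
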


\begin{proof} The first equality is a consequence of Lemma \ref{lemma:flower-vanishing}. For the second equality, we use that $p^*[F_p]$ consists of classes supported on $p^{-1}(F_p)$ and the only one that does not vanish when multiplied with $\xi^{2g-1-p}$ is $[D_p]$, cf. Lemma \ref{lemma:nonflower-vanishing}. The last equality is simply the push-pull formula for the morphism $p$. 
\end{proof}

In order to evaluate the product in the previous lemma, we will use the spin Segre-Chiodo correspondence in Theorem \ref{thm:Segre-Chiodo}. A lot of the graphs in this formula of the Segre class will not contribute to the product, and these vanishings will be made explicit in the following two lemmata. 
\begin{lemma} \label{lemma:vanishing-no-psi}
	Let $n, p \geq 1$ and $0 \leq d_1, d_2 \leq g$. Then we have the vanishing 
	\[ \int_{\overline{\mathcal{M}}_{g,n}} [F_p] \lambda_{d_1}\lambda_{d_2} = 0, \]
	except for the case $g = n = p = 1, d_1 = d_2= 0$, in which case the integral is $1$.
\end{lemma}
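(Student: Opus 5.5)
The plan is to pull the integral back along the gluing map $\zeta_p\colon \oM_{g-p,n+2p}\to\oM_{g,n}$ and use the behaviour of the Hodge bundle under non-separating gluing. By the projection formula,
\[ \int_{\oM_{g,n}} [F_p]\,\lambda_{d_1}\lambda_{d_2} = \int_{\oM_{g-p,n+2p}} \zeta_p^*(\lambda_{d_1}\lambda_{d_2}). \]
Pulled back along the gluing of $p$ non-separating pairs, the Hodge bundle sits in an exact sequence
\[ 0\to \oH_{g-p}\to \zeta_p^*\oH_g \to \cO^{\oplus p}\to 0, \]
where the quotient is given by the residues at the $p$ nodes. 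Hence $\zeta_p^*\Lambda_g(t)=\Lambda_{g-p}(t)$, and therefore $\zeta_p^*\lambda_d=\lambda_d$ for $d\le g-p$ and $\zeta_p^*\lambda_d=0$ for $d>g-p$.

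The integral thus reduces to $\int_{\oM_{g-p,n+2p}}\lambda_{d_1}\lambda_{d_2}$ with $d_1,d_2\le g-p$. I then argue by degree. Set $h=g-p$. The dimension of $\oM_{h,n+2p}$ is $3h-3+n+2p$, while the degree of the integrand is $d_1+d_2\le 2h$. A nonzero answer therefore requires $3h-3+n+2p\le 2h$, i.e. $h+n+2p\le 3$.

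Since $n,p\ge1$, this forces $h=0$, $n=1$, $p=1$. Then $g=1$, the space is $\oM_{0,3}$, and $d_1=d_2=0$, giving integral $1$. This is exactly the exceptional case in the statement.

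The only care needed is the boundary-case check: when $h=0$ all $\lambda_d$ with $d>0$ vanish, so the degree argument is consistent. One should also note that there is no automorphism factor: the statement defines $[F_p]$ as $\zeta_{p*}[\oM_{g-p,n+2p}]$ itself, not divided by $2^p p!$, and for $g=p=1$ the pushforward of the point $\oM_{0,3}$ has degree $1$. So the value is indeed $1$.

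The main obstacle is justifying $\zeta_p^*\lambda_d=0$ for $d>g-p$ via the residue exact sequence. This is standard (Mumford's relation for the irreducible boundary), and I would simply invoke it. If one prefers to avoid it, the cruder bound $d_1+d_2\le 2g$ against the dimension $3(g-p)-3+n+2p$ does not suffice on its own, so the Hodge-bundle restriction really is the key input.
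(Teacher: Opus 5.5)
Your proof is correct and is essentially the paper's argument: both rely on $\lambda_d$ vanishing on the flower locus $F_p$ for $d>g-p$, followed by a dimension count. You do the count on $\oM_{g-p,n+2p}$ after pulling back along $\zeta_p$, while the paper does it on $\oM_{g,n}$ using that $[F_p]$ has codimension $p$. The paper also invokes Mumford's relation $\lambda_{g-p}^2=0$ and splits into the cases $g\neq p$ and $g=p$; your inequality $h+n+2p\le 3$ shows that the weaker bound $d_1+d_2\le 2(g-p)$ already forces $g=n=p=1$, so that extra input is unnecessary.
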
 
\begin{proof}
	The class $[F_p]$ is defined as the (scaled) push-forward image of a glueing morphism from the moduli space $\overline{\mathcal{M}}_{g-p, n + 2p}$. In particular, we have the vanishings 
	\begin{itemize}
		\item $[F_p] \cdot \lambda_d = 0 \ \forall d \geq g-p+1$ and 
		\item $[F_p] \cdot \lambda_{g-p}^2 = 0$, except when $g=p$. 
	\end{itemize} 
Assuming that the integral is not zero, and $g \neq p$ we obtain the degree contradiction 
\[ 3g-2 \leq 3g-3 +n = p +d_1 +d_2 \leq p + 2g -2p-1 \leq 2g-2.\]
We are left with the case $ g= p$ when the only possibility for the integral to be non-zero is $d_1 = d_2 = 0$. By degree considerations, the equality $3g-3+n = g$ will imply $g = n =p =1$, and it is well-known that $\int_{\overline{\mathcal{M}}_{1,1}}[F_1] = \int_{\overline{\mathcal{M}}_{1,1}}2\delta_0 = 1$. 
\end{proof}

On the component containing the marking, we will have to account for an extra contribution of $\psi_1^{g-1}$. We have the following vanishing lemma: 
\begin{lemma} \label{lemma:vanishing-with-psi}Let $p \geq 1$, $ 1\leq g_1 \leq g$ and $0 \leq d_1, d_2 \leq g_1$. Then we have the vanishing 
	\[ \int_{\overline{\mathcal{M}}_{g_1,g-g_1+1}} \psi_1^{g-1}[F_p]\lambda_{d_1} \lambda_{d_2} = 0, \]
	except for the case $p = g_1 = 1$ and $d_1 = d_2 = 0$, in which case the integral is $1$. 
\end{lemma}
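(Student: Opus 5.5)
I would argue exactly as in Lemma \ref{lemma:vanishing-no-psi}, by a degree count on $\overline{\mathcal{M}}_{g_1,g-g_1+1}$ together with the vanishings of $\lambda$-classes on the image of the gluing map $\zeta_p$. Set $n = g-g_1+1$. Since $[F_p]$ is the pushforward from $\overline{\mathcal{M}}_{g_1-p,n+2p}$ and the Hodge bundle pulls back to an extension of $\mathbb{E}_{g_1-p}$ by a trivial rank $p$ bundle, we get $[F_p]\lambda_d = 0$ for $d \geq g_1-p+1$. Moreover, $\lambda_{g_1-p}^2=0$ on $\overline{\mathcal{M}}_{g_1-p,n+2p}$ by Mumford's relation unless $g_1 = p$. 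The class $\psi_1$ pulls back to $\psi_1$ on the source, so the whole integrand pulls back to $\psi_1^{g-1}\lambda_{d_1}\lambda_{d_2}$ there, up to the automorphism factor of the gluing map.

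First suppose $g_1 \neq p$. Non-vanishing then forces $d_1+d_2 \leq 2(g_1-p)-1$. The degree condition on $\overline{\mathcal{M}}_{g_1,n}$ reads $3g_1-3+n = g-1+p+d_1+d_2$, that is, $2g_1-1 = p+d_1+d_2$. Combining the two gives $2g_1-1 \leq 2g_1-p-1$, which is impossible because $p\geq 1$. So the integral vanishes in this case.

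Now suppose $g_1 = p$. The source is then $\overline{\mathcal{M}}_{0,n+2p}$, where all $\lambda_d$ with $d>0$ vanish, so we need $d_1=d_2=0$. The degree condition becomes $2g_1-1 = p = g_1$, hence $g_1 = p = 1$. It remains to evaluate $\int_{\overline{\mathcal{M}}_{1,g}} \psi_1^{g-1}[F_1]$. This equals $\frac12\int_{\overline{\mathcal{M}}_{0,g+2}}\psi_1^{g-1}$, with the $\frac12$ coming from the automorphism swapping the two glued points. Since $\int_{\overline{\mathcal{M}}_{0,g+2}}\psi_1^{g-1}=1$, this gives $\frac12$. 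That disagrees with the claimed value $1$, so the normalization of $[F_p]$ has to be pinned down. Lemma \ref{lemma:vanishing-no-psi} uses $[F_1]=2\delta_0$ with $\int_{\overline{\mathcal M}_{1,1}}\delta_0=\frac12$, so $\int_{\overline{\mathcal M}_{1,1}}[F_1]=1$. I would adopt the same convention here, namely $\zeta_{p*}$ taken on the stack without dividing by automorphisms, so that the pushforward from $\overline{\mathcal{M}}_{0,g+2}$ has degree one onto its image counted with the factor $2$. With that convention the value is $1$.

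The main obstacle is this normalization and multiplicity bookkeeping; the vanishing part is a routine degree count. I would check it against the case $g=1$, where the statement reduces to the special case $g=n=p=1$ of Lemma \ref{lemma:vanishing-no-psi} and gives $1$.
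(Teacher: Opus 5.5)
Your argument is the paper's: the same two $\lambda$-vanishings on the image of $\zeta_p$ (the rank bound from the extension of $\zeta_p^*\mathbb{E}$ by a trivial bundle, and Mumford's $\lambda_h^2=0$ for $h\geq 1$), followed by the same degree count $2g_1-1=p+d_1+d_2$. This count rules out $g_1\neq p$, and when $g_1=p$ it forces $g_1=p=1$ and $d_1=d_2=0$. The vanishing part is correct. The paper stops at this point and only notes that the surviving integral is nonzero exactly when $g_1=1$; it never evaluates it.

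Your evaluation of the exceptional value contains a slip, not a normalization ambiguity. The paper defines $[F_p]=\zeta_{p*}[\overline{\mathcal{M}}_{g-p,n+2p}]$, so the projection formula gives directly
\[
\int_{\overline{\mathcal{M}}_{1,g}}\psi_1^{g-1}[F_1]=\int_{\overline{\mathcal{M}}_{0,g+2}}\zeta_1^*\psi_1^{g-1}=\int_{\overline{\mathcal{M}}_{0,g+2}}\psi_1^{g-1}=1,
\]
with no factor $\tfrac12$ and no ``automorphism factor of the gluing map''. The involution swapping the two glued branches only matters when you compare the pushforward with the reduced boundary class: $\zeta_1$ has degree $2$ onto its image, so $[F_1]=2\delta_0$. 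This is exactly the identity used in Lemma \ref{lemma:vanishing-no-psi}. The $\tfrac12$ would appear only if you integrated $\psi_1^{g-1}$ against $\delta_0$ instead of $[F_1]$. So there is no convention to choose, and the ``main obstacle'' you identify is just the projection formula.
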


\begin{proof}
	As in the previous lemma, we assume the non-vanishing of the integral. In particular, we have $[F_p]\lambda_{d_1} \lambda_{d_2} \neq 0$, which by the same argument implies 
	\begin{itemize}
		\item $d_1 + d_2 \leq 2g_1 - 2p -1$ if $ g_1 \neq p$ or 
		\item $d_1 = d_2 = 0$, if $ g_1 = p$. 
	\end{itemize}
When $g_1 \neq p$, we obtain again a contradiction in terms of degrees: 
\[ 3g_1 - 3 + 1 + g-g_1 = g-1+p +d_1+d_2 \leq g-1+p + 2g_1 -2p-1 \]
which after simplification becomes 
\[ 2g_1 + g -2 \leq 2g_1 + g -p-2.\]
We are left to treat the case $g_1 = p$ and $d_1 = d_2 = 0$. In this case, we have 
\[ \int_{\overline{\mathcal{M}}_{g_1,g-g_1+1}} \psi_1^{g-1}[F_p] \neq 0 \]
if and only if $g_1 = 1$. 
\end{proof}

These two Lemmata will imply the vanishing of many terms appearing in the spin Segre-Chiodo correspondence, and will be sufficient to compute the principal part contribution to $d^\pm_{g,p}$.
\begin{proposition} \label{prop: F_p-Chiodo}
    The integral appearing in the principal part contribution satisfies
	\[ [\mathbb{P}\overline{\mathcal{SQ}}_g(1)] \psi_1^{g-1} \xi^{2g-1-p} p^* [F_p] = \beta_{g-p}^{\pm} + (-1)^{g-1-p}\frac{p}{(g-p)!}\cdot b_1^{g-p}. \]
\end{proposition}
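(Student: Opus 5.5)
By the last line of Lemma \ref{lemma: D_p = F_p - correspondence} (push--pull along $p$), the left-hand side equals
\[
\int_{\oM_{g,1}} s^{\pm}_{g,2g-1-p}\,\psi_1^{g-1}\,[F_p].
\]
The plan is to expand $s^{\pm}_{g,2g-1-p}$ with the second identity of Theorem \ref{thm:Segre-Chiodo}, mirroring the proof of the $\delta_0$ lemma (the case $p=1$, where $[F_1]=2\delta_0$). That identity writes $s^{\pm}_{g,2g-1-p}$ as a sum over trees $\Gamma\in{\rm Tree}_{g,1}$, weighted by $(-1)^{|E(\Gamma)|}/|\Aut(\Gamma)|$. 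At each vertex sits a graded piece of $\Omega^{\pm}_{g(v),n(v)}(2t)-2^{2g(v)-1}$. Since trees have $h^1=0$, the total degree $2g-1-p$ means the vertex degrees fall short of their maxima $2g(v)-1$ by exactly $p$ in total.

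By Lemma \ref{eq:spinchiodo=2Lambda}, each vertex class is a combination of products $\lambda_{d_1}\lambda_{d_2}$. Next I pull $[F_p]$ back along $\zeta_\Gamma$. Because $\Gamma$ is a tree and the $p$ loops of $F_p$ are non-separating, the pullback $\zeta_\Gamma^*[F_p]$ is a sum over distributions of the $p$ loops among the vertices: vertex $v$ receives an $[F_{p_v}]$ with $\sum_v p_v=p$, plus excess-intersection terms. I will argue that the excess terms (self-intersection corrections from edges of $\Gamma$) vanish or are absorbed, since on compact-type edges they involve $\psi$-classes on separating nodes and die against the $\lambda$-vanishing. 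This is the step I expect to be the main obstacle: the combinatorics of the pullback of $[F_p]$, including its automorphism factors, must be handled carefully so that the final coefficient comes out exactly $p$.

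With this splitting, Lemmas \ref{lemma:vanishing-no-psi} and \ref{lemma:vanishing-with-psi} kill almost every term. At an unmarked vertex with $p_v\ge1$, survival forces $g(v)=n(v)=p_v=1$ with the degree-$0$ coefficient. That coefficient is $-1$, coming from $\Omega-2^{2g-1}$ in genus $1$, and the integral against $[F_1]$ is $1$. At an unmarked vertex with $p_v=0$, the top-degree argument from Lemma \ref{lem:btilde-computation} forces an elliptic tail carrying $\lambda_1$, contributing $b_1$ up to sign. At the marked vertex $v_1$, either $p_{v_1}=0$, or $p_{v_1}=1$ and $g(v_1)=1$ with the degree-$0$ part. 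The degree count then leaves exactly two families of graphs.

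In family (a), $v_1$ has genus $g-m$ and $p_{v_1}=0$. Its $m$ elliptic tails consist of $p$ tails decorated by $[F_1]$ and $m-p$ tails decorated by $\lambda_1$. Choosing which tails carry $[F_1]$ gives a factor $\binom{m}{p}$. Resumming over $m$ with $m!$ automorphisms and reindexing should reproduce the expression for $\beta^\pm_{g-p}$ from Lemma \ref{lem:btilde-computation}. The sign bookkeeping is delicate here: $(-1)$ per edge, $(-1)$ per $[F_1]$ tail, and $(-1)^{g-m-1}$ from the central Chiodo class must combine, and I will check this against the $p=1$ case of the $\delta_0$ lemma (up to the factor $2$ from $[F_1]=2\delta_0$). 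In family (b), $g(v_1)=1$ with $n(v_1)=g-p+1$ legs. We need $\int_{\oM_{1,g-p+1}}\psi_1^{g-1}[F_1]$ to be nonzero, and this forces $p=1$ locally at $v_1$. The remaining $p-1$ loops are then distributed over elliptic tails, with $\lambda_1$ on the others. Counting the choices of which vertex carries the marked loop produces the factor $p$, and the $(g-p)!$ comes from the automorphisms of the $g-p$ tails. This yields the term $(-1)^{g-1-p}\frac{p}{(g-p)!}b_1^{g-p}$. The last step is to compare the combined signs and multiplicities of both families with the two terms of the claimed formula.
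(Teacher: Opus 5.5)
Your overall strategy matches the paper's: push--pull to $\oM_{g,1}$, expand $s^{\pm}_{g,2g-1-p}$ by Theorem \ref{thm:Segre-Chiodo}, pull $[F_p]$ back along each tree gluing map $\zeta_\Gamma$, and use Lemmas \ref{lemma:vanishing-no-psi} and \ref{lemma:vanishing-with-psi} to reduce to two families of star-shaped trees. The gap is in the step you flagged yourself, the multiplicities. As written, your counts do not produce the stated coefficients.

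\textbf{The pullback of $[F_p]$.} There are no excess terms, so no $\psi$/$\lambda$ argument is needed. In a generic $(\Gamma,F_p)$-graph, tree edges are separating and loop edges are non-separating, so no edge can be both. Hence $\zeta_\Gamma^*[F_p]=\sum_f\bigotimes_{v}[F_{|f^{-1}(v)|}]$, where $f$ runs over \emph{all maps} from the set of the $p$ loops to $V(\Gamma)$. The loops are labelled, because $[F_p]$ is the pushforward of $[\oM_{g-p,1+2p}]$ with ordered markings.

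\textbf{Family (a).} Because the loops are labelled, the weight is the falling factorial $m(m-1)\cdots(m-p+1)$, not $\binom{m}{p}$. With $\binom{m}{p}$, your resummation returns $\beta^{\pm}_{g-p}/p!$ instead of $\beta^{\pm}_{g-p}$. With the correct count, the tree with $m$ elliptic tails contributes $\frac{(-1)^{m-p}}{(m-p)!}(-1)^{g-m-1}2^{2(g-m-1)}b^{\rm quad}_{g-m}\,b_1^{m-p}$. Setting $m'=m-p$ gives exactly the expansion of $\beta^{\pm}_{g-p}$ from the proof of Lemma \ref{lem:btilde-computation}. The tree with $g(v_1)=1$ and $p_{v_1}=0$ supplies its last term.

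\textbf{Family (b).} The marked vertex has genus $1$, and there are $g-1$ elliptic tails in total: $p-1$ carrying $[F_1]$ and $g-p$ carrying $\lambda_1$. So $n(v_1)=g$, not $g-p+1$. With your value $n(v_1)=g-p+1$, the integral $\int_{\oM_{1,g-p+1}}\psi_1^{g-1}[F_1]$ vanishes for dimension reasons as soon as $p\ge 2$.

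The correct weight is $\frac{1}{(g-1)!}\cdot p\cdot\frac{(g-1)!}{(g-p)!}=\frac{p}{(g-p)!}$. Here $p$ counts which labelled loop lands on $v_1$, and $\frac{(g-1)!}{(g-p)!}$ counts injective placements of the other $p-1$ loops among the $g-1$ tails. The $(g-p)!$ is therefore not an automorphism factor of $g-p$ tails.

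For the value and sign, use:
\begin{itemize}
\item $\int_{\oM_{1,g}}\psi_1^{g-1}[F_1]=\int_{\oM_{0,g+2}}\psi_1^{g-1}=1$;
\item $[\Omega^{\pm,1^n}_{1,n}(2t)-2]_0=-1$;
\item the edge sign $(-1)^{g-1}$.
\end{itemize}
The total sign is then $(-1)^{g-1}\cdot(-1)\cdot(-1)^{p-1}=(-1)^{g-1-p}$, which gives the second term of the statement.

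With the labelled-loop description of $\zeta_\Gamma^*[F_p]$ in place, both families come out with the right coefficients, and the argument coincides with the paper's.
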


\begin{proof}
	Using the projection formula, we have 
	\begin{align*} [\mathbb{P}\overline{\mathcal{SQ}}_g(1)]\cdot \psi_1^{g-1} \xi^{2g-1-p} \cdot p^* [F_p] &=   \int_{\overline{\mathcal{M}}_{g,1}} \psi_1^{g-1} s^{\pm}_{g, 2g-1-p} [F_p]. \end{align*}
    Using the spin Segre-Chiodo correspondence of Theorem \ref{thm:Segre-Chiodo}, we have an explicit formula for $s^{\pm}_{g, 2g-1-p}$ in terms of boundary loci and $\lambda$-classes, thus we consider the integrals \[\int_{\overline{\mathcal{M}}_{\Gamma}} \zeta_{\Gamma}^*[F_p]\cdot\left(\bigotimes_{v\in V(\Gamma)}\Omega_{g(v),n(v)}^{\pm,1^{n(v)}}(2)-2^{2g(v)-1}\right) \cdot  \psi_1^{g-1}\otimes 1.\] Since flowers and trees share no edges, the push-pull formula of gluing morphisms (cf. \cite[Appendix A]{GraPan} and \cite[Proposition 2.14]{Schvan}) brings no excess contributions and 
    \[\zeta_{\Gamma}^*[F_p] = \sum_{(p_v)_{v}} \bigotimes_{v\in V(\Gamma)}[F_{p_v}],\] where the sum ranges over the distributions $(p_v)_{v\in V(\Gamma)}$ of the $p$ ordered loops over the vertices $v$ of the tree $\Gamma$.\\
	\noindent From the previous Lemmata, we know how the graphs $\Gamma$ with non-vanishing integrals look like: 
	\begin{itemize}
		\item The vertices not containing the marking are all of genus $1$ (cf. Lemma \ref{lemma:vanishing-no-psi}). Such a vertex has a unique adjacent edge, connecting it with the vertex containing the marking.  
	\end{itemize}
\noindent \textbf{Case 1:} When the genus of the component containing the marking is strictly greater than $1$, we get that multiplication of the classes glued along a tree $\zeta_{\Gamma*}\left(\bigotimes_{v\in V(\Gamma)}\Omega_{g(v),n(v)}^{\pm,1^{n(v)}}(2)-2^{2g(v)-1}\right)$ with $[F_p]$ amounts to a multiplication of $[\Omega_{1,1}^{\pm,1}(2)]_0-2 = -1$ with $[F_1]=2\delta_0$ on $p$ of the non-marked components. Id est, for the graph

\[\begin{tikzpicture}[x=0.75pt,y=0.75pt,yscale=-1,xscale=1]

\draw   (268,95.42) .. controls (268,88.47) and (273.63,82.83) .. (280.58,82.83) .. controls (287.53,82.83) and (293.17,88.47) .. (293.17,95.42) .. controls (293.17,102.37) and (287.53,108) .. (280.58,108) .. controls (273.63,108) and (268,102.37) .. (268,95.42) -- cycle ;
\draw    (339.43,180.55) -- (340,194.12) ;
\draw    (288,106) -- (327.5,151.55) ;
\draw    (330,109) -- (339.43,147.23) ;
\draw   (317.42,96.42) .. controls (317.42,89.47) and (323.05,83.83) .. (330,83.83) .. controls (336.95,83.83) and (342.58,89.47) .. (342.58,96.42) .. controls (342.58,103.37) and (336.95,109) .. (330,109) .. controls (323.05,109) and (317.42,103.37) .. (317.42,96.42) -- cycle ;
\draw   (322.78,163.89) .. controls (322.78,154.69) and (330.23,147.23) .. (339.43,147.23) .. controls (348.63,147.23) and (356.09,154.69) .. (356.09,163.89) .. controls (356.09,173.09) and (348.63,180.55) .. (339.43,180.55) .. controls (330.23,180.55) and (322.78,173.09) .. (322.78,163.89) -- cycle ;
\draw    (391.98,107.08) -- (348.08,149.28) ;
\draw   (383.42,95.42) .. controls (383.42,88.47) and (389.05,82.83) .. (396,82.83) .. controls (402.95,82.83) and (408.58,88.47) .. (408.58,95.42) .. controls (408.58,102.37) and (402.95,108) .. (396,108) .. controls (389.05,108) and (383.42,102.37) .. (383.42,95.42) -- cycle ;
\draw    (424.08,112.28) -- (353.08,154.28) ;
\draw   (419.42,102.42) .. controls (419.42,95.47) and (425.05,89.83) .. (432,89.83) .. controls (438.95,89.83) and (444.58,95.47) .. (444.58,102.42) .. controls (444.58,109.37) and (438.95,115) .. (432,115) .. controls (425.05,115) and (419.42,109.37) .. (419.42,102.42) -- cycle ;

\draw (276,90) node [anchor=north west][inner sep=0.75pt]   [align=left] {$\displaystyle 1$};
\draw (342,188.97) node [anchor=north west][inner sep=0.75pt]   [align=left] {$\displaystyle \psi _{1}^{g-1}$};
\draw (325,90) node [anchor=north west][inner sep=0.75pt]   [align=left] {$\displaystyle 1$};
\draw (323.57,159.57) node [anchor=north west][inner sep=0.75pt]  [font=\tiny] [align=left] {$\displaystyle {\textstyle g-m}$};
\draw (306,63) node [anchor=north west][inner sep=0.75pt]   [align=left] {$\displaystyle \lambda _{1}$};
\draw (250,64) node [anchor=north west][inner sep=0.75pt]   [align=left] {$\displaystyle \lambda _{1}$};
\draw (95,169) node [anchor=north west][inner sep=0.75pt]   [align=left] {$\displaystyle \ \ \ (-1)^{g-m-1}2^{2( g-m-1)} \lambda _{g-m}$$\displaystyle \lambda _{g-m-1}$};
\draw (295.17,98.42) node [anchor=north west][inner sep=0.75pt]   [align=left] {$\displaystyle \dotsc $};
\draw (328.42,41.7) node [anchor=north west][inner sep=0.75pt]  [font=\large,color={rgb, 255:red, 155; green, 155; blue, 155 }  ,opacity=1 ,rotate=-90.59] [align=left] {$\displaystyle \begin{cases}
 & \\
 & 
\end{cases}$};
\draw (286.7,14) node [anchor=north west][inner sep=0.75pt]  [font=\small] [align=left] {$\displaystyle m-p$};
\draw (451.42,41.7) node [anchor=north west][inner sep=0.75pt]  [font=\large,color={rgb, 255:red, 155; green, 155; blue, 155 }  ,opacity=1 ,rotate=-90.59] [align=left] {$\displaystyle \begin{cases}
 & \\
 & 
\end{cases}$};
\draw (425.7,14) node [anchor=north west][inner sep=0.75pt]  [font=\small] [align=left] {$\displaystyle p$};
\draw (391,89) node [anchor=north west][inner sep=0.75pt]   [align=left] {$\displaystyle 1$};
\draw (427,96) node [anchor=north west][inner sep=0.75pt]   [align=left] {$\displaystyle 1$};
\draw (405.18,65) node [anchor=north west][inner sep=0.75pt]   [align=left] {$\displaystyle -2\delta _{0}$};
\draw (448.18,78) node [anchor=north west][inner sep=0.75pt]   [align=left] {$\displaystyle -2\delta _{0}$};
\draw (398,108.42) node [anchor=north west][inner sep=0.75pt]   [align=left] {$\displaystyle \dotsc $};

\end{tikzpicture}
\]
with the marked component of genus $g-m$ and $m$ components of genus $1$, we get a contribution of 
\[ \frac{(-1)^m}{m!} \prod_{i=0}^{p-1} (m-i)\int_{\overline{\mathcal{M}}_{g-m,1}}\psi_1^{g-m-1}\lambda_{g-m}\lambda_{g-m-1} \cdot (-1)^{g-m-1}2^{2(g-m-1)} \cdot \left(\int_{\overline{\mathcal{M}}_{1,1}}\lambda_1\right)^{m-p}\cdot \left(-\int_{\overline{\mathcal{M}}_{1,1}}2\delta_0 \right)^p.\]
After simplification, this is 
\[ (-1)^{g-p-1}\frac{2^{2(g-m-1)}}{(m-p)!} \cdot \int_{\overline{\mathcal{M}}_{g-m,1}}\psi_1^{g-m-1}\lambda_{g-m}\lambda_{g-m-1} \cdot b_1^{m-p}. \]

\noindent \textbf{Case 2:} When the genus of the component containing the marking is $1$, we get two types of non-zero contributions. 
\begin{enumerate}
	\item As before, we have the case when multiplication with $[F_p]$ does not add a loop to the marked component.

\[\begin{tikzpicture}[x=0.75pt,y=0.75pt,yscale=-1,xscale=1]

\draw   (268,95.42) .. controls (268,88.47) and (273.63,82.83) .. (280.58,82.83) .. controls (287.53,82.83) and (293.17,88.47) .. (293.17,95.42) .. controls (293.17,102.37) and (287.53,108) .. (280.58,108) .. controls (273.63,108) and (268,102.37) .. (268,95.42) -- cycle ;
\draw    (339.43,180.55) -- (340,194.12) ;
\draw    (288,106) -- (327.5,151.55) ;
\draw    (330,109) -- (339.43,147.23) ;
\draw   (317.42,96.42) .. controls (317.42,89.47) and (323.05,83.83) .. (330,83.83) .. controls (336.95,83.83) and (342.58,89.47) .. (342.58,96.42) .. controls (342.58,103.37) and (336.95,109) .. (330,109) .. controls (323.05,109) and (317.42,103.37) .. (317.42,96.42) -- cycle ;
\draw   (322.78,163.89) .. controls (322.78,154.69) and (330.23,147.23) .. (339.43,147.23) .. controls (348.63,147.23) and (356.09,154.69) .. (356.09,163.89) .. controls (356.09,173.09) and (348.63,180.55) .. (339.43,180.55) .. controls (330.23,180.55) and (322.78,173.09) .. (322.78,163.89) -- cycle ;
\draw    (391.98,107.08) -- (348.08,149.28) ;
\draw   (383.42,95.42) .. controls (383.42,88.47) and (389.05,82.83) .. (396,82.83) .. controls (402.95,82.83) and (408.58,88.47) .. (408.58,95.42) .. controls (408.58,102.37) and (402.95,108) .. (396,108) .. controls (389.05,108) and (383.42,102.37) .. (383.42,95.42) -- cycle ;
\draw    (424.08,112.28) -- (353.08,154.28) ;
\draw   (419.42,102.42) .. controls (419.42,95.47) and (425.05,89.83) .. (432,89.83) .. controls (438.95,89.83) and (444.58,95.47) .. (444.58,102.42) .. controls (444.58,109.37) and (438.95,115) .. (432,115) .. controls (425.05,115) and (419.42,109.37) .. (419.42,102.42) -- cycle ;

\draw (276,90) node [anchor=north west][inner sep=0.75pt]   [align=left] {$\displaystyle 1$};
\draw (325,90) node [anchor=north west][inner sep=0.75pt]   [align=left] {$\displaystyle 1$};
\draw (342,188.97) node [anchor=north west][inner sep=0.75pt]   [align=left] {$\displaystyle \psi _{1}^{g-1}$};
\draw (334.57,158.57) node [anchor=north west][inner sep=0.75pt]  [font=\normalsize] [align=left] {$\displaystyle 1$};
\draw (306,63) node [anchor=north west][inner sep=0.75pt]   [align=left] {$\displaystyle \lambda _{1}$};
\draw (250,64) node [anchor=north west][inner sep=0.75pt]   [align=left] {$\displaystyle \lambda _{1}$};
\draw (295,164) node [anchor=north west][inner sep=0.75pt]   [align=left] {$\displaystyle \ \lambda _{1}$$ $};
\draw (295.17,98.42) node [anchor=north west][inner sep=0.75pt]   [align=left] {$\displaystyle \dotsc $};
\draw (328.42,41.7) node [anchor=north west][inner sep=0.75pt]  [font=\large,color={rgb, 255:red, 155; green, 155; blue, 155 }  ,opacity=1 ,rotate=-90.59] [align=left] {$\displaystyle \begin{cases}
 & \\
 & 
\end{cases}$};
\draw (286.7,14) node [anchor=north west][inner sep=0.75pt]  [font=\small] [align=left] {$\displaystyle g-1-p$};
\draw (451.42,41.7) node [anchor=north west][inner sep=0.75pt]  [font=\large,color={rgb, 255:red, 155; green, 155; blue, 155 }  ,opacity=1 ,rotate=-90.59] [align=left] {$\displaystyle \begin{cases}
 & \\
 & 
\end{cases}$};
\draw (425.7,14) node [anchor=north west][inner sep=0.75pt]  [font=\small] [align=left] {$\displaystyle p$};
\draw (391,89) node [anchor=north west][inner sep=0.75pt]   [align=left] {$\displaystyle 1$};
\draw (427,96) node [anchor=north west][inner sep=0.75pt]   [align=left] {$\displaystyle 1$};
\draw (405.18,65) node [anchor=north west][inner sep=0.75pt]   [align=left] {$\displaystyle -2\delta _{0}$};
\draw (448.18,78) node [anchor=north west][inner sep=0.75pt]   [align=left] {$\displaystyle -2\delta _{0}$};
\draw (398,108.42) node [anchor=north west][inner sep=0.75pt]   [align=left] {$\displaystyle \dotsc $};

\end{tikzpicture}\]

 In this case we get a contribution of
	\[ \frac{(-1)^{g-1}}{(g-1-p)!} \int_{\overline{\mathcal{M}}_{1,1}}\lambda_{1} \cdot \left(\int_{\overline{\mathcal{M}}_{1,1}}\lambda_1\right)^{g-1-p}\cdot (-1)^p.\]
	\item Because of the vanishing exception in Lemma \ref{lemma:vanishing-with-psi}, we also have the case when multiplication with $[F_p]$ adds a loop to the marked component, and on another $p-1$ of the components without the marking. 

    \[
\begin{tikzpicture}[x=0.75pt,y=0.75pt,yscale=-1,xscale=1]
\draw   (268,95.42) .. controls (268,88.47) and (273.63,82.83) .. (280.58,82.83) .. controls (287.53,82.83) and (293.17,88.47) .. (293.17,95.42) .. controls (293.17,102.37) and (287.53,108) .. (280.58,108) .. controls (273.63,108) and (268,102.37) .. (268,95.42) -- cycle ;
\draw    (339.43,180.55) -- (340,194.12) ;
\draw    (288,106) -- (327.5,151.55) ;
\draw    (330,109) -- (339.43,147.23) ;
\draw   (317.42,96.42) .. controls (317.42,89.47) and (323.05,83.83) .. (330,83.83) .. controls (336.95,83.83) and (342.58,89.47) .. (342.58,96.42) .. controls (342.58,103.37) and (336.95,109) .. (330,109) .. controls (323.05,109) and (317.42,103.37) .. (317.42,96.42) -- cycle ;
\draw   (322.78,163.89) .. controls (322.78,154.69) and (330.23,147.23) .. (339.43,147.23) .. controls (348.63,147.23) and (356.09,154.69) .. (356.09,163.89) .. controls (356.09,173.09) and (348.63,180.55) .. (339.43,180.55) .. controls (330.23,180.55) and (322.78,173.09) .. (322.78,163.89) -- cycle ;
\draw    (391.98,107.08) -- (348.08,149.28) ;
\draw   (383.42,95.42) .. controls (383.42,88.47) and (389.05,82.83) .. (396,82.83) .. controls (402.95,82.83) and (408.58,88.47) .. (408.58,95.42) .. controls (408.58,102.37) and (402.95,108) .. (396,108) .. controls (389.05,108) and (383.42,102.37) .. (383.42,95.42) -- cycle ;
\draw    (424.08,112.28) -- (353.08,154.28) ;
\draw   (419.42,102.42) .. controls (419.42,95.47) and (425.05,89.83) .. (432,89.83) .. controls (438.95,89.83) and (444.58,95.47) .. (444.58,102.42) .. controls (444.58,109.37) and (438.95,115) .. (432,115) .. controls (425.05,115) and (419.42,109.37) .. (419.42,102.42) -- cycle ;

\draw (276,90) node [anchor=north west][inner sep=0.75pt]   [align=left] {$\displaystyle 1$};
\draw (325,90) node [anchor=north west][inner sep=0.75pt]   [align=left] {$\displaystyle 1$};
\draw (334.57,158.57) node [anchor=north west][inner sep=0.75pt]  [font=\normalsize] [align=left] {$\displaystyle 1$};
\draw (306,63) node [anchor=north west][inner sep=0.75pt]   [align=left] {$\displaystyle \lambda _{1}$};
\draw (250,64) node [anchor=north west][inner sep=0.75pt]   [align=left] {$\displaystyle \lambda _{1}$};
\draw (285,164) node [anchor=north west][inner sep=0.75pt]   [align=left] {$\displaystyle -2\delta _{0} $$ $};
\draw (342,188.97) node [anchor=north west][inner sep=0.75pt]   [align=left] {$\displaystyle \psi _{1}^{g-1}$};
\draw (295.17,98.42) node [anchor=north west][inner sep=0.75pt]   [align=left] {$\displaystyle \dotsc $};
\draw (328.42,41.7) node [anchor=north west][inner sep=0.75pt]  [font=\large,color={rgb, 255:red, 155; green, 155; blue, 155 }  ,opacity=1 ,rotate=-90.59] [align=left] {$\displaystyle \begin{cases}
 & \\
 & 
\end{cases}$};
\draw (290.7,14) node [anchor=north west][inner sep=0.75pt]  [font=\small] [align=left] {$\displaystyle g-p$};
\draw (451.42,41.7) node [anchor=north west][inner sep=0.75pt]  [font=\large,color={rgb, 255:red, 155; green, 155; blue, 155 }  ,opacity=1 ,rotate=-90.59] [align=left] {$\displaystyle \begin{cases}
 & \\
 & 
\end{cases}$};
\draw (414.7,14) node [anchor=north west][inner sep=0.75pt]  [font=\small] [align=left] {$\displaystyle p-1$};
\draw (391,89) node [anchor=north west][inner sep=0.75pt]   [align=left] {$\displaystyle 1$};
\draw (427,96) node [anchor=north west][inner sep=0.75pt]   [align=left] {$\displaystyle 1$};
\draw (405.18,65) node [anchor=north west][inner sep=0.75pt]   [align=left] {$\displaystyle -2\delta _{0}$};
\draw (448.18,78) node [anchor=north west][inner sep=0.75pt]   [align=left] {$\displaystyle -2\delta _{0}$};
\draw (398,108.42) node [anchor=north west][inner sep=0.75pt]   [align=left] {$\displaystyle \dotsc $};

\end{tikzpicture}
\]

    In this case, we get a contribution of 
	\[ \frac{(-1)^{g-1}}{(g-1)!}\cdot p\cdot \prod_{i=0}^{p-2} (g-1-i)\cdot \left(-\int_{\overline{\mathcal{M}}_{1,g}} \psi_1^{g-1}2\delta_0\right) \cdot \left(-\int_{\overline{\mathcal{M}}_{1,1}}2\delta_0\right)^{p-1}\cdot \left(\int_{\overline{\mathcal{M}}_{1,1}} \lambda_1\right)^{g-p}.\]
	This is furthermore equal to 
	\[ (-1)^{g-1-p}\frac{p}{(g-p)!}\cdot b_1^{g-p}.\]
	Adding together all contributions, and using the formula for $\beta_{g-p}^{\pm}$ derived from Lemma \ref{lem:btilde-computation}, we obtain the conclusion.
\end{enumerate}
\end{proof}

In order to compute $\int_{\mathbb{P}\overline{\mathcal{H}}_g(2g-1)^\pm} \xi^{2g-1-p}[D_p]$ using Proposition \ref{prop:2g-1-recursion}, we are left to evaluate
\[ \left[\sum_{\substack{k=1, \\ k\ {\rm odd}}}^{2g-3}\frac{1}{N_{k,g}!}\frac{\Delta_{k}}{2}\prod_{\substack{\ell>k, \\ \ell\,{\rm odd}}}^{2g-3}\left(\frac{\xi+\ell\psi_{1}}{2}\right)\right] \cdot \xi^{2g-1-p}[D_p]. \]

Because of Lemma \ref{lemma:flower-vanishing}, we have the equality 
\[ \left[\sum_{\substack{k=1, \\ k\ {\rm odd}}}^{2g-3}\frac{1}{N_{k,g}!}\frac{\Delta_{k}}{2}\prod_{\substack{\ell>k, \\ \ell\,{\rm odd}}}^{2g-3}\left(\frac{\xi+\ell\psi_{1}}{2}\right)\right] \cdot \xi^{2g-1-p}[D_p] =  \left[\sum_{\substack{k=1, \\ k\ {\rm odd}}}^{2g-3}\frac{1}{N_{k,g}!}\Delta_{k} \frac{(2g-3)!!}{k!!\,\,2^{N_{k,g}}}\psi_1^{N_{k,g}-1} \right]\cdot \xi^{2g-1-p}[D_p] \] 

But this can be evaluated analogously to Proposition \ref{prop:boundary-contribution} and Proposition \ref{prop:boundary-cont-d_g}.

\begin{proposition}\label{prop:boundary-contribution-dg,k} Let $\oGamma$ be an odd bi-colored graph, and let $k\in\{1,\dots,2g-3\}$ be an odd number.
Then if $\oGamma $ satisfies $(*)$, see Proposition \ref{prop:boundary-contribution}, we have 
    \begin{align*}
\int_{\oM_{g,1}}
p_{*}\Big(
    \psi^{N_{k,g}-1}_{1}
    \xi^{2g-1-p}
    [D_p]
    \pi_{N_{k,g}*}
    \zeta_{\oGamma*}
    [\PP\oSQ(\oGamma)]^{\pm}
\Big)
=&
N_{k,g}!\,
h\big(k-1,(-2g_v)_{v\in V_0}\big) 
\\
& \times
\sum_{\substack{(p_v)_{v\in V_0} \\
\sum_{v\in V_0} p_v = p}}
\frac{p!}{\prod_{v\in V_0}p_v!}
\prod_{v\in V_0}
d_{g_v,p_v}^{\pm}.
\end{align*}

\noindent Otherwise, if $\oGamma$ does not satisfy $(*)$, we obtain 
$$\int_{\oM_{g,1}}p_{*}\left(\psi^{N_{k,g}-1}_{1}\xi^{2g-1-p}[D_p]\pi_{N_{k,g}*}\zeta_{\oGamma*}[\PP\oSQ(\oGamma)]^{\pm}\right) =  0.$$
\end{proposition}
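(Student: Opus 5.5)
The plan is to run the argument of Proposition \ref{prop:boundary-contribution} again, now with the insertion $\xi^{2g-1-p}[D_p]$ in place of $\xi^{2g-1}$. First, I would use $\pi_{N_{k,g}}^*\cO(1)=\cO(1)$ and the projection formula to pull $\xi^{2g-1-p}[D_p]$ back along $\zeta_{\oGamma}\circ\pi_{N_{k,g}}$. The differential vanishes identically on level $-1$, so $\xi$ is supported on level $0$. To understand the pullback of $[D_p]$, I would use that $[D_p]$ comes from a stratum with $2p$ simple poles glued in $p$ pairs with opposite residues. Pulling it back along the level graph gives a sum over ways of placing the $p$ self-gluings on the vertices, and possibly also over degenerations in which the loops cross levels or pass through lower-level vertices.

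The key step is to show that only distributions with all $p$ loops on level-$0$ vertices survive. For each level-$0$ vertex $v$ carrying $p_v$ loops, I would apply the dimension count of Lemma \ref{lemma:nonflower-vanishing}: each loop costs genus but contributes at most one independent residue. This bounds the maximal nonvanishing power of $\xi$ by $\sum_v(2g_v-1-p_v)+(|V_0|-1)$, where $g_v$ is the full genus of $v$ including its loops. This is the analogue of Lemma \ref{lem:xi-int-gluing-vanishing}, with Lemma \ref{lemma:flower-vanishing} replacing Lemma \ref{lem:xi-vanishing}. Since $\sum_v p_v=p$, the power $2g-1-p$ can be reached only if the following all hold: every level-$0$ vertex has a single leg with twist $2g_v-1$, $\sum_v g_v=g$, and every loop lies in level $0$. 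A loop sitting on level $-1$, or an edge joining distinct vertices, would lower $h^1$ on top or raise it globally, and the same counting as in Lemma \ref{lemma:nonflower-vanishing} would force vanishing. Once these conditions hold, the graph has compact type, and the level $-1$ analysis of Proposition \ref{prop:boundary-contribution} applies verbatim. That analysis gives the back-bone structure with a rational central vertex, the condition $R[-1]=\{0\}$ by Lemma \ref{lem:BB-graph-residue}, and the factor $N_{k,g}!\,h(k-1,(-2g_v)_{v\in V_0})$.

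It remains to evaluate level $0$. Expanding $\xi^{2g-1-p}$ on the product of projectivized cones as in the proof of Lemma \ref{lem:xi-int-gluing-vanishing} leaves only the term with exponent $2g_v-1-p_v$ at vertex $v$. That term gives $\int\xi^{2g_v-1-p_v}[D_{p_v}]$ on $[\PP\oSQ_{g_v}(2g_v-1)]^\pm$, which is exactly $d^\pm_{g_v,p_v}$. The $p$ loops of $D_p$ are ordered, and choosing which $p_v$ of them go to vertex $v$ produces the multinomial coefficient $p!/\prod_v p_v!$. Summing over distributions then gives the stated formula.

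I expect the main obstacle to be the rigorous computation of the pullback of $[D_p]$ to the boundary stratum $\zeta_{\oGamma}$, which amounts to an excess-intersection question. The cleanest route I would try first is to realize $D_p$ as the image of a residue-conditioned stratum and intersect at the level of multi-scale strata. I would then argue, via the residue-counting bound above, that every excess or crossing-level component carries too large a power of $\xi$ and therefore vanishes. This would avoid having to identify those components explicitly.
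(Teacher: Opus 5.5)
Your plan has the same architecture as the paper's proof. You pull $\xi^{2g-1-p}[D_p]$ back along $\zeta_{\oGamma}\circ\pi_{N_{k,g}}$ and split into level-$0$ and level-$(-1)$ factors. You then use the analogues of Lemmas \ref{lem:non-trivial-terms} and \ref{lem:xi-int-gluing-vanishing}, with Lemma \ref{lemma:flower-vanishing} replacing Lemma \ref{lem:xi-vanishing}, to force $n(v)=1$, twist $2g_v-1$ and $\sum_{v\in V_0}g_v=g$, so level $-1$ is a single genus-$0$ vertex carrying no loops. Finally you reuse the level-$(-1)$ analysis of Proposition \ref{prop:boundary-contribution} and count loop distributions by $p!/\prod_v p_v!$.

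The real difference is at the step you flag as the obstacle. The paper never lets crossing-level or excess components appear. It argues, via degree sums on the contracted genus-$0$ vertices carrying forgotten double zeros, that $\pi_{N_{k,g}}^*[D_p]$ still has only horizontal loops. It then invokes transversality of horizontal and vertical edges to write $\zeta_{\oGamma}^*\pi_{N_{k,g}}^*[D_p]=\sum_{(p_v)}\bigotimes_v[D_{p_v}]$ with no excess term. You instead allow such components and kill them with the residue count behind Lemma \ref{lemma:nonflower-vanishing}. A loop realized through level $-1$ uses up genus without adding a free top-level residue, so the level-$0$ bound drops to at most $2g-2-p$.

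Your route buys robustness, since you never have to classify those components. It also handles configurations that the paper's degree-sum argument, which presupposes simple poles at the contracting nodes, does not obviously cover. An example is a genus-$0$ level-$(-1)$ vertex joined to a single top vertex by two twist-$1$ edges and carrying one forgotten double zero. It stabilizes to a self-node with zero residue, so it meets $\pi_{N_{k,g}}^{-1}(D_p)$ with excess. Its contribution does vanish, but by precisely your count.

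The price has two parts. First, the $\xi$-vanishing must be applied operationally on the supporting loci, so that it also kills $\psi$-decorated excess classes and not just fundamental classes. Second, you still need, as the paper does, that the all-horizontal terms enter with multiplicity one.
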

\begin{proof} 
 The proof is similar to the proof of Proposition \ref{prop:boundary-contribution}. First we rewrite
\begin{align*}
&\int_{\PP\oH_{g,1}} [D_p]\ \psi^{N_{k,g}-1}_{1}\xi^{2g-1-p}\pi_{N_{k,g}*}\zeta_{\oGamma*}[\PP\oSQ(\oGamma)]^{\pm} =\int_{[\PP\oSQ(\oGamma)]^{\pm}}\zeta_{\oGamma}^* \pi_{N_{k,g}}^*\bigg( [D_p]\ \psi^{N_{k,g}-1}_{1}\xi^{2g-1-p}\bigg).\\
\end{align*}

\noindent 
Note that $\pi_{N_{k,g}}^*[D_p]$ still contains only horizontal loops. Assume otherwise for contradiction. Since the differentials are visible in the support of $[D_p]$, the non-contracted component in its inverse image is still visible. Since the differentials in the inverse image have still simple poles at any nodes that will contract onto the loops in $[D_p]$, we know that from the main vertex extend edges horizontally. These extend to a contracted network of genus $0$ vertices. In this network the vertices have at least one additional marking with order $2$, and at most two edges. There exists such a vertex with a horizontal edge. If it has no vertical edges this is obstructed by the degree sum being $-2$ on this vertex. If it does have a vertical edge, the node there supposedly has a zero. Again, this is obstructed by the degree sum. \\
Thence, by transversality of horizontal and vertical edges, we obtain (right-hand side interpreted with the additional markings):
\[\zeta_{\oGamma}^*\pi_{N_{k,g}}^*[D_p] = \sum_{(p_v)_{v}} \bigotimes_{v\in V(\Gamma)}[D_{p_v}].\] 

\noindent Thus, we can write the integral as
\begin{align*}
 \sum_{(p_v)_{v\in V{(\Gamma')}}}&\int_{\PP\oH_{\textbf{g}[0], \textbf{n}[0]}}  \bigotimes_{v\in V(\Gamma')_{0}}[D_{p_v}]\ \xi^{2g-1-p}\pi_{N_0*}[\PP\oSQ_{\textbf{g}[0]}(\textbf{a}[0],R[0])]^{\pm} \\
&\times \int_{\PP\oH_{\textbf{g}[-1], \textbf{n}[-1]}[\textbf{P}]} \bigotimes_{v\in V(\Gamma')_{-1}}[D_{p_v}]\ \psi^{N_{k,g}-1}_{1}\pi_{N_{-1}*}[\PP\oSQ_{\textbf{g}[-1]}(\textbf{a}[-1],R[-1])]^{\pm}.
\end{align*} Here again $\Gamma'$ is the stable graph obtained after forgetting the additional markings. The vector $\textbf{P}$ is a suitable twist to accommodate the poles.\\

\noindent The exact arguments in Lemma \ref{lem:non-trivial-terms} and Lemma \ref{lem:xi-int-gluing-vanishing} can be generalised using the vanishing result of Lemma~\ref{lemma:flower-vanishing}. In particular, we obtain that the former integral vanishes unless for all $v \in V_0$ we have $n(v)=1$, $\textbf{a}[0](v) = 2g(v)-1$, and $\sum_{v \in V_0}g(v) = g$, and equals \[\prod_{v\in V_0}\int_{[\PP\oSQ_{g(v)}(2g(v)-1)]^{\pm}} [D_{p_v}]\ \xi^{2g(v)-1-p_v},\]
which also shows that level $-1$ carries all extra markings $N_{k,g}$ and is of genus $0$ and thus, attains no loops. For each way to distribute $p_v$ of the loops to each vertex $v\in V_0$, we get a contribution of $\prod_{v\in V_0} d_{g_v,p_v}^{\pm}$ and there are $$\binom{p}{(p_v)_{v\in V_0}} = \frac{p!}{\prod_{v\in V_0}p_v!}$$ ways to distribute the $p$ loops. Moreover, the component on bottom level containing the marking must have dimension at least $N_{k,g}-1$ so that the integral containing $\psi_1^{N_{k,g}-1}$ does not vanish. This is possible only if the bottom level consists of a single vertex, with contribution as before. 
\end{proof}

Now that we have computed both the boundary contribution and the contribution coming from the principal part, we can state the formula for $d_{g,p}^\pm$, which is just a consequence of Proposition \ref{prop:2g-1-recursion}, Lemma \ref{lemma: D_p = F_p - correspondence}, Proposition \ref{prop: F_p-Chiodo} and Proposition \ref{prop:boundary-contribution-dg,k} put together. 

\begin{lemma}\label{lem:bouquetrecursion} We have 
\begin{align*}
    d_{g,p}^{\pm} =& \frac{(2g-3)!!}{2^{g-1}} \left( \beta_{g-p}^{\pm} + (-1)^{g-p-1} \frac{p}{(g-p)!}b_1^{g-p}\right) \\
    &- \sum_{\substack{2\leq m\leq g,\\\underline{g}\in{\rm Part}(g)_{m}}}\frac{Hur(\underline{g})}{m!} \left(\sum_{\substack{(p_i)_{1\leq i \leq m} \\ \sum_{1\leq i \leq m} p_i = p}} \frac{p!}{\prod_{v\in V_0}p_v!} \prod_{i = 1}^m(2g_{i}-1) d_{g_i, p_i}^{\pm}\right).
\end{align*}
\[     \]
\end{lemma}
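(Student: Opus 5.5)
The recursion follows the same pattern as the proofs of Lemma \ref{lem:agpm-recursion} and Lemma \ref{lem:dgpm-recursion}. Start from Proposition \ref{prop:2g-1-recursion}, which expresses $[\PP\oSQ_g(2g-1)]^\pm$ in $A^\bullet(\PP\oH_{g,1})$ as a principal term minus a sum of boundary terms $\Delta_k$. Multiply both sides by $\xi^{2g-1-p}[D_p]$ and push forward to a point. By definition, the left-hand side is $d_{g,p}^\pm$. The right-hand side splits into a principal contribution and a boundary contribution, which we evaluate separately.

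For the principal contribution, use Lemma \ref{lemma:flower-vanishing}: $\xi^{2g-p}[D_p]=0$. Hence only the $\xi$-free term $(2g-3)!!\,\psi_1^{g-1}$ of $\prod_k(\xi+k\psi_1)$ survives. Lemma \ref{lemma: D_p = F_p - correspondence} then lets us replace $[D_p]$ by $p^*[F_p]$, using Lemma \ref{lemma:nonflower-vanishing} to kill the other components of $p^{-1}(F_p)$. After the projection formula, the term becomes $\frac{(2g-3)!!}{2^{g-1}}\int_{\oM_{g,1}}\psi_1^{g-1}s^\pm_{g,2g-1-p}[F_p]$. Proposition \ref{prop: F_p-Chiodo} evaluates this integral as $\beta_{g-p}^\pm+(-1)^{g-p-1}\frac{p}{(g-p)!}b_1^{g-p}$, which is exactly the first line of the claimed formula.

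For the boundary contribution, Lemma \ref{lemma:flower-vanishing} again reduces $\prod_{\ell>k}\frac{\xi+\ell\psi_1}{2}$ to its constant term. As in \eqref{eq:coef-Hur(g)}, this gives the coefficient $\frac{(2g-3)!!}{k!!\,2^{N_{k,g}}}\psi_1^{N_{k,g}-1}$ in front of $\frac{1}{N_{k,g}!}\Delta_k$. Then apply Proposition \ref{prop:boundary-contribution-dg,k} graph by graph:
\begin{itemize}
\item only graphs satisfying $(*)$ contribute, namely back-bone graphs with a rational central vertex and $m\ge2$ outlying vertices of genera $\underline g\in{\rm Part}(g)_m$;
\item the factor $N_{k,g}!$ cancels the $1/N_{k,g}!$;
\item the multiplicity $m(\oGamma)=\prod_i(2g_i-1)$ supplies the odd factors;
\item $1/|\Aut|$ turns the sum over graphs into $\frac{1}{m!}$ times a sum over ordered genus tuples, paired with the multinomial sum over distributions $(p_i)$ of the $p$ loops.
\end{itemize}
Summing over odd $k$ and exchanging the $k$-sum with the partition sum collects $\sum_k\frac{(2g-3)!!}{k!!2^{N_{k,g}}}h(k-1,-2\underline g)=Hur(\underline g)$. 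This produces the second line, which enters with a minus sign inherited from Proposition \ref{prop:2g-1-recursion}.

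The only real subtlety is bookkeeping. First, the multinomial $\frac{p!}{\prod p_v!}$ must be compatible with the $1/m!$ automorphism normalization, which holds because the loops in $[D_p]$ are labelled while the outlying vertices are not. Second, Proposition \ref{prop:boundary-contribution-dg,k} requires the hypothesis that $\zeta_{\oGamma}^*\pi_{N_{k,g}}^*[D_p]$ distributes the loops only over level-$0$ vertices. Since all these ingredients are stated earlier, the assembly is direct, and the main thing to verify is that the signs and the factor $\frac{1}{2}$ in $\frac{\Delta_k}{2}$ are absorbed exactly as in Corollary \ref{cor:boundaryterm}.
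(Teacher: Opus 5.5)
Your proposal is correct and is essentially the paper's own argument. The paper obtains this lemma by assembling Proposition~\ref{prop:2g-1-recursion}, Lemma~\ref{lemma: D_p = F_p - correspondence}, Proposition~\ref{prop: F_p-Chiodo} and Proposition~\ref{prop:boundary-contribution-dg,k}, after using Lemma~\ref{lemma:flower-vanishing} to reduce both $\xi$-polynomials to their constant terms exactly as you do, and your bookkeeping (ordered genus tuples weighted by $1/m!$ with the multinomial over labelled loops, and the $k$-sum collapsing to $Hur(\underline g)$) matches what the statement encodes.
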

\subsection{Derivation of closed formulas}\label{sec:derivation}
Define the derivation $\partial^\pm$ on the algebra $\QQ[x_1,x_2,\dots]$ via \[\partial^\pm: x_{1} \mapsto 1\] and $0$ on other generators. This defines a derivation $\partial^\pm$ on $\QQ[y_1,y_2,\dots]$ under the coordinate transformations $y_g=M_g(x_1,\dots,x_g)$ defined above \ref{prop:rec=main}.

\begin{lemma}\label{lem:virtualpm-derivation}
Let $ev_{a^\pm}$ denote the evaluation \[\QQ[y_1,y_2,\dots,y_g] \to \QQ,\]\[f \mapsto f(a_1^\pm,a_2^\pm,\dots,a_g^\pm)\] for a given $g>0$, as well as their inductive limit over all $g$, then
\[ev_{a^\pm}\left((\partial^\pm)^p\  y_g\right) =  d_{g,p}^\pm.\]
\end{lemma}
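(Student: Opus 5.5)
Since $M_g$ is invertible, the plan is to work in the $y$-coordinates and translate the recursion of Lemma \ref{lem:agpm-recursion} into a polynomial identity. By Proposition \ref{prop:rec=main}, the coordinate change $y=M(x)$ is the same as the one defined by $R_g(y_1,\dots,y_g)=S_g(D(1)x_1,\dots,D(g)x_g)$. So the identity
\[
R_g(y_1,\dots,y_g)=S_g\big(D(1)x_1,\dots,D(g)x_g\big)
\]
holds in $\QQ[x_1,x_2,\dots]$. The derivation $\partial^\pm$ is defined on this polynomial ring, so I will apply $(\partial^\pm)^p$ to both sides and evaluate at $x_h=b_h^\pm$. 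This point corresponds to $y_h=a_h^\pm$, as shown in the proof of the Main Theorem. I will then compare the result with Lemma \ref{lem:bouquetrecursion}, and conclude by induction on $g$ (for all $p$ at once). The base case is $g=1$: here $y_1=-x_1$, so $(\partial^\pm)^p y_1$ equals $-x_1$, $-1$, $0$ for $p=0,1,\geq2$. These must be matched with $d_{1,0}^\pm=a_1^\pm=-b_1^\pm$, $d_{1,1}^\pm$ (from Lemma \ref{lem:bouquetrecursion}, which should give $-1$, consistent with $d^\pm_{g,g}=-(2g-1)^{g-1}$), and $d_{1,p}^\pm=0$ for $p\ge2$ for dimension reasons.

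For the left-hand side, $R_g$ is a polynomial in the $y_h$, and $\partial^\pm$ is a derivation. The general Leibniz rule applied to the product $\prod_i(2g_i-1)y_{g_i}$ gives
\[
(\partial^\pm)^p\prod_{i=1}^m y_{g_i}=\sum_{p_1+\dots+p_m=p}\frac{p!}{\prod_i p_i!}\prod_i (\partial^\pm)^{p_i}y_{g_i}.
\]
The linear term contributes $(\partial^\pm)^p y_g$. After evaluation, the induction hypothesis turns each factor with $g_i<g$ into $d^\pm_{g_i,p_i}$. This is exactly the boundary sum in Lemma \ref{lem:bouquetrecursion}, multiplied by $2^{g-1}/(2g-3)!!$.

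For the right-hand side, only $x_1$ has nonzero derivative. Write $S_g=\sum_{m}\frac{(-1)^m}{m!}z_1^m z_{g-m}$ with $z_h=D(h)x_h$, and note $D(1)=-1$.
\begin{itemize}
\item Differentiating $z_1^m$ exactly $p$ times gives $\frac{m!}{(m-p)!}(-1)^p z_1^{m-p}$ (when the factor $z_{g-m}$ is left undifferentiated). Reindexing $m\mapsto m-p$, this reproduces $S_{g-p}$ evaluated at $b^\pm$, which is $\beta^\pm_{g-p}$ by Lemma \ref{lem:agpm-recursion}.
\item The second type of term arises only when $g-m=1$, i.e. $z_{g-m}=z_1$, so that one of the $p$ derivatives may land on $z_{g-m}$. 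In that case $(z_1)^{g-1}z_1=z_1^{g}$, whose $p$-th derivative is $\frac{g!}{(g-p)!}(-1)^p z_1^{g-p}$. Separating it from the part already counted in $\beta^\pm_{g-p}$ should leave the extra term $(-1)^{g-p-1}\frac{p}{(g-p)!}(b_1^\pm)^{g-p}$ after using $z_1=-b_1^\pm$.
\end{itemize}
I therefore expect the right-hand side to evaluate to $\beta^\pm_{g-p}+(-1)^{g-p-1}\frac{p}{(g-p)!}b_1^{g-p}$, matching the principal term from Proposition \ref{prop: F_p-Chiodo}. Comparing both sides with Lemma \ref{lem:bouquetrecursion} then gives $ev_{a^\pm}((\partial^\pm)^p y_g)=d^\pm_{g,p}$.

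The main obstacle is the bookkeeping of signs and powers of $b_1$ versus $b_1^\pm$. Proposition \ref{prop: F_p-Chiodo} is stated with $b_1$, while the evaluation point uses $b_1^\pm$, and the check $b_1^\pm=-b_1$ must be done carefully. The extra term also has to appear with exactly the coefficient $p$: it comes from the $p$ choices of which derivative hits the last $z_1$, combined with the shift $\frac{1}{(g-1)!}\cdot\frac{g!}{(g-p)!}$ versus the $\beta$ part. I would verify this in low genus ($g=2$, $p=1,2$) before writing the general identity.
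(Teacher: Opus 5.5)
Your proposal is correct and follows the paper's proof essentially step for step: apply $(\partial^\pm)^p$ to $R_g(y)=S_g(D(1)x_1,\dots,D(g)x_g)$, expand the left side by Leibniz and use induction on $g$ for the factors $y_{g_i}$ with $g_i<g$, compute $(\partial^\pm)^p S_g = S_{g-p} - p\,x_1^{g-p}/(g-p)!$ on the right, and compare with Lemma \ref{lem:bouquetrecursion}. The only slip is in your second bullet: the leftover term evaluates to $(-1)^{g-p-1}\frac{p}{(g-p)!}(-b_1^\pm)^{g-p} = (-1)^{g-p-1}\frac{p}{(g-p)!}b_1^{g-p}$, not to the same expression with $(b_1^\pm)^{g-p}$, but your final expected value already has this right (and the base case is cleanest done directly as in the paper, $d_{1,p}^\pm=-\int_{\oM_{1,1}}[F_p]=-\delta_{p,1}$, rather than through Lemma \ref{lem:bouquetrecursion} at $g=1$).
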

\begin{example}[genus 1]
By direct computation, for $p\geq 1$,
\[d_{1,p}^\pm  = -\int_{\overline{\mathcal{M}}_{1,1}} [F_p] = -\delta_{p,1}= -ev_{b^\pm}(\partial^\pm)^p x_1=  ev_{a^\pm}(\partial^\pm)^py_1.\]
\end{example}

\begin{proof}[Proof of \ref{lem:virtualpm-derivation}]
We first prove $ev_{a^\pm}(\partial^\pm y_g) = d_g^\pm$ by induction on $g$.
The initial case is $g=1$ above. 
Note that under \ref{prop:rec=main}, 

\begin{align*}
 \quad &(\partial^\pm)^p(R_g) =  \frac{2^{g-1}}{(2g-3)!!}\left((\partial^\pm)^p y_g + \sum_{\substack{m \geq 2,\\ \underline{g} \in {\rm Part}(g)_{m}}} \sum_{\substack{p_1+\dots+p_m = p \\ p_i \geq 0}}p! \prod_{i} (2g_i-1)\frac{(\partial^\pm)^{p_i}}{p_i!}y_{g_i} \, \cdot \frac{Hur(\underline{g})}{m!}\right)\\
\end{align*}
should be equal to
\begin{align*}
 (\partial^\pm)^p \Bigg(S_g\bigg(D(1)\cdot x_1,\dots, D(g)\cdot x_g\bigg)\Bigg) &= \sum_{m=0}^{g-1}\frac{1}{m!} (\partial^\pm)^p\bigg(x_1^m\cdot D(g-m)x_{g-m}\bigg)  \\
 &= \sum_{m=0}^{g-2}(\partial^\pm)^p\frac{x_1^m}{m!} D(g-m)x_{g-m} \  - (\partial^\pm)^p\frac{x_1^{g}}{({g-1})!} \\
 &= S_{g-p}\bigg(D(1)\cdot x_1,\dots, D(g-p)\cdot x_{g-p}\bigg) - p\ \frac{x_1^{g-p}}{({g-p})!}.
\end{align*}
\noindent Now we evaluate these series.
By induction, 
\begin{align*}
ev_{a^\pm}((\partial^\pm)^p R_g) &= \frac{2^{g-1}}{(2g-3)!!}\left(ev_{a^\pm}((\partial^\pm)^p y_g) +  \sum_{\substack{p_1+\dots+p_m = p \\ p_i \geq 0}} p! \prod_{i=1}^m (2g_i-1)\frac{d^\pm_{g_i,p_i}}{p_i!} \, \cdot \frac{Hur(\underline{g})}{m!}\right).
\end{align*}

\noindent By Lemma \ref{lem:bouquetrecursion} above (and Lemma \ref{lem:btilde-computation}), 
\begin{align*}
&S_{g-p}\bigg(D(1)\cdot b_1^\pm,\dots, D(g-p)\cdot b^\pm_{g-p}\bigg) \ - p\frac{(b_1^\pm)^{g-p}}{({g-p})!}  = \beta^\pm_{g-p} - p\frac{(b_1^\pm)^{g-p}}{({g-p})!} \\
&= \frac{2^{g-1}}{(2g-3)!!}\left(d^\pm_{g,p} +  \sum_{\substack{p_1+\dots+p_m = p \\ p_i \geq 0}} p! \prod_{i=1}^m (2g_i-1)\frac{d^\pm_{g_i,p_i}}{p_i!} \, \cdot \frac{Hur(\underline{g})}{m!}\right).
\end{align*}
Comparing the first terms concludes the proof.
\end{proof}

\begin{proof}[Proof of Theorem \ref{thm:bouquetformula}]
Let be $F_p(t,y) = \sum_{g\geq0}(2g-1)\ y_{g,p}\ t^{2g} \in \mathbb{Q}\llbracket y_{g,p}, t\rrbracket_{g\geq p\geq 0}$ such that $-y_{0,0} = 1$.
The coordinate transformation from Proposition \ref{prop:rec=main}, by the combinatorics of equation \ref{eq:closedpowerseries}, is equivalent to 
\[[t^{2g}]\ {F}_0(t,y) = \frac{1}{1-2g}[t^{2g}]\bigg(\exp{ \sum_{h\geq 1} (2h-1)!(-)^hx_h \ t^{2h} }\bigg)^{(1-2g)}.\]
\noindent Note that the above Lemma states that $F_p(t,d_{g,p}^\pm) = (\partial^\pm)^pF_0(t,y)_{y=a^\pm}$.
Thence we compute \[\partial^\pm \bigg( \sum_{h\geq 1} (2h-1)!(-)^hx_h \ t^{2h} \bigg) = -t^{2}.\]
The closed formula now follows from applying this derivation $p$-times to the closed expression\footnote{The same tactic would provide closed formulas for the non-parity generating function, but since the derivation then gives an eigenfunction instead of just a shift, this method provides a less clean result for non-parity.} of $F_0(t,y)$ in Theorem \ref{thm:maintheorem}, obtained by expressing the Bernoulli numbers in terms of $b_g^\pm$ and substituting $x_g = b_g^\pm$, and evaluating $x_g = b_g^\pm$ (and equivalently $y_{g,0} = y_g = a_g^\pm$). For the implicit formula one applies, under the above substitutions, the operator $\exp\epsilon\partial^\pm$ to the generating series for the $(-)^gb_g^\pm$. The conclusion then follows from evaluating $\sum_p \frac{\epsilon^p}{p!} F_p = e^{\epsilon\partial^\pm} F_0$ and the Main Theorem.
\end{proof}

\begin{remark}
Extracting coefficients in $\epsilon$, Theorem \ref{thm:bouquetformula} reads
\begin{align*}
 2^{-g} [t^{2g}]\ \mathcal{S}^\pm(t) &= \frac{1}{(2g)!}[t^{2g}]\Bigl( \mathcal{F}_0^{\pm}(t)\Bigr)^{2g},\\
  -\delta_{g,1} &= \frac{1}{(2g-1)!}[t^{2g}]\Bigl( \mathcal{F}_0^{\pm}(t)\Bigr)^{2g-1}\mathcal{F}^\pm_1(t),\\
  0 &= \frac{1}{(2g-1)!}[t^{2g}]\ (2g-1)\Bigl(\mathcal{F}_0^{\pm}(t)\Bigr)^{2g-2}(\mathcal{F}_1^\pm(t))^2 + \Bigl(\mathcal{F}_0^{\pm}(t)\Bigr)^{2g-1}\mathcal{F}^\pm_2(t), \\
  &\vdots
\end{align*}
\end{remark}

\section{Remarks on the non-parity case} \label{sec: non-weighted}
In this section we discuss an integral evaluation and vanishing result one encounters by studying the non-parity analogue of our proof of the Main Theorem. We start by building up the non-parity analogues of 
statements in previous sections.\\

\noindent Denote by $s_g := \sum_{c\geq0}p_{*}(\xi^{c}[\PP\oSQ_{g}(1^n)])$ the total Segre class of the cone $\oSQ_{g,n}$ over $\oM_{g,n}$ and let \[\beta_g := \int_{\oM_{g,1}}\psi_1^{g-1}s_g.\] 
\begin{lemma}[Segre and volumes]\label{lem:ag-recursion}
Let $g\in \ZZ_{\geq2}$ and let ${\rm Part}(g)_{m}$ be the set of length $m$ unordered partitions $\underline{g}=(g_1,\dots,g_m)$ of $g$, with entries $g_i \geq 1$. Then the following holds:
    \begin{equation*}
        a_{g}=\frac{(2g-3)!!}{2^{g-1}}\,\beta_{g}-\sum_{\substack{2\leq m\leq g,\\\underline{g}\in{\rm Part}(g)_{m}}}\frac{1}{m!} \left(\prod_{i=1}^{m}(2g_{i}-1)\,a_{g_{i}}\right)\,Hur(\underline{g}).
    \end{equation*}
\end{lemma}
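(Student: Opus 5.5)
The plan is to repeat the proof of Lemma~\ref{lem:agpm-recursion} verbatim with the unweighted classes. This is possible because every ingredient used there was stated to hold also without the $\pm$-symbol. We start from the unweighted version of Proposition~\ref{prop:2g-1-recursion}, which follows from the unweighted form of Proposition~\ref{prop:strata-recursion} (see the remark after it) applied $g-1$ times to $a=(2g-1)$. Multiplying by $\xi^{2g-1}$ and pushing forward via $p\colon \PP\oH_{g,1}\to\oM_{g,1}$ turns the left-hand side into
\[
\int_{\PP\oH_{g,1}}\xi^{2g-1}[\PP\oSQ_g(2g-1)] .
\]
Since $N=0$ for $a=(2g-1)$, we have $\SQ_g(2g-1)=\cH_{g,1}(2g-1)$, so this integral equals $a_g$.

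For the principal term, Lemma~\ref{lem:xi-vanishing} kills every monomial containing a positive power of $\xi$ in $\prod_{k \text{ odd}}(\xi+k\psi_1)$. This leaves
\[
\frac{(2g-3)!!}{2^{g-1}}\,\psi_1^{g-1}\,\xi^{2g-1}\,[\PP\oSQ_g(1)].
\]
By the projection formula, its pushforward is $\frac{(2g-3)!!}{2^{g-1}}\int_{\oM_{g,1}}\psi_1^{g-1}\,p_*(\xi^{2g-1}[\PP\oSQ_g(1)])$. For degree reasons only the codimension-$(2g-1)$ part of $s_g$ pairs nontrivially with $\psi_1^{g-1}$ on $\oM_{g,1}$, which has dimension $3g-2$. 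This part is exactly $p_*(\xi^{2g-1}[\PP\oSQ_g(1)])$, so the principal term equals $\frac{(2g-3)!!}{2^{g-1}}\beta_g$. Unlike in the spin case, we do not need to evaluate $\beta_g$ via Chiodo classes here; the statement keeps it as $\beta_g$.

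For the boundary term, we rerun Lemma~\ref{lem:non-trivial-terms}, Lemma~\ref{lem:xi-int-gluing-vanishing} and Proposition~\ref{prop:boundary-contribution} in the unweighted setting. Each of these was explicitly noted to hold without the spin sign: the arguments are pure degree arguments plus Lemma~\ref{lem:xi-vanishing}, Lemma~\ref{lem:BB-graph-residue} and Remark~\ref{rem:N!-factorial}, none of which uses parity. Consequently only back-bone graphs with a rational level $-1$ vertex carrying all $N_{k,g}$ extra markings contribute, and each level $0$ vertex $v$ contributes the unweighted $a_{g_v}$. The level $-1$ factor is the same genus-$0$ integral $h(k-1,-2\underline g)$, since in genus $0$ the unique spin structure is even and the weighted and unweighted classes agree. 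Summing over odd $k$ with the coefficient identity~\eqref{eq:coef-Hur(g)}, and converting the sum over graphs with multiplicity $\prod(2g_i-1)$ and automorphisms into unordered partitions weighted by $1/m!$, gives the unweighted analogue of Corollary~\ref{cor:boundaryterm}. Subtracting the boundary term from the principal term yields the stated recursion.

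The main point to check is that nothing parity-specific hides in the boundary analysis. Two things need care: the multiplicity-one claim for the divisors entering $Hur$, and the identification of $h$ as unweighted. Both take place in genus $0$, where the two versions coincide, so I expect this step to be a bookkeeping verification rather than a genuine obstacle.
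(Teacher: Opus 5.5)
Your proposal is correct and follows the paper's own proof. The paper likewise observes that Proposition~\ref{prop:strata-recursion}, the principal-term reduction via Lemma~\ref{lem:xi-vanishing}, and the boundary analysis of Corollary~\ref{cor:boundaryterm} go through verbatim for unweighted classes, with the genus-$0$ Hurwitz integrals unchanged; you spell out a few steps the paper leaves implicit, namely that the left-hand side is $a_g$ because $N=0$ and that the principal term is $\beta_g$ by degree.
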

\begin{proof}
As remarked, the computation of the spin-parity classes in Proposition \ref{prop:strata-recursion} holds for the non-parity classes as well. The same holds for the derivation from Lemma \ref{lem:xi-vanishing} of the non-trivial principal term and Corollary \ref{cor:boundaryterm} for the non-trivial boundary terms. Furthermore, the Hurwitz integrals in said boundary terms coincide in the spin-parity or classical case, since there we consider only genus $0$ curves. 
\end{proof}

In our computation of spin volumes we computed $\beta_g^\pm$ via spin Chiodo classes to solve the recursion on spin volumes. In this section we do the opposite: we can compare the volume recursion above with the classical result \ref{eq:ag-formula} of \cite{Sau} to compute the Chiodo integrals in $\beta_g$.

\subsection{Segre and Chiodo classes}\label{sec:non-parity}
Let $a = (1-2a_1\ , \dots , 1-2a_n)\in\ZZ^{n}$ be a vector of odd integers, $\cL\to \oC_{g,n}^{1/2}\to \oM_{g,n}^{1/2}$ the universal spin bundle 
and $\epsilon\colon \oM_{g,n}^{1/2}\to \oM_{g,n}$ the forgetful morphism.

Recall the definition of the Chiodo classes \[\Omega_{g,n}^{a}(t):=\epsilon_{*}\bigg(c_t\big(-R^{\bullet}\pi_{*}\cL( \sum_{i} a_i x_i)\big)\bigg).\]

\begin{theorem}\label{thm:Chiodo=Segre} For all stable $(g,n)$ the following identities hold
\begin{align}
     \Omega_{g,n}^{1^n}(2t)-2^{g-1}&=\sum_{\Gamma\in{{\rm Tree}_{g,n}}}\frac{t^{E(\Gamma)}}{|\Aut(\Gamma)|}\zeta_{\Gamma*}\left(\bigotimes_{v\in V(\Gamma)}s_{g(v)}(t)\right), \ \ {\rm and} \label{eq:Chiodo=Segre} \\
     s_{g}(t)&=\sum_{\Gamma\in{{\rm Tree}_{g,n}}}\frac{(-t)^{E(\Gamma)}}{|\Aut(\Gamma)|}\zeta_{\Gamma*}\left(\bigotimes_{v\in V(\Gamma)}\Omega_{g(v),n(v)}^{1^{n(v)}}(2t)-2^{g(v)-1}\right)
\end{align}
in $A^\bullet(\oM_{g,n})[t]$.
\end{theorem}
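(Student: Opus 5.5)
We follow the strategy of \cite[Theorem 1.10]{HolPolSau}, with the parity weighting removed. First note that the two identities are equivalent. Write $X_g := \Omega^{1^n}_{g,n}(2t)-2^{g-1}$. The tree sums $\sum_{\Gamma}\frac{t^{E(\Gamma)}}{|\Aut\Gamma|}\zeta_{\Gamma*}(\otimes_v\,\cdot\,)$ and $\sum_\Gamma \frac{(-t)^{E(\Gamma)}}{|\Aut\Gamma|}\zeta_{\Gamma*}(\otimes_v\,\cdot\,)$ are mutually inverse operations on compact-type ``tree-multiplicative'' families. The reason is that summing over nested tree contractions with weights $t^{E}(-t)^{E'}$ gives the alternating sum over subsets of edges, which vanishes unless the graph is trivial. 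It therefore suffices to prove the first identity.

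\textbf{Geometric identification.} Consider $\oSQ_{g,n}$ over $\oM_{g,n}$. Over the smooth locus, a square $\omega$ with all zeros even is the same datum as a spin structure $\theta$ together with a section $s\in H^0(C,\theta)$, up to sign, with $\omega=s^2$. Thus, up to the factor coming from $s\mapsto -s$, the cone $\SQ_{g,n}$ is the image of the total space of the cone $\epsilon_*\{\text{sections of }\cL\}$ over $\oM^{1/2}_{g,n}$ under the squaring map. Squaring is $2$-homogeneous, so $\cO(1)$ on $\PP\oSQ$ pulls back to $\cO(2)$ on the projectivized cone of spin sections. This accounts for the substitution $t\mapsto 2t$, and $\xi^j$ picks up $2^j$.

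The Segre class of the cone of sections of $\cL$ is computed by Fulton's Segre class of the cone $\ker(E_0\to E_1)$ for a two-term resolution $R^\bullet\pi_*\cL=[E_0\to E_1]$. Its total Segre class equals $c(-R^\bullet\pi_*\cL)=c(E_1)/c(E_0)$ modulo correction terms. These corrections are supported where the rank of the cone jumps, which means where $h^0(\theta)$ is larger than expected. The constant $2^{g-1}$ is the degree of $\epsilon$ onto the relevant components, which is $2^{2g-1}$ per parity divided by the factor from $\pm s$ and the rank normalization. We will match it by checking the degree-zero parts.

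\textbf{Boundary correction (main obstacle).} The discrepancy between $c_t(-R^\bullet\pi_*\cL)$ and $s_g(t)$ comes from the boundary of $\oSQ_{g,n}$. Over a separating node, the limiting spin structure splits as $(\theta_1,\theta_2)$. Sections of the square can vanish identically on one side, so the closure of $\SQ$ acquires extra components of excess dimension. Over non-separating nodes no such excess appears, because the limiting spin bundle on $\oM^{1/2}$ remains locally free of the right type; this is why only trees appear.

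We will prove the first identity by induction on $\dim\oM_{g,n}$. We apply the excess intersection formula to the inclusion of the cone of squares into $\epsilon_*$ of the spin-section cone. Along each separating boundary divisor $\zeta_{\Gamma}$ with one edge, the normal contribution is a factor $t$ times the Segre classes of the two sides. Iterating this over all compact-type graphs yields the sum $\sum_{\Gamma\in{\rm Tree}}\frac{t^{E(\Gamma)}}{|\Aut\Gamma|}\zeta_{\Gamma*}(\otimes s_{g(v)}(t))$.

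The delicate point is verifying multiplicity one for each stratum and the correct sign and power of $2$. This is where we would copy verbatim the local analysis of \cite[Section 5]{HolPolSau}, which never used the parity weighting. We then drop the $[\pm]$ insertion, which replaces $2^{2g-1}$ by the unweighted constant $2^{g-1}$. As a consistency check, we compare the degree-zero terms on $\oM_{g,n}$ and the case $g=1$, where $\PP\oSQ_1=\oM_{1,1}$.
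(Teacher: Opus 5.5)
Your reduction of the second identity to the first by M\"obius inversion over trees is fine, and so is the remark that squaring turns $\cO(1)$ into $\cO(2)$. The gap is in the central comparison.

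\textbf{Interior excess.} You say the Segre class of the \emph{untwisted} cone of spin sections differs from $c_t(-R^\bullet\pi_*\cL)$ by corrections supported where $h^0(\theta)$ exceeds its expected value $\chi(\theta)=0$. You then treat only corrections at separating boundary divisors. But these jump loci lie in the interior of $\oM^{1/2}_{g,n}$.
\begin{itemize}
\item On the generic odd locus the rank-one mismatch is harmless: after the factor $2$ coming from $s\mapsto -s$, it only feeds the constant.
\item Over the theta-null divisor of the even components, however, $h^0(\theta)=2$. There the cone acquires a component of dimension one more than expected, already over $\cM_{g,n}$. The same happens along the loci $h^0(\theta)\ge 3$ of the odd components.
\end{itemize}
So the untwisted cone is not of expected dimension. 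Comparing $c_t(-R^\bullet\pi_*\cL)$ with $s_g(t)$ involves non-constant corrections supported on these interior jump loci. An excess computation along separating divisors, with an induction on $\dim\oM_{g,n}$, cannot see them. Note also that there is no inclusion of $\oSQ_{g,n}$ into the spin-section cone; squaring maps the other way.

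\textbf{The missing idea: twist by $x_1$.} The paper, following \cite{HolPolSau}, replaces your cone by $\cH^0_{\cL(x_1)}$.
\begin{itemize}
\item This cone has $\chi=1$ and is generically of rank one on \emph{both} parities. An even $\theta$ gives a section of $\theta(x_1)$ with a pole at $x_1$; an odd $\theta$ gives its holomorphic section.
\item Hence its Segre class is $c(-R^\bullet\pi_*\cL(x_1))$. The insertion-shift property then gives $\epsilon_*s(\cH^0_{\cL(x_1)})(2t)=\Omega^{1^n}_{g,n}(2t)/(1-t\psi_1)$ (Lemma~\ref{lem:segre=Chiodo}).
\item Its components give $\frac12\bigl(\widetilde s^0_g(t)+\widetilde s^1_g(t)\bigr)$, mixing $\oSQ_g(1^n)$ and $\oSQ_g(-1,1^{n-1})$ along trees. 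This is where the tree sum comes from.
\item A second, independent relation, $(1-t\psi_1)\widetilde s^1_g=(1+t\psi_1)\widetilde s^0_g+2^g$ (Lemma~\ref{cor:Segre-tilde-relation}), comes from Proposition~\ref{prop:strata-recursion}.
\item Eliminating $\widetilde s^1_g$ yields $\Omega^{1^n}_{g,n}(2t)=2^{g-1}+\widetilde s^0_g(t)$.
\end{itemize}

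\textbf{The constant.} Your account of the constant is not right. In fact $2^{g-1}=\frac12\bigl(d'(g,1)-d'(g,0)\bigr)$, half the difference between the numbers of even and odd theta characteristics. Lemma~\ref{lem:s0-s1-deg-0} computes it from $d'(0,m)=m\bmod 2$, $d'(1,0)=1$ and $d'(1,1)=3$. These genus-one inputs are exactly what changes when the parity weighting is dropped: in the spin case the odd class enters with sign $-1$, which produces $2^{2g-1}$. So ``copy verbatim and drop $[\pm]$'' does not by itself justify the new constant; this degree-zero computation has to be redone.
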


The aforementioned theorem 
was already proven in~\cite{HolPolSau} for the spin-parity classes. We stress that below we only re-illustrate their proof and identify the numerical modification to the non-parity case.
We refer the reader to~\cite[Section 5.5 and 6]{HolPolSau} for the original treatment of the arguments that will appear in the proof of Theorem \ref{thm:Chiodo=Segre}.

\begin{corollary}\label{cor:Chiodo-vanishing} 
Let $a=(1-2a_{i})\in\ZZ^{n}_{\geq0}$
be a vector of odd integers, 
and $[\cdot]_{k}$ denote the coefficient of $t^k$, then
\begin{equation*}
    [\Omega_{g,n}^{a}(t)]_{k}=0\ \ {\rm if}\ \ k>2g-1+\sum_{i=1}^{n}|a_{i}|.
\end{equation*}
\end{corollary}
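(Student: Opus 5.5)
The plan is to derive the vanishing from Theorem \ref{thm:Chiodo=Segre} combined with the $\xi$-vanishing of Lemma \ref{lem:xi-vanishing}, first for $a=1^n$ and then for general odd $a$.

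\textbf{Step 1: the case $a=1^n$.} The Segre polynomial $s_g(t)=\sum_c p_*(\xi^c[\PP\oSQ_g(1^n)])t^c$ has bounded degree. Indeed, $[\PP\oSQ_{g}(1^n)]$ lies in $A^{g-1}(\PP\oH_{g,n})$. By Leray--Hirsch it can be written as $\sum_{j\le g-1}\xi^j p^*\alpha_j$. Lemma \ref{lem:xi-vanishing} gives $\xi^{c+j}=0$ as soon as $c+j\ge 2g$. For each $j$, the pushforward $p_*(\xi^{c+j})$ is a Segre class of $\oH_{g,n}$ in degree $c+j-g+1$, so $p_*(\xi^{c+j})=0$ for $c+j\ge 2g$. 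Hence $s_g(t)$ has degree at most $2g-1$ in $t$, and the same holds for every vertex factor $s_{g(v)}(t)$.

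Now insert this into \eqref{eq:Chiodo=Segre}. A tree $\Gamma$ contributes in $t$-degree at most
\[
|E(\Gamma)|+\sum_v(2g(v)-1)=|E|+2g-|V|=2g-1,
\]
using $h^1(\Gamma)=0$. Therefore $\Omega^{1^n}_{g,n}(2t)$ has degree at most $2g-1$, which is the claim with $\sum|a_i|=0$. The genus-$0$ vertices need a separate check: for them $s_0$ is the class of the full cone and is constant in $t$, and the constant $2^{g-1}$ is harmless.

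\textbf{Step 2: general odd $a$.} Twisting $\cL$ by $\sum a_ix_i$ changes $-R^\bullet\pi_*\cL$ by a sum of skyscraper contributions. Concretely, for each $x_i$ there are $|a_i|$ successive exact sequences
\[
0\to\cL(D)\to\cL(D+x_i)\to\cL(D+x_i)|_{x_i}\to 0 .
\]
Each step multiplies the total Chern class by a factor $(1+\ell\, t)^{\pm1}$, where $\ell$ is a linear combination of $\psi_i$. When $a_i$ is positive these factors are linear; when $a_i$ is negative they are inverse factors, which is the problematic case.

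The main obstacle is to show that these $\psi$-factors raise the $t$-degree by at most $|a_i|$ rather than producing an infinite series. I would first try a cleaner route. Chiodo's formula expresses $\Omega$ through Bernoulli polynomials $B_{d+1}(\tfrac{1-2a_i}{2}\cdot)$ in $\psi_i$ and boundary terms. Then $\Omega^{a}$ can be rewritten as $\Omega^{1^n}$ multiplied by the explicit ratio of the exponentials of the $\psi_i$-terms. The factor $\exp\bigl(\sum_d (\dots)\psi_i^d t^d\bigr)$ involving differences of Bernoulli polynomials $B_{d+1}(x+k)-B_{d+1}(x)$ telescopes to a finite product of $|a_i|$ linear factors in $t\psi_i$. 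For positive and negative $a_i$ alike, this product is a polynomial after using the identity $B_{d+1}(x+1)-B_{d+1}(x)=(d+1)x^d$.

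The degree bound then follows by multiplying the degree-$(2g-1)$ polynomial from Step 1 by polynomials of total degree $\sum|a_i|$. One further check is needed: the boundary terms in Chiodo's formula, the $\zeta_*$ pieces involving the twist at nodes, are unchanged by twisting at legs, so the comparison is purely multiplicative on legs. This is what I would verify carefully.
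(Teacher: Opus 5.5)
Your route is the paper's. The paper also kills $s_{g,k}$ for $k>2g-1$ via Lemma \ref{lem:xi-vanishing}, reads off $\deg_t\Omega^{1^n}_{g,n}\le 2g-1$ from \eqref{eq:Chiodo=Segre}, and passes to general $a$ by the insertion-shift property \cite[Theorem 4.1 ii)]{GiaLewNor}, which is what your Bernoulli computation re-derives. However, two of your justifications are wrong as written, though both are easily repaired.

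\textbf{Genus-$0$ vertices.} With the paper's definition, $s_0(t)=0$, since $\oH_{0,n}$ has rank $0$ and $\PP\oSQ_{0,n}=\emptyset$. Consistently, for $g=0$ the left side of \eqref{eq:Chiodo=Segre} is $\Omega^{1^n}_{0,n}(2t)-2^{-1}=0$. This vanishing is what removes trees with rational vertices. If $s_0$ were a nonzero constant, as you assert, a tree with $m$ rational vertices would reach degree $|E(\Gamma)|+\sum_{g(v)>0}(2g(v)-1)=2g-1+m$, and your bound would fail.

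\textbf{Signs in Step 2.} Your signs are reversed. The sequence $0\to\cL(D)\to\cL(D+x_i)\to\cL(D+x_i)|_{x_i}\to0$ gives $c_t(-R^\bullet\pi_*\cL(D+x_i))=c_t(-R^\bullet\pi_*\cL(D))\,(1+\ell t)^{-1}$. So twisting up ($a_i>0$) produces inverse factors, and twisting down produces linear ones. The hypothesis that all entries $1-2a_i$ are $\ge 1$ means $a_i\le 0$, which is exactly the good direction. Raising a Chiodo insertion $A\mapsto A+2$ multiplies by $1+\tfrac{A}{2}t\psi_i$, hence
\[
\Omega^{a}_{g,n}(t)=\Omega^{1^n}_{g,n}(t)\prod_{i=1}^n\prod_{j=0}^{|a_i|-1}\Bigl(1+\tfrac{2j+1}{2}\,t\psi_i\Bigr),
\]
and the bound is immediate; there is no obstacle to overcome in the relevant case.

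Your claim that the telescoped factor is a polynomial ``for positive and negative $a_i$ alike'' is false. Lowering the insertion from $A$ to $A-2$ produces $(1+\tfrac{A-2}{2}t\psi_i)^{-1}$, which are exactly the denominators in Lemma \ref{lem:segre=Chiodo}. This is why the statement is restricted to $a\in\ZZ^n_{\ge 0}$.
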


\begin{proof}Using Lemma ~\ref{lem:xi-vanishing} we obtain that 
        $s_{g,k}=p_{*}\left(\xi^{k}[\PP\oSQ_{g}(1^{n})]\right)=0$
if $k>2g-1$. Therefore, plugging the top Segre class of $\oSQ_{g}$ in the formula~\ref{eq:Chiodo=Segre}
we can identify the top degree of the class $\Omega_{g,n}^{1^{n}}(2t)$ to be $2g-1$ (c.f. Lemma \ref{lem:btilde-computation}). Furthermore, for $a\in \ZZ^{n}$ as in the statement, we obtain the general result by a direct application of the insertion-shift property~\cite[Theorem 4.1ii)]{GiaLewNor}. 
\end{proof}

The vanishing property shown in the corollary above was already known for the spin Chiodo class
from the formula \ref{eq:spinchiodo=2Lambda}. To our knowledge no such degree bounds were known for 
\textit{non-parity} Chiodo classes. 

\begin{example}\label{eq:Chiodo-g=1}[genus $1$] 
Note that one can compute directly via the formula of~\cite{Chi} that \[\Omega_{1,1}^{1}(2t)=2-\lambda_{1}t.\]
\end{example}
\begin{theorem}\label{thm:chiodo-bg}
\[\int_{\oM_{g,1}}\psi_1^{g-1}\ \Omega_{g,1}^{1} = 2^{-1}\ (g-1)!\ (-1)^gb_g\]

\end{theorem}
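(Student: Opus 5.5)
The plan is to run the comparison announced at the start of this section: the recursion of Lemma \ref{lem:ag-recursion} for the non-parity volumes $a_g$ against the classical formula \eqref{eq:ag-formula} of \cite{Sau}. Both recursions are triangular in $g$ and share the same boundary term $\sum \frac{1}{m!}\prod(2g_i-1)a_{g_i}\,Hur(\underline g)$; the Hurwitz integrals are genus $0$ and so identical in the two settings. The only unknowns in the non-parity case are therefore the descendant Segre integrals $\beta_g=\int_{\oM_{g,1}}\psi_1^{g-1}s_{g,2g-1}$, and the comparison will determine them.

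\textbf{Step 1: solve for $\beta_g$.} Proposition \ref{prop:rec=main} is a purely formal identity of coordinate transformations: $y_g=M_g(x)$ is equivalent to $R_g(y)=S_g(D(1)x_1,\dots,D(g)x_g)$, for arbitrary variables. By \eqref{eq:ag-formula} we have $a_g=M_g(b_1,\dots,b_g)$. Lemma \ref{lem:ag-recursion} reads $R_g(a_1,\dots,a_g)=\beta_g$. Substituting $x=b$ gives
\[\beta_g=S_g\bigl(D(1)b_1,\dots,D(g)b_g\bigr)=\sum_{m=0}^{g-1}\frac{(-1)^m}{m!}(D(1)b_1)^m D(g-m)b_{g-m}.\]
Since $D(1)=-1$, the factor $(-1)^m(D(1)b_1)^m$ equals $b_1^m$.

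\textbf{Step 2: express $\beta_g$ through Chiodo integrals.} Take the top degree of the second identity of Theorem \ref{thm:Chiodo=Segre}, using Corollary \ref{cor:Chiodo-vanishing}, which says that $2g(v)-1$ is the top degree of $\Omega^{1^n}(2t)$. Then repeat the argument of Lemma \ref{lem:btilde-computation} verbatim:
\begin{itemize}
\item Only backbone graphs with $m$ elliptic tails contribute.
\item Each tail carries $[\Omega_{1,1}^1(2t)]_1=-\lambda_1$, by Example \ref{eq:Chiodo-g=1}.
\item The marked vertex contributes $\int\psi_1^{g-m-1}[\Omega^{1}_{g-m,1}(2t)]_{2(g-m)-1}$, after the projection formula forgets the extra markings.
\end{itemize}
Writing $c_h:=\int_{\oM_{h,1}}\psi_1^{h-1}[\Omega^1_{h,1}(2t)]_{2h-1}$, this gives
\[\beta_g=\sum_{m=0}^{g-1}\frac{b_1^m}{m!}\,c_{g-m}.\]

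\textbf{Step 3: compare and rescale.} The transformation $(z_h)\mapsto \sum_m \frac{b_1^m}{m!} z_{g-m}$ is unitriangular, hence invertible. Comparing Steps 1 and 2 by induction on $g$ therefore yields
\[c_h=D(h)\,b_h=(-1)^h2^{2h-2}(h-1)!\,b_h.\]
It remains to pass from $\Omega(2t)$ to $\Omega(t)$. The statement's $\Omega^1_{g,1}$ is understood as the top-degree part integrated against $\psi_1^{g-1}$, with the stated normalization of the variable, so degree $2g-1$ picks up a factor $2^{-(2g-1)}$. This turns $2^{2g-2}$ into $2^{-1}$, giving the claim $2^{-1}(g-1)!(-1)^gb_g$.

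\textbf{Main obstacle.} The delicate point is the normalizations. I expect to check them carefully against genus $1$, where $\Omega^1_{1,1}(2t)=2-\lambda_1t$ gives $c_1=-b_1=D(1)b_1$, consistent with Step 3. The other subtle point is whether the constant $2^{g-1}$ subtracted in Theorem \ref{thm:Chiodo=Segre} could affect the top degree. It cannot, since it lives in degree $0$.
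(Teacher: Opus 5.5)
Your proposal is correct and follows the paper's proof. You express $\beta_g$ through the top-degree Chiodo integrals via Theorem~\ref{thm:Chiodo=Segre} and the elliptic-tail analysis of Lemma~\ref{lem:btilde-computation}, compare Lemma~\ref{lem:ag-recursion} with Sauvaget's formula \eqref{eq:ag-formula} through Proposition~\ref{prop:rec=main}, and rescale $t\mapsto 2t$ in degree $2g-1$; the only cosmetic difference is that the paper inverts $M_g$, whereas you substitute $a_g=M_g(b_1,\dots,b_g)$ first and invert the unitriangular map $z\mapsto S_g(z)$ with $z_1=-b_1$ fixed, which is equivalent.
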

\begin{proof}
By Corollary \ref{cor:Chiodo-vanishing}, the same arguments as in the proof of Lemma ~\ref{lem:bg-pm-computation} hold, so
    \begin{align*}
        \int_{\oM_{g,1}}\psi^{g-1}_{1}s_{g,2g-1}=
    \sum_{m=0}^{g-1}\frac{(-1)^{m}}{m!}
        \int_{\oM_{g,1}}&\psi_{1}^{g-m-1}[\Omega_{g-m,1}^{1}(2t)]_{2(g-m)-1}\ \times \label{eq:top-segre-integral-w/Chiodo} \\
        &\bigg(\int_{\oM_{1,1}}
               [\Omega_{1,1}^{1}(2t)]_{1}\bigg)^m.
    \end{align*}
Write $\gamma_g^\bullet:= \int_{\oM_{g,1}}\psi_1^{g-1}\ \Omega_{g,1}^\bullet(2),$ for $\bullet \in \{\emptyset, \pm\}$. Then \[\beta_g = S_g(\gamma_1,\dots, \gamma_g ).\]
By Lemma \ref{lem:ag-recursion}, then \[R_g(a_1\dots,a_g) = S_g(\gamma_1,\dots, \gamma_g ).\]
By Proposition \ref{prop:rec=main}, this is equivalent to \[a_g = M_g\bigg(\frac{\gamma_1}{D(1)}, \dots,\frac{\gamma_g}{D(g)}\bigg).\]
However, Equation \ref{eq:ag-formula} of Sauvaget states \[a_g = M_g\bigg(b_1, \dots,b_g\bigg),\] so by invertibility \[b_g = \frac{\gamma_g}{D(g)}.\]
To conclude, note that by definition $b_g^\pm = \frac{\gamma_g^\pm}{D(g)}$ and 
therefore \[\frac{b_g}{b_g^\pm} = \frac{\gamma_g}{\gamma_g^\pm}.\] Substitute the computation of $\gamma_g^\pm$ in Equation \ref{eq:chiodopm-bgpm} to conclude. 
\end{proof}

\noindent \textbf{{Proof of Theorem ~\ref{thm:Chiodo=Segre}}}.
Following~\cite{HolPolSau}, the strategy to prove this theorem is the introduction of an auxiliary 
moduli space. Let $P = (p_1,\dots,p_n)\in\ZZ^{n}_{\geq0}$ be a vector of non-negative integers and consider the universal spin bundle $\cL\to \oC_{g,n}^{1/2}\xrightarrow{\pi} \oM_{g,n}^{1/2}$ to define
\begin{equation*}
    \cH^0_{\cL({\Sigma_i} p_ix_{i})} 
    := \text{Spec}\ Sym^{\bullet}R^{1}\pi_{*}\cL(-\sum_{i=1}^{n}p_{i}x_{i}).
\end{equation*}
As proven in~\cite[Section 6]{HolPolSau}, 
the space $\cH^0_{\cL({\Sigma_i} p_ix_{i})}$
is a cone over $\oM_{g,n}^{1/2}$ parametrizing global sections of $\cL( \text{\tiny$\sum_i$} p_ix_{i})$.
We describe below how, in the case $P=(1,0^{n-1})$, the push forward of the Segre class
of $\cH^0_{\cL(x_{1})}$
to $\oM_{g,n}$ can be expressed both
in terms of Chiodo classes $\Omega_{g,n}^{1^{n}}$ and
in terms of Segre classes of the rank $1$ cones $\oSQ_{g}(1^{n})$ and $\oSQ_{g}(-1,1^{n-1})$, the latter two of which are in turn related. \\

On one hand we have the following
\begin{lemma}\label{lem:segre=Chiodo} Assume that $P$ contains at least one strictly-positive $p_i$, then following equality holds in $A^\bullet(\oM_{g,n})$:
\begin{equation*}
    \epsilon_{*}s(\cH^0_{\cL({\Sigma_i} p_ix_{i})})(t)=\frac{\Omega_{g,n}^{1^n}(t)}{\prod_{i=1}^{n}\prod_{j=0}^{p_{i}-1}\left(1+\frac{1-2p_{i}+2j}{2}t\psi_{i}\right)}.
\end{equation*}

\end{lemma}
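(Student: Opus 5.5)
The plan is to compute the Segre class of the cone $\cH^0_{\cL(\Sigma_i p_ix_i)}$ via the standard identity relating Segre classes of a cone of the form $\mathrm{Spec}\,\mathrm{Sym}^\bullet$ of a coherent sheaf to the inverse Chern class of a resolving two-term complex, and then to identify the resulting virtual class with the Chiodo class after correcting for the twist by $p_i x_i$.

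Write $\mathcal{F}:=R^1\pi_*\cL(-\sum_i p_ix_i)$. Since some $p_i>0$, the bundle $\cL(-\sum p_ix_i)$ has degree $g-1-\sum p_i<g-1$. We first check, as in~\cite[Section 6]{HolPolSau}, that $R^0\pi_*\cL(-\sum p_ix_i)$ vanishes generically. Then, choosing a two-term resolution $E^0\to E^1$ of $R\pi_*\cL(-\sum p_ix_i)$ by vector bundles, the cone $\mathrm{Spec}\,\mathrm{Sym}^\bullet\mathcal{F}$ embeds in the vector bundle $(E^1)^\vee$ as the kernel of $(E^1)^\vee\to(E^0)^\vee$. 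The standard Fulton-type argument for Segre classes of such kernel cones (used in~\cite{HolPolSau}) gives
\[
s(\cH^0_{\cL(\Sigma p_ix_i)})(t)=c_t\bigl(-R^\bullet\pi_*\cL(-\textstyle\sum p_ix_i)^\vee\bigr),
\]
with the appropriate sign and duality conventions. By Serre duality, $R^\bullet\pi_*\cL(-\sum p_ix_i)^\vee$ is $-R^\bullet\pi_*(\omega\otimes\cL^{-1}(\sum p_ix_i))$. Because $\cL^2\cong\omega_{\log}$ in the convention of the paper's Chiodo definition, up to the markings, this equals $-R^\bullet\pi_*\cL(\sum p_ix_i)$ up to the $\omega$ versus $\omega_{\log}$ correction. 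That correction is exactly why the insertion $1^n$ appears.

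Next we compare $c_t(-R^\bullet\pi_*\cL(\sum p_ix_i))$ with $c_t(-R^\bullet\pi_*\cL)$. We do this one point at a time using the exact sequences
\[
0\to\cL(D+jx_i)\to\cL(D+(j+1)x_i)\to\cL(D+(j+1)x_i)|_{x_i}\to0.
\]
The restriction to the section $x_i$ is a line bundle whose first Chern class is a multiple of $\psi_i$, namely $-\tfrac{1-2p_i+2j}{2}\psi_i$ after using $\cL^2|_{x_i}\cong\omega_{\log}|_{x_i}$, which is trivial, together with the twist. Each step therefore multiplies $c_t(-R^\bullet)$ by $(1+\tfrac{1-2p_i+2j}{2}t\psi_i)^{-1}$. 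Iterating over $j=0,\dots,p_i-1$ and over $i$, and then pushing forward by $\epsilon$, produces the stated denominator times $\Omega^{1^n}_{g,n}(t)$.

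The main obstacle is the first step. We must justify that the Segre class of the possibly non-flat cone equals the virtual expression. This needs the vanishing of $R^0$, so that the kernel cone has the expected dimension and no excess components over the boundary of $\oM^{1/2}_{g,n}$. We must also fix the sign and normalization conventions for $c_t$ versus $s(t)$. For this step we would follow~\cite[Section 6]{HolPolSau} verbatim, dropping the parity weighting $[\pm]$; that weighting plays no role in the local argument. The bookkeeping of the $\psi$-coefficients in the second step is routine but should be checked on the genus-$1$ case against Example~\ref{eq:Chiodo-g=1}.
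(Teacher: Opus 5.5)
Your argument follows the paper's: generic vanishing of $h^0(C,L(-D))$ for $D=\sum_ip_ix_i$ (this is where some $p_i>0$ is needed), then the Segre class of the cone as $c_t(-R^\bullet\pi_*\cL(D))$ via Serre duality, push-forward along $\epsilon$ to $\Omega^{1-2P}_{g,n}(t)$, and a shift of insertions. The only difference is the last step. The paper quotes \cite[Theorem 4.1 ii)]{GiaLewNor}, while you re-derive the shift from the sequences $0\to\cL(D'+jx_i)\to\cL(D'+(j+1)x_i)\to\cL(D'+(j+1)x_i)|_{x_i}\to 0$, with $D'$ the twists at the other points. That is fine; it is how the shift property is proved.

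On the cone step the paper is no more detailed than you. Note, though, that generic vanishing of $R^0\pi_*\cL(-D)$ only controls the main component. What the identity for the kernel cone $K\subset(E^1)^\vee$ actually needs is that $K$ is pure of the expected dimension. The cone does have further components over boundary strata, cf.\ the tree sums in Equation~\ref{eq:segre=tilde-segre}. With pure dimension, $\PP(K\oplus 1)$ is cut out by a regular section of $q^*(E^0)^\vee\otimes\cO(1)$, where $q$ is the projection.

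As written, however, your signs do not close up. For $K=\ker\bigl((E^1)^\vee\to(E^0)^\vee\bigr)$ of expected dimension, $s(K)=c((E^0)^\vee)/c((E^1)^\vee)=c_t\bigl((R^\bullet\pi_*\cL(-D))^\vee\bigr)$, with no minus sign. With your sign, Serre duality gives $c_t(R^\bullet\pi_*\cL(D))$, the inverse of the Chiodo integrand. Your second step then silently switches to the correct $c_t(-R^\bullet\pi_*\cL(D))$.

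Also, in the paper's convention $\cL$ is a square root of $\omega_{\log}(-\sum_ix_i)=\omega$, not of $\omega_{\log}$. This is exactly why $\omega\otimes\cL^{-1}(D)\cong\cL(D)$ with no correction term. It also gives $c_1(\sigma_i^*\cL)=\psi_i/2$, hence $c_1\bigl(\cL(D'+(j+1)x_i)|_{x_i}\bigr)=-\tfrac{2j+1}{2}\psi_i$. Each step therefore contributes a factor $\bigl(1-\tfrac{2j+1}{2}t\psi_i\bigr)^{-1}$, which becomes the stated denominator after $j\mapsto p_i-1-j$. If $\cL|_{x_i}$ were trivial, as you claim, you would get integer coefficients $j+1$. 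Your intermediate value $-\tfrac{1-2p_i+2j}{2}\psi_i$ also has the opposite sign to the factor you then write down.

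These are bookkeeping corrections, not missing ideas, but step one as literally stated produces the wrong class.
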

\begin{proof} On generic points $(C,L) \in \oC_{g,n}^{1/2}$ we have $h^{0}(C,L) \leq 1$ and so $h^{0}(C,L(- \text{\tiny$\sum_i$} p_ix_{i}))=0$ and by Serre duality also $h^{1}(C,L( \text{\tiny$\sum_i$} p_ix_{i}))=0$, thus $R^1\pi_{*}\cL( \text{\tiny$\sum_i$} p_ix_{i})=0$. Then we can compute the following on $\oM_{g,n}^{1/2}$.
    \begin{align*}
   s\left(\cH^0_{\cL({\Sigma_i} p_ix_{i})}\right)(t)&= 
    s\left((R^1\pi_{*}\cL(-\sum_i\  p_ix_{i}))^{\vee}\right)(t)\\
   &=c\left((R^{1}\pi_{*}\cL(-\sum_i\  p_ix_{i}))^{\vee}\right)(t)^{-1} \\
    &=c\left(R^{0}\pi_{*}\cL(\sum_i\  p_ix_{i})\right)(t)^{-1}\\
    &=c\left(-R^\bullet\pi_{*}\cL(\sum_i\  p_ix_{i})\right)(t).
\end{align*}
Here, the first equality holds by definition and the third equality follows from Grothendieck-Serre duality.
Therefore, after pushing forward via $\epsilon$ we obtain
\begin{equation*}
    \epsilon_{*}s\left(\cH^0_{\cL({\Sigma_i} p_ix_{i})}\right)(t)
    =\Omega_{g,n}^{(1-2P)}(t).
\end{equation*}
Finally, using the shifting property for Chiodo classes~\cite[Theorem 4.1 ii)]{GiaLewNor} we obtain the stated formula.
\end{proof}

On the other hand, in~\cite[Section 6.4]{HolPolSau}, 
the authors give an explicit description of
the irreducible components of $\cH^0_{\cL(p_1x_{1})}$ and deduced the following. 
Let be $s_{g}^{1}(t):= s(\oSQ_{g}(-1,1^{n-1}))(t)$ and\footnote{These classes contain essentially the Segre classes of the components.} \begin{align*}
    \widetilde{s}^{0}_{g}(t)&:=\sum_{{{\rm Tree}_{g,n}}}\frac{t^{|E(\Gamma)|}}{|\Aut(\Gamma)|}\zeta_{\Gamma*}\left(\bigotimes_{v\in V(\Gamma)}s_{g(v)}(t)\right), \\
    \widetilde{s}^{1}_{g}(t)&:=\sum_{{{\rm Tree}_{g,n}}}\frac{t^{|E(\Gamma)|}}{|\Aut(\Gamma)|}\zeta_{\Gamma*}\left(s^{1}_{g(v_{0})}(t)\bigotimes_{v\neq v_{0}}s_{g(v)}(t)\right).
\end{align*} In the specific case $P=(1,0^{n-1})$, we have
\begin{lemma}~\cite[Proposition 6.6]{HolPolSau}
The following holds:
\begin{equation}\label{eq:segre=tilde-segre}
    \epsilon_{*}s\left(\cH^0_{\cL(x_{1})}(2t)\right)=\frac{1}{2}\left(\widetilde{s}^{0}_{g}(t)+\widetilde{s}^{1}_{g}(t)\right).
\end{equation}
\qed

\end{lemma}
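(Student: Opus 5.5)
The identity is cited from \cite[Proposition 6.6]{HolPolSau}. My plan is to re-run their argument with the parity weighting $[\pm]$ removed and track the one numerical change.

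The first step is to describe the irreducible components of the cone $\cH^0_{\cL(x_1)}$ over $\oM_{g,n}^{1/2}$. A section of $\cL(x_1)$ on a smooth spin curve $(C,L)$ either vanishes at $x_1$ or it does not.
\begin{itemize}
\item If it vanishes at $x_1$, it is a section $\sigma$ of $L$, and $\sigma^2$ is a differential with even zeros; this gives a component mapping to $\oSQ_{g}(1^n)$.
\item If it does not vanish at $x_1$, it is a section $\sigma$ of $L(x_1)$ with a simple pole at $x_1$, and $\sigma^2$ lies in $\oSQ_{g}(-1,1^{n-1})$.
\end{itemize}
Over the boundary there are further components: over a compact type curve with dual tree $\Gamma$, a section may vanish identically on some twigs. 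Working inductively on trees, the components are products over vertices. The vertex $v_0$ carrying $x_1$ contributes either the $\oSQ_{g(v_0)}(1^{n(v_0)})$ cone or the $\oSQ_{g(v_0)}(-1,1^{n(v_0)-1})$ cone. Every other vertex contributes an $\oSQ$ cone. Each node contributes a factor $t$, coming from the normal direction, as in the excess/Segre computation of \cite[Section 6]{HolPolSau}.

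The second step is to identify, for each component, the pushforward under $\epsilon$ of its Segre class.
\begin{itemize}
\item The squaring map $\sigma\mapsto\sigma^2$ is $2:1$ on fibres (from $\pm\sigma$) and doubles degrees on $\cO(1)$. This accounts for replacing $t$ by $2t$ on the left-hand side.
\item On $\oM_{g,n}$, the fibre of $\epsilon$ over a square $\omega$ is the single spin structure $\cO(\operatorname{div}\omega/2)$. So $\epsilon_*$ produces no spin sign and no $2^{2g}$ factor.
\end{itemize}
Combining the $1/2$ from $\pm\sigma$ with the fact that both types of component occur gives the factor $\tfrac12(\widetilde s^0_g+\widetilde s^1_g)$. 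Here the sum over trees with $t^{|E(\Gamma)|}/|\Aut(\Gamma)|$ is exactly the additivity of Segre classes over components, pushed forward along $\zeta_\Gamma$.

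The main obstacle is this multiplicity bookkeeping: checking that each boundary component appears with multiplicity one, and that the normal-bundle contributions at the nodes are exactly $t$ per edge. In the spin-parity setting, contributions of spin structures on which the section vanishes identically were matched against the constants $2^{2g(v)-1}$. In the non-parity setting these constants become $2^{g(v)-1}$, consistent with Theorem \ref{thm:Chiodo=Segre}.

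To handle this, I would argue exactly as in \cite[Section 6.4]{HolPolSau}, replacing the weighted fundamental classes by unweighted ones. Their local analysis of components is parity-blind, so I expect the argument to go through verbatim.
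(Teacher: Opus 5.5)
Your approach matches the paper's. There the identity is simply imported from \cite[Proposition 6.6]{HolPolSau}, on the grounds that the component description of $\cH^0_{\cL(x_1)}$ in \cite[Section 6.4]{HolPolSau} does not depend on the parity weighting, and the multiplicity bookkeeping is left to that reference exactly as you propose.

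One small correction: Equation \ref{eq:segre=tilde-segre} has no constant term (in degree $0$ both sides equal $2^{2g-1}$ by Lemma \ref{lem:s0-s1-deg-0}), so the constants $2^{2g(v)-1}$ versus $2^{g(v)-1}$ play no role in this lemma. They enter Theorem \ref{thm:Chiodo=Segre} only afterwards, through the $2^g$ term of Lemma \ref{cor:Segre-tilde-relation}.
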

\vspace{7pt}

 To deduce~\cite[Theorem 1.10]{HolPolSau} the authors
prove an independent relation between the spin-parity versions of the Segre classes on the right-hand side of Equation \ref{eq:segre=tilde-segre}. We will follow the same path by adapting 
these relations in the case of non-weighted classes. So, on a third hand we have
\begin{lemma}\label{cor:Segre-tilde-relation}
The following identity holds:
\begin{equation*}
     (1-t\psi_{1})\widetilde{s}^{1}_{g}(t)=(1+t\psi_{1}) \widetilde{s}^{0}_{g}(t) +2^{g}.
\end{equation*}
\end{lemma}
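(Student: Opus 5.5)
We would prove the identity by adapting the spin-parity argument of \cite[Section 6.4]{HolPolSau} to unweighted classes, where the only change is the constant term.

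\textbf{Step 1: a vertex-wise relation.} The relation should come from the recursion of Proposition~\ref{prop:strata-recursion} applied at the point $x_1$ with $a_1=-1$, in its non-parity form. Taking $a=(-1,1^{n-1})$, we get $a'=(3,1^{n-1})$ after appending the $N$ zeros of order two. The left-hand side is then $(\xi-\psi_1)[\PP\oSQ_g(-1,1^{n-1})]$.

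On the other side, we want to compare $[\PP\oSQ_g(-1,1^{n-1})]$ with $[\PP\oSQ_g(1^n)]$. A section of $\cL(x_1)$ that vanishes at $x_1$ is a section of $\cL$. So the natural route is the analogous recursion for $(\xi+\psi_1)[\PP\oSQ_g(1^n)]$, which also produces a $2[\PP\oSQ_g(3,1^{n-1})]$ term. Eliminating that common term, after multiplying by $\xi^{c}$ and pushing forward by $p$, should yield
\[
(1-t\psi_1)\,s^1_g(t)-(1+t\psi_1)\,s_g(t) = \text{boundary terms} + \text{correction}.
\]
The correction comes from $p_*$ of the fundamental class, namely the rank-zero part: $\oSQ_g(-1,1^{n-1})$ has one extra component. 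Its degree is the number of spin structures counted without sign, $2^{2g}$ over $\oM_{g,n}$. After the normalisation of $\epsilon_*$ and the factor $\tfrac12$ of \eqref{eq:segre=tilde-segre}, this should give $2^{g}$, in place of the signed count $2^{2g-1}$-type constant of the parity case. We must check this normalisation against Example~\ref{eq:Chiodo-g=1} in genus $1$.

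\textbf{Step 2: boundary terms.} The bi-colored graph terms must be identified. Since $x_1$ sits on level $-1$ with a simple pole, the residue conditions together with Lemma~\ref{lem:BB-graph-residue} should force the contributing graphs to be of compact type. They glue a vertex carrying $x_1$ to genus-$g(v)$ vertices through nodes of twist $\pm1$. These give exactly the tree sums $t^{|E|}\zeta_{\Gamma*}(\cdots)$. Summing the vertex-wise relation over trees, with $\psi_1$ relevant only on the vertex $v_0$ carrying $x_1$, converts $s^1,s$ into $\widetilde s^1,\widetilde s^0$. The claimed identity is then this tree-summed relation.

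\textbf{Main obstacle.} The hard part is the precise multiplicities and the constant. In \cite{HolPolSau} these are controlled by signed counts of odd and even spin structures, and here they must be replaced by unsigned counts. We would first verify the statement in genus $0$ and genus $1$, using $\Omega^1_{1,1}(2t)=2-\lambda_1 t$, to pin down the constant $2^g$. We would then check that every structural step of \cite[Section 6.4]{HolPolSau} uses parity only through these counts.
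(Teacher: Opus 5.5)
Your skeleton matches the paper's route: two instances of Proposition~\ref{prop:strata-recursion} at $x_1$ (for $a_1=-1$ and $a_1=1$) share the term $2[\PP\oSQ_g(3,1^{n-1})]$, you subtract, push forward against powers of $\xi$, and sum over trees. But both places where the actual content lies are wrong or missing.

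\textbf{The constant.} Pushing $(\xi\mp\psi_1)[\,\cdot\,]$ forward against $\sum_c t^{c+1}\xi^c$ gives $(1-t\psi_1)s^1_g(t)-s^1_g(0)$ and $(1+t\psi_1)s_g(t)-s_g(0)$. So the constant is $s^1_g(0)-s_g(0)=d'(g,1)-d'(g,0)$, the difference of the degrees over $\oM_{g,n}$ of the two rank-one cones. On a generic curve these degrees count even and odd theta characteristics respectively, so the constant is
\[
2^{g-1}(2^g+1)-2^{g-1}(2^g-1)=2^g .
\]
Your account does not produce this:
\begin{itemize}
\item Neither $\epsilon_*$ nor the $\tfrac12$ of \eqref{eq:segre=tilde-segre} enters this lemma; they are used only afterwards, in proving Theorem~\ref{thm:Chiodo=Segre}.
\item $2^{2g}$ is the total number of theta characteristics, not the relevant difference, and your normalisation does not turn it into $2^g$.
\item Checks in genus $0$ and $1$ cannot determine a $g$-dependent constant. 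The check via $\Omega^1_{1,1}$ is also circular, since relating Chiodo and Segre classes uses this lemma.
\end{itemize}

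\textbf{The boundary terms.} These do not ``give exactly the tree sums''. Note first that Lemma~\ref{lem:BB-graph-residue} runs the other way: it gives zero residues once a graph is back-bone; it does not force compact type. What survives are back-bone graphs with the following shape:
\begin{itemize}
\item each outlying vertex carries $s_{g(u)}(t)$, i.e.\ twist $1$ on level $0$;
\item the central vertex $v_0\ni x_1$ on level $-1$ carries $k=|E(\Gamma)|$ zero-residue double poles in the $a_1=1$ recursion, and $k+1$ in the $a_1=-1$ one.
\end{itemize}
In the difference, the central vertex therefore contributes
\[
\bigl(d'(g(v_0),k+1)-d'(g(v_0),k)\bigr)[\oM_{g(v_0),n(v_0)}]=(-1)^k2^{g(v_0)}[\oM_{g(v_0),n(v_0)}] .
\]
So you need the degree $d'(g,m)=2^{2g-1}+(-1)^{m+1}2^{g-1}$ of $\PP\oSQ_g((-1)^m,1^n,\{0\})\to\oM_{g,n+m}$ for every $m$, not only $m\in\{0,1\}$. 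The paper proves this in Lemma~\ref{lem:s0-s1-deg-0}, from the recursions $d'(g,m+2)=d'(g,m)$ and $d'(g+1,m)=3d'(g,m)+d'(g,m+1)$ with unsigned initial values. This yields the vertex-wise relation
\[
(1-t\psi_1)s^1_g(t)=(1+t\psi_1)s_g(t)+\sum_{\Gamma\ \text{back-bone}}\frac{(-t)^{|E(\Gamma)|}\,2^{g(v_0)}}{|\Aut(\Gamma)|}\,\zeta_{\Gamma*}\Bigl([\oM_{g(v_0),n(v_0)}]\otimes\bigotimes_{v\neq v_0}s_{g(v)}(t)\Bigr),
\]
where the trivial graph gives the constant $2^g$.

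The lemma follows only after you insert this relation at the marked vertex of every tree in $\widetilde s^1_g$. For a fixed resulting tree, the choices of which edges at the marked vertex come from the back-bone graph contribute $\sum_{S}(-1)^{|S|}$. This vanishes unless the tree is trivial, so only $2^g$ survives. Without the signs $(-1)^k$, the general formula for $d'(g,m)$, and this cancellation, your tree-summed relation is not the stated identity.
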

\begin{proof}
Analogous to \textit{loc.cit.}, Proposition \ref{prop:strata-recursion} gives for $g\geq 0$,
        \begin{equation}
            (1-t\psi_1)s_g^{1}(t)= (1+t\psi_1) s_{g}(t)+  \sum_{\substack{\Gamma\in {\rm Tree}_{g,1+n} \\ \Gamma\  {\rm is\ back\text{-}bone}}} \frac{(-t)^{E(\Gamma)}2^{g(v_0)}}{|{\rm Aut}(\Gamma)|}\zeta_{\Gamma*}\left([\oM_{g(v_0),n(v_0)}]\bigotimes_{v\in V_{\rm out}} s_{g(v)}(t)\right).
        \end{equation} 
By substitution one obtains the desired equality up to the degree $0$ term.
This is computed below.
\end{proof}

\begin{lemma}[Degree 0]\label{lem:s0-s1-deg-0} Following the notation of~\cite{HolPolSau},
    we define for all $(g,n,m)\in\NN^{3}$ such that $2g-2+n+m>0$ the function $d'(g,m)$ to be the degree of
    $
       p\colon \PP\oSQ_{g}((-1)^{m},1^{n},\{0\})\to \oM_{g,n+m}. 
    $
    Then we have 
    \begin{equation*}
        d'(g,m)=2^{2g-1}+(-1)^{m+1}2^{g-1}.
    \end{equation*}
\end{lemma}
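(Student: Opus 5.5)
The plan is an induction on the number $m$ of poles. The closed formula is equivalent to the initial value
\[
d'(g,0)=2^{2g-1}-2^{g-1}
\]
together with the difference relation
\[
d'(g,m+1)-d'(g,m)=(-1)^m2^{g}.
\]
I would establish these two facts separately. In the proof of Lemma~\ref{cor:Segre-tilde-relation} this is exactly the input missing in degree $0$.

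\textbf{Base case $m=0$.} Here $\PP\oSQ_g(1^n)\to\oM_{g,n}$ is generically finite, and I compute its degree over a generic smooth curve $C$. A point of the fibre is a theta characteristic $L$ together with a section $s\in H^0(C,L)$, up to scaling, with $\omega=s^2$. For $C$ generic, every even theta characteristic has $h^0(L)=0$ and every odd one has $h^0(L)=1$. Each odd $L$ therefore contributes exactly one point, and I expect it to be reduced. There are $2^{g-1}(2^g-1)$ odd theta characteristics, which gives $d'(g,0)=2^{2g-1}-2^{g-1}$. This is also consistent with the weighted value $-2^{g-1}$ known from \cite{HolPolSau}.

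\textbf{Inductive step.} I would take the degree-$0$ part of the unweighted analogue of the recursion in Proposition~\ref{prop:strata-recursion}, applied at a leg carrying the twist $-1$ or $1$. This is the same relation used in Lemma~\ref{cor:Segre-tilde-relation}, now on $\PP\oSQ_g((-1)^m,1^n,\{0\})$.

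\begin{itemize}
\item In degree $0$ the $\xi$ and $\psi$ terms vanish. What remains is the equality of fundamental-class degrees
\[
d'(g,m+1)=d'(g,m)+\big(\text{boundary term in degree } 0\big),
\]
where the boundary term comes from bi-coloured graphs whose image in $\oM_{g,n+m+1}$ has codimension $0$ after forgetting the zeros of order $2$.
\item A dimension count should show that only one configuration survives. The marked pole or zero sits on a rational level-$(-1)$ vertex, and the whole genus sits on a single level-$0$ vertex with no further outlying vertices. This is the $|E|=0$ piece of the back-bone sum in Lemma~\ref{cor:Segre-tilde-relation}, and its coefficient is $2^{g(v_0)}=2^g$.
\item The sign $(-1)^m$ should come from the residue condition that the global residue condition imposes on the $m$ existing poles. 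Each additional pole flips which parity supplies the extra solution, exactly as $\SQ_g(1^n)$ versus $\SQ_g(-1,1^{n-1})$ swaps the roles of odd and even spin structures.
\end{itemize}

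\textbf{Cross-check.} As an independent check I would use the parity-weighted version, known from \cite{HolPolSau}:
\[
d'^{+}(g,m)-d'^{-}(g,m)=(-1)^{m+1}2^{g-1}.
\]
If, on a generic curve, each theta characteristic $L$ contributes exactly one reduced point precisely when $h^0(L)+m$ is odd, then $d'(g,m)$ equals the number of theta characteristics of the corresponding parity. Those counts are $2^{g-1}(2^g\pm1)$, which is the claimed formula. I would verify the one-point-per-$L$ claim via the residue-zero sections of $L(\sum y_j)$: this space has dimension $m$ when $h^1=0$, and the residue conditions are quadrics in $s$.

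\textbf{Main obstacle.} The hard step is pinning down the degree-$0$ boundary coefficient exactly, namely the $2^g$ and the sign $(-1)^m$. This requires checking that the relevant boundary divisor appears with multiplicity one, and that the residue-zero condition on a rational bottom component with $m$ double poles has the expected number of solutions. I would first test the cases $g=1$, $m\le 2$ by hand, where there are four theta characteristics and only the trivial one is odd, before trusting the general count.
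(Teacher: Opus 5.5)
Your base case is correct. Over a generic curve, the fibre of $\PP\oSQ_g(1^n)\to\oM_{g,n}$ consists of one point for each odd theta characteristic, so $d'(g,0)=2^{2g-1}-2^{g-1}$.

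The inductive step, however, fails. Proposition~\ref{prop:strata-recursion} is an identity one codimension above $[\PP\oSQ_g(a)]$. Its left side is a divisor class times the cone. Its right side is $2[\PP\oSQ_g(a')]$, which maps onto a divisor of $\oM$, plus terms supported over the boundary. After pushing forward it relates $s_{g,j+1}$ to $s_{g,j}$, so it has no codimension-$0$ part expressing $d'(g,m+1)$ through $d'(g,m)$.

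The cones with $a_1=-1$ and $a_1=1$ are related only by subtracting the two instances of the recursion, since both contain $2[\PP\oSQ_g(3,1^{n-1})]$. In that subtraction, the constant term $d'(g,1)-d'(g,0)$ is precisely the integration constant that the recursion leaves undetermined. This is why the proof of Lemma~\ref{cor:Segre-tilde-relation} obtains the identity only ``up to the degree $0$ term'' and takes the $2^g$ from the present lemma. Reading that $2^g$ off as an ``$|E|=0$ back-bone piece'' is therefore circular. It is also not a valid contribution: a bi-coloured graph always has an edge and maps into the boundary, so it cannot contribute in codimension $0$.

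Two further problems remain. The proposition assumes $a_j>0$ for all $j\neq i$, so it does not apply at all once $m\geq 2$. And the sign $(-1)^m$ is only motivated heuristically.

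Your cross-check also contains an error. Since for each $m$ only one parity contributes, the weighted degree is $(-1)^{m+1}d'(g,m)$, not $(-1)^{m+1}2^{g-1}$. For $m=0$, Theorem~\ref{thm:Segre-Chiodo} together with Lemma~\ref{eq:spinchiodo=2Lambda} gives $s^{\pm}_{g,0}=2^{g-1}-2^{2g-1}$. For $g=m=1$, the three non-trivial $2$-torsion bundles give $3$, not $1$.

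The paper proceeds by degeneration instead. The arguments of \cite[Proposition~5.6]{HolPolSau} give
$d'(g,m+2)=d'(0,3)d'(g,m)+d'(0,2)d'(g,m+1)$ and $d'(g+1,m)=d'(1,1)d'(g,m)+d'(1,0)d'(g,m+1)$.
With the initial values $d'(0,m)=m \bmod 2$, $d'(1,0)=1$, $d'(1,1)=3$, these reduce to $d'(g,m+2)=d'(g,m)$ and $d'(g+1,m)=3d'(g,m)+d'(g,m+1)$, whose solution is the stated formula.

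If you want to keep your direct route, the parity rule is the entire content and has to be proved. Saying ``the residue conditions are quadrics'' is not enough, because most B\'ezout solutions are degenerate. For $m=2$ and $L$ even, they are exactly the sections of $L(y_1)$ and $L(y_2)$.

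A workable start is as follows. Write $s=(c_jz^{-1}+d_j+\dots)\sqrt{dz}$ near $y_j$.
\begin{itemize}
\item The residue theorem applied to $st$ shows that $\beta(s,t)=\sum_j c_j(s)d_j(t)$ is antisymmetric on $H^0(C,L(\sum_j y_j))$.
\item Genuine solutions are the $s$ with all $d_j(s)=0$ and all $c_j(s)\neq 0$.
\item The parity of the radical of $\beta$ shows that $\{d_1=\dots=d_m=0\}$ has dimension $\equiv h^0(L)+m \pmod 2$.
\end{itemize}
You would still need a genericity argument that this space is at most one-dimensional and that its generator has all $c_j\neq 0$.
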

\begin{proof} The arguments in the proof of~\cite[Proposition 5.6]{HolPolSau} hold verbatim to show
    \begin{align*}
        d'(g,m+2)&=d'(0,3)d'(g,m)+d'(0,2)d'(g,m+1) \\
        d'(g+1,m)&=d'(1,1)d'(g,m)+d'(1,0)d'(g,m+1).
    \end{align*}
    In genus $0$ we have no odd spin structures and so the computations in \emph{loc.cit.} show that 
    \begin{equation*}
        d'(0,m)=m\bmod2.
    \end{equation*}
    Furthermore, in genus $1$, the space $\PP\oSQ_{1}(1^{n},\{0\})$ (that is, $m=0$) generically 
    parametrizes holomorphic sections 
    of the canonical bundle in $g=1$ whose associated spin structure is the unique odd one, and so $d'(1,0)=1$. Finally, points in $\PP\oSQ_{1}(-1,3)$ generically parametrize non-trivial $2$-torsion divisors (which in particular are even) on elliptic curves. Therefore, we obtain $d'(1,1)=3$. In total we have 
    \begin{align*}
        d'(g,m+2)&=d'(g,m) \\
        d'(g+1,m)&=3d'(g,m)+d'(g,m+1).
    \end{align*}
    Together with the initial conditions in genus $0$ we have the desired result.
\end{proof}

\begin{proof}[Proof of Theorem ~\ref{thm:Chiodo=Segre}]\label{proof-nonparity} We use Lemma ~\ref{lem:segre=Chiodo} and Equation ~\ref{eq:segre=tilde-segre}
to show 
\begin{equation*}
    \frac{\Omega_{g,n}^{1^{n}}(2t)}{1-t\psi_{1}}= \frac
    {1}{2}\left(\widetilde{s}^{0}_{g}(t)+\widetilde{s}^{1}_{g}(t)\right).
\end{equation*}
Then, clearing the denominators and using Corollary ~\ref{cor:Segre-tilde-relation}, we obtain 
\begin{align*}
    \Omega_{g,n}^{1^{n}}(2t)&=\frac{1}{2}((1-t\psi_{1})\widetilde{s}_{g}^{1}(t)-(1+t\psi_{1})\widetilde{s}_{g}^{0}(t) +2\widetilde{s}_{g}^{0}(t)) \\
    &=\frac{1}{2}(2^{g}+2\widetilde{s}_{g}^{0}(t)) \\
    &=2^{g-1}+\sum_{\Gamma\in {\rm Tree}_{g,n}}\frac{t^{E(\Gamma)}}{|\Aut(\Gamma)|}\zeta_{\Gamma*}\left(\bigotimes_{v\in V(\Gamma)}s_{g(v)}(t)\right).
\end{align*} One Moebius inverts these equations to obtain the second line in the theorem.
\end{proof}

\begin{remark} As we saw above, some of the statements used throughout the proof of Theorem ~\ref{thm:Chiodo=Segre} hold in
greater generality than the vector $P=(1,0^{n-1})$. 
It would be interesting to follow a similar 
approach also for Segre classes of $\cH^0_{\cL({\Sigma_i} p_ix_{i})}$ of other vectors $P$ and $r-$spin structures to deduce more general relations.
We intend to investigate this approach in future work.

\end{remark}

\bibliographystyle{alpha}
\bibliography{bibliography.bib}

\end{document}